\numberwithin{equation}{section}
\setlist[itemize]{leftmargin=2em}
\DeclareMathAlphabet{\mathpzc}{OT1}{pzc}{m}{it}
\definecolor{cadetblue}{rgb}{0.37, 0.62, 0.63}
\newtheorem{thm}{Theorem}[section]
\newtheorem{conj}[thm]{Conjecture}
\newtheorem{cor}[thm]{Corollary}
\newtheorem{lemma}[thm]{Lemma}
\newtheorem{prop}[thm]{Proposition}
\newtheorem{question}[thm]{Question}
\newtheorem{defn}[thm]{Definition}
\newtheorem{rmk}[thm]{Remark}
\newcommand{\Q}{\mathbb Q}
\newcommand{\Z}{\mathbb Z}
\newcommand{\R}{\mathbb R}
\newcommand{\Cone}{\mathcal{C}_{1}}
\newcommand{\Ctwo}{\mathcal{C}_{2}}
\newcommand{\Cthree}{\mathcal{C}_{3}}
\newcommand{\Cmin}{\mathcal{C}_{\text{min}}}
\newcommand{\Cmax}{\mathcal{C}_{\text{max}}}
\newcommand{\Codd}{\mathcal{C}_{\text{odd}}}
\newcommand{\Ceven}{\mathcal{C}_{\text{even}}}
\newcommand{\TauSup}{\tau_{\text{sup}}}
\newcommand{\Seifert}{\textbf{Seifert disk sector analysis: }} 
\newcommand{\Polygon}{\textbf{Polygon sector analysis: }} 
\newcommand{\Horizontal}{\textbf{Horizontal sector analysis: }} 
\newcommand{\one}{\sigma_1}
\newcommand{\two}{\sigma_2}
\newcommand{\three}{\sigma_3}
\begin{document}
\renewcommand\labelitemi{$\boldsymbol{\circ}$}

\title[Taut foliations, braid positivity, and unknot detection]
{Taut foliations, braid positivity, and unknot detection}

\author{Siddhi Krishna}
\address{Department of Mathematics, Columbia University\\ New York, NY 10027}
\email{sk5026@columbia.edu}

\def\subjclassname{\textup{2020} Mathematics Subject Classification}
\expandafter\let\csname subjclassname@1991\endcsname=\subjclassname
\subjclass{
57K35, 
57K30 (primary); 
57K18, 
57K10 
(secondary). 
}

\begin{abstract}
We study \textit{positive braid knots} (the knots in the three--sphere realized as positive braid closures) through the lens of the L-space conjecture. This conjecture predicts that if $K$ is a non-trivial positive braid knot, then for all $r < 2g(K)-1$, the 3-manifold obtained via $r$-framed Dehn surgery along $K$ admits a taut foliation. Our main result provides some positive evidence towards this conjecture: we construct taut foliations in such manifolds whenever $r<g(K)+1$.
As an application, we produce a novel braid positivity obstruction for cable knots by proving that \textit{the $(n,\pm 1)$--cable of a knot $K$ is braid positive if and only if $K$ is the unknot}. We also present some curious examples demonstrating the limitations of our construction; these examples can also be viewed as providing some negative evidence towards the L-space conjecture. 
Finally, we apply our main result to produce taut foliations in some splicings of knot exteriors. 
\end{abstract}

\maketitle

\section{Introduction} \label{section:intro}

\subsection{Motivation.} \label{subsection:motivation} 

Tools for studying 3--manifolds come in a variety of flavors. A fruitful approach to studying 3--manifolds comes from studying the geometric structures they admit. One such geometric structure is a \textbf{taut foliation}, a particular type of decomposition of a 3--manifold $Y$ into (typically non--compact) surfaces (called \textit{leaves}), such that there is a simple closed curve meeting every leaf transversely. Historically, taut foliations have been useful for studying Dehn surgery theoretic problems. While the most prominent application is Gabai's seminal proof of the Property R conjecture \cite{Gabai:FoliationsIII}, taut foliations were also used to probe the Property P conjecture \cite{DasbachLi}, as well as Thurston's geometrization \cite{RobertsShareshianStein}. In a different vein, Floer homological invariants provide a powerful approach for studying 3--manifolds; they, too, are often well suited for studying problems within the Dehn surgery realm. For example, Kronheimer--Mrowka--Ozsv\'ath--Szab\'o used monopole Floer homology to resolve Gordon's conjecture \cite{KMOSz}, and Greene used Heegaard Floer homology to address the lens space realization problem \cite{Greene:LensSpaceRealization}. \textbf{L--spaces} (3--manifolds with ``simple'' Floer homology) played an essential role in both results. 

L--spaces cannot admit taut foliations \cite{OSz:HolDisks, Bowden:Approx, KazezRoberts}. Given this result, one may wonder: \textit{can Floer homology detect the existence of a taut foliation?} Answering this question and its relatives has culminated in the bold L--space conjecture. If true, it neatly organizes 3--manifolds based on their Floer homological, geometric, and algebraic data:

\begin{conj}[The L-space Conjecture \cite{BoyerGordonWatson, Juhasz:Survey}] \label{conj:LSpace} 
Suppose $Y$ is an irreducible rational homology 3-sphere. Then the following are equivalent: 
\begin{enumerate}
\item $Y$ is a non-L-space $($i.e.~ the Heegaard Floer homology of $Y$ is not ``small''$)$,
\item $\pi_1(Y)$ admits a total ordering which is invariant under left multiplication, and
\item $Y$ admits a taut foliation.
\end{enumerate}
\end{conj}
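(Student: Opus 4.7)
The final statement is Conjecture~\ref{conj:LSpace}, which is a central open problem in low-dimensional topology; so a ``proof proposal'' here necessarily amounts to organizing the known partial progress and sketching plausible strategies for closing the cycle of implications. The only direction settled in full generality is $(3) \Rightarrow (1)$: a closed oriented 3--manifold admitting a taut foliation is not an L--space, by the combined work of Ozsv\'ath--Szab\'o (for sufficiently smooth foliations), Bowden (approximating $C^{0}$ foliations by contact structures), and Kazez--Roberts. The substantive work is therefore to produce taut foliations and invariant orderings starting from the Floer-theoretic hypothesis, and to relate these two structures to one another.

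My plan would be to close the cycle $(1) \Rightarrow (3) \Rightarrow (2) \Rightarrow (1)$. For $(1) \Rightarrow (3)$, the natural strategy is to present $Y$ as Dehn filling on a knot or link complement, use surgery machinery (the mapping cone, the immersed-curves reformulation of Hanselman--Rasmussen--Watson) to translate ``non--L--space'' into the existence of a distinguished class in $\widehat{HFK}$, and then realize that class geometrically as an incompressible or branched surface from which one assembles a taut foliation in the spirit of Gabai's sutured manifold theory. The paper under consideration executes exactly this philosophy for positive braid surgeries when $r < g(K)+1$; a general proof would demand a much more flexible branched-surface calculus. For $(3) \Rightarrow (2)$, one would hope to promote a taut foliation to a faithful $\pi_1$-action on $\R$ via the universal circle of Calegari--Dunfield; the known obstacles include co-orientability and regularity issues for $C^{0}$ foliations. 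Finally, $(2) \Rightarrow (1)$ would require a mechanism to extract non-trivial Floer classes directly from an invariant ordering --- a step for which no general technique is presently available.

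The hardest direction, by a wide margin, will be $(1) \Rightarrow (3)$. The class of L--spaces is defined algebraically while the class of manifolds carrying taut foliations is defined geometrically, and there is no known canonical intermediary. Every known case (fibered manifolds, graph manifolds, positive braid surgeries) exploits special combinatorial input, and the branched-surface construction advertised in this paper's abstract falls squarely in this pattern. A uniform proof will almost certainly require a new structural theorem --- perhaps via sutured hierarchies, instanton Floer homology, or a dynamical reformulation of ``largeness'' of $\widehat{HF}(Y)$ --- that canonically produces a branched surface carrying a foliation from Floer-theoretic input alone. Absent such a bridge, one should aim only for results of the kind proved here: incremental theorems handling ever-broader families of surgery descriptions, chipping away at the conjecture one geometric regime at a time.
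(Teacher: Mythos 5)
This statement is stated in the paper as an open conjecture, with no proof given (and none exists in the literature); your proposal correctly recognizes this, accurately records that only $(3)\Rightarrow(1)$ is known in general and that the full equivalence is established only in special cases such as graph manifolds, and rightly frames the paper's contribution (Theorem~\ref{thm:main}) as partial evidence rather than a proof. There is nothing to compare against, and your assessment of the state of the art is consistent with how the paper itself treats the conjecture.
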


The L--space conjecture is true for \textit{graph manifolds}, i.e. the 3--manifolds whose JSJ decomposition consists only of Seifert fibered pieces \cite{BoyerClay2, BoyerClay, BoyerGordonWatson, BrittenhamNaimiRoberts, ClayLidmanWatson, EisenbudHirschNeumann, HRRW, LiscaStipsicz}. 

There are three primary constructions of non-L-spaces in the literature: branched covers of knots in $S^3$, splicings of knot exteriors, and Dehn surgery along knots in the $S^3$. The branched covers perspective has been investigated by many
\cite{Peters:Thesis, BoileauBoyerGordon:SQP1, BoileauBoyerGordon:SQP2}; in this paper, we focus on Dehn surgery along knots, though we also provide some applications towards splicings.

In particular, this work primarily focuses on the rational homology spheres obtained by surgery along knots in $S^3$. We investigate: \textit{does every non--L--space obtained by Dehn surgery along a knot in $S^3$ admit a taut foliation?} A first step to addressing this question is to understand when surgery along a non--trivial knot $K \subset S^3$ yields a non--L--space.

\begin{defn}
A non--trivial knot $K \subset S^3$ is an \textbf{L--space knot} if there exists some $r > 0$ such that $S^3_r(K)$ is an L--space.
\end{defn}

Concrete examples of L--space knots include torus knots \cite{Moser:TorusKnots}, Berge knots \cite{Berge}, and 1--bridge braids \cite{GLV:11Lspace}. In some sense, L--space knots are not well understood; there is no characterization or classification of these knots. However, it is known that if a knot admits a surgery to a single L--space, it admits infinitely many:

\begin{thm}[\cite{KMOSz, Rasmussen2}]
Let $K$ be a non--trivial knot in $S^3$. Then $S^3_r(K)$ is an L-space if and only if $K$ is an L-space knot and $r \geq 2g(K)-1$, or $m(K)$ is an L-space knot and $r \leq 1 - 2g(K)$. 
\end{thm}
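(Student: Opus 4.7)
The plan is to prove this via the Ozsv\'ath--Szab\'o mapping cone formula for Dehn surgery, combined with the surgery exact triangle. Since $S^3_{-r}(K)$ is orientation-reversing diffeomorphic to $-S^3_r(m(K))$, and since L-spaceness is preserved under orientation reversal, it suffices to establish the positive case: for $r > 0$, $S^3_r(K)$ is an L-space if and only if $K$ is an L-space knot with $r \ge 2g(K)-1$. Applying this result to $m(K)$ in place of $K$ then yields the negative branch of the theorem.

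The first main step is monotonicity in the surgery slope: if $S^3_{r_0}(K)$ is an L-space for some $r_0 > 0$, then $S^3_r(K)$ is an L-space for every rational $r \ge r_0$. Along integer slopes, the surgery exact triangle relating $\widehat{HF}(S^3_n(K))$, $\widehat{HF}(S^3_{n+1}(K))$, and $\widehat{HF}(S^3)$ yields the rank inequality
\[
\bigl| \mathrm{rk}\,\widehat{HF}(S^3_{n+1}(K)) - \mathrm{rk}\,\widehat{HF}(S^3_n(K)) \bigr| \le 1,
\]
while for a rational homology sphere $Y$ one has $\mathrm{rk}\,\widehat{HF}(Y) \ge |H_1(Y;\mathbb{Z})|$ with equality precisely when $Y$ is an L-space. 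Iterating shows that once the minimal rank is attained at some integer slope, it persists for all larger integer slopes; the general rational slope case follows from Rasmussen's rational surgery mapping cone formula.

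The second main step is to pin down the minimal positive L-space slope as exactly $2g(K)-1$. For this I would invoke the large surgery formula: for integers $n \ge 2g(K)-1$, in each $\mathrm{Spin}^c$ structure the complex $\widehat{CF}(S^3_n(K))$ is quasi-isomorphic to a subquotient $A_s$ of $CFK^\infty(K)$ determined by the Alexander filtration. If some such surgery is an L-space, every $A_s$ must have rank one, which forces $CFK^\infty(K)$ to have staircase form; a direct computation then shows that every integer $n \ge 2g(K)-1$ is an L-space slope, and by the monotonicity step so is every rational $r \ge 2g(K)-1$. Conversely, for $0 < r < 2g(K)-1$ the mapping cone picks up contributions from Alexander gradings outside the large surgery range; these extra summands force $\mathrm{rk}\,\widehat{HF}(S^3_r(K)) > |H_1|$, so $S^3_r(K)$ cannot be an L-space.

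The main technical obstacle is handling non-integer slopes and verifying that the bound $2g(K)-1$ is sharp across all of $\mathbb{Q}_{>0}$ rather than just on the integer lattice. The cleanest resolution is via Rasmussen's rational surgery mapping cone combined with the explicit staircase shape of $CFK^\infty$ for L-space knots, which makes the ranks of the relevant subcomplexes directly computable and pinpoints $2g(K)-1$ as the smallest slope at which no spurious homology is contributed.
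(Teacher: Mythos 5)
This theorem is not proved in the paper at all: it is quoted as a known result with citations to Kronheimer--Mrowka--Ozsv\'ath--Szab\'o and Rasmussen, so there is no in-paper argument to compare against. Your outline is a faithful sketch of the standard proof from those references --- reduction to positive slopes via $S^3_{-r}(K) \cong -S^3_{r}(m(K))$, monotonicity of L-space slopes from the surgery exact triangle together with $\operatorname{rk}\widehat{HF}(Y) \ge |H_1(Y;\Z)|$, and the identification of $2g(K)-1$ as the threshold via the large surgery formula and the rational surgery mapping cone --- and I see no logical gap in the plan, though as written it is a proof schema that defers all the real content (the mapping cone and large surgery quasi-isomorphisms, and the staircase form of $CFK^\infty$ for L-space knots) to the literature rather than a self-contained proof. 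Two small points worth making explicit if you flesh this out: the slope $r=0$ must be excluded separately since $S^3_0(K)$ is not a rational homology sphere, and once every $\widehat{A}_s$ has rank-one homology the large surgery formula already gives all integer slopes $n \ge 2g(K)-1$ as L-space slopes, so the staircase structure is only genuinely needed for the sharpness claim at non-integer slopes below $2g(K)-1$.
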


A braid is \textit{positive} if it is the product of positive Artin generators.  
If $K$ is realized as a positive braid closure (i.e. $K$ is \textit{braid positive}), then $S^3_r(K)$ is a non--L--space for all $r<2g(K)-1$ (regardless of whether $K$ is or is not an L-space knot). \Cref{conj:LSpace} predicts these manifolds should all admit taut foliations. The author confirmed this prediction for knots realized as positive 3--braid closures \cite{Krishna:3Braids}.
This was the first example confirming $(1) \iff (3)$ in \Cref{conj:LSpace} for every non--L--space obtained by Dehn surgery along an infinite family of hyperbolic L--space knots.

\subsection{Main results.} \label{subsection:results}

In this work, we study positive braid knots of any braid index.


\begin{thm}\label{thm:main}
Suppose the knot $K \subset S^3$ is realized as the closure of a positive braid, and $g(K) \geq 2$. Then $S^3_r(K)$ admits a taut foliation whenever $r < g(K)+1$.
\end{thm}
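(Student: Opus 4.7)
The plan is to construct, from a positive braid presentation $\beta$ of $K$, a branched surface $B\subset S^3\setminus\nu(K)$ that carries a family of taut foliations realizing every boundary slope in the interval $(-\infty,g(K)+1)$. Extending each such foliation across the Dehn filling solid torus then yields a taut foliation of $S^3_r(K)$ for every $r<g(K)+1$.

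To build $B$ I will begin with the Bennequin/Seifert surface $\Sigma$ obtained by Seifert's algorithm from $\beta$: parallel Seifert disks $D_1,\dots,D_n$ (one per strand) joined by a positive twisted band for each Artin generator of $\beta$. Because $\beta$ is positive, $\Sigma$ realizes the genus of $K$ and is a fiber surface, so in particular it is incompressible. I will promote $\Sigma$ to a branched surface by decorating each Seifert disk and each band with a carefully chosen branch locus (arcs along which two sheets merge with a coherent co-orientation), generalizing the construction from the author's earlier 3-braid work. Every surface carried by $B$ will then decompose into three kinds of pieces suggested by the macros defined in the preamble: \emph{Seifert disk sectors} living inside the $D_i$, \emph{polygon sectors} lying between consecutive bands, and \emph{horizontal sectors} running near the braid axis. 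A standard spinning near $\partial\nu(K)$ converts $\partial B$ into a train track on the peripheral torus.

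The next task is to verify that $B$ is well-behaved enough to support taut foliations: no disks of contact, no half-disks or monogons, no sink disks, and Reeb-free boundary, so that branched surface technology in the spirit of Li and Gabai--Oertel applies. Each of these conditions reduces to a finite combinatorial check on the three sector types, using incompressibility of $\Sigma$ together with positivity of the braid word. I expect these verifications to extend the 3-braid arguments in a fairly mechanical fashion, though the bookkeeping grows with the braid index.

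The substantive step, and the main obstacle, is computing the range of realized boundary slopes. A transverse weight on $B$ assigns nonnegative real numbers to the sector types subject to the switch equations at the branch arcs, and the slope it induces on $\partial\nu(K)$ is a ratio of linear functionals in those weights. I will stratify the admissible weight cone into subcones $\Cone$, $\Ctwo$, $\Cthree$, refined further into $\Cmin$, $\Cmax$, $\Codd$, $\Ceven$ according to the parity and position of the bands in play, and compute the slope function on each stratum. The supremum $\TauSup$ of the realized slopes should equal $g(K)+1$, with the bound itself arising from an Euler-characteristic tally relating the number of Seifert disks, the number of bands, and $\chi(\Sigma)=1-2g(K)$. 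The delicate point will be producing an explicit one-parameter family of weights whose slopes limit to $g(K)+1$ from below; I expect this to require a maximally spun horizontal configuration, and the hypothesis $g(K)\geq 2$ should be exactly what guarantees enough bands to arrange such a configuration.
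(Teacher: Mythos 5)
The overall architecture you describe --- Bennequin surface, branched surface, combinatorial checks of laminarity by sector type, slope computation on the boundary train track, then Li's theorem --- is the right one and matches the paper's. But the plan glosses over the two places where the actual content lives, and in both cases the gap is substantive rather than expository.

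First, the branched surface is not obtained by decorating $\Sigma$ with branch arcs along which sheets of $\Sigma$ merge; it is the union of a copy of the fiber surface with a collection of co-oriented \emph{product disks}, each swept out by pushing a Hopf-plumbing arc once around the fibration. The product disks are branch sectors in their own right, and the construction depends on an explicit factorization of the monodromy (extracted from the Hopf-plumbing structure of the Bennequin surface) to know where each arc lands. Without the product disks the boundary train track is just the longitude and carries only the slope $0$.

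Second, and more seriously, your slope computation omits the phenomenon that actually governs $\TauSup$. Each product disk meets $\partial X_K$ in two arcs, and $\tau_B$ fully carries all slopes below (number of product disks used) minus (number of \emph{linked pairs} of arcs on the peripheral torus); every linked pair costs a full unit of slope. The quantities $\Cone$, $\Ctwo$, $\Cthree$, $\Codd$, $\Ceven$ are not strata of a weight cone --- they are crossing counts in the columns of the braid, used to decide \emph{which} columns contribute product disks so that the disk count alone already exceeds $g(K)+1$ via the tally $2g(K)-1=\mathcal{C}-n$ (this is the Euler-characteristic bookkeeping you anticipate, and it is the easy part). The genuine difficulty, which your plan dismisses as ``fairly mechanical,'' is that the co-orientations needed to avoid sink disks tend to create linked pairs and vice versa; arranging both conditions simultaneously across all columns is what forces the template, pivoting, and calibration machinery that occupies most of the paper, and it is here that the bound degrades from $2g(K)-1$ to $g(K)+1$. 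Your proposed ``maximally spun horizontal configuration'' and the role you assign to $g(K)\geq 2$ do not engage with this; the hypothesis $g(K)\geq 2$ merely excludes the right-handed trefoil (and the genus-two case on four strands is $T(2,5)$, checked separately), not a shortage of bands.
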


We note: the only positive braid knot of genus one is the right handed trefoil; it is already known that if $K$ is the right handed trefoil, $S^3_r(K)$ has a taut foliation if and only $r < 2g(K)-1$, as predicted by the L-space conjecture. Thus, the assumption that $g(K) \geq 2$ is not restrictive.

\Cref{thm:main} and \cite{Krishna:3Braids} are the only examples in the literature producing taut foliations in manifolds obtained by Dehn surgery along \textit{hyperbolic} knots, where the supremal slope is a function of $g(K)$, as predicted by \Cref{conj:LSpace}. Studying braids of ``big'' strand number (i.e. four or more strands) presents significant technical challenges, especially compared to braids on at most three strands; this is a typical dichotomy within the world of braids. 
For example,
Murasugi classified all 3--braids up to conjugation \cite{Murasugi:3Braids}, but no such classification exists for higher braid index, even amongst positive braids. 
While we did not use Murasugi's classification in \cite{Krishna:3Braids}, the significant increase in complexity for large strand number braids partially explains the challenges encountered while proving \Cref{thm:main}. 


\Cref{thm:main} can be used to provide a novel obstruction to braid positivity. Throughout, we let $K_{p,q}$ denote the $(p,q)$-cable of a knot $K$, where the cable traverses the longitudinal direction of the knot $p$ times ($p \geq 2$), and the meridional direction $q$ times. 

\begin{thm} \label{UnknotDetection} The cable knot $K_{n, \pm 1}$ is braid positive $\iff$ $K$ is the unknot.
\end{thm}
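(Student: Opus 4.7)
The plan is to prove the non-trivial direction --- if $K_{n,\pm 1}$ is braid positive then $K$ is the unknot --- by contradiction, combining \Cref{thm:main} with Gordon's classical cabling formula for Dehn surgery on cable knots. The reverse direction is immediate: when $K$ is the unknot, $K_{n, \pm 1}$ is the $(n, \pm 1)$-torus knot, which is itself the unknot and trivially realized as a positive braid closure.

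For the contrapositive, I assume $K$ is non-trivial (so $g(K) \geq 1$) and that $K_{n, \pm 1}$ is braid positive for some $n \geq 2$. The Seifert genus formula for cables gives $g(K_{n, \pm 1}) = n\, g(K) \geq 2$, so \Cref{thm:main} produces a taut foliation on $S^3_r(K_{n, \pm 1})$ for every slope $r < n\, g(K) + 1$. The crucial classical input is Gordon's cabling formula
\[
S^3_{pq}(K_{p,q}) \;\cong\; L(p, q) \;\#\; S^3_{q/p}(K),
\]
which specializes in our setting to $S^3_{\pm n}(K_{n, \pm 1}) \cong L(n, \pm 1) \;\#\; S^3_{\pm 1/n}(K)$. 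The cabling slope $\pm n$ satisfies $\pm n \leq n \leq n\, g(K) < n\, g(K) + 1$ because $g(K) \geq 1$, so \Cref{thm:main} equips $S^3_{\pm n}(K_{n, \pm 1})$ with a taut foliation.

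However, both summands on the right are non-trivial: the lens space $L(n, \pm 1)$ is not $S^3$ for $n \geq 2$, while $S^3_{\pm 1/n}(K) \neq S^3$ by Gordon--Luecke (no non-trivial surgery on a non-trivial knot recovers $S^3$). Thus $S^3_{\pm n}(K_{n, \pm 1})$ is a reducible rational homology sphere, and such manifolds carry no taut foliation --- by a classical theorem of Rosenberg, or directly because an essential reducing sphere would fail to be transverse to a putative taut foliation, contradicting Novikov's theorem. The resulting contradiction forces $K$ to be the unknot.

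I expect no serious technical obstacle here: the argument is essentially a dictionary converting cabling on the knot side into connect-summing on the surgery side. The only cares are to confirm the slope inequality and to apply Gordon's formula with the correct orientation conventions (particularly for the $q = -1$ case, which one may alternatively handle by reflecting everything and appealing to the $q = +1$ version). The conceptual leverage from \Cref{thm:main} is precisely that the cabling slope $pq = \pm n$ always lies strictly below the taut-foliation threshold $g(K_{n, \pm 1}) + 1$ as soon as $g(K) \geq 1$, and this is what makes the braid-positivity obstruction effective.
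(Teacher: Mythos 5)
Your proposal is correct and follows essentially the same route as the paper: genus formula for cables, \Cref{thm:main} applied at the cabling slope $\pm n < n\,g(K)+1$, the cabling formula $S^3_{pq}(K_{p,q}) \cong L(p,q)\,\#\,S^3_{q/p}(K)$ together with Gordon--Luecke to exhibit a reducible surgery, and the fact that reducible manifolds admit no taut foliation (the paper justifies this last step via the universal cover being $\R^3$ rather than via Novikov/Rosenberg, but the conclusion is the same).
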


\Cref{UnknotDetection} is reminiscent of previously known unknot detection results, namely Grigsby--Wehrli's proof that \textit{Khovanov's categorification of the $n$--colored Jones polynomial detects the unknot} \cite{GrigsbyWehrli}, and Hedden's result that \textit{Khovanov homology of the 2--cable detects the unknot} \cite{Hedden:KhovanovHomology2Cable}. We pursue generalizations of \Cref{UnknotDetection} in \cite{HeddenKrishna}.  
Assuming \Cref{thm:main}, the proof of \Cref{UnknotDetection} is both short and self-contained. As the result may be of independent interest outside of \Cref{conj:LSpace}, it is presented in \Cref{section:Cabling}.

We observe a novelty of \Cref{UnknotDetection}: in general, it is difficult to distinguish \textit{braid positivity} from the closely related notions of \textit{knot positivity} (when a knot admits a diagram with only positive crossings) and \textit{strong quasipositivity} (see \cite{Rudolph:QPsliceness, Hedden:Positivity} for a definition). We recall that
$$\{\text{positive braid knots}\} \subsetneq \{\text{positive knots}\} \subsetneq \{\text{strongly quasipositive knots}\}.$$
Many invariants cannot readily distinguish these three notions. For example, for all strongly quasipositive knots, $\tau(K) = g_3(K) = g_4(K)$ and $s(K) = 2\tau(K)$ (where $\tau(K)$ and $s(K)$ denote the Ozsv\'ath--Szab\'o and Rasmussen concordance invariants, respectively, and $g_4(K)$ denotes the smooth 4--ball genus of $K$) \cite{Livingston:TauInvariant, HeddenOrding}. Thus, $\tau$ and $s$ cannot detect braid positivity. Similarly,  many classical invariants cannot distinguish positive braids amongst positive knots: for example, all positive knots have negative signature \cite{Przytycki:PositiveKnotsNegativeSignature} and positive Conway polynomials \cite{Cromwell:HomogeneousLinks}. Some properties of the coefficients and degrees of the HOMFLY and Jones polynomials are able to differentiate positive braid knots amongst positive knots on an ad--hoc basis; see \cite[Section 8]{Stoimenow:PositiveKnotsJonesPoly} and \cite{Ito:BraidPositiveHOMFLY}. However, no general formula computing either polynomial for cables of knots in terms of the polynomials of $T(p,q)$ and $K$ can exist \cite{StoimenowTanaka}, making it difficult to distinguish braid positivity from knot positivity for infinite families. 





On an ad--hoc basis, one can often improve upon \Cref{thm:main} for individual positive braid knots, or families thereof, and further close the gap towards $2g(K)-1$.

\begin{figure}[h!]\center
\labellist 
\pinlabel \rotatebox{90}{$(\sigma_3 \sigma_3 \sigma_2)^{2m}$} at 4355 470
\endlabellist
\includegraphics[scale=0.1]{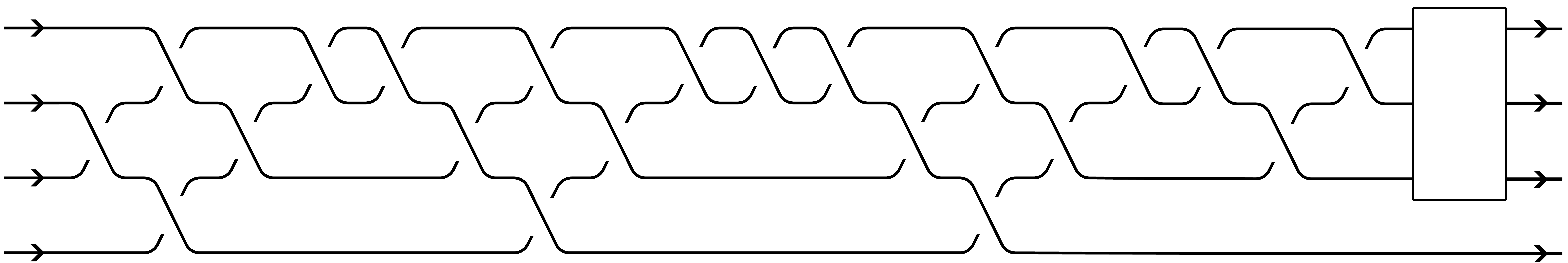}
\caption{Let $m \geq 1$. Define $\{\beta_m\}$ to be 4-braids pictured above. Taking the braid closure of $\beta_m$ yields the knot $K_m$. The infinite family $\{K_m\}$ are the knots of interest in \Cref{thm:examples}.}
\label{fig:4Braid_Intro}
\end{figure}

\begin{thm}\label{thm:examples}
There are infinitely many hyperbolic 4-braid L-space knots $\{K_m\}$ such that $S^3_r(K_m)$ admits a taut foliation whenever $r < 2g(K_m)-2$.
\end{thm}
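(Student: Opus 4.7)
The plan is to use Theorem~\ref{thm:main} as a base case and then construct additional branched surfaces exploiting the periodic block $(\sigma_3\sigma_3\sigma_2)^{2m}$ in $\beta_m$. Theorem~\ref{thm:main} already supplies taut foliations for slopes $r < g(K_m)+1$, so the remaining task is to produce taut foliations for slopes $r \in [g(K_m)+1,\, 2g(K_m)-2)$. The key heuristic is that each of the $2m$ copies of the factor $\sigma_3\sigma_3\sigma_2$ inside $\beta_m$ should serve as an extra tuning parameter for the weight systems on a branched surface; since $g(K_m)$ grows linearly in $m$, harvesting a small fixed number of additional slopes per block is what is required to reach a bound at a fixed additive distance from $2g(K_m)-1$.

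Concretely, I would first reconstruct the branched surface $B_m^{(0)}$ of Theorem~\ref{thm:main} built over the Bennequin surface $\Sigma_m$ of $\beta_m$, and then enhance it as follows: for each repeat of $\sigma_3\sigma_3\sigma_2$ along the braid axis, attach a new product sector localised to that block and supported only on the strands $2,3,4$. The enhanced branched surface $B_m$ has strictly more sectors than $B_m^{(0)}$, and the cone of admissible transverse weights should parametrise boundary slopes filling an interval strictly larger than the original $(-\infty, g(K_m)+1)$. The boundary slope carried by a given weight vector is computable by the usual train track count on $\partial N(K_m)$, and using $g(K_m) = (c(\beta_m)-3)/2$ one verifies that the carried interval reaches up to $2g(K_m)-2$ provided one supplies sufficiently many enhancement sectors, which is exactly what the $2m$ copies of the factor furnish.

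The main obstacle is verifying that $B_m$ remains a laminar, sink-disk-free branched surface after the enhancements, since simply adding sectors can easily destroy tautness or create closed complementary regions. Because each new local sector interacts with $B_m^{(0)}$ only along a pair of product regions at the top and bottom of its $\sigma_3\sigma_3\sigma_2$ block, I would carry out the Seifert disk, polygon, and horizontal sector analyses (in the style used for Theorem~\ref{thm:main}) on a single enhanced block and then glue these identical local pictures together to make the argument essentially inductive in $m$. The remaining items are more routine: $K_m$ is hyperbolic because $\beta_m$ can be arranged to avoid torus knot or cable-like structure in $B_4$; $K_m$ is an L-space knot by a direct Alexander polynomial computation on the positive braid word; and infiniteness of the family follows from monotonicity of $g(K_m)$ in $m$.
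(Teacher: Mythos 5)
There are several genuine gaps here, and the overall strategy is not the one that actually works for these knots.

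First, the branched-surface ``enhancement'' is the heart of your plan and it is also the part that fails. In this framework the supremal carried slope is (number of co-oriented product disks) minus (number of linked pairs/triples), and product disks can only come from plumbing arcs of the Bennequin surface -- there is no freedom to ``attach a new product sector localised to a block'' beyond choosing which plumbing arcs to use and how to co-orient them. For the knots $K_m = \widehat{(\sigma_1\sigma_2\sigma_3)^7(\sigma_3\sigma_2)^{3m}}$ the correct observation is much simpler: after standardizing, $c_1 = 3$, $c_2 = 7+2m$, $c_3 = 11+4m$, so $\Cmin = \Cone = 3$ is \emph{constant in $m$}, and the already-existing generic $4$-braid construction (\Cref{prop:Generic4BraidCMinIsCOne}) gives $\TauSup = \mathcal{C}-\Cmin-2 = (21+6m)-5 = 16+6m = 2g(K_m)-2$ directly. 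No slopes need to be ``harvested'' from the periodic blocks; the gain comes entirely from the lopsided crossing distribution. Indeed, the paper shows (Table 2 in \Cref{section:WeirdExamples}) that trying to add the remaining product disks from $\Gamma_1$ -- the only enhancement available -- always creates a linked pair and yields no improvement, which is exactly why these examples stop at $2g(K_m)-2$.

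Second, two of the ``routine'' items are not routine as you have stated them. A ``direct Alexander polynomial computation'' cannot show that $K_m$ \emph{is} an L-space knot: the Alexander polynomial condition is necessary, not sufficient. The paper identifies $K_m$ as the twisted torus knot $K(4,7;3,m)$ with $s=p-1$ and invokes Vafaee's classification to conclude it is an L-space knot. Similarly, ``$\beta_m$ can be arranged to avoid torus knot or cable-like structure'' is not a hyperbolicity proof; the paper verifies hyperbolicity of an auxiliary two-component link with SnapPy and then applies Thurston's hyperbolic Dehn surgery theorem to obtain infinitely many hyperbolic $K_m$. You would need to supply arguments of comparable substance for both claims.
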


Our concrete examples are seen in \Cref{fig:4Braid_Intro}. In \Cref{section:WeirdExamples}, we describe a surprising phenomenon for these knots -- namely, it does not appear that \textit{our construction} produces taut foliations in $S^3_r(K)$ for the remaining unit interval of Dehn surgery slopes which conjecturally admit taut foliations. For the taut foliations constructed in both \cite{Krishna:3Braids} and \Cref{thm:main}, the core of the Dehn surgery solid torus is a closed transversal for the taut foliation. This leads us to pose a qualitative question, which we plan to pursue in future work:

\begin{question} \label{TransversalQuestion}
Fix any hyperbolic knot $K_m$ from the family shown in \Cref{fig:4Braid_Intro} and fix some $r$ such that $r \in [2g(K_m)-2, 2g(K_m)-1)$. 
Suppose $S^3_r(K_m)$ has a taut foliation. Is the core of the Dehn surgery solid torus isotopic to a transversal for that taut foliation?
\end{question}

Finally, we provide an application towards the \textit{splicing of knot exteriors}. We begin by recalling the construction/definition: let $K_1$ and $K_2$ be non-trivial knots in $S^3$, and let $X_1$ and $X_2$ be their respective exteriors. Both $X_1$ and $X_2$ have torus boundary, and each torus is equipped with the standard homology basis $\mu_1, \lambda_1$ and $\mu_2, \lambda_2$, where $\lambda_1$ and $\lambda_2$ are the Seifert longitudes for $K_1$ and $K_2$, respectively. The 3-manifold $M(K_1, K_2)$ is obtained by gluing the two tori together via an orientation-reversing homeomorphism which identifies $\lambda_1$ with $\mu_2$, and $\mu_1$ with $\lambda_2$; this manifold is called the \textit{splicing of the the exteriors of $K_1$ and $K_2$}. A standard Meyer-Vietoris argument verifies that $M(K_1, K_2)$ is an integer homology 3--sphere (see \cite{Gordon:HomologySpheres} or \cite{Saveliev:Homology3Spheres} for details on this construction and its variations). Splicings have received considerable interest from the Floer homological community: Hedden-Levine showed that the splicing of non-trivial knot exteriors always yields a non-L-space \cite{HeddenLevine:Splicing}, and Baldwin-Sivek \cite{BaldwinSivek:Splicing} proved the analogous statement in the instanton Floer homology setting. Other Floer homological investigations of splicings include \cite{Eftekhary, HanselmanRasmussenWatson, Zemke:LinkSurgery, KarakurtLidmanTweedy}.

In light of \Cref{conj:LSpace} and the Hedden-Levine result, one expects splicings to both admit taut foliations and have left-orderable fundamental groups. Work of Boileau-Boyer \cite[Theorem 0.2]{BoileauBoyer:HomologySpheres} confirms both when $M(K_1,K_2)$ is a graph manifold that is not $S^3$ or the Poincar\'e homology sphere, and subsequent work of Boyer-Gordon-Hu \cite[Theorem 2.1]{BoyerGordonHu:Toroidal} does the same if one of $K_1$ or $K_2$ is fibered. We can apply the proof of \Cref{thm:main} to partially recover both:

\begin{cor} \label{cor:splicing}
Suppose $K_1$ and $K_2$ are positive braid knots of genus at least two, and let $M(K_1, K_2)$ denote the splicing of the knot exteriors of $K_1$ and $K_2$. Then $M(K_1, K_2)$ is a non-L-space if and only if it admits a taut foliation. Moreover, in this case, $\pi_1(M(K_1, K_2))$ is left-orderable.
\end{cor}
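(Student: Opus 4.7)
The plan is to construct a co-oriented taut foliation on $M(K_1, K_2)$ by gluing two of the foliations produced in the course of proving \Cref{thm:main}, and then deduce left-orderability by invoking a result on fibered splicings. One direction of the equivalence, ``taut foliation implies non-L-space'', is automatic from \cite{OSz:HolDisks, Bowden:Approx, KazezRoberts}. Moreover, Hedden--Levine \cite{HeddenLevine:Splicing} have shown that $M(K_1, K_2)$ is always a non-L-space, so the entire content of the corollary is to exhibit a taut foliation and a left-ordering unconditionally under the hypothesis that $K_1, K_2$ are positive braid knots of genus at least two.

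The essential input, extracted from the proof of \Cref{thm:main}, is that for every rational $r < g(K_i)+1$ one can build a co-oriented taut foliation $\mathcal{F}_i$ on the exterior $X(K_i)$ whose restriction to $\partial X(K_i)$ is a foliation by parallel simple closed curves of slope $r$. In \Cref{thm:main} this $\mathcal{F}_i$ is capped off by a Reebless foliation of the Dehn surgery solid torus of slope $r$; here I would instead glue two such foliations across the splicing torus.

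Under the splicing identification $\mu_1 \leftrightarrow \lambda_2$, $\lambda_1 \leftrightarrow \mu_2$, a slope $r_1$ curve on $\partial X(K_1)$ is sent to a slope $1/r_1$ curve on $\partial X(K_2)$. Since $g(K_1), g(K_2) \geq 2$, the slope $r_1 = r_2 = 1$ lies in the admissible range $r < g(K_i)+1$ for both exteriors, and slope $1$ is self-dual under the gluing. I would therefore build $\mathcal{F}_1$ and $\mathcal{F}_2$ with boundary slope $1$, arranged so that their restrictions to the splicing torus agree as parameterized foliations by parallel circles, and then glue to obtain a co-oriented foliation $\mathcal{F}$ on $M(K_1, K_2)$. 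Tautness is preserved by concatenating closed transversals for $\mathcal{F}_1$ and $\mathcal{F}_2$ across the splicing torus.

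For left-orderability, positive braid knots are fibered by Stallings' theorem, so both $K_1$ and $K_2$ are fibered; invoking \cite[Corollary 2.3]{BoyerGordonHu:Toroidal} yields left-orderability of $\pi_1(M(K_1,K_2))$. The main technical obstacle is extracting the precise boundary behavior of the foliations from the proof of \Cref{thm:main}: one needs that the branched surface carrying $\mathcal{F}_i$ meets $\partial X(K_i)$ transversely in an embedded train track whose fully carried curves are parallel simple closed curves of slope $1$, and with enough flexibility in the parameterization along the boundary circle that the two foliations can be matched for a $C^0$ gluing. Once this is confirmed, both conclusions of the corollary follow.
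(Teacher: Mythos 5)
Your construction of the taut foliation is essentially the paper's own proof: take the foliations from the proof of \Cref{thm:main} with boundary slope $+1$ on each exterior (admissible since $g(K_i)\geq 2$, and self-dual under the splicing map $\mu_1\leftrightarrow\lambda_2$, $\lambda_1\leftrightarrow\mu_2$), glue along the splicing torus, and certify tautness with a closed curve on the torus meeting the boundary leaves once, pushed slightly into the interior. The one place you diverge is left-orderability: you cite Boyer--Gordon--Hu's result for splicings involving a fibered knot, whereas the paper deduces it directly from its newly constructed taut foliation via the fact that an integer homology sphere admitting a taut foliation has left-orderable fundamental group; your route is valid but slightly undercuts the point of the corollary, which is to recover the Boyer--Gordon--Hu conclusion independently rather than to invoke it.
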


\subsection{Organization}
The techniques required to deduce \Cref{UnknotDetection} from \Cref{thm:main} (a result potentially of independent interest) require no background in taut foliations or constructions thereof, so we present the proof in \Cref{section:Cabling}.
\Cref{section:background} reviews the background material on building taut foliations via branched surfaces, as well as factorizations of the monodromies of positive braid links. \Cref{section:example} demonstrates the general proof strategy of \Cref{thm:main} with an example. \Cref{thm:main} is proved for 4-braids in \Cref{section:Proof4Braids}, and for braids on $n \geq 5$ strands in \Cref{section:proof}. We prove \Cref{thm:examples} in \Cref{section:WeirdExamples}. The proof of \Cref{cor:splicing} appears in \Cref{section:Splicing}.

\subsection{Acknowledgements} We thank John Baldwin, Peter Feller, Josh Greene, Kyle Hayden, Matt Hedden, Jen Hom, Tao Li, Francesco Lin, Marissa Loving, and Liam Watson for helpful conversations. Special thanks to Tye Lidman for asking about cabling obstructions, leading us to prove \Cref{UnknotDetection}. Additional gratitude goes to John and Francesco for their encouragement during various phases of the writing process. We thank Chuck Livingston and Allison H. Moore for maintaining \textit{KnotInfo}, which we utilized at various points throughout this project. Finally, we thank the referee for their careful reading and helpful comments. This work was partially supported by NSF grants DMS--1745583 and DMS--2103325.


\subsection{Conventions} 
\begin{itemize}
\item Throughout, we let $X_K$ denote a knot exterior, and $\lambda, \mu \in H_1(\partial X_K)$ denote the standard Seifert longitude and meridian for a knot.
\item Let $\langle \alpha, \beta \rangle$ denote the algebraic intersection number. Using the sign convention established in \Cref{fig:intersection_number}, we set $\langle \lambda, \mu \rangle = 1$. The slope of an essential simple closed curve $\gamma$ on $T^2 \approx \partial X_K$ is determined by $\displaystyle \frac{\langle \gamma, \lambda \rangle}{\langle \mu, \gamma \rangle}$.
\item We use $\sigma_1, \sigma_2, \ldots, \sigma_{n-1}$ to represent the standard Artin generators for the $n$-stranded braid group. When strands are drawn vertically, they are oriented ``from north to south'', and when they are drawn horizontally, they are oriented ``from west to east''. 
\item The surface $F$ will always be orientable; in all figures of Seifert surfaces, only $F^{+}$ is visible.
\item Blue arcs lie on $F^-$, and pink arcs lie on $F^+$. 
\item When $K$ is a fibered knot in $S^3$, we denote the monodromy of the mapping torus by $\varphi$, and we assume that $\varphi \approx \mathbbm{1}$ on $\partial X_K$. 
\item Unless stated otherwise, if we say that we are ``conjugating'' a braid $\beta$, we are performing a cyclic conjugation thereof. 
\item When possible, we shade different sectors of a branched surface in different colors. 
\end{itemize}

\bigskip

\begin{figure}[h]
\labellist \small
\pinlabel $v$ at 58 246
\pinlabel $w$ at 246 57 
\pinlabel {$\langle v, w \rangle = 1$} at 180 140
\endlabellist
\centering
\includegraphics[scale=.25]{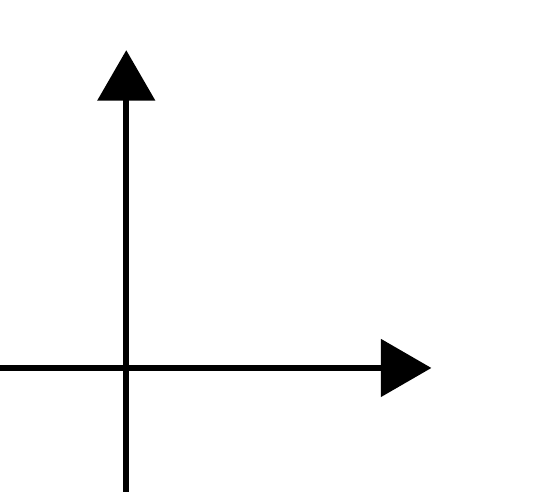}
\caption{Our conventions for computing the intersection number.} 
\label{fig:intersection_number}
\end{figure}

\newpage
\section{Detecting the unknot} \label{section:Cabling}

In this section, we prove \Cref{UnknotDetection}. Throughout, we denote the $(p,q)$--cable of $K$ by $K_{p,q}$, where the cable traverses the longitudinal direction of the knot $p$ times (where $p \geq 2$), and the meridional direction $q$ times. We begin by proving:

\noindent \textbf{\Cref{UnknotDetection}.} 
\textit{$K_{n,\pm 1}$ is braid positive $\iff K$ is the unknot.} 

\begin{proof}
Throughout, we assume that $n \geq 2$. 

The ``if'' direction is straightforward: if $K$ is the unknot, then the $K_{n,\pm 1} \approx T(n,\pm 1)$, a positive braid knot (which is isotopic to the unknot in $S^3$). 

To obtain the converse, we prove the contrapositive: when $g(K)\geq 1$, the $(n,\pm1)$--cable of $K$ is never braid positive. 

Suppose $g(K) \geq 1$. The genus formula for cables (\cite{Schubert}, c.f. \cite{Schultens:SatelliteKnots}) tells us that 
\begin{align*}
g(K_{n,\pm 1})&=ng(K) + g(T(n,\pm 1)) \\
&= ng(K) + 0 \\
&= ng(K)
\end{align*}

Note: since $g(K) \geq 1$ and $n \geq 2$, then $g(K_{n, \pm 1}) \geq 2$. 

If $K_{n, \pm 1}$ is braid positive, then we could apply \Cref{thm:main} to deduce that $S^3_r(K_{n, \pm 1})$ has a taut foliation for all $r < g(K_{n, \pm 1}) +1$. Thus, we have taut foliations in $S^3_r(K_{n,\pm 1})$ for all $r$ such that
\begin{align*}
r < g(K_{n, \pm 1}) +1 &= ng(K)+1
\end{align*}

Since $g(K) \geq 1$, it follows that $\pm n < n +1 \leq ng(K) + 1$. We deduce that $S^3_{\pm n}(K_{n,\pm 1})$ admits a taut foliation. 

On the other hand, the cabling construction tells us that $$S^3_{pq}(K_{p,q}) \approx L(p,q) \# S^3_{q/p}(K)$$ (see \cite[Section 2.4]{Hedden:CablingII} for more details). In this case, we have $p = n$ and $q = \pm 1$, and so $$S^3_{\pm n}(K_{n,\pm 1}) \approx L(n, \pm 1) \# S^3_{\pm 1/n}(K)$$ Since $g(K) \geq 1$, $K$ is non-trivial; the Gordon-Luecke theorem tells us that $S^3_{\pm 1/n}(K) \not \approx S^3$ \cite{GordonLuecke:KnotComplements}. Therefore, $S^3_{\pm n}(K_{n, \pm 1})$ has two non-trivial summands, hence it is a reducible manifold. However, reducible manifolds cannot admit taut foliations (if $M$ admits a taut foliation, the universal cover $\widetilde M$ is homeomorphic to $\R^3$ \cite{GabaiOertel}; in contrast, if $M$ is reducible, then $\widetilde{M} \not \approx \R^3$ \cite{Hatcher:3Manifolds}). This yields a contradiction, and we deduce that $K_{n, \pm 1}$ is not braid positive. 
\end{proof}

The rest of this work is focused on proving \Cref{thm:main}.

\newpage

\section{Background} \label{section:background}

\subsection{Factorizing the monodromy of the mapping torus of positive braid closures} \label{section:monodromy}

It is well known that positive braid knots are fibered knots in $S^3$; see \cite{Stallings:Fibered} or \cite{Misev:Thesis}. In this section, we will prove something stronger: that any positive braid word $\beta$ for a positive braid knot $K$ provides an algorithm for building the fiber surface $F$ for $K$, and for identifying a factorization of the monodromy of the mapping torus. Our proof relies on realizing the surface $F$ as a plumbing of positive Hopf bands. This algorithm is well known to experts -- however, as we heavily rely on the construction throughout, we include a proof here for completeness.

We recall: a positive Hopf band is a fibered link in $S^3$. The fiber surface is an annulus $A$, and the monodromy $\varphi$ is a positive Dehn twist about the core curve $c$. Let $H_1$ and $H_2$ denote two positive Hopf links, with monodromies $\varphi_1$ and $\varphi_2$ respectively. The simplest way to produce a new fibered link from $H_1$ and $H_2$ is via the \textit{plumbing} operation, which requires specifying an arc $\alpha$ on the fiber surface of $H_1$; $\alpha$ is often called the \textit{plumbing arc}. Plumbing $H_2$ onto $H_1$ yields a new fibered link \cite{Stallings:Fibered}. As explained in \cite{Gabai:MurasugiSumII}, the resulting monodromy $\varphi$ is obtained via precomposition: that is, $\varphi \approx \varphi_1 \circ \varphi_2$ (and we read this from right--to--left). Next, consider a sequence of plumbings of Hopf links $H_1, \ldots, H_k$ (where $H_i$ is plumbed onto $H_{i-1}$). The result is a fibered link $L$ with monodromy $\varphi = \varphi_1 \circ \ldots \circ \varphi_k$, where $\varphi_i$ is a positive Dehn twist about the core curve of $H_i$. We emphasize: as will be illustrated explicitly in our proof, the choice of plumbing arc is key.

\subsubsection{Preliminaries}

By classical work of Neuwirth, the fiber surface of a fibered knot $K$ in $S^3$ is unique up to isotopy. 
If $\beta$ is a positive braid on $n$ strands, we claim (and show below) that the fiber surface is just the \textbf{Bennequin surface} $F$, which is built by attaching positively twisted bands to $n$ disks, as specified by a \textit{particular} braid word.

\begin{defn}
Suppose $\beta$ is a braid on $n$ strands. The Bennequin surface $F$ is built from $n$ disks, which we call \textbf{Seifert disks}, and reading from left-to-right, we label them by $S_i$, for $1 \leq i \leq n$. 
\end{defn}

\begin{defn}
A \textbf{presentation} for a braid is a braid word to which we choose not to apply braid relations. 
\end{defn}

For example, suppose $\beta_1 = \omega_1 \sigma_i \sigma_{i+1} \sigma_i \omega_2$ and $\beta_2 = \omega_1 \sigma_{i+1} \sigma_{i} \sigma_{i+1} \omega_2$. We see that $\beta_1$ and $\beta_2$ are related by a single braid relation, hence the represent the same word in the braid group. Nevertheless, we want to consider them as different presentations of the same braid. 

\begin{rmk}
\textup{Throughout this work, when we refer to \underline{the} Bennequin surface $F$, there is an implicit presentation of the braid in mind. We specify presentations of a braid (and cyclic conjugations thereof) only when there may be ambiguity about the assumed presentation. }
\end{rmk}

\begin{defn}
Fix a presentation of a braid. The total number of $\sigma_i$ letters in $\beta$ is $c_i$. 
\end{defn}

\begin{defn}
Fix a presentation of $\beta$. Let $F$ denote the Bennequin surface for $\widehat{\beta}$. 
\begin{itemize}
\item The $j^{th}$ positively twisted band connecting $S_i$ and $S_{i+1}$ is denoted by $\mathbbm{b}_{i,j}$.
\item The $j^{th}$ plumbing arc on $S_i$ is denoted $\alpha_{i,j}$.
\item The $j^{th}$ occurrence of a $\sigma_i$ letter in $\beta$ is denoted $\sigma_{i,j}$.
\end{itemize}
\end{defn}

\begin{defn} \label{defn:column}
The \textbf{$i^{th}$ column of $F$}, denoted $\Gamma_i$, is the union of the Seifert disks $S_{i}$, $S_{i+1}$, and the $($positively twisted$)$ bands $\mathbbm{b}_{i,1}, \ldots, \mathbbm{b}_{i, c_i}$ connecting them.
\end{defn}

\begin{defn}
Fix a presentation of a braid $\beta$. For $s \neq t$, we define the function  $d(j; s, t):$
\begin{align*}
d(j; s, t) = 
	\begin{cases}
	\text{the number of $\sigma_{j+1}$ letters between $\sigma_{j,s}$ and $\sigma_{j,t}$} & \qquad $s < t$ \\
	c_{j+1} - d(j; t, s) & \qquad $t < s$
	\end{cases}
\end{align*}
Analogously, $d_L(j; s,t)$ records the number of $\sigma_{j-1}$ bands between $\mathbbm{b}_{j,s}$ and $\mathbbm{b}_{j,t}$.
\end{defn}

That is, if we fixed a presentation of $\beta$ and looked at the corresponding Bennequin surface, $d(j; s,t)$ counts the number of bands in $\Gamma_{j+1}$ which are between $\mathbbm{b}_{j,s}$ and $\mathbbm{b}_{j,t}$.

\begin{rmk}
\textup{The quantities $d(j; s,t)$ appear frequently throughout our proofs, while the values $d_L(j; s,t)$ will appear sparingly throughout $($hence the asymmetry in the naming conventions$)$.} 
\end{rmk}

\subsubsection{Identifying fiber surfaces and monodromies}
With this setup in place, we can now build the fiber surface $F$ in stages, one column at a time, from left--to--right. We build the first column, and then continue building the surface $F$ by first stabilizing, and then plumbing Hopf bands based on the relative positions of the $\sigma_i$'s with respect to the $\sigma_{i-1}$'s. Throughout, we present our bands so that we only see the ``positive'' side of the surface, \`a-la Rudolph \cite{Rudolph:QPsliceness}.

\begin{lemma} \label{lemma:monodromy}
Let $\beta$ be a positive braid on $n$ strands. The standard Bennequin surface $F$, obtained by attaching positively twisted bands between $n$ disks, is a fiber surface for $\widehat{\beta}$. Moreover, any conjugated presentation of the braid word determines a factorization of the monodromy of $F$. 
\end{lemma}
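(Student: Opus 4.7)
The plan is to induct on the length $m$ of the chosen presentation of $\beta$, realizing the Bennequin surface as an iterated positive Hopf plumbing with one plumbing per letter of $\beta$. Stallings' theorem on Hopf plumbing \cite{Stallings:Fibered} then does the heavy lifting at each inductive step: it guarantees that the plumbed surface is a fiber surface and that the new monodromy is obtained from the previous one by post-composition with a positive Dehn twist about the core of the newly plumbed band. Applying this iteratively across the letters of the presentation delivers both the fiberedness of $F$ and the desired factorization.

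For the base case $m=1$, the Bennequin surface for $\sigma_{i_1}$ is a single positive Hopf band (connecting $S_{i_1}$ and $S_{i_1+1}$) together with $n-2$ disjoint Seifert disks fibering the $n-2$ unknotted components of $\widehat{\sigma_{i_1}}$, and its monodromy is a single positive Dehn twist. For the inductive step, write $\beta = \beta'\cdot\sigma_{i_m}$ and let $F'$ be the Bennequin surface for $\beta'$, a fiber surface by induction with factored monodromy $\varphi_1 \circ \cdots \circ \varphi_{m-1}$. Setting $i=i_m$ and letting $j$ be the index such that $\sigma_{i_m}=\sigma_{i,j}$, I would exhibit an explicit properly embedded arc $\alpha \subset F'$ whose endpoints are the attachment points of the band $\mathbbm{b}_{i,j}$. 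Concretely, when $j\geq 2$, the arc $\alpha$ is constructed to cross $\mathbbm{b}_{i,j-1}$ exactly once while additionally traversing the $d_L(i;j-1,j)$ bands of $\Gamma_{i-1}$ and the $d(i;j-1,j)$ bands of $\Gamma_{i+1}$ that lie between $\sigma_{i,j-1}$ and $\sigma_{i,j}$ in the braid word; when $j=1$, the first $\sigma_i$ band is realized as a Hopf plumbing onto the (possibly disconnected) pair of Seifert disks $S_i$ and $S_{i+1}$, viewed as a trivial fiber surface for two unknots. Stallings' theorem then completes the inductive step and yields $F$ as a fiber surface with monodromy $\varphi_1 \circ \cdots \circ \varphi_m$.

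For the ``moreover'' clause, the key observation is that cyclically conjugating the presentation produces an isotopic braid closure, hence (by Neuwirth's uniqueness of the fiber surface) does not change the Bennequin surface up to isotopy; what it does change is the order in which the bands are treated as Hopf plumbings in the inductive construction. Running the above inductive argument with any cyclic conjugate of the original presentation as the starting point therefore produces a corresponding factorization of the (same) monodromy as a product of positive Dehn twists, one per letter.

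The main obstacle I expect is not the fibration-theoretic content -- Stallings' theorem handles that uniformly -- but rather the explicit topological bookkeeping required to confirm that the prescribed plumbing arc $\alpha$ is in fact properly embedded in $F'$ and that the corresponding Hopf plumbing reproduces the Bennequin surface $F$ exactly, rather than some merely isotopic surface. The functions $d(j;s,t)$ and $d_L(j;s,t)$ are designed precisely for this purpose, counting how many bands of the adjacent columns $\Gamma_{i-1}$ and $\Gamma_{i+1}$ the plumbing arc must navigate in order to reach the correct attachment points of $\mathbbm{b}_{i,j}$. Once this combinatorial verification is in place (most transparently illustrated by drawing columns $\Gamma_{i-1}, \Gamma_i, \Gamma_{i+1}$ side by side and tracking $\alpha$ explicitly), the remainder of the argument is a formal application of the Hopf-plumbing / monodromy machinery.
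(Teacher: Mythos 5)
Your overall strategy (realize $F$ as an iterated positive Hopf plumbing and invoke Stallings/Gabai to read off the monodromy) is the same as the paper's, but the way you set up the induction has a genuine gap. You induct one letter at a time, so the intermediate objects are Bennequin surfaces of prefixes $\widehat{\beta'}$, and your base case is $\widehat{\sigma_{i_1}}$. These closures are in general \emph{split links} with disconnected Seifert surfaces, and split links are not fibered; in particular there is no properly embedded arc in the disjoint union $S_i \sqcup S_{i+1}$ joining the two components, so Stallings' plumbing theorem cannot be applied to ``plumb onto a trivial fiber surface for two unknots.'' The base case is also wrong on its own terms: two disks joined by one positively twisted band is a disk (the fiber of an unknot component), not a positive Hopf band, and contributes no Dehn twist. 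Relatedly, your count of twists is off: you claim one plumbing, hence one positive Dehn twist, per letter, giving $\mathcal{C}$ twists, but the correct number is $\sum_i (c_i-1) = \mathcal{C}-n+1 = b_1(F)$. The first occurrence of each generator $\sigma_i$ attaches a new disk by a single band, which is a (Markov) stabilization producing a surface isotopic to the previous one --- it changes neither the fibration nor the monodromy and contributes no twist.

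This is precisely why the paper's proof is organized column-by-column rather than letter-by-letter: one first builds $\Gamma_1$ as $T(2,c_1)$ by $c_1-1$ plumbings starting from the unknot's disk, then for each subsequent column \emph{stabilizes} (introducing the new Seifert disk and the first band of that column, keeping the surface connected and the link fibered at every stage) before plumbing the remaining $c_i-1$ Hopf bands along arcs in $S_i$ whose endpoints are positioned using $d_L(i;\cdot,\cdot)$. Your argument can be repaired by adopting this interleaving of stabilizations and plumbings (or, equivalently, by choosing an order on the letters in which each band either joins a new disk to the connected partial surface, contributing no twist, or joins two boundary points of the already-connected surface, contributing one twist), but as written the induction passes through non-fibered links and overcounts the Dehn twist factors. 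Your treatment of the ``moreover'' clause --- rerunning the construction on a cyclic conjugate and appealing to uniqueness of the fiber surface --- is fine and agrees with the paper.
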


%





\begin{proof}

We prove \Cref{lemma:monodromy} alongside an example, where we let $\beta$ be the braid in (\ref{ex:braid}) below:
\begin{align} \label{ex:braid}
\beta \approx \sigma_1 \sigma_2^2 \sigma_1^2 \sigma_2
\end{align}

Our goal is to explicitly read off a Hopf plumbing sequence from a presentation for a braid. 
Given any braid, we can always cyclically conjugate it so that it begins with a $\sigma_1$ letter. So, we fix a presentation of $\beta$ that begins with a $\sigma_1$ letter. \\

\tcbox[size=fbox, colback=gray!30]{Step 0: Build the unknot as a braid closure.}

First, we build the fiber surface for the unknot by attaching a single positively twisted band between two disks, as in \Cref{fig:HopfPlumbing_F1} (left). This presents the unknot as the closure of the braid $\sigma_1 \in \mathcal{B}_2$, which is the braid group on two strands. \\

\begin{figure}[h]\center
\labellist \tiny
\pinlabel {isotopy} at  270 120
\pinlabel {plumb a} at  610 122
\pinlabel {Hopf band} at  610 96
\endlabellist
\includegraphics[scale=.45]{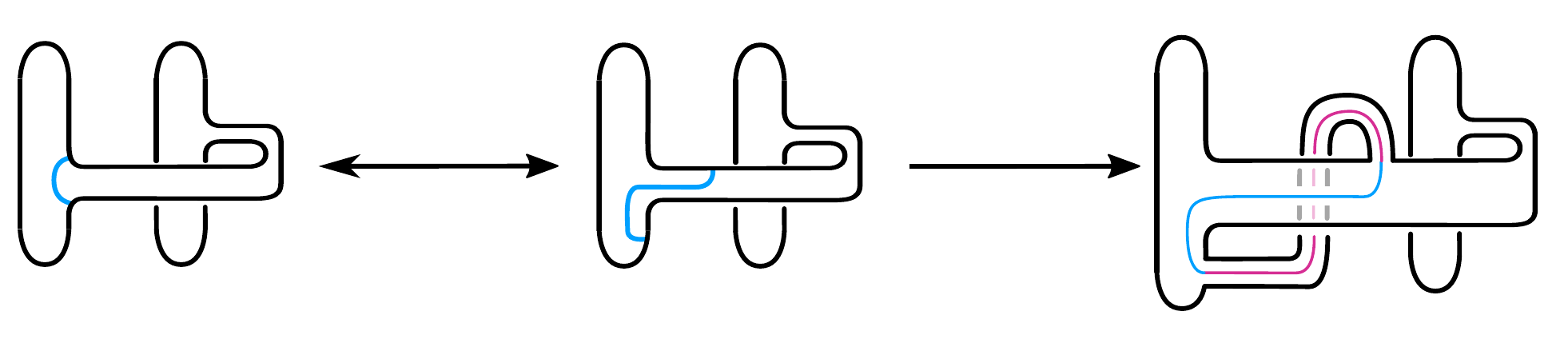} 
\caption{Plumbing a positive Hopf band to the fiber surface for the unknot. The monodromy of the result is a positive Dehn twist about $\gamma$, which is the union of the blue and pink arcs.} 
\label{fig:HopfPlumbing_F1}
\end{figure}

\newpage
\tcbox[size=fbox, colback=gray!30]{Step 1: Build the first column, $\Gamma_1$.} 

We build the first column of the fiber surface $F$: recall that $c_1$ is the number of $\sigma_1$ letters in $\beta$. Build the torus link $T(2, c_1)$ by plumbing $c_1-1$ Hopf bands in sequence. Each Hopf band is plumbed along an arc which is isotopic to the co-core of a band. See \Cref{fig:HopfPlumbing_F1} (middle/right) to see the plumbing of the first Hopf band. An isotopy producing $T(2, 2)$ is seen in \Cref{fig:HopfPlumbing_F2}. 

\begin{figure}[h]\center
\labellist \tiny
\pinlabel {isotopy} at  378 210
\endlabellist
\includegraphics[scale=.45]{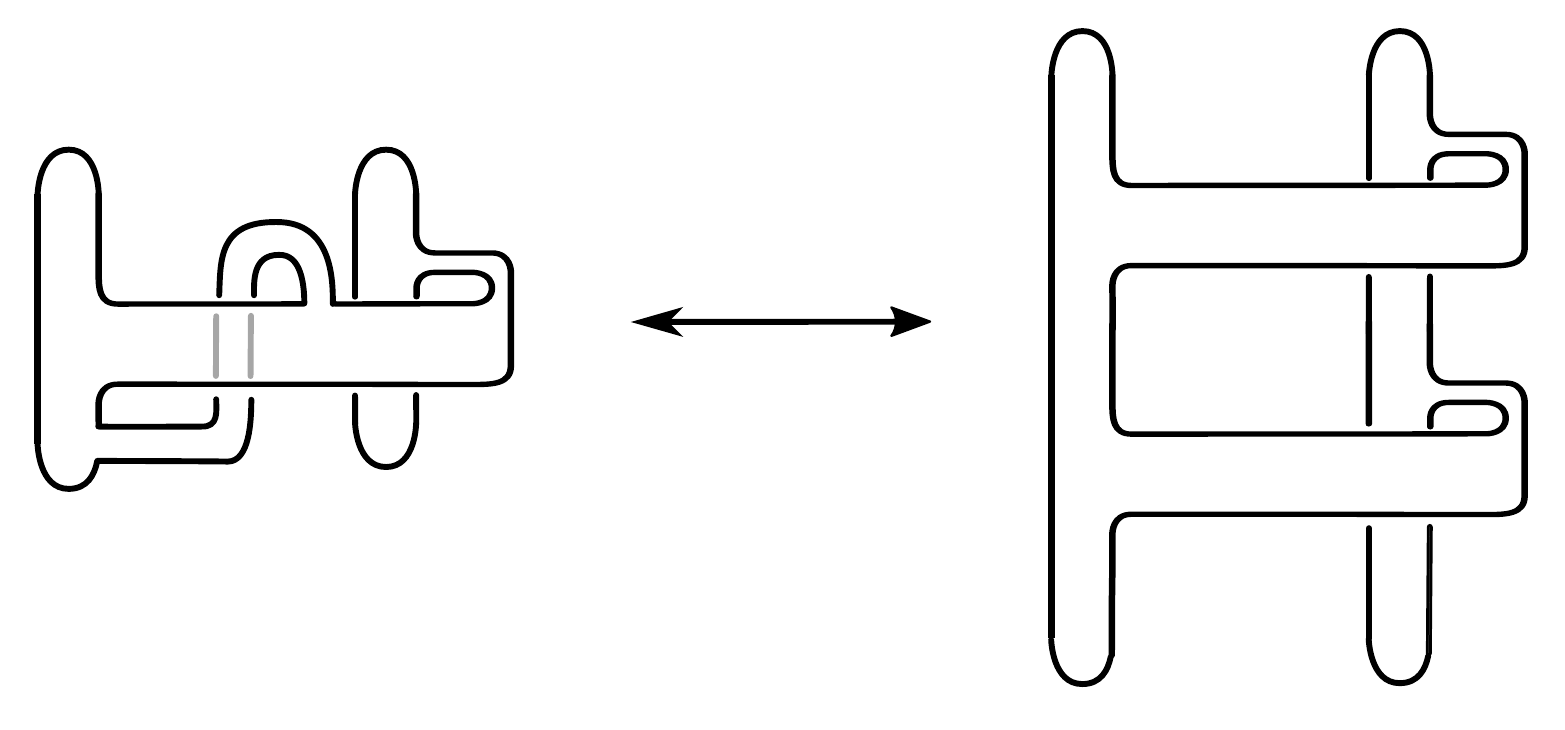} 
\caption{The last frame of \Cref{fig:HopfPlumbing_F1} and the standard fiber surface for the link $T(2,2)$ are related by an isotopy.} 
\label{fig:HopfPlumbing_F2}
\end{figure}

For the braid $\beta$ in (\ref{ex:braid}), $c_1=3$, so we plumb another Hopf band to produce $T(2,3)$. \Cref{fig:HopfPlumbing_F3} contains the result of plumbings and isotopies. 

\begin{figure}[h]\center
\labellist \tiny
\pinlabel {plumb $+$} at  260 192
\pinlabel {isotopy} at  260 165
\endlabellist
\includegraphics[scale=.45]{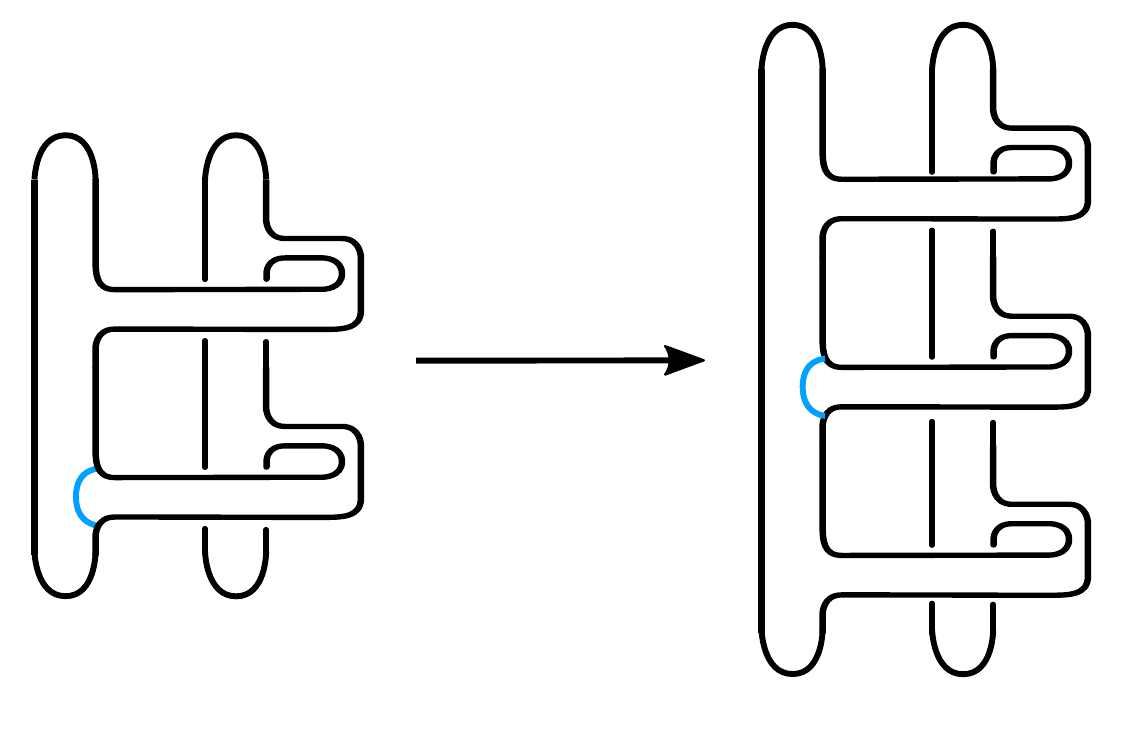} 
\caption{Plumbing along the co-core of the lower band (i.e. along the blue arc) yields, after an isotopy, the fiber surface for $T(2,3)$.}
\label{fig:HopfPlumbing_F3}
\end{figure}

\tcbox[size=fbox, colback=gray!30]{Step 2: Stabilize and plumb onto $\Gamma_1$ to produce $\Gamma_1 \cup \Gamma_2$} 

Next, we stabilize $\Gamma_1$, such that the stabilization occurs at the site of the first $\sigma_2$ in $\beta$, as in \Cref{fig:HopfPlumbing_F4} (middle/left).  The post-stabilization surface is isotopic to the fiber surface for $T(2, c_1)$; however, this surface is now realized as the Bennequin surface of a braid on three (rather than two) strands. Next, we sequentially plumb $c_2 - 1$ Hopf bands, where all the plumbing arcs lie in the Seifert disk $S_2$. To do this, we need to identify plumbing arcs in $S_2$.

\begin{figure}[h]\center
\labellist \tiny
\pinlabel {isotopy} at  272 260
\pinlabel {stabilize} at  615 261
\endlabellist
\includegraphics[scale=.42]{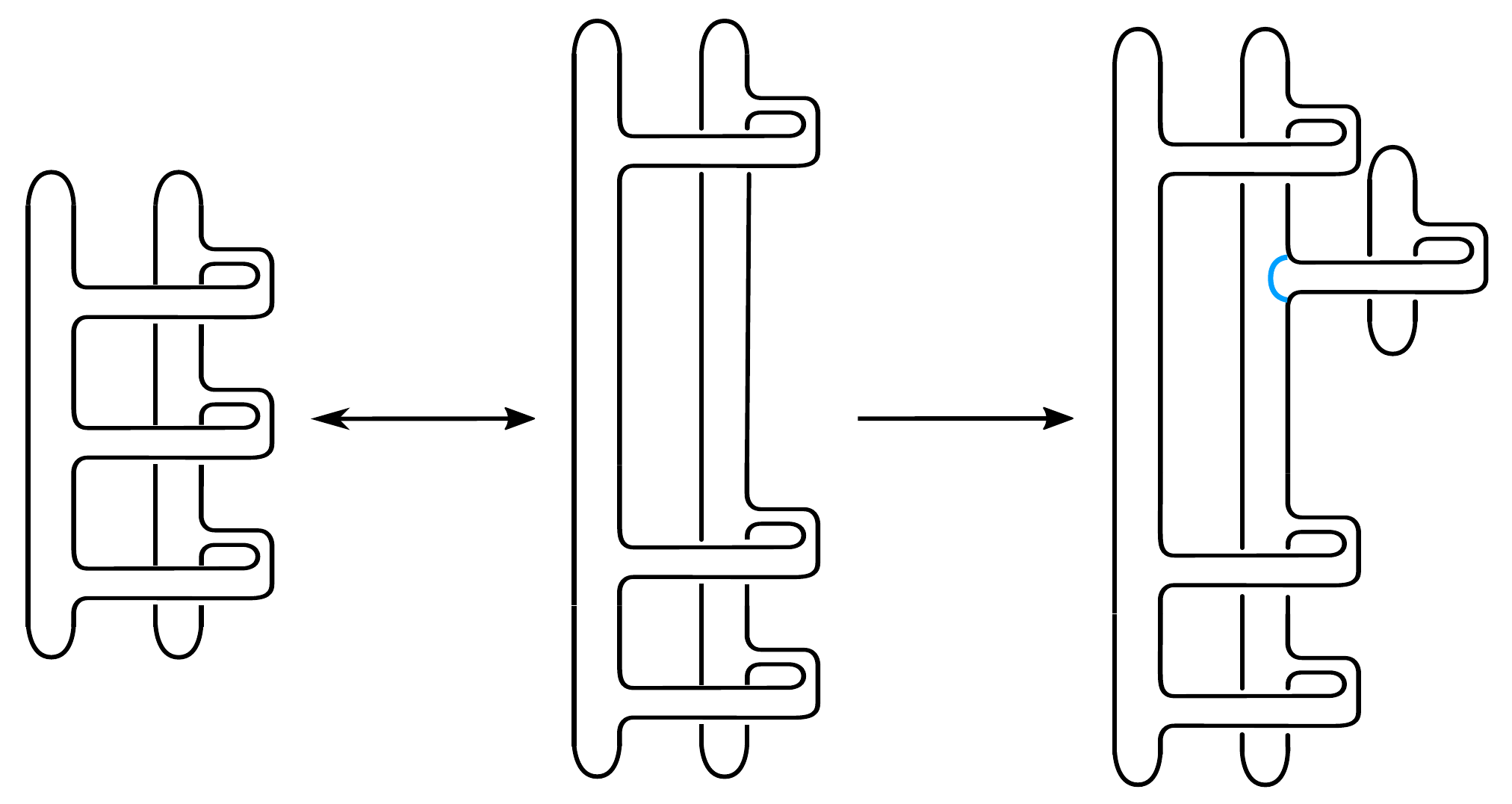} 
\caption{We isotope and stabilize our surface such that the first $\sigma_2$ letter appears in the correct location relative to the $\sigma_1$ letters. The blue arc in the rightmost frame is the first plumbing arc in $S_2$, and we will plumb a postive Hopf band along this arc.} 
\label{fig:HopfPlumbing_F4}
\end{figure}

We already identified the first occurrence of $\sigma_2$ in $\beta$ (this determined the site of the stabilization). Now, identify the second occurrence of $\sigma_2$; it occurs after $d_L(2; 1,2)$ occurrences of $\sigma_1$. Indeed, in Example \ref{ex:braid}, $d_L(2; 1,2) = 0$, which corresponds to the fact that $\mathbbm{b}_{2,1}$ and $\mathbbm{b}_{2,2}$ are consecutive and both fall between $\mathbbm{b}_{1,1}$ and $\mathbbm{b}_{1,2}$. 

We now construct the plumbing arc $\alpha_{2,1}$. The plumbing arc will be a simple arc in $S_2$, so it suffices to determine the locations of the upper and lower endpoints of $\alpha_{2,1}$. We denote these endpoints by $u_{\alpha_{2,1}}$ and $\ell_{\alpha_{2,1}}$, respectively. The upper endpoint $u_{\alpha_{2,1}}$ will lie ``above'' the attachment site of the previous $\sigma_2$ band, and the lower endpoint $\ell_{\alpha_{2,1}}$ will lie ``below'' the right attachments sites of the $d_L(2; 1,2)$ bands in $\Gamma_1$ between $\sigma_{2,1}$ and $\sigma_{2,2}$. Connecting $u_{\alpha_{2,1}}$ and $\ell_{\alpha_{2,1}}$ via a simple geodesic arc in $S_2$ yields $\alpha_{2,1}$. For our example braid in (\ref{ex:braid}), $d_L(2; 1,2)=0$, so $\alpha_{2,1}$ is isotopic to the co-core of $\mathbbm{b}_{2,1}$; see \Cref{fig:HopfPlumbing_F4} (right).

\begin{figure}[h]\center
\labellist \tiny
\pinlabel {isotopy} at  360 315
\pinlabel {plumb} at  810 318
\endlabellist
\includegraphics[scale=.32]{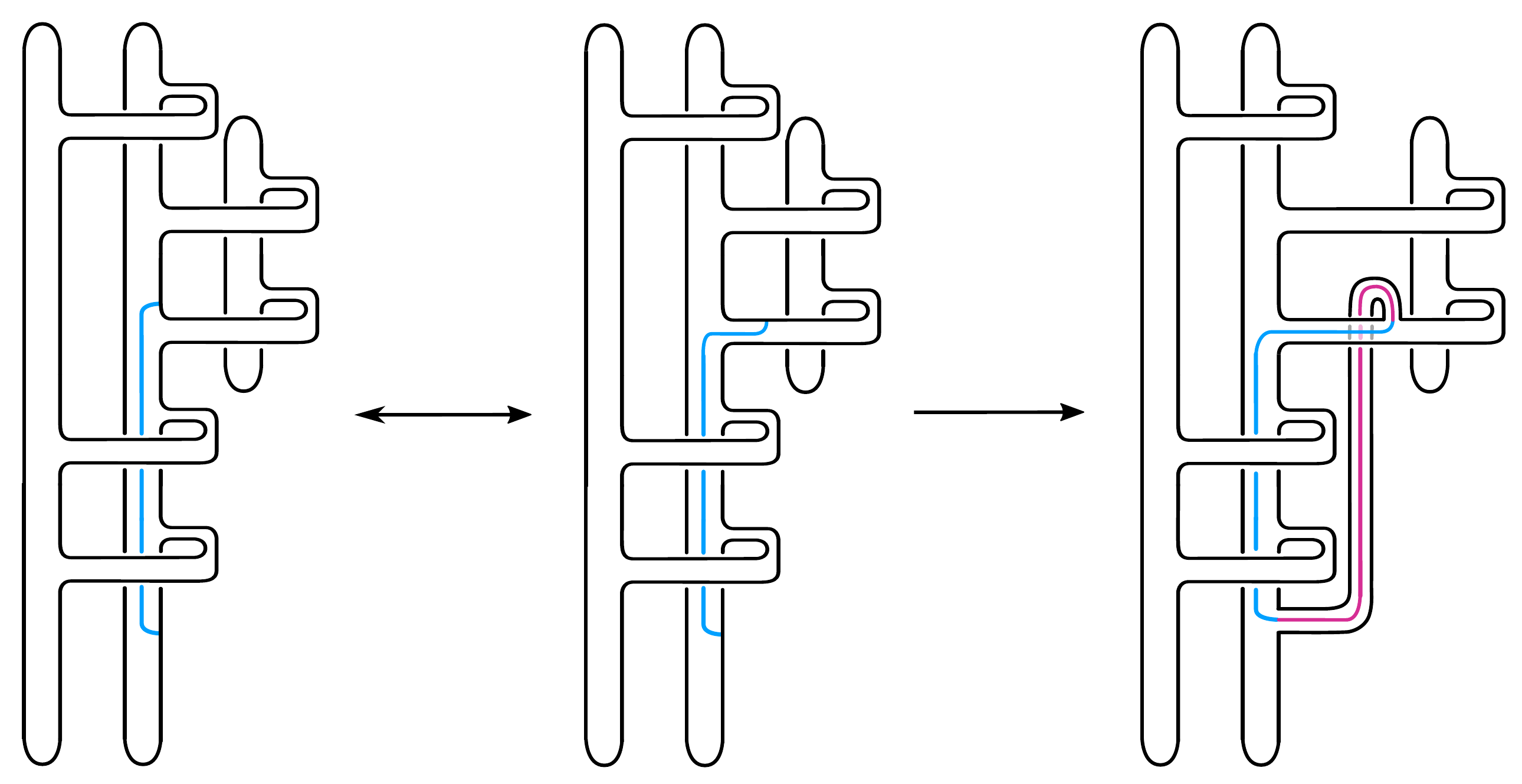} 
\caption{Plumbing along the blue arc in the last frame of \Cref{fig:HopfPlumbing_F4} yields the surface in the left frame. After isotopy, we plumb along the blue arc in the middle frame to produce the rightmost surface.}
\label{fig:HopfPlumbing_F5}
\end{figure}

We repeat this process and exhaust all occurrences of $\sigma_2$ in $\beta$. More precisely, to construct $\alpha_{2,t}$, for $1 \leq t \leq c_2-1$, we: compute $d_L(2; t, t+1)$, place $u_{\alpha_{2,t}}$ ``above'' the left attachment site of $\mathbbm{b}_{2,t}$, place $\ell_{\alpha_{2,t}}$ ``below'' the right attachment sites of the $d_L(2; t, t+1)$ bands to $S_2$, and then draw a simple arc in $S_2$ between them. The result of this process is a realization of the Bennequin surface in $\Gamma_1 \cup \Gamma_2$ as a Hopf plumbing. \Cref{fig:HopfPlumbing_F5} shows this procedure for our braid in (\ref{ex:braid}), where $d_L(2; 2,3) = 2$.

\tcbox[size=fbox, colback=gray!30]{Step 3: Exhaust all $\sigma_i$'s in $\beta$ to build $F$.} 

Repeating this procedure will yield $F$: for each $3 \leq i \leq n-1$, stabilize $S_i$ such that the stabilization occurs at the location of the first occurrence of $\sigma_i$ in $\beta$. Then, use the braid word to identify plumbing arcs in the Seifert disk $S_i$, and then plumb $c_i - 1$ Hopf bands onto the stabilized surface. As our example in (\ref{ex:braid}) is a 3-braid, we are done. \hfill \fbox{End of procedure.} \\

This procedure allows us to read off an explicit sequence of Hopf plumbings for the Bennequin surface $F$ from a presentation of the braid word $\beta$. We deduce that $F$ is a fiber surface for $\widehat{\beta}$. Moreover, each pair of consecutive bands between adjacent Seifert disks specifies a simple closed curve (for simplicity we have only displayed two such curve in these figures; they are in Figures \ref{fig:HopfPlumbing_F1} and \ref{fig:HopfPlumbing_F5}). The knot exterior has the structure of a mapping torus, and the monodromy of the fibration is a sequence of positive Dehn twists about these simple closed curves obtained from the plumbings: from ``bottom--to--top'', we Dehn twist about the simple closed curves in $\Gamma_{n-1}$, then $\Gamma_{n-2}$, then $\Gamma_{n-3}$, until we reach the first column $\Gamma_1$. This yields both the fiber surface $F$ for $\widehat{\beta}$, and an explicit factorization of the monodromy of $F$. 
\end{proof}

\begin{rmk}
\textup{This procedure also works for alternating and homogeneous braids. For these braids, the plumbing involves positive and negative Hopf bands, in accordance with the sign of the $\sigma_i$ in $\beta$.}
\end{rmk}

\begin{figure}[h]\center
\includegraphics[scale=.25]{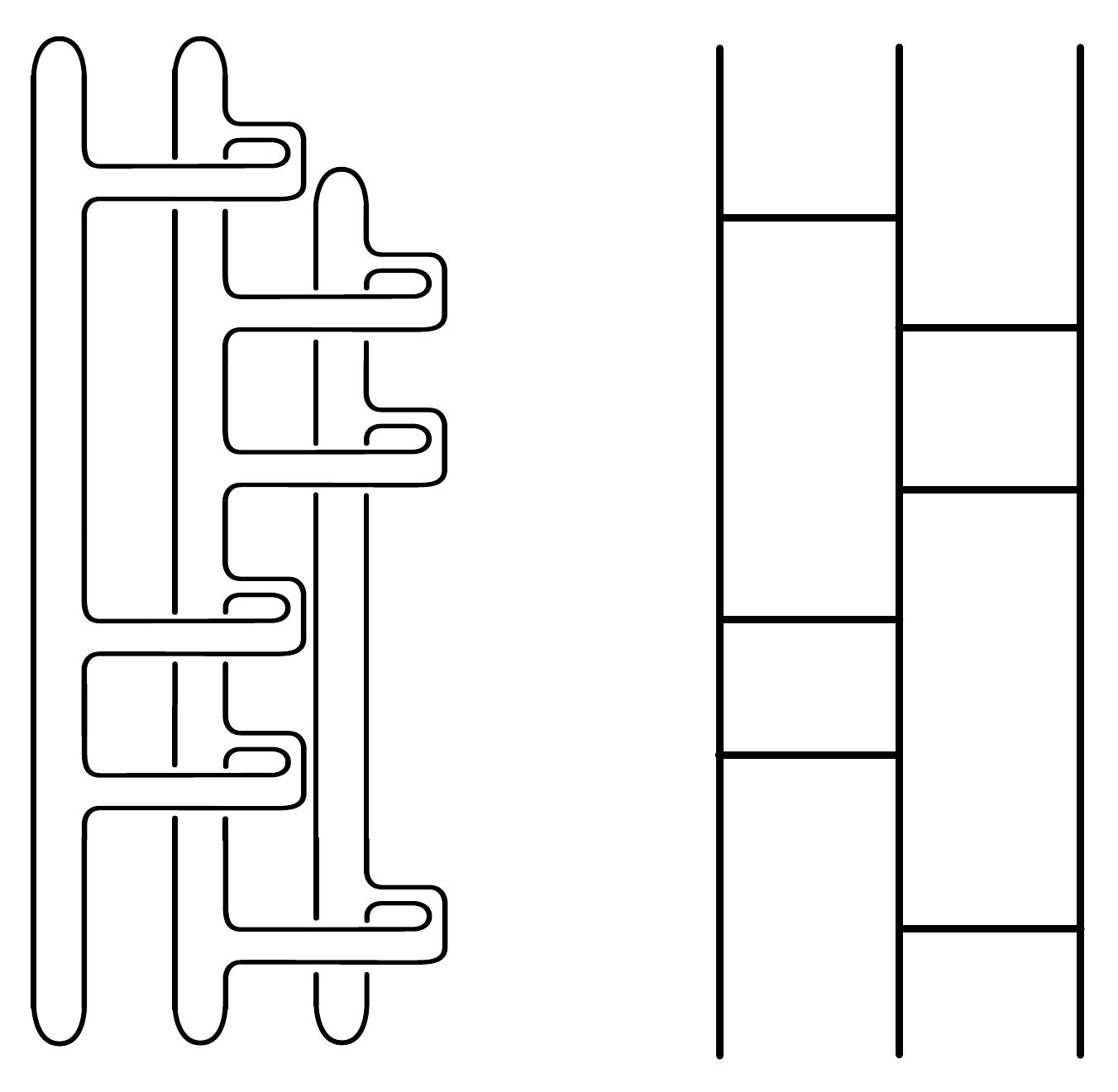} 
\caption{Isotoping the rightmost surface in \Cref{fig:HopfPlumbing_F5} yields the surface on the left; it is the fiber surface for the closure of $\sigma_1 \sigma_2^2 \sigma_1^2 \sigma_2$. The corresponding brick diagram is on the right.}
\label{fig:BrickDiagram}
\end{figure}

\Cref{cor:image} follows from the proof of \Cref{lemma:monodromy}.

\begin{cor} \label{cor:image}
Fix a presentation of a positive $n$-braid $\beta$. 
Suppose $K \approx \widehat{\beta}$  with fiber surface $F$ and monodromy $\varphi$. 
For any plumbing arc $\alpha_{i,j}$, $1 \leq i \leq c_{n-1}$, and $1 \leq j \leq c_i-1$, we completely understand $\varphi(\alpha_{i,j})$: viewed as an arc on $F$, $\varphi(\alpha_{i,j})$ has the same endpoints $u_{\alpha_{i,j}}, \ell_{\alpha_{i,j}}$ as $\alpha_{i,j}$, and travels from the former to the latter by passing through $\mathbbm{b}_{i,j}, S_{i+1}, \mathbbm{b}_{i, j+1}$ and $S_{i}$.
\hfill $\Box$
\end{cor}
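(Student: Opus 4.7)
The plan is to read the statement off the iterated plumbing construction carried out in the proof of Lemma \ref{lemma:monodromy}. That proof writes the monodromy as the ordered composition
\[
\varphi \;=\; T_{\gamma_{1,1}} \circ T_{\gamma_{1,2}} \circ \cdots \circ T_{\gamma_{n-1,\,c_{n-1}-1}},
\]
read right-to-left, where $\gamma_{i,j}$ is the core curve of the Hopf band plumbed along $\alpha_{i,j}$. By construction, $\gamma_{i,j}$ decomposes on $F$ as $\alpha_{i,j} \cup \delta_{i,j}$, a union of two arcs sharing the endpoints $u_{\alpha_{i,j}}$ and $\ell_{\alpha_{i,j}}$: the plumbing arc $\alpha_{i,j}$ lies in $S_i$, and by direct inspection of the Hopf plumbing picture, its complementary half $\delta_{i,j}$ travels from $u_{\alpha_{i,j}}$ through $\mathbbm{b}_{i,j}$, across $S_{i+1}$, through $\mathbbm{b}_{i,j+1}$, and back into $S_i$ to end at $\ell_{\alpha_{i,j}}$. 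The corollary thus reduces to showing $\varphi(\alpha_{i,j})$ is isotopic rel endpoints to $\delta_{i,j}$.

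The main geometric input is the standard Hopf plumbing fact: the Dehn twist about the core of a freshly plumbed Hopf band sends the plumbing arc to its complementary half of the core. Applied here, this yields $T_{\gamma_{i,j}}(\alpha_{i,j}) \simeq \delta_{i,j}$ rel endpoints, so the remainder of the argument is to check that no other factor in the factorization of $\varphi$ disturbs this image. I would split this check into two cases based on whether a factor $T_{\gamma_{i',j'}}$ lies to the \emph{right} of $T_{\gamma_{i,j}}$ (applied to $\alpha_{i,j}$ itself, \emph{before} $T_{\gamma_{i,j}}$) or to the \emph{left} (applied to $\delta_{i,j}$, \emph{after} $T_{\gamma_{i,j}}$). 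For the former, the curves $\gamma_{i',j'}$ correspond to plumbings performed later than $(i,j)$: either $i' > i$, in which case $\gamma_{i',j'}$ is supported in Seifert disks and bands disjoint from $S_i \supset \alpha_{i,j}$, or $i' = i$ with $j' > j$, in which case the explicit placement of the plumbing arcs from Lemma \ref{lemma:monodromy} forces the $S_i$-portion of $\gamma_{i,j'}$ to lie strictly ``below'' $\alpha_{i,j}$; so $T_{\gamma_{i',j'}}$ fixes $\alpha_{i,j}$ up to isotopy. For the latter, the only curves $\gamma_{i',j'}$ whose support can overlap $\delta_{i,j}$ are those with $i' \in \{i-1, i\}$, and the combinatorics of band positions encoded by $d$ and $d_L$ pin down the relative placements tightly enough that any intersection is removable by an isotopy rel endpoints.

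The main obstacle is this second check---carefully enumerating the possible intersections of $\delta_{i,j}$ with each earlier-plumbed $\gamma_{i',j'}$ and verifying removability case by case. However, no new ideas are required beyond the bookkeeping already present in the proof of Lemma \ref{lemma:monodromy}, which is why the corollary is best presented as a direct consequence of that proof rather than as an independent argument.
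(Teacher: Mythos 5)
Your proposal is correct and follows the same route the paper intends: the corollary carries no separate proof precisely because it is meant to be read off the Hopf-plumbing factorization of $\varphi$ established in \Cref{lemma:monodromy}, via the local computation that the twist about the freshly plumbed core sends $\alpha_{i,j}$ to the arc running through $\mathbbm{b}_{i,j}$, $S_{i+1}$, $\mathbbm{b}_{i,j+1}$, $S_i$, together with the observation that $\alpha_{i,j}$ is disjoint from the later-plumbed core curves and its image is disjoint from the earlier-plumbed ones. The intersection bookkeeping you defer at the end is exactly what the paper's explicit placement of the plumbing arcs and the spinal-isotopy/``enclosing'' conventions are designed to guarantee, so no new idea is missing.
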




To simplify our diagrams and figures, we will often present positive braids via \textbf{brick diagrams}, where we draw a single vertical line for each strand of the braid, and a horizontal line between adjacent vertical lines, each representing a positive band attached between disks. \Cref{fig:BrickDiagram} presents a brick diagram for the braid in (\ref{ex:braid}). Precedent for exhibiting positive braids in this manner can be found in \cite{BaaderLewarkLiechti, Baader:MaximalSignature}; they, too, use these diagrams as a convenient way to encode a braid and ``see'' the associated Bennequin surface.

\subsection{Branched surfaces and taut foliations} \label{background:foliations} Our technique for constructing taut foliations involves \textit{branched surfaces}. 

\begin{defn}
A \textbf{spine} for a branched surface is a co--oriented 2--complex in a 3--manifold, locally modeled by \Cref{fig:spine}.
\end{defn}

\begin{figure}[h]\center
\includegraphics[scale=.25]{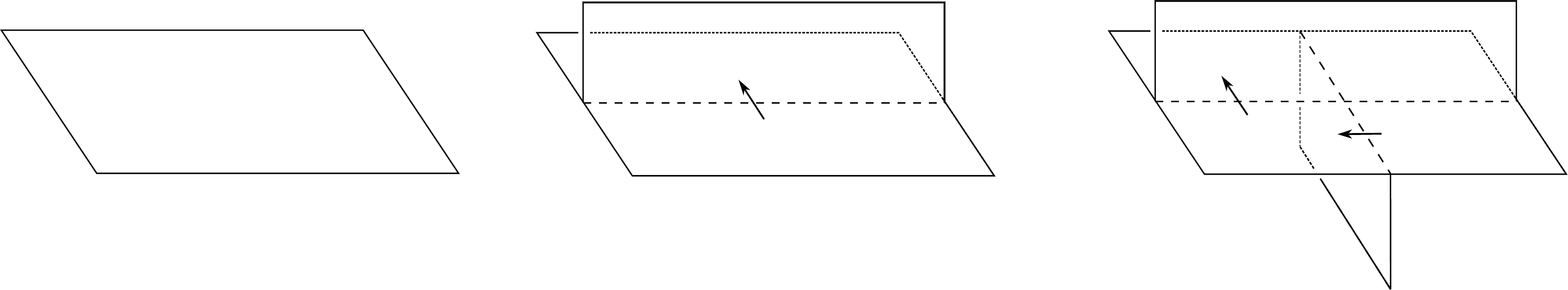} 
\caption[The local model for the spine of a branched surface.]{The local model for the spine of a branched surface.} 
\label{fig:spine}
\end{figure}

\begin{defn}
A \textbf{branched surface $B$} in a 3--manifold $M$ is obtained by providing co--orientations (i.e. smoothing directions) to a spine, and is locally modeled as in \Cref{fig:branched_surface}.
\end{defn}

\begin{figure}[h]\center
\includegraphics[scale=.25]{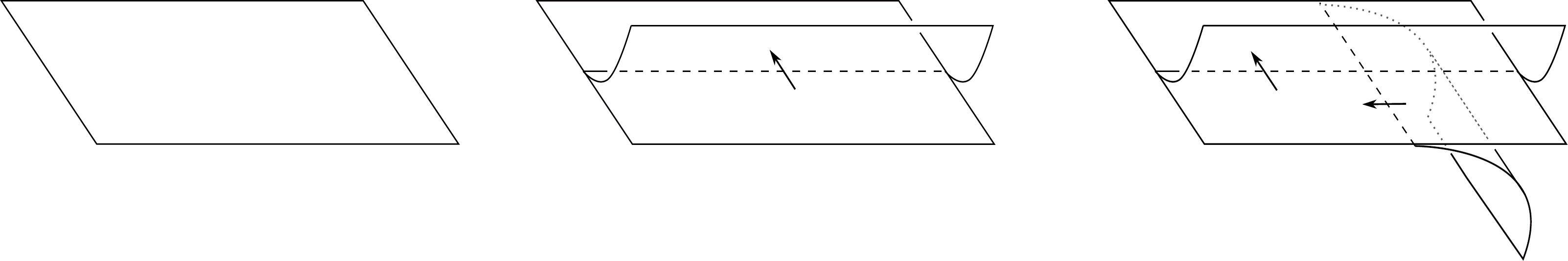} 
\caption{The local model for a branched surface.} 
\label{fig:branched_surface}
\end{figure}

Branched surfaces are used to build essential laminations and taut foliations in 3--manifolds. In \cite{TaoLi:SinkDisk, TaoLi:BoundarySinkDisk}, Li showed that \textbf{laminar branched surfaces} always carry essential laminations: 

\begin{thm}[Theorem 2.5 in \cite{TaoLi:BoundarySinkDisk}]\label{thm:taolisinkdisk}
Let $M$ be an orientable, irreducible 3--manifold, such that the boundary $\partial M$ is an incompressible torus. Suppose $B$ is a properly embedded branched surface in $M$ such that the following properties hold:

\begin{enumerate}
\item[(1a)] $\partial_h(N(B))$ is incompressible and $\partial$-incompressible in $M - \text{int}(N(B))$
\item[(1b)] There is no monogon in $M - \text{int}(N(B))$
\item[(1c)] No component of $\partial_h N(B)$ is a sphere or a disk properly embedded in $M$
\item[(2)] $M - \text{int}(N(B))$ is irreducible and $\partial M - \text{int}(N(B))$ is incompressible in $M - \text{int}(N(B))$
\item[(3)] $B$ contains no Reeb branched surface $($see \cite{GabaiOertel} for more details$)$
\item[(4)] $B$ does not contain any sink disks or half sink disks.
\end{enumerate}
Then $B$ is a \textbf{laminar branched surface}. Let $r$ be any slope in $\Q \cup \{\infty\}$ fully carried by the boundary train track $\tau_B = B \cap \partial X_K$. If $B$ does not carry a torus that bounds a solid torus in $M(r)$ $($the manifold obtained by $r$-framed Dehn filling$)$ then $B$ carries an essential lamination in $M$ meeting the boundary torus in parallel simple closed curves of slope $r$. Moreover, $M(r)$ contains an essential lamination. 
\end{thm}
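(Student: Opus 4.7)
The plan is to construct an essential lamination carried by $B$ by performing a controlled sequence of splittings along the branch locus, and then verify essentiality via the Gabai--Oertel criteria.

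First I would set up the framework of carrying weights. A lamination carried by $B$ corresponds to a nonnegative weight function on the sectors of $B$ satisfying the branch (switch) equations at every arc of the branch locus, and any such weight system can be realized geometrically inside the fibered neighborhood $N(B)$. The goal is to produce a weight system whose support yields a non-trivial lamination whose leaves are non-simply-connected and whose complement has no Reeb components. Rather than constructing weights directly, I would build the lamination via the standard iterated splitting procedure: successively split $B$ along arcs in the branch locus, producing a nested sequence of branched surfaces $B = B_0 \succ B_1 \succ B_2 \succ \cdots$ all carrying a common set of laminations, and take an inverse limit to obtain a measured lamination $\mathcal{L}$.

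The crucial step is to argue that the splitting sequence can be chosen so that no region of the complement collapses into a sink disk obstruction, and so that the diameters of complementary regions (measured transversely) tend to zero. Here the hypothesis (4), the absence of sink disks and half sink disks, is exactly what guarantees that every disk region of $M - \text{int}(N(B))$ has at least one branch arc on its boundary across which we are allowed to split without creating a new monogon; this is the technical heart of \cite{TaoLi:SinkDisk, TaoLi:BoundarySinkDisk}. I would formalize this as a combinatorial lemma saying that the sink-disk-free condition produces an infinite compatible splitting sequence, and therefore a genuine lamination $\mathcal{L} \subset N(B)$.

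Next I would verify that $\mathcal{L}$ is essential in the Gabai--Oertel sense. Hypothesis (1c) rules out sphere or disk leaves; hypothesis (1a) combined with (2) ensures that the complementary regions of $\mathcal{L}$ are irreducible and have incompressible, end-incompressible boundary, since these properties descend from the horizontal boundary $\partial_h N(B)$ and the ambient manifold $M$; hypothesis (1b) rules out monogons in the complement of $\mathcal{L}$; and hypothesis (3) rules out Reeb components, again because a Reeb component in $\mathcal{L}$ would force a Reeb sub-branched-surface in $B$. For the boundary slope statement, the rational slope $r$ fully carried by $\tau_B = B \cap \partial X_K$ corresponds to an invariant weighting on the boundary train track, which extends (using the branch equations) to a weighting on $B$ itself; the resulting lamination meets $\partial M$ in parallel curves of slope $r$. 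Finally, to conclude that $M(r)$ contains an essential lamination, I would Dehn fill by the standard trick of capping off the boundary leaves with meridian disks of the filling solid torus --- this preserves essentiality provided $B$ does not carry a torus bounding a solid torus in $M(r)$, which is exactly the stated exclusion.

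The main obstacle I expect is the combinatorial argument in the second paragraph: showing that the sink-disk-free hypothesis actually suffices to terminate or control the splitting procedure in a way that avoids producing degenerate laminations (for instance, ones concentrated on a single leaf or whose complementary regions accumulate). This is where Li's innovation over Gabai--Oertel lies, and a careful bookkeeping of which branch arcs are split at each stage, together with a compactness argument using the space of measured laminations carried by $B$, will be required.
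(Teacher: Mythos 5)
This statement is not proved in the paper at all: it is quoted verbatim as Theorem 2.5 of \cite{TaoLi:BoundarySinkDisk} and used as a black box, so there is no internal argument to compare yours against. Judged on its own terms, your outline correctly reproduces the architecture of Li's proof (and of the Gabai--Oertel machinery it extends): construct the lamination by an iterated splitting of $B$ along its branch locus, verify essentiality of the result using hypotheses (1a)--(1c), (2), (3) via the Gabai--Oertel criteria, realize the slope $r$ by a weighting of the boundary train track, and cap off after Dehn filling, using the no-torus-bounding-a-solid-torus hypothesis to preserve essentiality.

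However, as a proof rather than a road map, the proposal has a genuine gap that you yourself flag in the last paragraph: the assertion that sink-disk-freeness ``produces an infinite compatible splitting sequence, and therefore a genuine lamination'' is the entire content of Li's contribution, and you neither prove it nor reduce it to something standard. Two specific points need repair. First, a sink disk is a condition on \emph{branch sectors} of $B$ (a disk sector with all branch directions pointing inward), not on complementary regions of $N(B)$, so the claim that hypothesis (4) guarantees ``every disk region of $M - \mathrm{int}(N(B))$ has a branch arc across which we may split'' misstates what the hypothesis controls; the actual role of (4) is to prevent the splitting process from producing a disk of contact (equivalently, to ensure $B$ fully carries a lamination with no isolated disk leaves). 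Second, the inverse-limit lamination need not be measured, and a rational boundary slope does not automatically extend to a positive solution of the switch equations on all of $B$; Li handles the boundary behavior by a separate argument rather than by globally solving the branch equations. Without these, the proposal is an accurate summary of where the difficulty lies rather than a proof of the theorem.
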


The key condition in this list is having no sink disks, or being \textbf{sink disk free}:

\begin{defn}
A \textbf{sink disk} \cite{TaoLi:SinkDisk} is a branch sector $S$ of $B$ such that:
\begin{enumerate}
\item $S$ is homeomorphic to a disk, 
\item $\partial S \cap \partial M = \varnothing$, and
\item the branch direction of every smooth arc or curve in its boundary points into the disk.
\end{enumerate}
If $S$ does meet $\partial M$, and conditions (1) and (3) above hold, then $S$ is called a \textbf{half sink disk} \cite{TaoLi:BoundarySinkDisk}. We remark that $\partial S \cap \partial M$ may not be connected. When a branched surface $B$ contains no sink disk or half sink disk, we say $B$ is \textbf{sink disk free}. See \Cref{fig:sinkdiskdefn}.
\end{defn}

\begin{figure}[h]\center
\includegraphics[scale=.5]{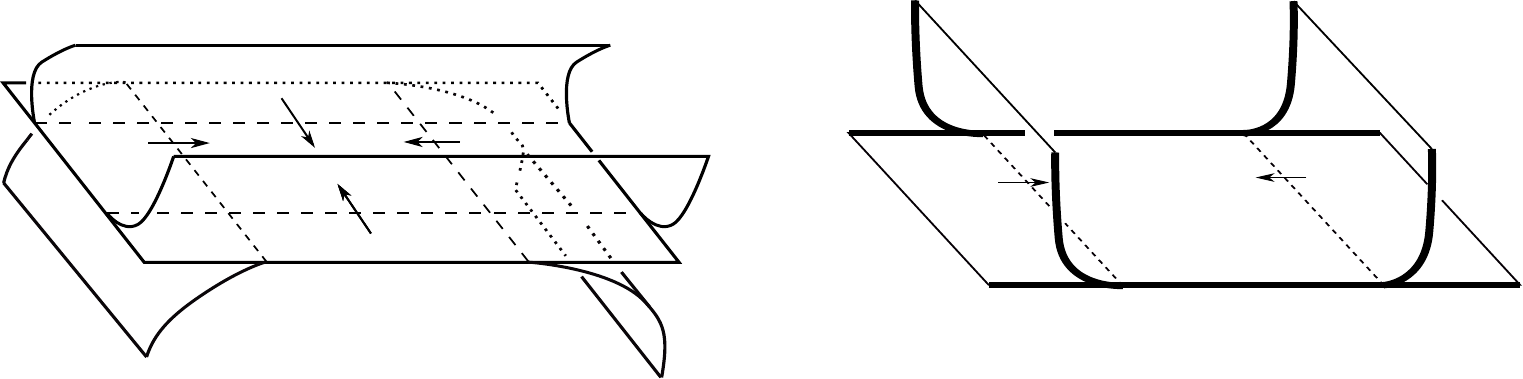}
\caption{On the left, a \textbf{sink disk}. On the right, the \textbf{bolded} lines lie on $\partial M \approx T^2$; this is the abstract model for a \textbf{half sink disk}. }
\label{fig:sinkdiskdefn}
\end{figure}

In this paper, our branched surfaces are built from a copy of the fiber surface $F$ and a collection of co--oriented \textbf{product disks}, in the sense of Gabai \cite{Gabai:Fibered}: in particular, these are the sweep--outs of properly embedded essential arcs on $F$ (for a more detailed summary, see \cite[Section 2]{Krishna:3Braids}). We recall the key details within the context of positive braids: let $K$ be a positive braid knot and $F$ denote its fiber surface. Let $\alpha$ be a plumbing arc on $F$, and let $\mathbb{D}_\alpha$ denote the product disk obtained by pushing $\alpha$ through the mapping torus. We can decompose $\partial \mathbb{D}_\alpha$ into four pieces: $\mathbb{D}_\alpha \cap F^+, \mathbb{D}_\alpha \cap F^{-}$, and $\mathbb{D}_\alpha \cap \partial X_K \approx \partial \alpha \times I$ (i.e. the trace of the endpoints of $\alpha$ through the monodromy). 

Carefully specifying co--orientations for the product disks yields a laminar branched surface:

\begin{prop} \cite[Proposition 3.12]{Krishna:3Braids} \label{prop:BisLaminar}
Let $K$ be a fibered knot in $S^3$. Suppose $B$ is a branched surface built from  a copy of the fiber surface and a collection of co--oriented product disks. If $B$ is sink disk free, then $B$ is laminar. Moreover, $B$ does not carry a torus. \hfill $\Box$
\end{prop}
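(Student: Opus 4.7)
My plan is to verify the four hypotheses of Li's Theorem \ref{thm:taolisinkdisk}. Hypothesis (4) is our standing assumption, so the real work lies in (1a)--(1c), (2), (3), together with the separate assertion that $B$ carries no torus. The organizing observation is that $X_K \setminus \text{int}(N(B))$ is a disjoint union of \emph{trivial $I$-bundles}; once this is established, essentially everything else reduces to standard facts about products.

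First I would make the product decomposition precise. Using the mapping-torus structure $X_K = F \times [0,1]/(x,1) \sim (\varphi(x),0)$, each product disk $\mathbb{D}_\alpha$ is, by construction, the sweep-out of an essential arc $\alpha \subset F$ through the monodromy. Cutting $X_K$ open along $F$ yields $F \times I$, and further cutting along the product disks amounts to cutting the fiber (continuously in the $I$-direction) along a disjoint collection of properly embedded essential arcs. Consequently, each component of $X_K \setminus \text{int}(N(B))$ is homeomorphic to $R_j \times I$ for some component $R_j$ of $F$ cut along those plumbing arcs, with horizontal boundary $R_j \times \{0,1\}$ and vertical boundary $(\partial R_j \cap \partial F) \times I \subset \partial X_K$.

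With this structure in hand, conditions (1a), (1b), (1c), and (2) are routine: $\pi_1$-injectivity of $R_j \hookrightarrow R_j \times I$ gives incompressibility and $\partial$-incompressibility of $\partial_h N(B)$; a monogon in such a product would project to an inessential boundary arc on some $R_j$, contradicting essentiality of the associated plumbing arc; no $R_j$ is a sphere or disk because $F$ is incompressible with $\chi(F) < 0$ (as $K$ is a non-trivial fibered knot) and each plumbing arc is properly embedded and essential; and trivial $I$-bundles are irreducible with incompressible vertical boundary.

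For condition (3) and the no-torus assertion, I would use an Euler characteristic argument. Any closed surface $S$ fully carried by $B$ intersects each product region $R_j \times I$ in a disjoint union of parallel horizontal copies of $R_j$, with multiplicities determined by the branch equations at each plumbing arc. Tracking these equations shows that any nonempty carried closed surface must have strictly positive weight on $F$. Summing Euler characteristics across sectors then gives $\chi(S) \leq n\,\chi(F) < 0$ for some $n \geq 1$, ruling out tori; since every Reeb branched surface carries a torus, this also handles condition (3). The main obstacle I anticipate is verifying that the branch equations genuinely force nontrivial weight on the $F$-sectors whenever $S \neq \varnothing$; this demands a careful local analysis at each plumbing arc, but once in place the Euler-characteristic estimate closes the argument immediately.
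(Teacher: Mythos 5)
Your overall architecture is the same as that of the cited proof of \cite[Proposition 3.12]{Krishna:3Braids}: identify $X_K - \operatorname{int}(N(B))$ as a disjoint union of trivial $I$-bundles over the components of $F$ cut along the plumbing arcs, and then read off Li's conditions (1a)--(1c) and (2) from the product structure. That reduction is correct. However, your justification of (1c) contains a false sub-claim: it is not true that no component $R_j$ of $F$ cut along the plumbing arcs is a disk. Cutting the Bennequin surface along the full collection of plumbing arcs routinely produces disk components (the polygon sectors of $B$ are themselves disks, and complementary regions of the form $D^2 \times I$ do occur). Condition (1c) holds for a different reason: every component of $\partial_h N(B)$ has part of its boundary on the cusp locus of $\partial_v N(B)$, which lies in the interior of $X_K$, so no component is a \emph{properly embedded} disk or a sphere in $X_K$; this is what you should say.

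The genuine gap is in your treatment of condition (3) and the no-torus claim. Your Euler-characteristic route rests on two unproved assertions: that a nonempty closed carried surface has positive weight on the $F$-sectors, and that $\chi(S) \leq n\chi(F)$. The second is not a naive sum over sectors --- the product disk sectors have $\chi = +1$, and the weighted Euler characteristic of a carried surface picks up corrections along the branch locus, so the inequality requires exactly the careful local analysis you defer. A shorter and complete argument, and the one underlying the cited proof, is boundary-based: each product disk $\mathbb{D}_\alpha$ meets $\partial X_K$ in $\partial\alpha \times I$, and every sector of $F$ other than a polygon sector meets $\partial F \subset \partial X_K$; a closed surface carried by $B$ must have weight zero on every sector meeting $\partial X_K$, since otherwise it would intersect the fibers of $N(B)$ lying in $\partial X_K$ and hence have nonempty boundary. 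The switch relations at each plumbing or image arc read $w(P) = w(Q) + w(\mathbb{D}_\alpha)$ for the two adjacent $F$-sectors $P, Q$; with all disk weights zero these force adjacent $F$-sectors to have equal weight, and since every polygon sector is connected through such relations to a sector meeting $\partial F$, all weights vanish. Thus $B$ carries no closed surface whatsoever, which disposes of the torus and, since a Reeb branched surface carries a torus, of condition (3) simultaneously.
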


Branched surfaces built in this way meet the boundary of the knot exterior, $\partial X_K$ in a train track, which we denote $\tau_B$. If $r$ is a slope fully carried by $\tau_B$, then Li's theorem builds an essential lamination $\mathcal{L}_r$ meeting $\partial X_K$ in simple closed curves of slope $r$. These essential laminations can be extended to taut foliations in $S^3_r(K)$:

\begin{thm}
\label{thm:foliations}
Suppose $K$ is a fibered knot in $S^3$, and $B$ is a laminar branched surface built from a copy of the fiber surface and a collection of co--oriented product disks. For any slope $r$ fully carried by $\tau_B$, $S^3_r(K)$ has a taut foliation.
\end{thm}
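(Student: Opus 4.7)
Plan: The proof proceeds in two stages. First, I would use Li's theorem to extract an essential lamination in $X_K$ with prescribed boundary behaviour; second, I would leverage the fibered product structure of $B$ to upgrade this lamination to a taut foliation of the Dehn-filled manifold.

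For the first stage, I would invoke \Cref{prop:BisLaminar}, which guarantees that $B$ is a laminar branched surface carrying no torus. Since $\tau_B$ fully carries $r$ by hypothesis, Li's theorem (\Cref{thm:taolisinkdisk}) then produces an essential lamination $\mathcal{L}_r \subset X_K$ meeting $\partial X_K$ in parallel simple closed curves of slope $r$, which extends to an essential lamination of $S^3_r(K)$ after $r$-framed Dehn filling.

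For the second stage, I would exploit the fact that $B$ is by assumption assembled from a single copy of the fiber surface $F$ together with a collection of co-oriented product disks $\mathbb{D}_{\alpha_i}$, each obtained by sweeping a properly embedded arc $\alpha_i \subset F$ through the mapping-torus flow. Cutting $X_K$ open along $F$ and then along the product disks decomposes the complement $X_K \setminus N(B)$ into a disjoint union of $I$-bundles of the form $P_j \times I$, where the $\{P_j\}$ are the connected components of $F \setminus \bigcup_i \alpha_i$. I would foliate each such $I$-bundle by its horizontal cross-sections $P_j \times \{t\}$; the prescribed co-orientations on the $\mathbb{D}_{\alpha_i}$ dictate a consistent way to glue these cross-sections onto the leaves of $\mathcal{L}_r$, producing a codimension-one foliation $\mathcal{F}$ of $X_K$ whose restriction to $\partial X_K$ is a foliation by parallel slope-$r$ curves. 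Since the meridian of the Dehn-filling solid torus $V$ of slope $r$ is identified with a slope-$r$ curve on $\partial X_K$, each boundary leaf of $\mathcal{F}$ bounds a meridian disk in $V$; foliating $V$ by these meridian disks then extends $\mathcal{F}$ to a foliation $\widehat{\mathcal{F}}$ of $S^3_r(K)$.

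Tautness is witnessed by the core $c$ of $V$: each leaf of $\widehat{\mathcal{F}}$ contains at least one meridian disk of $V$ (the cap on the boundary of a leaf of $\mathcal{F}$ in $X_K$), and $c$ meets each such disk transversely in a single point. The main subtlety I anticipate is verifying that the horizontal cross-sections of the $I$-bundles glue onto $\mathcal{L}_r$ along the product disks to yield a bona fide $C^0$ foliation; this is exactly where the sink-disk-free condition, packaged into laminarity via \Cref{prop:BisLaminar}, does the necessary work, ruling out Reeb-type obstructions and ensuring co-orientations match consistently across branching loci.
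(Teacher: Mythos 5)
Your proposal takes essentially the same route as the paper: the first stage (combining \Cref{prop:BisLaminar} with \Cref{thm:taolisinkdisk} to obtain the essential lamination $\mathcal{L}_r$ with boundary slope $r$) is exactly the paper's argument, and your second stage is an unpacking of \cite[Proposition 3.19]{Krishna:3Braids}, which the paper simply cites to extend $\mathcal{L}_r$ to a taut foliation of $X_K$ before capping off with meridian disks under $r$-framed filling. The only difference is that you sketch the product-region filling argument behind that citation rather than invoking it.
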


\begin{proof}
Suppose $B$ is a branched surface satisfying the stated hypotheses. By \Cref{prop:BisLaminar}, $B$ does not carry a torus. Therefore, applying  \Cref{thm:taolisinkdisk}, for all slopes $r$ fully carried by $\tau_B$, the branched surface $B$ carries an essential lamination $\mathcal{L}_r$ which meets $\partial X_K$ in simple closed curves of slope $r$. In \cite[Proposition 3.19]{Krishna:3Braids}, we proved that $\mathcal{L}_r$ can be extended to a taut foliation $\mathcal{F}_r$ of $X_K$ meeting $\partial X_K$ in simple closed curves of slope $r$. Performing $r$--framed Dehn filling extends the foliation of the knot exterior to a foliation of $S^3_r(K)$.
\end{proof}

In particular, this reduces the task of building taut foliations in Dehn surgeries along fibered knots to two combinatorial conditions on the branched surface $B$:
\begin{enumerate}
\item proving $B$ is sink disk free, and
\item computing the slopes fully carried by the induced train track $\tau_B$.
\end{enumerate}

We emphasize that both are combinatorial conditions. Therefore, to prove that $B$ is sink disk free, it suffices to show that every branch sector contains an outward pointing arc in its boundary.

\section{An example alongside the construction} \label{section:example}

In this section, we demonstrate the construction used to prove \Cref{thm:main} alongside an example. In addition to establishing some preliminaries, we hope it cultivates some intuition for how to build branched surfaces in fibered knot exteriors. 

We build branched surfaces by fusing together the fiber surface $F$ with a well chosen collection of co--oriented product disks. The plumbing structure described in \Cref{section:monodromy} is well suited to selecting product disks: namely, we choose the product disks swept out by plumbing arcs. 

In \cite{Krishna:3Braids}, we developed a strategy for building a branched surface for positive 3--braid knots. Our goal is to extend this strategy to positive braids on any number of strands. Our proof of \Cref{thm:main} is inspired by a technique known as the \textit{amplification method} \cite{TerryTao:Amplification}; Feller referred to it as the \textit{reduction method}, and used it to produce a sharp signature bound for 4-braids \cite{Feller:4Braids}.

We build taut foliations in $S^3_r(K)$, where $K$ is realized as the closure of a positive braid on $n$ strands, $r < g(K)+1$, and $n \geq 4$, by following the outline below:

\begin{tabular}{ll}
\textit{\Cref{section:Features}}: & Identify some key features of positive braid words. \\
\textit{\Cref{section:BuildTemplate}:} & Design ``branched surface templates'' for columns of the braid. \\
\textit{\Cref{section:distribution}:} & Study the distribution of crossings in the braid $\beta$. \\
\textit{\Cref{section:column}:} & Apply the templates to build a branched surface $B$. \\
\textit{\Cref{section:SinkDisk}:} & Check that $B$ is sink disk free. \\
\textit{\Cref{section:TrainTrack}:} & Calculate the slopes carried by the train track $\tau_B$. \\
\textit{\Cref{section:Foliations}:} & Construct taut foliations in manifolds obtained by Dehn surgery along $\widehat{\beta}$. \\
\end{tabular}

Throughout this section, we demonstrate the construction alongside the braid 
\begin{align} \label{ex:main}
\beta \approx \two \one \three \two \three^2 \two \one \three \two \three^3 \two \three \one \two \three^2 \two \three (\three \three \two)^2
\end{align}

\subsection{Identify some key features of positive braid words.} \label{section:Features}
Before building our templates, we establish some necessary preliminaries about braid words. Recall that $c_i$ denotes the number of occurrences of $\sigma_i$ in $\beta$.

\begin{lemma}
Let $\beta$ be a braid on $n$-strands. If $\widehat{\beta}$ is a prime knot, then $c_i \geq 2$ for all $1 \leq i \leq n-1$. 
\end{lemma}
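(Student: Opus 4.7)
The plan is to prove the contrapositive: if $c_i \leq 1$ for some $i$ with $1 \leq i \leq n-1$, then $\widehat{\beta}$ is either a split link, a non-trivial connected sum, or admits a positive braid presentation on strictly fewer than $n$ strands. The last outcome is ruled out by the implicit convention throughout the paper that $n$ is the minimum strand number for a positive braid realization of $\widehat{\beta}$; absent such minimality, elementary examples like $\widehat{\sigma_1 \sigma_2^3} \simeq T(2,3)$ with $c_1 = 1$ would already defeat the lemma. Each of the three outcomes therefore contradicts $\widehat{\beta}$ being a prime knot.

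For the case $c_i = 0$, the braid uses only generators $\sigma_j$ with $j \neq i$. Since $\sigma_j$ and $\sigma_k$ commute whenever $|j - k| \geq 2$, and in particular whenever $j < i < k$, I can reorder the letters of $\beta$ to put it in the form $\alpha\gamma$ with $\alpha \in \langle \sigma_1, \ldots, \sigma_{i-1}\rangle$ and $\gamma \in \langle \sigma_{i+1}, \ldots, \sigma_{n-1}\rangle$. The induced permutation then preserves the partition $\{1, \ldots, i\} \sqcup \{i+1, \ldots, n\}$, so $\widehat{\beta}$ splits as $\widehat{\alpha} \sqcup \widehat{\gamma}$ and is not even connected.

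For the more delicate case $c_i = 1$, I first cyclically conjugate $\beta$ so the unique $\sigma_i$ appears at the front, $\beta = \sigma_i w$, and apply the same commutation trick to factor $w = \alpha\gamma$ with $\alpha, \gamma$ supported on the indices as above. The geometric heart of the argument is to exhibit the $2$-sphere $S \subset S^3$ obtained by compactifying the plane $\{x = i + \tfrac{1}{2}\}$: this meets $\widehat{\beta}$ transversely in exactly two points, namely the two strand segments cut from the unique $\sigma_i$ crossing. Performing the oriented smoothing at $\sigma_i$ converts $\widehat{\beta}$ into the split link $\widehat{\alpha} \sqcup \widehat{\gamma}$, where $\widehat{\alpha}$ is viewed as the closure of $\alpha$ on $i$ strands and $\widehat{\gamma}$ as the closure of $\gamma$ on $n - i$ strands, yielding a connected-sum decomposition $\widehat{\beta} \cong \widehat{\alpha} \# \widehat{\gamma}$. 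If $\widehat{\beta}$ is prime, one summand must be the unknot; if this is $\widehat{\alpha}$, then $\widehat{\beta} \cong \widehat{\gamma}$ is already realized as a positive braid closure on $n - i < n$ strands, contradicting the minimality of $n$.

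The main obstacle is the geometric identification in the $c_i = 1$ case: verifying that the sphere $S$ really witnesses the claimed connected-sum decomposition with summands $\widehat{\alpha}$ and $\widehat{\gamma}$. This reduces to tracing the two arcs of $\widehat{\beta}$ on either side of $S$ and recognizing each, after capping off with an arc on $S$, as the standard closure of the corresponding sub-braid. Everything else is a direct application of cyclic conjugation and the far-commutation relations of the braid group.
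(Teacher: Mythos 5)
Your argument is correct, and for $c_i = 0$ it coincides with the paper's: the plane between the $i$-th and $(i+1)$-st Seifert disks, capped off to a sphere, exhibits $\widehat{\beta}$ as a split link, hence not a knot. For $c_i = 1$ the two routes diverge slightly but land in the same place. The paper performs the isotopy of its strand-reduction figure to absorb the single band and re-present $\widehat{\beta}$ on $n-1$ strands, whereas you pass through the connected-sum decomposition $\widehat{\beta} \cong \widehat{\alpha} \# \widehat{\gamma}$ cut out by the same sphere (now meeting the knot in two points at the unique $\sigma_i$ band) and then use primality to force one summand to be trivial. Both arguments terminate identically --- $\widehat{\beta}$ admits a positive braid presentation on fewer strands --- and neither literally contradicts primality; your counterexample $\widehat{\sigma_1\sigma_2^3} \simeq T(2,3)$ correctly shows the lemma as stated is false without some reduction convention. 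The paper's own proof for $c_i=1$ concludes only that the strand number can be lowered, so it is implicitly invoking the same ``without loss of generality, reduce until every $c_i \geq 2$'' convention that you make explicit via minimality of $n$; flagging this is a genuine gain in precision, not a defect of your write-up. Two small points to tidy: record the symmetric case in which $\widehat{\gamma}$ is the unknotted summand (then $\widehat{\beta} \cong \widehat{\alpha}$ on $i < n$ strands, and minimality is contradicted just the same), and note that the two-point intersection of your sphere with $\widehat{\beta}$ is cleanest to see in the Bennequin-surface (brick diagram) picture, where the plane meets the surface only in the single band $\mathbbm{b}_{i,1}$, rather than in a naive closed-braid diagram where the nested closure arcs would also cross the plane.
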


\begin{proof}
If, for some $i$, $c_i = 0$, then $\widehat{\beta}$ is a split link. If, for some $i$, $c_i = 1$, then $\widehat{\beta}$ can be presented on $n-1$ strands; see \Cref{fig:Braid_FewerStrands} for an isotopy demonstrating how to reduce the strand number. 
\end{proof}

\begin{defn}
The positive braid words $\beta$ and $\beta'$ \textbf{synonyms} if their closures are isotopic in $S^3$. 
\end{defn}

\begin{defn} \label{defn:standard}
A braid $\beta$ is in \textbf{standard form} if, for all $1 \leq i \leq n-2$, there are no occurrences of $\sigma_{i} \sigma_{i+1}\sigma_i$ in $\beta$, even up to cyclic presentations of $\beta$. 
\end{defn}

\begin{lemma}
Every braid $\beta$ admits at least one synonym which is in standard form. 
\end{lemma}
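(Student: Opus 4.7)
The plan is to repeatedly apply the type-III braid relation $\sigma_i \sigma_{i+1} \sigma_i = \sigma_{i+1} \sigma_i \sigma_{i+1}$ to eliminate every forbidden subword. Such a replacement preserves the element of the braid group (hence the closure in $S^3$), preserves positivity, and preserves the total word length, so any word produced by iterating the replacement is again a positive braid synonym of $\beta$. To also address the ``even up to cyclic presentations'' clause of \Cref{defn:standard}, we interleave these replacements with cyclic conjugations, which likewise preserve the closure $\widehat{\beta}$.

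The main obstacle is termination: a single replacement could, a priori, introduce a new forbidden substring $\sigma_j \sigma_{j+1} \sigma_j$ elsewhere (for instance, when $\beta$ contains $\sigma_i \sigma_i \sigma_{i+1} \sigma_i \sigma_{i+2}$, one rewrite produces $\sigma_i \sigma_{i+1} \sigma_i \sigma_{i+1} \sigma_{i+2}$, exhibiting a fresh pattern at the front). To control this, define the weight
\[
W(\beta) \;=\; \sum_{i=1}^{n-1} i \cdot c_i(\beta).
\]
Both the length $L(\beta) = \sum_i c_i(\beta)$ and the weight $W(\beta)$ are manifestly invariant under cyclic conjugation. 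Under a replacement $\sigma_i \sigma_{i+1} \sigma_i \mapsto \sigma_{i+1} \sigma_i \sigma_{i+1}$, the contribution of these three letters to $W$ changes from $i + (i+1) + i = 3i+1$ to $(i+1) + i + (i+1) = 3i+2$; all other letters are untouched. Hence $W$ strictly increases by exactly $1$ per replacement, while being bounded above by the cyclic invariant $(n-1)\,L(\beta)$. So only finitely many replacements are possible.

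The algorithm is then the obvious one: while some cyclic presentation of the current word contains a contiguous substring $\sigma_i \sigma_{i+1} \sigma_i$ for some $1 \leq i \leq n-2$, first cyclically conjugate so that the pattern becomes contiguous in the written word, then rewrite it as $\sigma_{i+1} \sigma_i \sigma_{i+1}$. By the weight argument this process halts at some positive braid word $\beta'$, which by construction admits no cyclic presentation containing $\sigma_i \sigma_{i+1} \sigma_i$ and is therefore in standard form. Since every step along the way was either a cyclic conjugation or a braid relation, $\widehat{\beta'} \approx \widehat{\beta}$, so $\beta'$ is the desired synonym.
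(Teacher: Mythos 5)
Your proof is correct and follows the same route as the paper (iteratively replace $\sigma_i\sigma_{i+1}\sigma_i$ by $\sigma_{i+1}\sigma_i\sigma_{i+1}$, sweeping through all cyclic presentations); in fact it is stronger, since the paper merely asserts the process finishes ``in finite time'' while your monovariant $W(\beta)=\sum_i i\,c_i$ — strictly increasing under each rewrite yet bounded by $(n-1)L(\beta)$ — actually proves termination, which is needed because, as your example shows, a rewrite can create new forbidden subwords. The only point to add is that the paper also permits the far-commutation relations $\sigma_i\sigma_j=\sigma_j\sigma_i$, $|i-j|\ge 2$, to bring a pattern into contiguous position before rewriting; since these moves preserve both $L$ and $W$, your termination argument goes through unchanged with them interleaved.
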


\begin{proof}
Let $\beta$ denote any positive braid. It necessarily has finite length. Thus, in finite time, one can replace \underline{all} occurrences of $\sigma_{i}\sigma_{i+1}\sigma_i$ with $\sigma_{i+1}\sigma_i\sigma_{i+1}$ whenever $1 \leq i \leq n-2$ (note that the other braid relation $\sigma_{i}\sigma_{j} \approx \sigma_j \sigma_i, |i-j| \geq 2$ may also be required). This eliminates all $\sigma_i \sigma_{i+1} \sigma_i$ subwords, where $1 \leq i \leq n-2$, in the fixed presentation of $\beta$. However, such subwords may exist after cyclically conjugating $\beta$, so those must be eliminated as well. So, for every possible cyclic conjugation of $\beta$, eliminate any $\sigma_i \sigma_{i+1} \sigma_i$ subword that arises (and, as before, use the braid relations $\sigma_{i}\sigma_{j} \approx \sigma_j \sigma_i, |i-j| \geq 2$ as needed). The result is a synonym of $\beta$ in standard form.
\end{proof}

We remark: if $K$ is a braid positive, it may admit many standard forms. Our construction is not dependent on a particular standard form. 
%

\begin{figure}[h!]\center
\labellist \tiny
\pinlabel {\textcolor{gray}{(A)}} at -10 450
\pinlabel {A} at 52 435
\pinlabel {B} at 100 435
%
\pinlabel {\textcolor{gray}{(B)}} at 190 450
\pinlabel \rotatebox{-90}{A} at 264 455
\pinlabel {B} at 350 386
%
\pinlabel {\textcolor{gray}{(C)}} at 524 450
\pinlabel \rotatebox{-180}{A} at 562 450
\pinlabel {B} at 502 322
%
\pinlabel {\textcolor{gray}{(D)}} at -10 120
\pinlabel \rotatebox{-180}{A} at 108 185
\pinlabel {B} at 48 55
%
\pinlabel {\textcolor{gray}{(E)}} at 225 120
\pinlabel \rotatebox{-180}{A} at 302 184
\pinlabel \rotatebox{-180}{B} at 276 95
%
\pinlabel {\textcolor{gray}{(F)}} at 470 120
\pinlabel \rotatebox{-180}{A} at 540 183
\pinlabel \rotatebox{-180}{B} at 515 95
\endlabellist
\includegraphics[scale=0.65]{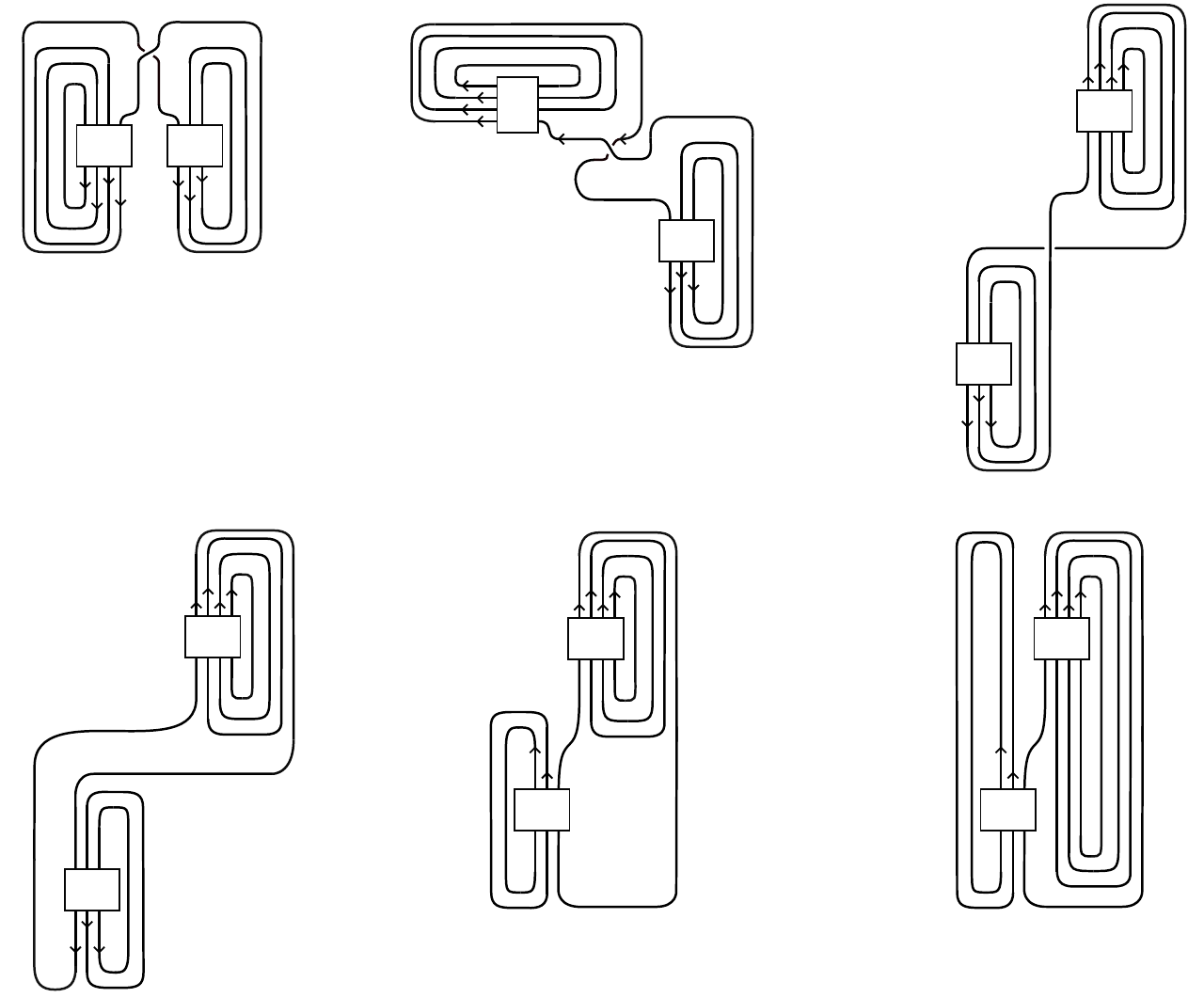}
\caption{An isotopy showing that if $c_i=1$ for some $i$, then $\widehat{\beta}$ can be presented on one less strand.}
\label{fig:Braid_FewerStrands}
\end{figure}

\begin{defn} \label{defn:BlockFunctions}
Let $\mathbb{B}_n^+$ denote the monoid of positive braids on $n$ strands. For $1 \leq i \leq n-1$, define the functions $\mathcal{B}_i: \mathbb{B}_n^+ \to \Z^+$ as follows: fix a presentation of a positive braid $\beta$ in standard form.  Cyclically conjugate the braid so it appears as $\beta \approx \sigma_i^{p_1} w_1  \sigma_i^{p_2} w_2 \ldots \sigma_i^{p_k} w_k$, where for all $1 \leq j \leq k$, $1 \leq p_j$, and $w_j$ is a subword of $\beta$ with no $\sigma_i$ letters. Then $\mathcal{B}_i(\beta) := k$. 
\end{defn}

Note: this function takes as input a \textit{presentation} of a braid in standard form, so the functions $\mathcal{B}_i$ are well--defined.

\begin{lemma} \label{lemma:prime}
If $\widehat{\beta}$ is a prime knot, then for all $1 \leq i \leq n-1$, $\mathcal{B}_i(\beta) \geq 2$.
\end{lemma}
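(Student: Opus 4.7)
The plan is to prove the contrapositive: assuming $\mathcal{B}_i(\beta) = 1$ for some $i$, I would exhibit $\widehat{\beta}$ as a nontrivial connected sum. The earlier lemma in this section gives $c_i(\beta) \geq 2$, so combined with $\mathcal{B}_i(\beta) = 1$ a cyclic conjugation presents $\beta$ as $\sigma_i^{p_1} w$ with $p_1 \geq 2$ and $w$ containing no $\sigma_i$ letters. Since $\sigma_j$ commutes with $\sigma_k$ whenever $|j-k| \geq 2$, the subword $w$ factors as $w_L \cdot w_R$ with $w_L \in \langle \sigma_1, \ldots, \sigma_{i-1}\rangle$ and $w_R \in \langle \sigma_{i+1}, \ldots, \sigma_{n-1}\rangle$, so up to cyclic conjugation $\beta \approx w_L \sigma_i^{p_1} w_R$.

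Next, I would build the Bennequin surface $F$ for $\widehat{\beta}$ from this presentation via \Cref{lemma:monodromy}. The Seifert disk $S_{i+1}$ carries $p_1 \geq 2$ band attachments on its left edge (from $\sigma_i^{p_1}$) and the $c_{i+1}$ attachments from $w_R$ on its right edge, while its top and bottom edges remain free. Choose a properly embedded arc $\gamma \subset S_{i+1}$ running from its top edge to its bottom edge and separating these two families of attachments; its endpoints lie on $\partial F = \widehat{\beta}$. Using the planar embedding of $S_{i+1}$ in $S^3$, I would extend $\gamma$ to a $2$-sphere $\Sigma \subset S^3$ meeting $F$ transversely along $\gamma$ and meeting $\widehat{\beta}$ in exactly the two endpoints of $\gamma$. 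This sphere realizes a decomposition $\widehat{\beta} = K_L \# K_R$, where tracking the bands on each side identifies $K_L = \widehat{w_L \sigma_i^{p_1}}$ as the closure of a positive $(i+1)$-braid on disks $S_1, \ldots, S_{i+1}$, and $K_R = \widehat{w_R}$ as the closure of a positive $(n-i)$-braid on disks $S_{i+1}, \ldots, S_n$.

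To finish, I would verify that both summands are nontrivial knots. Both are knots: a direct orbit-tracking argument in $S_n$ shows that whenever $\pi_\beta$ is an $n$-cycle, the induced sub-permutations of $w_L \sigma_i^{p_1}$ and of $w_R$ must themselves be cycles of the appropriate sizes. Both are nontrivial: invoking the standing hypothesis $c_j(\beta) \geq 2$ from the earlier lemma, each sub-braid has length bounded below by twice (strand count $-\,1$), so the positive-braid genus formula $g = (L - m + 1)/2$ forces $g(K_L), g(K_R) \geq 1$, ruling out the unknot. Thus $\widehat{\beta}$ is a nontrivial connected sum, contradicting primeness.

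The main obstacle will be rigorously constructing $\Sigma$ and verifying that the cut of $F$ along $\gamma$ truly realizes a connected sum in $S^3$, rather than some more general band sum. This is essentially the ``disk-sum'' operation for plumbed fiber surfaces, i.e.~the inverse of a Murasugi sum along a bigon, which is well known to produce connected sums of the boundary knots; the key geometric input is that $\gamma$ lies in the flat Seifert disk $S_{i+1}$ and separates the left-attached bands (entirely in the ``left half'' of $F$) from the right-attached bands (entirely in the ``right half''), so the required capping disks for $\Sigma$ exist disjoint from $\widehat{\beta}$.
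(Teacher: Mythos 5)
Your proposal is correct and takes essentially the same route as the paper: the paper's proof likewise observes that $\mathcal{B}_i(\beta)=1$ permits a cyclic conjugation presenting $\beta$ as $\sigma_i^{p_1}w_1$ with $w_1$ containing no $\sigma_i$ letters, and then exhibits a splitting sphere (\Cref{fig:Braid_ConnectedSum}, left) realizing $\widehat{\beta}$ as a connected sum, contradicting primeness. Your added verification that both summands are nontrivial knots (via $c_j \geq 2$ and the positive-braid genus formula) is a detail the paper leaves implicit; just note that since $c_j \geq 2$ itself depends on primeness, this step is cleanest phrased as a proof by contradiction rather than a strict contrapositive.
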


\begin{proof}
We proceed via a proof by contrapositive: if there exists some $i$ such that $\mathcal{B}_i(\beta) = 1$, then there exists some conjugation of $\beta$ such that $\beta \approx \sigma_i^{p_1}w_1$, for $w_1$ is a word spelled without $\sigma_i$ letters. But this means that $\beta$ can be realized a connected sum of two braids. In particular, one can identify a splitting $S^2$ which contains a non-trivial braid on either side. See \Cref{fig:Braid_ConnectedSum} (left).
\end{proof}

\begin{lemma} \label{lemma:prime2}
Suppose $\widehat{\beta}$ is a prime knot, and there exists some $i \geq 2$ such that $c_i=2$. For any conjugated presentation of $\beta$, $d(i; 1,2) \geq 1$ and $d_L(i; 1,2) \geq 1$. 
\end{lemma}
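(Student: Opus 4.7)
The plan is to prove this by contrapositive, adapting the argument of Lemma~\ref{lemma:prime}. Suppose $c_i=2$ and, for contradiction, that some cyclic presentation of $\beta$ exhibits $d(i;1,2)=0$; the argument for $d_L(i;1,2)$ is obtained by symmetry under the involution $\sigma_j \mapsto \sigma_{n-j}$ of $\mathbb{B}_n^+$, which exchanges the roles of $d$ and $d_L$. Cyclically conjugating $\beta$ to begin with a $\sigma_i$, I would write
\[
\beta \approx \sigma_i \, w_1 \, \sigma_i \, w_2,
\]
where $w_1$ contains neither $\sigma_i$ (by $c_i=2$) nor $\sigma_{i+1}$ (by the $d(i;1,2)=0$ hypothesis).

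The first step is to clean up $w_1$. Because $\sigma_j$ and $\sigma_k$ commute whenever $|j-k|\geq 2$, I may rearrange $w_1 = u\,v$ with $u \in \langle \sigma_1,\ldots,\sigma_{i-1}\rangle$ and $v \in \langle \sigma_{i+2},\ldots,\sigma_{n-1}\rangle$; since $v$ also commutes with $\sigma_i$, this yields
\[
\beta \approx \sigma_i \, u \, \sigma_i \, v \, w_2.
\]
The critical structural consequence is that, throughout the $w_1$-region of the braid diagram, no crossing occurs between positions $i+1$ and $i+2$ (since $\sigma_{i+1}\notin w_1$), so the diagram admits a natural vertical slice between these positions.

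Using this separating slice, I would adapt the splitting-sphere construction from the proof of Lemma~\ref{lemma:prime} to build an embedded $S^2 \subset S^3$ meeting $\widehat{\beta}$ transversely in exactly two points and realizing $\widehat{\beta}$ as a non-trivial connected sum, contradicting primeness. The sphere pinches the braid at the $w_1$-region, enclosing the sub-braid $\sigma_i u \sigma_i$ (living in $\langle \sigma_1,\ldots,\sigma_i\rangle$ on strands $1,\ldots,i+1$) on one side, and the remaining braid data $v\,w_2$ together with the appropriate closure arcs on the other.

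The main obstacle is verifying that both sides of the splitting sphere yield non-trivial tangles. For the side containing $\sigma_i u \sigma_i$: if $u$ did not involve $\sigma_{i-1}$, then $u \in \langle \sigma_1,\ldots,\sigma_{i-2}\rangle$ would commute with $\sigma_i$, yielding $\sigma_i u \sigma_i = u\sigma_i^2$, a presentation in which the two $\sigma_i$'s are consecutive and hence $\mathcal{B}_i = 1$, directly contradicting Lemma~\ref{lemma:prime}; thus $u$ must use $\sigma_{i-1}$ and the sub-braid is genuinely non-trivial. For the other side: since $\widehat{\beta}$ is a connected knot, the remaining tangle must carry enough braiding to close the knot up and hence cannot be a trivial tangle. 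Together, these force a genuine connected-sum decomposition of $\widehat{\beta}$, contradicting the primeness hypothesis.
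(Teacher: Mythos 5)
Your high-level strategy---prove the contrapositive by producing a splitting sphere that exhibits $\widehat{\beta}$ as a non-trivial connected sum---is the same as the paper's (whose proof is a two-sentence deferral to \Cref{fig:Braid_ConnectedSum}), and your preliminary reductions are fine: sorting $w_1=u\,v$ by far-commutation is valid, and the involution $\sigma_j\mapsto\sigma_{n-j}$ does exchange the roles of $d$ and $d_L$. The gap is in the sphere itself. A sphere that ``pinches the braid at the $w_1$-region,'' enclosing only the sub-braid $\sigma_i u\sigma_i$ on strands $1,\ldots,i+1$ while leaving $v\,w_2$ and the closure arcs outside, meets $\widehat{\beta}$ transversely in $2(i+1)\geq 6$ points, not two: each strand passing through that height range contributes an entry and an exit point, and these strands cannot be capped off inside the sphere because they continue into $w_2$ (which in general contains $\sigma_j$ letters with $j\leq i-1$) and into the braid closure. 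What you have described is a tangle decomposition, and the ``summand'' $\widehat{\sigma_i u\sigma_i}$ is not a well-defined piece of $\widehat{\beta}$ cut off by any 2-point sphere.

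The correct sphere must enclose the \emph{entire} left half of the Bennequin surface: $S_1,\ldots,S_i$, every $\sigma_j$ band with $j\leq i-1$ (including those occurring in $w_2$), the closure arcs of the first $i$ strands, the two bands $\mathbbm{b}_{i,1},\mathbbm{b}_{i,2}$, and the sub-disk of $S_{i+1}$ lying between their attaching arcs. The hypothesis $d(i;1,2)=0$ (together with $c_i=2$) is exactly what guarantees that this sub-disk has no other bands attached, so the sphere meets $F$ in a single properly embedded arc of $S_{i+1}$ and hence meets $\widehat{\beta}$ in precisely its two endpoints; the summands are the closures of $\beta$ with all $\sigma_{\geq i+1}$, respectively all $\sigma_{\leq i}$, deleted. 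This also redirects your non-triviality argument: your observation that $u$ must contain a $\sigma_{i-1}$ is correct (and in fact already shows $d$ and $d_L$ cannot both vanish, via \Cref{lemma:prime}), but it addresses the wrong object, and ``the remaining tangle must carry enough braiding to close the knot up'' is not an argument that the other summand is a non-trivial knot. The robust argument is that each summand bounds a Bennequin surface every column of which has at least two bands (by the $c_j\geq 2$ lemma), hence has Euler characteristic at most $1-i\leq -1$ on the left side---this is where $i\geq 2$ enters---and these are fiber surfaces, so both summands have positive genus.
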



\begin{proof}
Suppose there exists some $i$ such that $c_i = 2$. If one of $d(i; 1,2)$ or $d_L(i; 1,2)$ is zero, then as in \Cref{fig:Braid_ConnectedSum}, we produce a splitting $S^2$ for $\beta$, and deduce $\widehat{\beta}$ is not prime.
\end{proof}

\begin{figure}[h!]\center
\labellist
\pinlabel {$w_1$} at 53 68
\pinlabel {$w_2$} at 100 68
\pinlabel {$w_1$} at 302 95
\pinlabel {$w_2$} at 258 53
\pinlabel {$w_3$} at 302 53
\endlabellist
\includegraphics[scale=1.1]{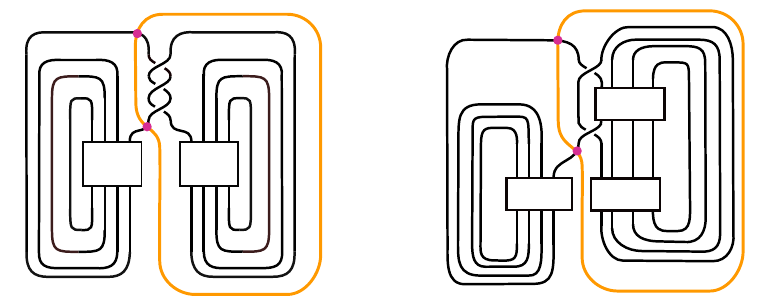}
\caption[Braids as connected sums of knots]{\textbf{Left}: A braid with $\mathcal{B}_4(\beta)=1$; $w_1$ and $w_2$ are braid words not containing $\sigma_4$. The orange unknotted circle is the equator of an $S^2$ realizing $\widehat{\beta}$ as the connected sum of two knots. \\
\textbf{Right}: a braid with some $c_i = 2$ and $d_L(i; 1,2) = 0$. The closure is a connected sum.}
\label{fig:Braid_ConnectedSum}
\end{figure}


Delman--Roberts proved that if $K$ is the connected sum of two fibered knots, then every manifold obtained by rational Dehn surgery has a taut foliation \cite{DelmanRoberts:composite}. Thus, we restrict to \textit{prime} braid positive knots throughout. Going forward, we assume all positive braids are in standard form, i.e. have been \textit{standardized}.

\subsection{Design ``branched surface templates'' for columns of the braid} \label{section:BuildTemplate}

To build the branched surface $B$ required for \Cref{thm:main}, we design and apply some standard templates to a subset of the columns of $F$. This requires:
\begin{itemize}
\item identifying the product disks in a column $\Gamma_j$,
\item building the spine for a branched surface supported in $\Gamma_j$, and
\item assigning co--orientations to build the branched surface. 
\end{itemize}


\subsubsection{Identify product disks in a column $\Gamma_j$} \label{section:ProductDisk}
The construction in \Cref{lemma:monodromy} identifies a factorization of the monodromy of the fiber surface for $\widehat{\beta}$ by realizing the fiber surface as the plumbing of positive Hopf bands. In particular, it identifies the plumbing arcs as well as their images under the monodromy. 
Pushing the $c_j-1$ plumbing arcs in $S_j$ through the mapping torus yields $c_j-1$ product disks supported in $\Gamma_j$. 


\subsubsection{Build the spine for a branched surface in a column $\Gamma$} \label{section:spine}

Now that we have identified the product disks in $\Gamma_j$, we can build the spine for a branched surface. 


Let $\mathbb{S}_j$ denote the spine which is formed by fusing the fiber surface $F$ with the product disks supported in $\Gamma_j$.
The branch locus for $\mathbb{S}_j$ is supported in Seifert disks $S_j, S_{j+1}$, and the bands $\mathbbm{b}_{j,1}, \ldots, \mathbbm{b}_{j, c_j}$ connecting them. In \cite[Section 3.2]{Krishna:3Braids}, the author defined an operation called \textbf{spinal isotopy}, which simplifies the branch locus for a forthcoming branched surface. For the purposes of this paper, it suffices to understand the result of this operation. After performing a spinal isotopy, the fiber surface $F$ has geodesic representatives of the plumbing arcs and their images as elements in $H_1(F, \partial F)$. Mildly abusing notation, we refer to the spine after the spinal isotopy also as $\mathbb{S}_j$, and also refer to plumbing and image arcs after spinal isotopy by $\alpha_{j,s}$ and $\varphi(\alpha_{j,s})$. Note: after a spinal isotopy, the branch locus is now supported in $S_j$ and $S_{j+1}$. See \Cref{fig:Enclosing_Bands} for an example.

\begin{defn} \label{defn:PositiveNBraidsEncloses}
Consider an arc $\gamma \subset S_{j}$, and ignore all other arcs $\gamma' \subset S_j$. Then $\gamma$ \textbf{encloses the band $\mathbbm{b}_{j,k}$ on the left} if, thinking of the band as a 2-dimensional 1-handle, the co--core and left attaching region of $\mathbbm{b}_{j,k}$ is contained in a branch sector $\mathcal{S}$ with $\gamma \subset \partial \mathcal{S}$, where $\mathcal{S}$ does not contain the center of the Seifert disk $S_j$. On the other hand, $\gamma$ \textbf{encloses the band $\mathbbm{b}_{j-1,t}$ on the right} if the co--core and right attaching region of $\mathbbm{b}_{j-1,t}$ is contained in a branch sector $\mathcal{S}$ with $\gamma \subset \partial \mathcal{S}$, where $\mathcal{S}$ does not contain the center of the Seifert disk $S_j$. 
\end{defn}

We observe that we have the following behavior for any arc $\alpha_{j,s}$:
\begin{itemize}
\item $\alpha_{j,s}$ encloses exactly $\mathbbm{b}_{j, s}$ on the left,
\item $\alpha_{j,s}$ encloses exactly $d_L(j; s, s+1)$ bands on the right,
\item $\varphi(\alpha_{j,s})$ encloses exactly $\mathbbm{b}_{j, s+1}$ on the right, and
\item $\varphi(\alpha_{j,s})$ encloses $d(j; s, s+1)$ bands on the left.
\end{itemize}

See \Cref{fig:Enclosing_Bands} for an example. This figure also demonstrates why we fix an arc $\gamma \subset S_j$ and ignore the other arcs $\gamma' \subset S_j$: our definition is written so that $\alpha_{j,s}$ encloses two bands from $\Gamma_{j-1}$ on the right (not one band). 

\begin{figure}[h!]\center
\labellist \tiny
\pinlabel $\alpha_{j,s}$ at 55 200
\pinlabel $\varphi(\alpha_{j,s})$ at 200 215
\pinlabel $\Gamma_j$ at 360 260
\pinlabel $\Gamma_{j-1}$ at 600 260
\pinlabel $\Gamma_{j}$ at 665 260
\pinlabel $\Gamma_{j+1}$ at 730 260
\endlabellist
\includegraphics[scale=.5]{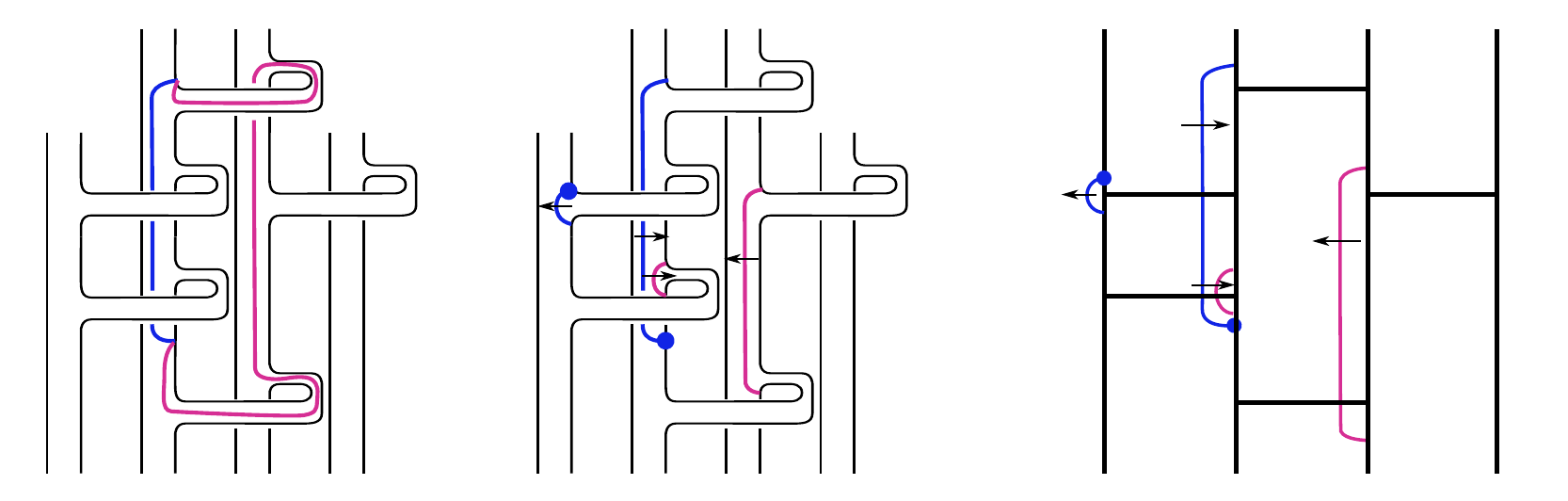}
\caption{ \textbf{Left:} The arc $\alpha_{j,s}$ encloses bands on both the left and on the right. The pink arc is $\varphi(\alpha_{j,s})$, before performing a spinal isotopy. \\
\textbf{Middle:} The result of the spinal isotopy for two disks. If a plumbing arc is co--oriented to the left (resp. right), the image is smoothed to the right (resp. left). \\
\textbf{Right:} The co--oriented disks after a spinal isotopy, captured via a brick diagram.}
\label{fig:Enclosing_Bands}
\end{figure}

Much of the time, when we study branch sectors, we will present them via in a brick diagram presentation as in \Cref{fig:Enclosing_Bands} (right).

\subsubsection{Co--orient the product disks} \label{section:Co-Orient}
To produce a branched surface from the spine, we need to specify co--orientations for the product disks. 

\begin{defn}
An arc $\alpha$ on $F$ is co--oriented \textbf{to the left $($resp. right$)$} if, when looking at the positive side of the fiber surface, $F^{+}$, the arc is decorated with a left $($resp. right$)$ pointing arrow. We say the arc $\alpha$ is a \textbf{left $($resp. right$)$ pointer}, or is \textbf{left $($resp. right$)$ pointing}.
\end{defn}

The direction of the arrow indicates the smoothing direction of the locus where a product disk meets the fiber surface.

\begin{lemma} \cite[Section 3.4.1]{Krishna:3Braids}
\label{lemma:smoothing}
Let $\alpha$ be a plumbing arc on $F$ and let $\mathbb{D}_\alpha$ denote the corresponding product disk. 
A choice of co--orientation of $\mathbb{D}_\alpha$ is in one--to--one correspondence with the induced smoothing directions on the arcs $\alpha$ and $\varphi(\alpha)$. In particular, there are two choices of co--orientations for $\mathbb{D}_\alpha$, and each induces one of the following:
\begin{itemize}
\item $\alpha$ is co--oriented to the left, and $\varphi(\alpha)$ is co--oriented to the right, or 
\item $\alpha$ is co--oriented to the right, and $\varphi(\alpha)$ is co--oriented to the left. \hfill $\Box$
\end{itemize}
\end{lemma}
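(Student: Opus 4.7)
The plan is to parameterize the product disk explicitly and track how a normal vector field restricts to the fiber surface on the two boundary arcs $\alpha$ and $\varphi(\alpha)$. First, I would fix the standard parameterization $\mathbb{D}_\alpha \cong \alpha \times [0,1]$ coming from Gabai's definition of a product disk, under which $\alpha \times \{0\}$ is identified with $\alpha \subset F^{+}$ and $\alpha \times \{1\}$ is identified with $\varphi(\alpha) \subset F^{-}$, while the interior of the disk sweeps through the mapping torus. Since $\mathbb{D}_\alpha$ is a disk, it has exactly two transverse co-orientations; equivalently, there are exactly two continuous non-vanishing normal vector fields $\nu$ up to positive scaling. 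Because $\mathbb{D}_\alpha$ meets $F$ transversely along its top and bottom arcs, each such $\nu$ restricts at every point of $\alpha$ to a vector tangent to $F$ and normal to $\alpha$, and similarly at $\varphi(\alpha)$. This already establishes both that a co-orientation of $\mathbb{D}_\alpha$ induces smoothing directions on each of $\alpha$ and $\varphi(\alpha)$, and that there are precisely two such resulting pairs.

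The substance of the lemma is the claim that those two induced smoothing directions are always opposite when measured from the uniform ``view from $F^{+}$'' convention used throughout the paper. To prove this, I would argue as follows: because $\nu$ is continuous along $\mathbb{D}_\alpha$, the side of $\mathbb{D}_\alpha$ picked out at $\alpha \subset F^{+}$ and at $\varphi(\alpha) \subset F^{-}$ is the same side of the ambient disk. However, at $\varphi(\alpha)$ this side lies against $F^{-}$, whereas our drawing convention always labels arrows as they appear from $F^{+}$. Translating the smoothing direction across the fiber surface---equivalently, rotating $F$ about $\varphi(\alpha)$ so that $F^{-}$ faces the reader---reverses the left/right labeling. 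Hence if $\nu$ induces a leftward arrow on $\alpha$, it must induce a rightward arrow on $\varphi(\alpha)$, and conversely. The two choices of $\nu$ then produce exactly the two bulleted configurations in the statement, and the correspondence is clearly a bijection.

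The only delicate point, and therefore the main obstacle, is pinning down this orientation-reversal under ``flipping $F^{-}$ to $F^{+}$'' carefully enough that the conclusion is unambiguous. This reduces to fixing a coherent normal direction to the oriented fiber surface $F$ once and for all, so that $F^{+}$ and $F^{-}$ are unambiguously defined, and then verifying that reflecting across $F$ swaps the labels ``left'' and ``right'' in our convention. With that convention in hand, the lemma follows immediately, and the bijection between co-orientations of $\mathbb{D}_\alpha$ and the two admissible pairs of smoothing directions on $(\alpha, \varphi(\alpha))$ is a direct consequence of $\mathbb{D}_\alpha$ being a disk together with the transverse intersection $\mathbb{D}_\alpha \pitchfork F$.
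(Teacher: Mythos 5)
Your argument is correct in substance, and it is essentially the argument this lemma rests on: the paper does not reprove the statement here but cites \cite[Section 3.4.1]{Krishna:3Braids}, where the content is exactly the local analysis you describe --- a disk has two co--orientations, transversality of $\mathbb{D}_\alpha$ with $F$ makes each co--orientation induce a transversal direction along each arc of $\mathbb{D}_\alpha \cap F$, and the two induced left/right labels must disagree because the disk attaches to $F$ from opposite sides along $\alpha$ and $\varphi(\alpha)$ while the figures record all arrows from the $F^{+}$ viewpoint. One small convention slip: under the paper's conventions the plumbing arcs (blue) are the arcs of $\mathbb{D}_\alpha \cap F^{-}$ and the image arcs (pink) are the arcs of $\mathbb{D}_\alpha \cap F^{+}$, which is the reverse of the identification you fix at the outset; since the conclusion is symmetric in the two arcs, this does not affect the proof, but it is worth aligning with the stated conventions. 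You are also right that the only genuinely delicate point is the final orientation bookkeeping --- in particular, the smoothing direction is not literally the restriction of the normal field of $\mathbb{D}_\alpha$ but is determined from it by requiring the smoothed sheets' co--orientations to agree with that of $F$; this introduces a fixed translation at each arc which differs between the $F^{+}$ and $F^{-}$ attachments and is precisely the source of the flip, so spelling that out would close the one step you currently leave implicit.
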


Accordingly, the co--orientations for the disks are encoded by recording smoothing directions for the plumbing arcs used to build the spine; see \Cref{fig:Enclosing_Bands} (middle, right).

We describe a template which provides smoothing directions for plumbing arcs (and therefore product disks) in either a single column $\Gamma_i$, or a pair of consecutive columns, $\Gamma_{i} \cup \Gamma_{i+1}$. The relative application site of these templates is determined by a particular cyclic presentation of the standardized braid. 

\begin{defn} \label{defn:pivot}
Suppose $\beta = \gamma \sigma_i \gamma'$ is a presentation of a positive braid, and $\sigma_i$ is some distinguished letter in $\beta$. We \textbf{pivot $\beta$ about $\sigma_i$} by cyclically conjugating the braid to present is as $\beta = \sigma_i \gamma' \gamma$. That is, we \textbf{pivot $\beta$ with respect to this distinguished $\sigma_i$}. 
\end{defn}

\begin{defn} \label{defn:Templates}
Let $\beta$ be a braid in standard form, and fix a presentation of the braid as $\beta = \gamma \sigma_{i} \gamma'$ for some distinguished $\sigma_i$ letter. 

\begin{itemize}
\item To \textbf{apply our template to $\Gamma_i$}, we:
	\begin{enumerate}
	\item pivot $\beta$ about $\sigma_i$,
	\item co--orient the plumbing arcs $\alpha_{i,\star} \subset S_i$ appearing before the first $\sigma_{i+1}$ letter to the left, and
	\item co--orient all subsequent plumbing arcs to the right.
	\end{enumerate}
%
\item To \textbf{apply our template to $\Gamma_i \cup \Gamma_{i+1}$}, we:
	\begin{enumerate}
	\item pivot $\beta$ about $\sigma_i$,
	\item co--orient the plumbing arcs $\alpha_{i,\star} \subset S_i$ appearing before the first $\sigma_{i+1}$ letter to the left, and all subsequent plumbing arcs to the right, and
        \item co--orient the first plumbing arc of $S_{i+1}$ to the right, and all subsequent plumbing arcs to the left. 
	\end{enumerate}
\end{itemize}
\end{defn}

We postpone the figures showing these templates.

\subsection{Study the distribution of crossings of the braid} \label{section:distribution}

Now that we have templates for building a branched surface, we choose to which columns the templates are applied. These choices are made based on the distribution of the crossings in the braid word.

\begin{defn}
For $j \in \{1, 2, 3\}$ and $1 \leq i \leq n-1$, let $\displaystyle \mathcal{C}_{j} := \sum_{i \equiv j \mod 3} c_i$.
\end{defn}

\begin{defn} \label{defn:Cmin}
We define $\Cmin$ as follows:
\begin{align*}
\Cmin := \left\{
    \begin{array}{cll}
\textup{min}\{\Cone, \Ctwo, \Cthree\} && \textup{when a unique minimum exists} 
\\
\Ctwo && \Ctwo = \Cthree < \Cone \\
\Cone && \textup{otherwise} 
    \end{array}
\right.
\end{align*}
\end{defn}

For our example in (\ref{ex:main}), we have:
\begin{align*}
 \left.
    \begin{array}{l}
	\Cone =  c_1 =3\\
	\Ctwo = c_2  = 9\\
	\Cthree = c_3 = 15 
    \end{array}
\right \} \implies \Cmin = \Cone
\end{align*}

\begin{figure}[h!]\center
\labellist
\pinlabel {$\Gamma_1$} at 84 980
\pinlabel {$\Gamma_2$} at 150 980
\pinlabel {$\Gamma_3$} at 218 980
\endlabellist
\includegraphics[scale=.55]{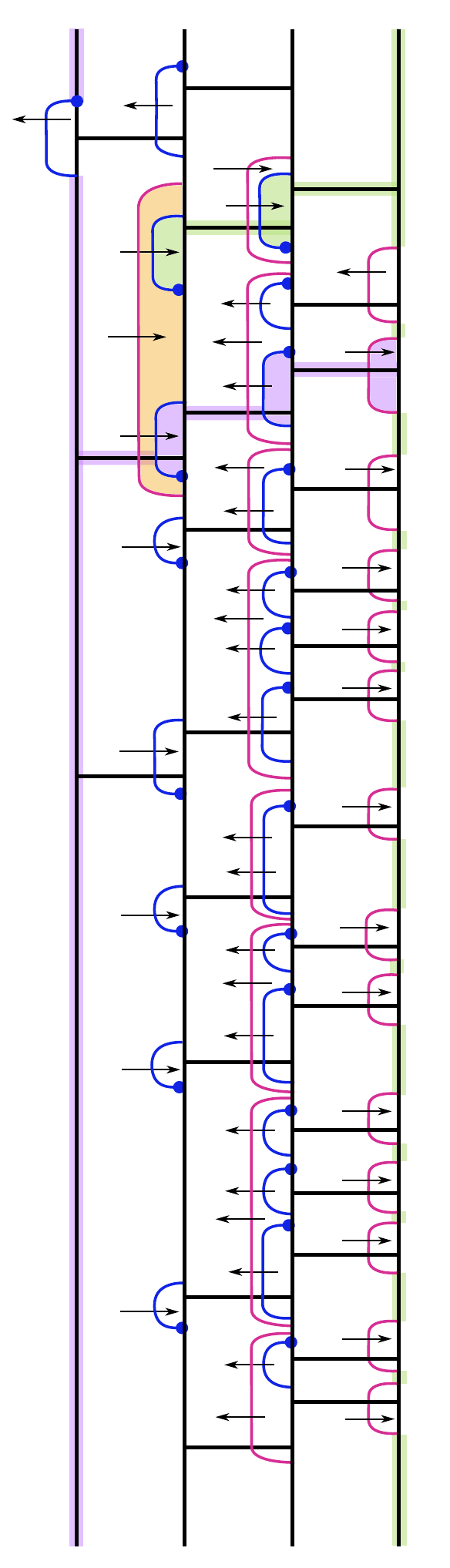}
\caption{The branched surface for $K = \widehat{\beta}$, for $\beta$ as in (\ref{ex:main}).}
\label{fig:Example_Main}
\end{figure}

\subsection{Apply the templates to build a branched surface}\label{section:column}

We apply the templates designed in \Cref{section:BuildTemplate} to the columns of $\beta$, based on the value of $\Cmin$. 

We demonstrate the technique for our ongoing example, where $\Cmin = \Cone$.  With the presentation of $\beta$ in (\ref{ex:main}), we apply our template to $\Gamma_2 \cup \Gamma_3$, and then co-orient $\alpha_{1,1}$ to the left. See \Cref{fig:Example_Main}.

\subsection{Searching for sink disks} \label{section:SinkDisk}

To apply \Cref{thm:foliations}, we require that $B$ is a laminar branched surface. By \Cref{prop:BisLaminar}, it suffices to show that $B$ is sink disk free. This requires an analysis of the branch sectors. We built $B$ from a copy of the fiber surface $F$ and a collection of co--oriented product disks, so we can classify the branch sectors into four types:
\begin{itemize}
\item the (isotoped) product disks
\item the sectors containing the Seifert disks $S_i$
\item the sectors contained in a Seifert disk
\item the remaining sectors 
\end{itemize}

\begin{lemma} \cite[Lemma 3.11]{Krishna:3Braids} \label{lemma:ProductDisksAren'tSinkDisks}
The $($isotoped$)$ product disks are never sink disks. \hfill $\Box$
\end{lemma}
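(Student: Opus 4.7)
The plan is to show that any product disk sector $\mathbb{D}_\alpha$ always has at least one smooth boundary arc along which the branch direction points outward, which simultaneously rules out $\mathbb{D}_\alpha$ being a sink disk or a half sink disk.

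First I would analyze the boundary of $\mathbb{D}_\alpha$ viewed as a branch sector of $B$. It decomposes into four pieces: the plumbing arc $\alpha$, its image $\varphi(\alpha)$, both lying on $F$ and contributing to the branch locus of $B$, together with the two arcs on $\partial X_K$ coming from the trace $\partial\alpha \times I$ of the endpoints of $\alpha$ through the monodromy. The latter two arcs lie on $\partial X_K$ and are not part of the branch locus, so the sink/half-sink condition on $\mathbb{D}_\alpha$ reduces to examining the branch directions along the two branch-locus arcs $\alpha$ and $\varphi(\alpha)$ alone.

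Next I would combine two inputs. The first is Lemma \ref{lemma:smoothing}: any choice of co-orientation for $\mathbb{D}_\alpha$ induces opposite smoothing directions on $\alpha$ and $\varphi(\alpha)$ when both are viewed on $F^+$ (one is a left pointer, the other a right pointer). The second is the geometric observation that, since $\mathbb{D}_\alpha$ is swept out by pushing $\alpha$ through the mapping torus $X_K$ (cut along $F$ into a product $F \times I$), the product disk approaches the fiber from opposite sides at its two boundary arcs: from the $F^+$ side at $\alpha$ and from the $F^-$ side at $\varphi(\alpha)$. Translating "left vs.\ right on $F^+$" into "points into vs.\ out of $\mathbb{D}_\alpha$" at each end, and accounting for the fact that $\mathbb{D}_\alpha$ sits on opposite sides of $F$, the two reversals combine so that exactly one of the two branch arrows points into $\mathbb{D}_\alpha$ while the other points outward; the symmetric bookkeeping applies to the opposite co-orientation choice.

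Concluding, $\mathbb{D}_\alpha$ fails the sink disk condition at one of $\alpha, \varphi(\alpha)$, and since the offending outward arrow lives in the interior of $X_K$ rather than on $\partial X_K$, it rules out the half sink disk condition as well. The main obstacle I expect is purely the sign bookkeeping: one must carefully match the direction of the co-orientation arrow on $F^+$ to which of the three sheets meeting at each branch arc (the two sides of $F$ and the product disk) pair up smoothly and which forms the "tongue", while also tracking the orientation-reversal caused by $\mathbb{D}_\alpha$ approaching $F$ from opposite sides at its two ends. Once these conventions are aligned, the lemma follows as a direct corollary of Lemma \ref{lemma:smoothing}.
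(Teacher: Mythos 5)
This lemma is quoted from \cite{Krishna:3Braids} and not reproved in the present paper, so there is no in-text argument to compare against; your outline is essentially the argument of the cited Lemma 3.11. Your reduction is right on both counts: the $\partial X_K$ pieces of $\partial \mathbb{D}_\alpha$ are irrelevant to the (half) sink disk condition, so only the two branch-locus arcs $\alpha$ and $\varphi(\alpha)$ matter, and the two inputs you name --- \Cref{lemma:smoothing} together with the fact that $\mathbb{D}_\alpha$ attaches to $F$ from the $F^+$ side along $\alpha$ and from the $F^-$ side along $\varphi(\alpha)$ --- are exactly what the original proof uses. The one thing you have deferred rather than done is the decisive local-model check that the opposite co-orientations at the two ends translate into one inward and one outward branch direction for the disk sector; since that check \emph{is} the content of the lemma, a complete write-up should include the two local pictures (as the cited proof does) rather than assert that ``the two reversals combine.''
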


\begin{defn}
The $($connected component$)$ of a branch sector containing the center of a Seifert disk $S_i$ is called the \textbf{$i^{th}$ disk sector}, and is denoted $\mathcal{S}_i$. $($Note the subtle calligraphic difference between $S_i$ and $\mathcal{S}_i$.$)$ If a branch sector is contained in a Seifert disk, it is a \textbf{polygon sector}. The remaining branch sectors are \textbf{horizontal sectors}.
\end{defn}

We note: our construction does not force every branch sector to topologically be a disk. 

\begin{defn}
A disk or polygon sector $\Delta$ \textbf{spans} $\Gamma_i$ if $\Delta \cap \{S_i, S_{i+1}, \mathbbm{b}_{i,1}, \ldots , \mathbbm{b}_{i, c_i}\} \neq \varnothing$. 
A horizontal sector $\mathcal{H}$ \textbf{spans} $\Gamma_j$ if $\mathcal{H} \cap S_j \neq \varnothing$ and $\mathcal{H} \cap S_{j+1} \neq \varnothing$. We say that $\mathcal{H}$ \textbf{spans} $\Gamma_j \cup \ldots \cup \Gamma_{j+m}$ if $\mathcal{H}$ spans at least one of $\Gamma_j, \ldots, \Gamma_{j+m}$.
\end{defn}




The shortest horizontal sectors are the branch sectors exactly containing a single band connecting adjacent Seifert disks. Other horizontal sectors may span more than two Seifert disks. 

\begin{rmk}
\textup{Throughout this work, we will analyze the branch sectors by type. Since \Cref{lemma:ProductDisksAren'tSinkDisks} holds, we will only analyze disk, polygon, and horizontal sectors explicitly. In practice, the horizontal sectors typically span many columns of the braid, so they are the most difficult to analyze. Finally, as shorthand, we will often say ``the sector is not a sink'' (instead of specifying whether it is ``not a sink disk'' or ``not a half sink disk'').
}
\end{rmk}

\begin{lemma} \label{lemma:SeifertDisksNotSinks}
If we apply our template to $\Gamma_j \cup \Gamma_{j+1}$, then $\mathcal{S}_{j+1}$ is not a sink.
\end{lemma}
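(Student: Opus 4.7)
The goal is to exhibit an outward-pointing smooth arc in $\partial \mathcal{S}_{j+1}$, so that $\mathcal{S}_{j+1}$ fails condition (3) in the definition of a sink disk. First I describe the branch locus in $S_{j+1}$. Pivoting $\beta$ about $\sigma_j$ places the right attachment of $\mathbbm{b}_{j,1}$ on the left side of $S_{j+1}$ at the very top of $S_{j+1}$. The branch locus on $S_{j+1}$ then consists of the plumbing arcs $\alpha_{j+1, 1}, \ldots, \alpha_{j+1, c_{j+1}-1}$, ordered vertically from top to bottom within $S_{j+1}$, together with the portions of the image arcs $\varphi(\alpha_{j, s})|_{S_{j+1}}$ for $1 \leq s \leq c_j - 1$, each of which, by \Cref{cor:image}, is a short arc hugging the left side of $S_{j+1}$ between the right attachments of two consecutive $\Gamma_j$-bands. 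By the template and \Cref{lemma:smoothing}, $\alpha_{j+1, 1}$ is co-oriented to the right, every subsequent $\alpha_{j+1, s}$ (for $s \geq 2$) is co-oriented to the left, and every image arc $\varphi(\alpha_{j,s})|_{S_{j+1}}$ carries the opposite co-orientation of its pre-image $\alpha_{j,s}$.

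Next I locate $\mathcal{S}_{j+1}$ and identify the outward pointer. Since the upper endpoint of $\alpha_{j+1, 1}$ lies above $\mathbbm{b}_{j+1, 1}$'s attachment but strictly below the top of $S_{j+1}$, the branch locus leaves a nonempty region near the top of $S_{j+1}$ lying above and to the left of $\alpha_{j+1, 1}$. The image arcs are confined to a neighborhood of the left side of $S_{j+1}$ and do not stretch across the disk, so this upper-left region persists as a single connected component after all cuts; this is $\mathcal{S}_{j+1}$. Because $\alpha_{j+1, 1}$ appears on the right boundary of $\mathcal{S}_{j+1}$ and is co-oriented to the right, its branch cusp points \emph{away} from $\mathcal{S}_{j+1}$ at every interior point of $\alpha_{j+1, 1} \cap \partial \mathcal{S}_{j+1}$. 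Hence $\alpha_{j+1, 1}$ provides an outward pointer in $\partial \mathcal{S}_{j+1}$, and $\mathcal{S}_{j+1}$ cannot be a sink disk.

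The main technical obstacle is verifying that some portion of $\alpha_{j+1, 1}$ genuinely persists on $\partial \mathcal{S}_{j+1}$ after all the other branch arcs are accounted for. This rests on a careful analysis of how the remaining $\alpha_{j+1, s}$ (for $s \geq 2$) and the image arcs $\varphi(\alpha_{j, s})|_{S_{j+1}}$ partition $S_{j+1}$, which in turn depends on the relative heights of $\Gamma_j$- and $\Gamma_{j+1}$-band attachments dictated by the pivoted presentation of $\beta$. The essential structural point is that pivoting $\beta$ about $\sigma_j$ forces $\mathbbm{b}_{j, 1}$ to sit above $\mathbbm{b}_{j+1, 1}$ on $S_{j+1}$; this is what geometrically guarantees the upper-left region of $S_{j+1}$ is nonempty, and hence the outward pointer along $\alpha_{j+1, 1}$ is indeed available.
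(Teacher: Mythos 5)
Your overall strategy (exhibit an outward pointer in $\partial\mathcal{S}_{j+1}$) is the right one, but you chose the wrong arc, and the justification for why your arc lies in $\partial\mathcal{S}_{j+1}$ rests on a false geometric premise. You claim the image arcs $\varphi(\alpha_{j,s})|_{S_{j+1}}$ are ``short arcs hugging the left side of $S_{j+1}$'' that ``do not stretch across the disk.'' This contradicts the enclosing behaviour recorded after \Cref{cor:image} and the spinal isotopy: $\varphi(\alpha_{j,s})$ encloses $d(j;s,s+1)$ bands of $\Gamma_{j+1}$ \emph{on the left}, i.e.\ it reaches across $S_{j+1}$ to the east side and wraps around the left attaching sites of the $\Gamma_{j+1}$--bands lying between $\mathbbm{b}_{j,s}$ and $\mathbbm{b}_{j,s+1}$. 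In particular, whenever $d(j;1,2)\geq 1$ the arc $\varphi(\alpha_{j,1})$ wraps around the left attachment of $\mathbbm{b}_{j+1,1}$, and the region you identify as $\mathcal{S}_{j+1}$ --- the component adjacent to $\alpha_{j+1,1}$ on the side its co-orientation points away from --- is cut off from the rest of the disk. That component is exactly the polygon sector $\mathcal{P}_1$ of \Cref{lemma:PolygonSectorsNotSinks}, which is a \emph{different} branch sector from $\mathcal{S}_{j+1}$ (and is handled by a separate lemma precisely for this reason). Since $\alpha_{j+1,1}$ is adjacent only to $\mathcal{P}_1$ on one side and to the sector containing the left attachment of $\mathbbm{b}_{j+1,1}$ on the other, it need not appear in $\partial\mathcal{S}_{j+1}$ at all, and your outward pointer is pointing out of the wrong sector.

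The paper's proof instead uses $\varphi(\alpha_{j,1})$ as the outward pointer: $\alpha_{j,1}$ is a left pointer under the template, so its image is a right pointer, and a case analysis on $d(j;1,2)\in\{0\}\cup\{1\}\cup[2,\infty)$ (together with the value of $d_L(j+1;1,2)$ when $d(j;1,2)=1$) verifies that $\varphi(\alpha_{j,1})$ genuinely lies in $\partial\mathcal{S}_{j+1}$ and exits it. If you want to repair your argument you would need either to carry out that verification for $\varphi(\alpha_{j,1})$, or to prove separately that in every configuration some portion of $\alpha_{j+1,1}$ abuts $\mathcal{S}_{j+1}$ --- but the latter is false in the configurations of \Cref{fig:Template_Polygon} where $\mathcal{P}_1$ separates $\alpha_{j+1,1}$ from the disk sector.
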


\begin{proof} 
When we defined our templates in \Cref{defn:Templates}, we first presented our standardized braid as $\sigma_j w_1 \sigma_{j+2}w_2$ (here, the identified $\sigma_{j+2}$ is the first letter of its type in this braid presentation), and then we applied our template to $\Gamma_j \cup \Gamma_{j+1}$. We fix this presentation throughout. We argue that $\mathcal{S}_{j+1}$ is not a sink because some arc always points out of the sector.
\begin{itemize}
\item If $d(j; 1,2) = 0$, then it is immediate that $\varphi(\alpha_{j,1})$ is in $\partial \mathcal{S}_{j+1}$; since $\alpha_{j,1}$ is a left pointer, $\varphi(\alpha_{j,1})$ is a right pointer, and it points out of $\mathcal{S}_{j+1}$, so the sector is not a sink. See \Cref{fig:Template_Polygon} (A).
\item Suppose $d(j; 1,2) = 1$. We have that either $d_L(j+1; 1,2) = 1$ (as in \Cref{fig:Template_Polygon} (B)) or $d_L(j+1; 1,2) \geq 2$ (as in \Cref{fig:Template_Polygon} (C)). If the former occurs, then $\varphi(\alpha_{j,1})$ and $\alpha_{j+1,1}$ are parallel arcs, and we can guarantee that $\varphi(\alpha_{j,1}) \subset \partial \mathcal{S}_{j+1}$; when the latter occurs, these two arcs are not parallel, but we can still ensure that $\varphi(\alpha_{j,1}) \subset \partial \mathcal{S}_{j+1}$, as in as in \Cref{fig:Template_Polygon} (C). Since $\varphi(\alpha_{j,1})$ is a right pointer, $\mathcal{S}_{j+1}$ is not a sink.
\item If $d(j; 1,2) \geq 2$, then $\varphi(\alpha_{j,1}) \subset \partial \mathcal{S}_{j+1}$, and so $\mathcal{S}_{j+1}$ is not a sink. See \Cref{fig:Template_Polygon} (D).
\end{itemize}

We deduce that when we apply our template to $\Gamma_j \cup \Gamma_{j+1}$, the sector $\mathcal{S}_{j+1}$ is never a sink.
\end{proof}

\begin{figure}[h!]\center
\labellist \tiny
\pinlabel {1} at 30 275
\pinlabel {1} at 58 245
\pinlabel {$\Gamma_j$} at 30 290
\pinlabel {$\Gamma_{j+1}$} at 58 290
\pinlabel {$\mathcal{P}_1$} at 30 245
\pinlabel {(A)} at 43 150
\pinlabel {1} at 174 270
\pinlabel {1} at 202 245
\pinlabel {$\Gamma_j$} at 174 290
\pinlabel {$\Gamma_{j+1}$} at 202 290
\pinlabel {$\mathcal{P}_1$} at 175 248
\pinlabel {(B)} at 188 150
\pinlabel {1} at 335 270
\pinlabel {1} at 360 245
\pinlabel {$\Gamma_j$} at 335 290
\pinlabel {$\Gamma_{j+1}$} at 360 290
\pinlabel {$\mathcal{P}_1$} at 335 248
\pinlabel {(C)} at 348 150
\pinlabel {1} at 100 123
\pinlabel {1} at 128 95
\pinlabel {$\Gamma_j$} at 100 138
\pinlabel {$\Gamma_{j+1}$} at 128 138
\pinlabel {$\mathcal{P}_1$} at 106 104
\pinlabel {(D)} at 112 -2
\pinlabel {$\Gamma_j$} at 252 138
\pinlabel {$\Gamma_{j+1}$} at 280 138
\pinlabel {$\mathcal{P}_i$} at 260 104
\pinlabel {(E)} at 266 -2
\endlabellist
\includegraphics[scale=1.23]{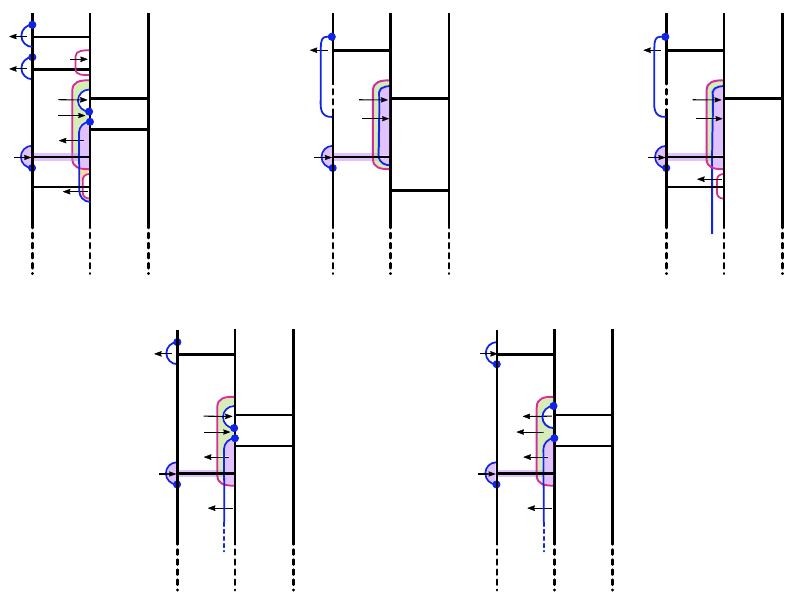}
\vspace{1em}
\caption{We apply our template to $\Gamma_j \cup \Gamma_{j+1}$. In \textbf{(A) -- (D)}, we see that when $d(j; 1,2) = 0$, $d(j; 1,2) =1$, and $d(j; 1,2) \geq 2$, the Seifert disk sector $\mathcal{S}_{j+1}$ is not a sink. These same frames demonstrate why the polygon sector $\mathcal{P}_1$ is never a sink. Frame \textbf{(E)} shows a generic polygon sector $\mathcal{P}_i$, where $2 \leq i \leq k$.}
\label{fig:Template_Polygon}
\end{figure}

\begin{lemma} \label{lemma:PolygonSectorsNotSinks}
Applying the branched surface template to $\Gamma_j \cup \Gamma_{j+1}$ does not create any sink polygon sectors in $S_{j+1}$.
\end{lemma}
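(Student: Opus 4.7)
The plan is to show every polygon sector $\mathcal{P}_i \subset S_{j+1}$ carries at least one boundary arc whose co-orientation points out of it. After the spinal isotopy, the branch locus in $S_{j+1}$ consists of the plumbing arcs $\alpha_{j+1, 1}, \ldots, \alpha_{j+1, c_{j+1}-1}$ (with both endpoints on the right side of $S_{j+1}$, near the attaching sites of $\mathbbm{b}_{j+1, \star}$) together with the images $\varphi(\alpha_{j, 1}), \ldots, \varphi(\alpha_{j, c_j-1})$ as they traverse $S_{j+1}$ (entering and exiting through the left side, near the attaching sites of $\mathbbm{b}_{j, \star}$). By the template applied to $\Gamma_j \cup \Gamma_{j+1}$, the arc $\alpha_{j+1, 1}$ is a right pointer, each $\alpha_{j+1, t}$ with $t \geq 2$ is a left pointer, and by Lemma \ref{lemma:smoothing} each $\varphi(\alpha_{j, s})$ carries the co-orientation opposite to $\alpha_{j, s}$, which is right-pointing if $\alpha_{j,s}$ sits before the first $\sigma_{j+1}$ in the pivoted presentation, and left-pointing otherwise.

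I would first dispose of $\mathcal{P}_1$, the polygon sector abutting $\alpha_{j+1, 1}$. This is precisely the case already depicted in frames (A)--(D) of Figure \ref{fig:Template_Polygon}. Splitting into the three sub-cases $d(j; 1, 2) = 0$, $d(j; 1, 2) = 1$, and $d(j; 1, 2) \geq 2$, one locates either $\alpha_{j+1, 1}$ (a right pointer) or $\varphi(\alpha_{j, 1})$ (which is also a right pointer by the template choice) in $\partial \mathcal{P}_1$, with the co-orientation pointing out of $\mathcal{P}_1$ in each sub-case.

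For a generic polygon sector $\mathcal{P}_i$ with $2 \leq i \leq k$, as in frame (E), I would exhibit an outward-pointing arc on its boundary by isolating a left-pointing $\alpha_{j+1, t}$ with $t \geq 2$ that bounds $\mathcal{P}_i$ on its left. The natural candidate is the nearest such arc to the left of $\mathcal{P}_i$: since $\mathcal{P}_i$ sits to the right of $\alpha_{j+1, t}$, the left-pointing co-orientation agrees with the outward normal to $\mathcal{P}_i$, and $\mathcal{P}_i$ fails to be a sink. When no such $\alpha_{j+1, t}$ is available on the left --- a configuration forced by the interleaving with $\varphi(\alpha_{j, s})$ arcs --- one instead uses a suitably positioned $\varphi(\alpha_{j,s})$ whose co-orientation, determined by whether $s$ falls before or after the first $\sigma_{j+1}$, points out of $\mathcal{P}_i$. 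The spatial arrangement is governed by $d(j; s, s+1)$ and $d_L(j+1; t, t+1)$.

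The main obstacle is the combinatorial bookkeeping: the $\alpha_{j+1, t}$ and $\varphi(\alpha_{j, s})$ can weave around each other inside $S_{j+1}$ in many patterns dictated by the braid word, producing polygon sectors of variable shape and number. The template's asymmetric choice --- making $\alpha_{j+1, 1}$ the unique right-pointer among the arcs on $S_{j+1}$ and pivoting $\beta$ about $\sigma_j$ so that the transition between left- and right-pointing $\varphi$-arcs is controlled --- is engineered precisely so that every polygon comes equipped with a canonical outward pointer. The proof will amount to organizing the polygons according to which $\alpha_{j+1, t}$ or $\varphi(\alpha_{j, s})$ arc serves as their witness, and verifying that the template's co-orientations are consistent with the required outward direction in every configuration.
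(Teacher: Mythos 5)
Your treatment of $\mathcal{P}_1$ matches the paper's (the paper simply observes $\alpha_{j+1,1} \subset \partial\mathcal{P}_1$ is a right pointer exiting the sector), but your primary mechanism for the generic sectors $\mathcal{P}_i$, $i \geq 2$, rests on a false geometric premise. You claim that a left-pointing $\alpha_{j+1,t}$ with $t \geq 2$ bounds $\mathcal{P}_i$ with ``$\mathcal{P}_i$ to its right,'' so that the left-pointing co-orientation exits $\mathcal{P}_i$. This is backwards. The arc $\alpha_{j+1,t}$ encloses the band $\mathbbm{b}_{j+1,t}$ on the left, so the branch sector on the side of $\alpha_{j+1,t}$ toward which a \emph{right} pointer would point contains the left attaching site of $\mathbbm{b}_{j+1,t}$; that sector meets a band and hence is never a polygon sector. (This is the same orientation bookkeeping used in \Cref{lemma:TemplatesHorizontalSectorsNotSinks}, where a right-pointing $\alpha_{t,s}$ points \emph{into} the horizontal sector containing $\mathbbm{b}_{t,s}$.) Consequently any polygon sector adjacent to $\alpha_{j+1,t}$ lies on the opposite side, which is exactly the side a left pointer points \emph{into} --- the sink direction. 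So the arcs $\alpha_{j+1,t}$, $t \geq 2$, can never serve as outward witnesses for polygon sectors; indeed the whole point of the template is that these arcs are permitted to point into the polygons, and $\alpha_{j+1,1}$ is made the lone right pointer precisely to rescue $\mathcal{P}_1$.

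What you relegate to a fallback is in fact the only mechanism available, and it is what the paper uses: for every $\mathcal{P}_i$ with $i \geq 2$, the boundary contains an image arc $\varphi(\alpha_{j,t})$ whose co-orientation exits the sector (for $\mathcal{P}_2$ adjacent to $\mathcal{P}_1$ this is the right-pointing $\varphi(\alpha_{j,1})$; otherwise it is a left-pointing $\varphi(\alpha_{j,t})$, $t$ past the first $\sigma_{j+1}$, pointing out of $\mathcal{P}_i$ and into $\mathcal{S}_{j+1}$). Your final paragraph defers exactly this verification (``the proof will amount to organizing the polygons \ldots and verifying \ldots''), so even apart from the reversed orientation the argument is not completed. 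To repair the proof, discard the plumbing-arc witnesses entirely and check, case by case as in \Cref{fig:Template_Polygon} (C)--(E), that each $\mathcal{P}_i$ with $i \geq 2$ meets an image arc on the correct side.
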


\begin{proof}
For every $i$ such that $d(j; i, i+1) \geq 2$, we produce between one and two polygon sectors; see \Cref{fig:Template_Polygon}. Moving from north to south, we label the polygon sectors $\mathcal{P}_1, \ldots, \mathcal{P}_k$. 

By construction, $\partial \mathcal{P}_1$ contains $\alpha_{j+1,1}$; this arc is a right pointer, pointing out of $\mathcal{P}_1$, so this sector is not a sink. Suppose $2 \leq i \leq k$. 

If $\partial \mathcal{P}_1 \cap \mathcal{P}_2 \neq \varnothing$ (as in \Cref{fig:Template_Polygon} (C)), then $\mathcal{P}_2$ is not a sink because $\varphi(\alpha_{j,1})$ points out of the sector. A single argument works for $\mathcal{P}_3, \ldots, \mathcal{P}_k$, or when $\partial \mathcal{P}_1 \cap \mathcal{P}_2 = \varnothing$: in these cases, the polygon sector contains some image arc $\alpha_{j,t}$ which points out of $\mathcal{P}_i$ and into $S_{j+1}$, so the polygon sector is not a sink; see \Cref{fig:Template_Polygon} (D), (E). Therefore, no polygon sectors in $S_{j+1}$ are sinks.
\end{proof}



Finally, we check that the horizontal sectors are not sinks. 

\begin{lemma} \label{lemma:TemplatesHorizontalSectorsNotSinks}
If we apply our template to $\Gamma_j \cup \Gamma_{j+1}$, then there is at most one potential horizontal sink disk sector spanning $\Gamma_j \cup \Gamma_{j+1}$, and it is the sector containing the unique band in $\Gamma_{j+1}$ enclosed by a right pointing plumbing arc. 
\end{lemma}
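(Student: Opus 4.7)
The plan is to examine each horizontal branch sector of $B$ that spans $\Gamma_j \cup \Gamma_{j+1}$ and exhibit an outward-pointing boundary arc for every such sector except one. I would first inventory the co-oriented arcs the template places in $S_j$, $S_{j+1}$, and $S_{j+2}$. By construction, the plumbing arcs in $S_j$ come as a block of left-pointers $\alpha_{j,1}, \ldots, \alpha_{j,p}$ followed by a block of right-pointers $\alpha_{j,p+1}, \ldots, \alpha_{j,c_j-1}$, where the transition is dictated by the first occurrence of $\sigma_{j+1}$. By \Cref{lemma:smoothing}, the corresponding image arcs $\varphi(\alpha_{j,s})$ in $S_{j+1}$ take the opposite smoothing direction: right-pointing for $s \leq p$ and left-pointing for $s > p$. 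In $S_{j+1}$, only $\alpha_{j+1,1}$ is right-pointing, while every $\alpha_{j+1,t}$ with $t \geq 2$ is left-pointing; the image arcs in $S_{j+2}$ invert accordingly.

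Next, I would classify horizontal sectors spanning $\Gamma_j \cup \Gamma_{j+1}$ according to the consecutive pair of branch arcs of $S_{j+1}$ bounding them there. The key observation is that, in $S_{j+1}$, the only adjacent pair of \emph{plumbing} arcs whose co-orientations both point inward to the region between them is $(\alpha_{j+1,1}, \alpha_{j+1,2})$. For any other adjacent pair $(\alpha_{j+1,t}, \alpha_{j+1,t+1})$ with $t \geq 2$, both arcs are left-pointing, so the left arc points out of the region they bound; analogously, when the left boundary of a sector is instead a left-pointing image arc $\varphi(\alpha_{j,s})$ with $s > p$, the same conclusion holds. A parallel analysis on the $S_j$-side (using the left-/right-pointer transition in $S_j$) and on the $S_{j+2}$-side (using the right-pointing image arcs $\varphi(\alpha_{j+1,t})$ for $t \geq 2$) eliminates the horizontal sectors spanning $\Gamma_j$ but not $\Gamma_{j+1}$, leaving the sector containing $\mathbbm{b}_{j+1,1}$ --- the unique band in $\Gamma_{j+1}$ enclosed on the left by the right-pointing plumbing arc $\alpha_{j+1,1}$ --- as the sole potential horizontal sink.

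The main obstacle will be the combinatorial bookkeeping. Boundaries of horizontal sectors typically involve plumbing arcs and image arcs drawn in several Seifert disks, and the precise geometry of the image arcs depends on the values $d(j; s, s+1)$ from the braid presentation. I would have to handle sub-cases according to whether a sector's $S_{j+1}$-portion is bounded by two plumbing arcs, a plumbing arc and an image arc, or two image arcs, and separately verify that extending a sector into $\Gamma_{j-1}$ or $\Gamma_{j+2}$ does not alter the conclusion. The template's deliberately asymmetric choice of co-orientations --- a single right-pointer in $S_{j+1}$ and a single transition in $S_j$ --- ensures that outward-pointing arcs are abundant except at the one distinguished location, so the case analysis terminates cleanly once the dichotomy above is in place.
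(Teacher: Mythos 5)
Your overall strategy --- enumerate the horizontal sectors and exhibit an exiting boundary arc for every one except the sector containing $\mathbbm{b}_{j+1,1}$ --- is the right one, and your inventory of the template's co-orientations is accurate. The gap is in your ``key observation,'' which misidentifies the dangerous local configuration. A horizontal sector consisting of a single band $\mathbbm{b}_{t,s}$ (which requires $d(t;s-1,s)=0$) is bounded by the plumbing arc $\alpha_{t,s}$ in $S_t$ and by the \emph{image} arc $\varphi(\alpha_{t,s-1})$ in $S_{t+1}$; it is threatened only when $\alpha_{t,s-1}$ is a left pointer and $\alpha_{t,s}$ is a right pointer, i.e.\ at a left-then-right transition between consecutive plumbing arcs. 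In $S_{j+1}$ the template's pattern is right-then-left ($\alpha_{j+1,1}$ right, all later arcs left), so this configuration never occurs there at all. The pair $(\alpha_{j+1,1},\alpha_{j+1,2})$ you single out is instead precisely the pair producing the unique \emph{linked pair} of arcs in \Cref{lemma:TemplateLinking} --- a train-track phenomenon, not a sink-disk one. The genuine potential sink is the sector cut off by $\alpha_{j+1,1}$ alone: it contains $\mathbbm{b}_{j+1,1}$, whose right attachment site is enclosed by no image arc (there is no $\varphi(\alpha_{j+1,0})$), so no arc of $\Gamma_j \cup \Gamma_{j+1}$ can be shown to exit it; $\alpha_{j+1,2}$ plays no role in bounding it.

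The second gap is on the $S_j$ side, which you wave off as ``parallel.'' The left-to-right transition $\alpha_{j,p} \mapsto \alpha_{j,p+1}$ in $S_j$ is exactly the dangerous pattern just described, so far from eliminating the sectors spanning only $\Gamma_j$, it is the one place where a single-band sink could arise. What saves the construction is that the template places this transition at the first $\sigma_{j+1}$ letter, forcing $d(j;p,p+1) \geq 1$: the sector containing $\mathbbm{b}_{j,p+1}$ therefore spans both columns and is absorbed into the two-column analysis, where the paper's argument is that any such sector meets $S_{j+1}$ in a plumbing arc $\alpha_{j+1,s}$, and every $\alpha_{j+1,s}$ with $s \geq 2$ exits it, leaving only $s=1$. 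This is the step your proposal must supply explicitly. Relatedly, your appeal to the image arcs in $S_{j+2}$ to eliminate sectors spanning $\Gamma_j$ but not $\Gamma_{j+1}$ is misplaced, since such sectors never reach $S_{j+2}$.
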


\begin{figure}
\labellist 
\pinlabel {$\mathbbm{b}_{t, s-1}$} at 144 150
\pinlabel {$\mathbbm{b}_{t, s}$} at 140 76
\endlabellist
\includegraphics[scale=.5]{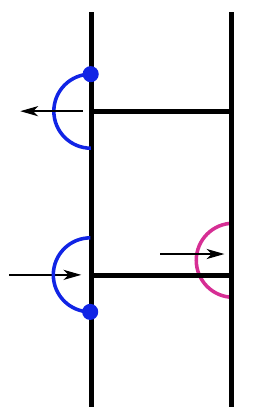}
\caption{The horizontal sector containing $\mathbbm{b}_{t,s+1}$ is a sink, but we never use these smoothings in our construction.}
\label{fig:SinkDiskInBand}
\end{figure}

\begin{proof}
Suppose $\mathcal{H}$ is a horizontal sector spanning $\Gamma_j \cup \Gamma_{j+1}$. 

Recall that every plumbing arc encloses exactly one band on the left, and that every image arc always encloses exactly one band on the right. If $\mathcal{H}$ spans exactly one column, then it exactly contains some band $\mathbbm{b}_{t, s}$ for $t \in \{j, j+1\}$, and $d(t; s-1, s) =0$. In particular, $\mathcal{H} \cap S_t$ in $\alpha_{t, s}$ and $\mathcal{H} \cap S_{t+1}$ in $\varphi(\alpha_{t, s-1})$.
For $\mathcal{H}$ to be a sink, we must have that both $\alpha_{t,s}$ and $\varphi(\alpha_{t,s-1})$ are right pointers (therefore, $\alpha_{t,s-1}$ is a left pointer); see \Cref{fig:SinkDiskInBand}. However, our template is designed to avoid these instructions, so such an $\mathcal{H}$ does not appear.

Alternatively, suppose $\mathcal{H}$ spans both $\Gamma_j \cup \Gamma_{j+1}$. Then, in particular, $\mathcal{H} \cap S_{j+1}$ is non-empty. We claim that $\mathcal{H} \cap S_{j+1}$ is a plumbing arc $\alpha_{j+1, s}$: if $\mathcal{H}$ spans multiple columns, then $\mathcal{H}$ must contain some band $\mathbbm{b}_{j,\ell}$ and some other band $\mathbbm{b}_{j+1, r}$; moreover, when scanning the braid from top-to-bottom, $\mathbbm{b}_{j+1,r}$ must lie between $\mathbbm{b}_{j, \ell-1}$ and $\mathbbm{b}_{j, \ell}$. In fact, for $\mathcal{H}$ to span both $\Gamma_j$ and $\Gamma_{j+1}$, we must have that $\mathbbm{b}_{j+1,r}$ is the last band between $\mathbbm{b}_{j, \ell-1}$ and $\mathbbm{b}_{j, \ell}$. Therefore, we have that $d(j; \ell, \ell+1) \geq 1$. If $d(j; \ell, \ell+1) = 1$, then we are in a configuration as in \Cref{fig:Template_Polygon} (B, C), so we can isotope our arcs so that $\mathcal{H} \cap S_{j+1}$ is a plumbing arc. Otherwise, $d(j; \ell, \ell+1) \geq 2$, and we have a configuration as in \Cref{fig:Template_Polygon} (A, D, E); in these cases, we see that the image arc $\varphi(\alpha_{j, \ell-1})$ at least partially encloses the plumbing arc $\alpha_{j+1,r}$. In particular, $\mathcal{H}$ meets $S_{j+1}$ in $\alpha_{j+1, r}$, and we deduce that $\mathcal{H} \cap S_{j+1}$ is always a plumbing arc $\alpha_{j+1, s} = \alpha_{j+1,r}$.  

If $\mathcal{H} \cap S_{j+1}$ contains $\alpha_{j+1,s}$ for $s \geq 2$, then since $\alpha_{j+1,s}$ is a left pointer, it must point out of $\mathcal{H}$, and $\mathcal{H}$ is not a sink; see \Cref{fig:Template_Polygon} (A, D, E). Alternatively, $\mathcal{H} \cap S_{j+1}$ contains $\alpha_{j+1,1}$, which is a right pointer; \Cref{fig:Template_Polygon} (B, C). We cannot conclude that this sector is not a sink, so this is the unique potential horizontal sink disk spanning $\Gamma_j \cup \Gamma_{j+1}$. 
\end{proof}

With these lemmas established, we now inspect the branch sectors for our ongoing example, as seen in \Cref{fig:Example_Main}. We check that there are no sink disks in $B$.
\begin{itemize}
\item \Seifert We have to check that the Seifert disk sectors $\mathcal{S}_1, \mathcal{S}_2, \mathcal{S}_3, \mathcal{S}_4$ are no sinks. $\mathcal{S}_1$ is not a sink: this sector containing $S_1$ also contains the bands $\mathbbm{b}_{1,2}$ and $\mathbbm{b}_{2,3}$, so in particular, $\partial \mathcal{S}_1$ contains $\alpha_{3,3}$, which points out of the sector. $\partial \mathcal{S}_2$ contains $\varphi(\alpha_{1,1})$, which points out of the region. We applied our template to $\Gamma_2 \cup \Gamma_3$; applying \Cref{lemma:SeifertDisksNotSinks}, we see that $\mathcal{S}_3$ is not a sink. Finally, all but one of the image arcs in $\mathcal{S}_4$ are right pointers, so $\mathcal{S}_4$ is not a sink.
\item \Polygon There are polygon sectors in $S_2$ and $S_3$. There is a unique polygon sector in $S_2$, and it is not a sink because $\alpha_{2,2}$ points out of the sector. We applied our template to $\Gamma_2 \cup \Gamma_3$: by \Cref{lemma:PolygonSectorsNotSinks}, no polygon sectors in $S_3$ are sinks.
\item \Horizontal Suppose $\mathcal{H}$ is a horizontal sector spanning $\Gamma_1$. By inspecting \Cref{fig:Example_Main}, we see that every horizontal sector spanning $\Gamma_1$ also spans $\Gamma_2$. Therefore, it suffices to see why no horizontal sector spanning $\Gamma_2 \cup \Gamma_3$ is a sink. By \Cref{lemma:TemplatesHorizontalSectorsNotSinks}, we see that there is at most one potential horizontal sector spanning $\Gamma_2 \cup \Gamma_3$, and it is the sector containing $\mathbbm{b}_{2,1}$ and $\mathbbm{b}_{3,1}$. However, in \Cref{fig:Example_Main}, we also see that this sector also contains $\mathcal{S}_4$, which we already proved is not a sink. Therefore, we deduce that no horizontal sector in $B$ is a sink.
\end{itemize}

We deduce that the branched surface in \Cref{fig:Example_Main} does not contain any sink disks.

\subsection{Calculate the slopes carried by the train track} \label{section:TrainTrack}
As noted in \Cref{background:foliations}, every product disk $\mathbb{D}_\alpha$ meets $\partial X_K$ in a pair of arcs. Therefore, our branched surface $B$ meets $\partial X_K$ in a train track formed from the Seifert longitude $\lambda$, and one pair of arcs for each disk $\mathbb{D}_\alpha$ used to build $B$. 

\begin{defn}
For a branched surface $B$ in $X_K$, the associated train track on $\partial X_K$ is $\tau_B$.
\end{defn}

The train track $\tau_B$ is a co--oriented trivalent graph on $\partial X_K$.

\begin{defn}
Fix a train track $\tau_B$. Each component of $\mathbb{D}_\alpha \cap \partial X_K$ is a \textbf{sector} of $\tau_B$. 
\end{defn}


\begin{defn} \label{defn:contribute}
Let $\Sigma$ denote a sector of $\tau_B$; so, in particular, $\Sigma$ is a co-oriented arc of $\tau_B$. Suppose we begin traversing $\lambda$ from the north-most point of $S_1$ $($the left-most Seifert disk$)$, and travel due south. Let $s_1$ $($resp. $s_2)$ denote the first $($resp. second$)$ endpoint of $\Sigma$ encountered while traveling along $\lambda$. Let $\eta$ denote the normal vector to $\lambda$ at $s_1$. If $\langle \eta, \lambda \rangle = +1$, then we say that \textbf{$\Sigma$ contributes maximally to $\tau_B$}. See \Cref{fig:TrainTrackSetup}.
\end{defn}

Since each product disk $\mathbb{D}_\alpha$ is the sweep-out of a fixed arc $\alpha$, then each disk contributes exactly two sectors to $\tau_B$: these sectors correspond to the two endpoints of $\alpha$. Moreover, exactly one of these sectors contributes maximally to $\tau_B$, and the other does not. The endpoint of an arc $\alpha \subset F$ which contributes maximally to $\tau_B$ is \textbf{bolded} in our figures.



\begin{lemma}
Suppose $\alpha$ is a geodesic plumbing arc on $F$. Suppose we begin traversing $\lambda$ from the north-most point of $S_1$ and continue traveling due south. Let $\epsilon_1$ $($resp. $\epsilon_2)$ denote the first (resp. second) endpoint of $\alpha$ encountered while traveling along $\lambda$. If $\alpha$ is co--oriented to the left $($resp. right$)$, then $\epsilon_1$ $($resp. $\epsilon_2)$ contributes maximally to $\tau_B$. \hfill $\Box$
\end{lemma} 


Let $\nu$ denote a sector of $\tau_B$ which contributes maximally to $\tau_B$, and let  $\theta$ denote the sector which does not contribute maximally to $\tau_B$. Then $\lambda \cup \nu$ fully carries all slopes in $(0, 1)$, and $\lambda \cup \theta$ fully carries all slopes $(-\infty, 0)$. It is tempting -- but wrong! -- to think that the train track $\tau_B$ carries all slopes $r < C$, where $C$ is the total number of product disks used to build $B$, as this argument excludes the possibility that sectors in $\partial X_K$ can be \textbf{linked}:

\begin{defn}
Let $\alpha$ and $\alpha'$ be co-oriented plumbing arcs on $F$. As described above, each arc contributes exactly one sector to $\tau_B$ which contributes maximally to $\tau_B$ -- let $\nu$ and $\nu'$ denote these sectors of $\tau_B$. Let $s_1, s_2$ denote the endpoints of $\nu$, and $s_1'$ and $s_2'$ denote the endpoints of $\nu'$. If, while traversing $\lambda$, we encounter the endpoints in the order $s_1, s_2, s_1', s_2'$, we say $\alpha$ and $\alpha'$ are \textbf{linked arcs}. Alternatively, if while traversing $\lambda$, we encounter the endpoints in the order $s_1, s_2, s_1', s_2'$, then $\alpha$ and $\alpha'$ are \textbf{unlinked} or \textbf{not linked}. See \Cref{fig:TrainTrackSetup}.
The arcs $\alpha_1, \alpha_2, \alpha_3$ forms a \textbf{linked triple} if $\alpha_2$ is linked with both $\alpha_1$ and $\alpha_3$, but $\alpha_1$ and $\alpha_3$ are not linked with each other.
\end{defn}

\begin{figure}[h!]\center
\labellist
\pinlabel {$\lambda$} at -5 45
\pinlabel {$\Sigma_1$} at 95 62
\pinlabel {$\Sigma_2$} at 230 62
\pinlabel {$\Sigma_3$} at 350 62
\pinlabel {$\Sigma_4$} at 410 62
\endlabellist
\includegraphics[scale=.7]{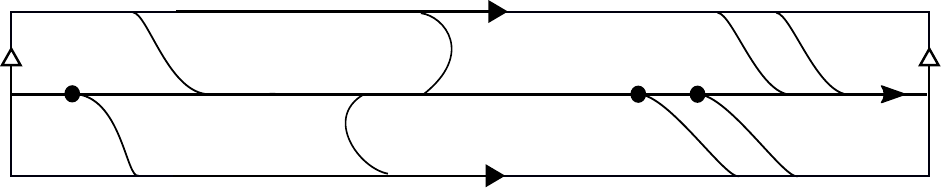}
\caption[]{The sectors $\Sigma_1$, $\Sigma_3$ and $\Sigma_4$ contribute maximally to $\tau_B$. $\Sigma_1 \cup \lambda$ fully carries $(0, 1)$. $\Sigma_2 \cup \lambda$ fully carries $(-\infty, 0)$. The sectors $\Sigma_3$ and $\Sigma_4$ are linked, so $\Sigma_3 \cup \Sigma_4 \cup \lambda$ fully carries $(0, 1)$. }
\label{fig:TrainTrackSetup}
\end{figure}

Thus, to compute the slopes carried by $\tau_B$, we need to first count the total number of product disks used to build $B$, and then deduct the total number of pairs of linked arcs.

\begin{defn}
Let $B$ be a branched surface constructed from a copy of the fiber surface and a collection of co-oriented plumbing disks, and let $\tau_B$ be the associated train track. Then $$\TauSup := \sup \{\ r \ | \ r \text{ is a slope fully carried by $\tau_B$} \}.$$
\end{defn}

That is, $\tau_B$ fully carries all slopes $r$ such that $r < \TauSup$. A priori, it is possible that $\TauSup$ can be any value in $\Q \cup \{\infty\}$. Our construction forces $\TauSup$ to always be a positive integer.

\begin{lemma} \label{lemma:TemplateLinking}
Applying our template to $\Gamma_i$ yields no pairwise linking between any arcs in $S_i$. However, applying our template to $\Gamma_i \cup \Gamma_{i+1}$ yields exactly one pair of linked arcs in $S_i \cup S_{i+1}$.
\end{lemma}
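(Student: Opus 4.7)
The plan is to combinatorially track where the endpoints of the plumbing arcs sit along $\lambda = \partial F$ and then read off which pairs interleave. The essential placement data comes from \Cref{lemma:monodromy}: $u_{\alpha_{i,s}}$ sits just above the attachment site of $\mathbbm{b}_{i,s}$ on $S_i$, and $\ell_{\alpha_{i,s}}$ sits just below the attachment sites on $S_i$ of the $d_L(i; s, s+1)$ bands coming from $\Gamma_{i-1}$. The analogous statement holds for arcs in $S_{i+1}$ after replacing $i$ by $i+1$.

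For the first claim, I will trace $\lambda$ southward and record where each endpoint appears. Because the attachment sites of $\mathbbm{b}_{i, 1}, \mathbbm{b}_{i, 2}, \ldots, \mathbbm{b}_{i, c_i}$ along the right edge of $S_i$ occur in their natural top-to-bottom order, and similarly for the $\sigma_{i-1}$ bands on the left edge, the placement rule forces the upper endpoints $u_{\alpha_{i, 1}}, \ldots, u_{\alpha_{i, c_i-1}}$ to appear in sequence along $\lambda$, followed by the lower endpoints $\ell_{\alpha_{i, c_i-1}}, \ldots, \ell_{\alpha_{i, 1}}$ in reverse order. Hence the endpoints of any two plumbing arcs in $S_i$ are nested rather than interleaved, so no pair is linked, independent of the co--orientations the template assigns. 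This immediately yields the conclusion under the $\Gamma_i$ template, and also shows there is no linking purely within $S_i$ under the $\Gamma_i \cup \Gamma_{i+1}$ template.

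For the second claim, the identical nesting argument applied inside $S_{i+1}$ shows that no two arcs in $S_{i+1}$ are linked with each other. What remains is to investigate cross-column pairs $(\alpha_{i, s}, \alpha_{i+1, t})$. The endpoints of arcs in $S_i$ and $S_{i+1}$ occupy largely disjoint portions of $\lambda$ and can interleave only through the shared neighborhoods of the $\sigma_i$-band attachment sites. I claim the unique linked pair is the one determined by the topmost $\sigma_i$-band: after pivoting $\beta$ about $\sigma_i$, the band $\mathbbm{b}_{i, 1}$ is the topmost $\sigma_i$-band, $u_{\alpha_{i, 1}}$ sits on the right of $S_i$ just above $\mathbbm{b}_{i, 1}$, and $\ell_{\alpha_{i+1, 1}}$ sits on the left of $S_{i+1}$ just below $\mathbbm{b}_{i, 1}$, so these two endpoints are separated along $\lambda$ only by the band itself, forcing interleaving. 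The template co--orients $\alpha_{i, 1}$ to the left and $\alpha_{i+1, 1}$ to the right, so the bolded endpoints of this pair satisfy the interleaving pattern required by the definition of linking.

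The main obstacle is verifying that no other cross-column pair is linked. I expect this to be a case analysis on $(s, t)$ in which the counting functions $d(i; \cdot, \cdot)$ and $d_L(i+1; \cdot, \cdot)$ from \Cref{defn:column} determine how many bands separate the endpoints of $\alpha_{i, s}$ and $\alpha_{i+1, t}$ along $\partial S_i$ and $\partial S_{i+1}$; in each case the separation should force nesting rather than interleaving, and the standard form hypothesis on $\beta$ (no $\sigma_i \sigma_{i+1} \sigma_i$ subword) rules out the degenerate cases where two such endpoints could coincide at a shared attachment site.
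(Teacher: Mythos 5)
Your proposal misidentifies the unique linked pair, and the root cause is a misreading of what ``linked'' means. In the paper's framework, linking is a property of the sectors of the boundary train track $\tau_B$, i.e.\ of the arcs $\partial\alpha\times I\subset\partial X_K$ running from an endpoint of $\alpha$ to the corresponding endpoint of $\varphi(\alpha)$; the distinguished endpoint is the one contributing maximally, and which endpoint that is depends on the co--orientation of $\mathbb{D}_\alpha$. Your nesting argument tracks only the endpoints of the plumbing arcs inside $S_i$ and concludes that same--disk arcs are never linked ``independent of the co--orientations'' --- but linking cannot be co--orientation--independent (the definition explicitly singles out the maximally contributing endpoints), and since $\varphi(\alpha_{i,s})$ lies in $S_{i+1}$ after the spinal isotopy, each train--track sector coming from $\alpha_{i,s}$ stretches along $\lambda$ past the bands of $\Gamma_i$ into the adjacent disk. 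The correct statement, which the paper establishes by checking the four co--orientation combinations for a consecutive pair $\alpha_{i,t},\alpha_{i,t+1}$ (\Cref{fig:SameDiskLinking}), is that such a pair is linked in exactly one of the four cases: when the first arc is a right pointer and the second a left pointer. The $\Gamma_i$ template (a block of left pointers followed by a block of right pointers) never realizes this pattern, whereas the $S_{i+1}$ half of the $\Gamma_i\cup\Gamma_{i+1}$ template (one right pointer followed by left pointers) realizes it exactly once, at $(\alpha_{i+1,1},\alpha_{i+1,2})$.

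Consequently your claimed linked pair $(\alpha_{i,1},\alpha_{i+1,1})$ is wrong in both directions: the actual pair $(\alpha_{i+1,1},\alpha_{i+1,2})$ lies entirely in $S_{i+1}$ (the worked example confirms this --- applying the template to $\Gamma_2\cup\Gamma_3$ produces the linked pair $\alpha_{3,1},\alpha_{3,2}$), and the template is designed precisely so that \emph{no} arc of $S_i$ links with any arc of $S_{i+1}$: the two cross--column configurations that do produce linking (\Cref{fig:AdjacentDiskLinking}) are exactly the ones the co--orientation assignments avoid. To repair the argument you must replace the positional nesting claim with (i) the four--case co--orientation check for consecutive arcs in a single disk and (ii) the two--case check for arcs in adjacent disks; the counting functions $d$ and $d_L$ enter only in locating where the image arcs sit, not in ruling out interleaving on positional grounds.
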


\begin{figure}[h!]\center
\labellist \footnotesize
\pinlabel {$\alpha_{i,t}$} at -8 100
\pinlabel {$\alpha_{i,t+1}$} at -13 25
\pinlabel {$\alpha_{i,t}$} at 180 100
\pinlabel {$\alpha_{i,t+1}$} at 175 25
\pinlabel {$\alpha_{i,t}$} at 355 100
\pinlabel {$\alpha_{i,t+1}$} at 350 25
\pinlabel {$\alpha_{i,t}$} at 540 100
\pinlabel {$\alpha_{i,t+1}$} at 535 25
\endlabellist
\includegraphics[scale=.7]{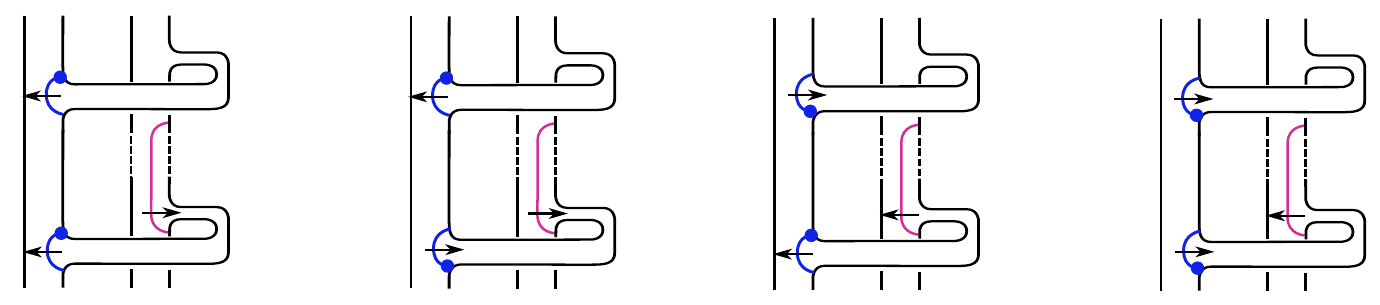}
\caption{If $\alpha_{i,t}$ and $\alpha_{i,t+1}$ are linked, then $\alpha_{i,t}$ is a right pointer and $\alpha_{i,t+1}$ is a left pointer.}
\label{fig:SameDiskLinking}
\end{figure}

\begin{proof}

Fix a presentation of $\beta$. As described in \Cref{section:spine}, if a plumbing arc $\alpha$ lies in $S_i$, then after performing the spinal isotopy, $\varphi(\alpha)$ lies in $S_{i+1}$. Therefore, if some arc links with $\alpha_{s,t}$, then the arc either lies in $S_s$ or $S_{s\pm1}$. 

We first study how arcs in the same Seifert disk can be linked. If $\alpha_{i,t}$ links with some other arc $\alpha_{i, r}$, then we must have that $r = t \pm 1$; in particular, they are consecutive plumbing arcs on $S_i$. Without loss of generality, let us assume that we are studying $\alpha_{i,t}$ and $\alpha_{i,t+1}$. As there are two possible co-orientations each for $\alpha_{i,t}$ and $\alpha_{i,t+1}$, there are four possible cases to study. As seen in \Cref{fig:SameDiskLinking}, of these four cases, there is only one case where a linked pair arises: it is when $\alpha_{i,t}$ is a left pointer and $\alpha_{i,t+1}$ is a right pointer. In particular, if we apply our template to a single column $\Gamma_i$, then our design ensures that there are no linked pairs in $S_i$; if we apply our template to $\Gamma_i \cup \Gamma_{i+1}$, then our design of this template produces exactly one pair of linked arcs in $S_{i+1}$.

To finish proving the lemma, we need to check that if we apply our template to $\Gamma_i \cup \Gamma_{i+1}$, there is no linking between arcs in adjacent columns. If $\alpha_{i,t}$ links with some other arc $\alpha_{i+1,r}$, then we must have that $\alpha_{i,t}$ immediately precedes $\alpha_{i+1,r}$, or $\alpha_{i,t}$ immediately follows $\alpha_{i+1,r}$; see \Cref{fig:AdjacentDiskLinking}. For them to be linked, we either have:
\begin{itemize}
\item $\alpha_{i,t}$ is a left pointer immediately preceded by a right pointing $\alpha_{i+1,r}$, as in \Cref{fig:AdjacentDiskLinking} (left), or
\item $\alpha_{i,t}$ is a right pointer; and if $\mathbbm{b}_{i+1, r}$ is the last band between $\alpha_{i,t}$ and $\alpha_{i+1,t}$, then $\alpha_{i+1,r}$ is also a right pointer, as in \Cref{fig:AdjacentDiskLinking} (right).
\end{itemize}

We designed our template for $\Gamma_{i} \cup \Gamma_{i+1}$ to avoid these instructions. We deduce that there is no pairwise linking between arcs in adjacent columns in this case. 
\end{proof}

For the knot $K$ realized as the closure of the braid in (\ref{ex:main}), we see exactly one pair of linked arcs in \Cref{fig:Example_Main}: they are $\alpha_{3,1}$ and $\alpha_{3,2}$. We now calculate the slopes carried by $\tau_B$: in our example, $\Cmin = \Cone$. We compute $\TauSup$:
\begin{align*}
\TauSup &= (\text{\# disks from $\Gamma_1$}) + (\text{\# disks from $\Gamma_2$}) + (\text{\# disks from $\Gamma_3$}) - (\text{\# pairs of linked arcs}) \\
&= 1 + (9-1) + (15-1) - 1 = 22
\end{align*}
Therefore, $\tau_B$ carries all slopes $r < 22$. We now compare this value to the genus $K \approx \widehat{\beta}$: 
$$2g(K)-1 = 2g(F)-1 = \mathcal{C}-n = 27-4 = 23$$

We see that $\TauSup = 2g(K)-2$, and that $\tau_B$ carries all slopes $r < 2g(K)-2$. 

\begin{figure}[h!]\center
\labellist \tiny
\pinlabel {$\alpha_{i,t}$} at -5 98
\pinlabel {$\alpha_{i+1,r}$} at 100 170
%
\pinlabel {$\alpha_{i,t}$} at 235 170
\pinlabel {$\alpha_{i+1,r}$} at 340 112
\endlabellist
\includegraphics[scale=.7]{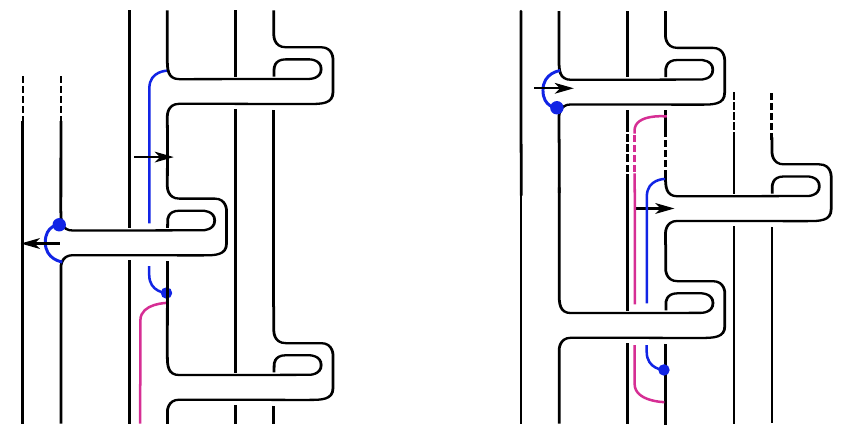}
\caption{How linking can occur between arcs in adjacent Seifert disks.}
\label{fig:AdjacentDiskLinking}
\end{figure}

\subsection{Constructing taut foliations} \label{section:Foliations} We are ready to build taut foliations in many manifolds obtained by Dehn surgery along $\widehat{\beta}$, for $\beta$ as in (\ref{ex:main}). By \Cref{section:SinkDisk}, we have constructed a sink disk free branched surface $B$. By \Cref{prop:BisLaminar}, $B$ is laminar. In \Cref{section:TrainTrack}, we showed the train track $\tau_B$ carries all slopes in the interval $(-\infty, 2g(K)-2)$. Applying \Cref{thm:foliations}, we conclude that $S^3_r(K)$ admits a taut foliation for all $r < 2g(K)-2$.

This concludes the example, and the outline of the construction. The key is leveraging the distribution of crossings throughout the braid to build a laminar branched surface. The delicate portions of the argument occur while arguing the branch surface is both sink disk free while attempting to maximize $\TauSup$.

\section{Proof of \Cref{thm:main} for positive $4$--braids}
\label{section:Proof4Braids}

Proving \Cref{thm:main} requires dividing the set of (prime) positive braid knots $\beta$ into two categories, based on the distribution of crossings in the  columns used for the construction. 

\begin{defn}
A positive $n$--braid $\beta$ is \textbf{generic} if for all $1 \leq i \leq n-1$, $c_i \geq 3$. If, for some $i$, $1 \leq i \leq n-1$, $c_i =2$, then $\beta$ is \textbf{sparse}.
\end{defn}


\begin{defn}
Suppose $\mathcal{S}$ and $\mathcal{R}$ are portions of a branched surface $B$. If $\mathcal{S}$ and $\mathcal{R}$ are part of the same branch sector, we write $\mathcal{S} \sim \mathcal{R}$. 
\end{defn}

In Sections \ref{section:4--braids} and \ref{section:Edge4Braids}, we prove \Cref{thm:main} for generic and sparse 4--braids, respectively.  

\subsection{Generic positive 4--braids} \label{section:4--braids}
To prove \Cref{thm:main} for generic positive 4--braids, we first note that such braids have only three columns, so there are three possible values for $\Cmin$. This determines how many product disks to take from the three columns, as summarized in \Cref{table:Generic4Braids}.

\begin{table}[H]
\begin{tabular}{|c || c | c | c | c |}
\hline
\textit{Value of} & \multicolumn{3}{c|}{\textit{\# of disks chosen from}} & \textit{total $\#$} \\
\cline{2-4}
$\Cmin$ & $\Gamma_1$ & $\Gamma_2$ & $\Gamma_3$ & \textit{disks added} \\
\hline
$\Cone$ & 1 & $c_2-1$ & $c_3-1$ & $c_2 + c_3-1$ \\
$\Ctwo$ & $c_1-1$ & $1$ & $c_3-1$ & $c_1 + c_3-1$ \\
$\Cthree$ & $c_1-1$ & $c_2-1$ & $1$ & $c_1 + c_2-1$ \\
\hline
\end{tabular} 
\caption{The cases to consider for positive 4--braids.}
\label{table:Generic4Braids}
\end{table}

\begin{prop} \label{prop:Generic4BraidCMinIsCThree}
Suppose $\beta$ is a generic positive 4--braid such that $\Cmin = \Cthree$. There exists a sink disk free branched surface $B$ for $\widehat{\beta}$ such that $\tau_B$ has exactly one linked pair or one linked triple.
\end{prop}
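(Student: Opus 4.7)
The plan is to build $B$ by applying the two-column template from \Cref{defn:Templates} to $\Gamma_1 \cup \Gamma_2$ and then adjoining one product disk from $\Gamma_3$, matching the disk counts in \Cref{table:Generic4Braids}. After cyclically conjugating $\beta$ about a suitable $\sigma_1$ letter and applying the template, \Cref{lemma:SeifertDisksNotSinks}, \Cref{lemma:PolygonSectorsNotSinks}, and \Cref{lemma:TemplatesHorizontalSectorsNotSinks} show that every branch sector supported in $\Gamma_1 \cup \Gamma_2$ is non-sink, with the only possible exception being the horizontal sector $\mathcal{H}^*$ that contains the band $\mathbbm{b}_{2,1}$ enclosed on the left by the right-pointing arc $\alpha_{2,1}$. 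By \Cref{lemma:TemplateLinking}, the template contributes exactly one linked pair in $S_1 \cup S_2$, namely $\{\alpha_{2,1}, \alpha_{2,2}\}$.

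Next I would adjoin one product disk from $\Gamma_3$, coming from a plumbing arc $\alpha_{3,k} \subset S_3$ with a chosen co-orientation. The choice must simultaneously accomplish two things: first, neutralize $\mathcal{H}^*$ by extending its sector across $\Gamma_3$ so that the extended boundary contains an outward-pointing arrow (typically provided by the new arc itself, by an image arc $\varphi(\alpha_{2,*})$ lying on the $\mathcal{S}_3$ side, or by $\mathcal{S}_4$ which contains the outward-pointing image $\varphi(\alpha_{3,k})$); second, produce at most one new linking in $\tau_B$. By the genericity assumption $c_3 \geq 3$ together with the combinatorial restrictions of \Cref{lemma:prime} and \Cref{lemma:prime2}, a case split on $d(2;1,2)$, $d_L(3;1,2)$, and the position of the first $\sigma_3$ letter in the pivoted presentation yields an explicit pair $(\alpha_{3,k},\text{co-orientation})$ realizing both requirements.

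A sector-by-sector sink-disk analysis for the augmented $B$ then proceeds along the lines of \Cref{section:SinkDisk}: one checks the Seifert disk sectors $\mathcal{S}_3$ and $\mathcal{S}_4$, the polygon sectors in $S_3$, the short horizontal sectors spanning only $\Gamma_3$, and the long horizontal sectors spanning $\Gamma_2 \cup \Gamma_3$ or all three columns. The outward-pointing arrows needed for the long sectors are provided by the co-orientations prescribed by the $\Gamma_1 \cup \Gamma_2$ template, while the short sectors are handled by the chosen co-orientation of $\alpha_{3,k}$. For the linking count, the preexisting pair $\{\alpha_{2,1}, \alpha_{2,2}\}$ is preserved, and the new arc $\alpha_{3,k}$ contributes at most one additional linking; any such new linking must involve either an adjacent plumbing arc in $S_2$ or an image arc $\varphi(\alpha_{2,s}) \subset S_3$, and in the worst case it produces a second linked pair that shares $\alpha_{2,2}$ with the existing pair, yielding a linked triple rather than a disjoint pair.

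The principal obstacle is the coupling between the two conditions on the $\Gamma_3$ disk: neutralizing $\mathcal{H}^*$ typically forces $\alpha_{3,k}$ to sit near $\varphi(\alpha_{2,1})$ or $\varphi(\alpha_{2,2})$, which threatens to create a second independent linked pair rather than a triple. Resolving this requires the careful case analysis above on the relative positions of the first few $\sigma_3$ letters with respect to the first two $\sigma_2$ letters, and in a handful of corner configurations one may need to toggle the pivot to a different $\sigma_1$ letter to guarantee that any new linking feeds into the existing pair, thereby producing a triple instead of two disjoint pairs.
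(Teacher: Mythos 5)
Your plan is correct and follows essentially the same route as the paper: apply the two-column template to $\Gamma_1 \cup \Gamma_2$, identify the unique potential horizontal sink containing $\mathbbm{b}_{2,1}$ and the unique linked pair $\{\alpha_{2,1},\alpha_{2,2}\}$ from \Cref{lemma:TemplateLinking}, and then add a single co-oriented disk from $\Gamma_3$ whose choice is governed by a case split on $d(2;1,2)$ (the paper's Case (A) with $d(2;1,2)\geq 2$ giving a left-pointing $\alpha_{3,1}$ and one linked pair, and Case (B) with $d(2;1,2)=0$ requiring a re-pivot about the last $\sigma_3$ and a right-pointing $\alpha_{3,1}$, producing the linked triple through the shared arc exactly as you predict). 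The case analysis you defer is precisely the content of the paper's Cases (A) and (B), including your observed tension between neutralizing the potential sink and avoiding a second independent linked pair.
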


\begin{proof}
For a positive 4-braid as in the hypotheses, we can always find a conjugated presentation of $\beta$ which presents it as $\beta \approx \sigma_1 \omega_1 \sigma_1 \omega_2$, where $\omega_1$ is non-empty, contains no $\sigma_1$ letters, and contains some $\sigma_2$ letters. Since $\beta$ is standardized, with this presentation of the braid, we know that $d(1; 1,2) \geq 2$. We use this presentation of $\beta$ to build the branched surface. 

To build $B$, we first apply our template to $\Gamma_1 \cup \Gamma_2$. Next, we have to choose a single co--oriented product disk from $\Gamma_3$. This choice is dependent on the value of $d(2;1,2)$:

\begin{itemize}
\item \textbf{Case (A):} if $d(2; 1,2) \geq 2$, then co--orient $\alpha_{3,1}$ to the left. 
\item \textbf{Case (B):} if $d(2; 1,2) = 0$, then we will modify the presentation of $\beta$. Based on that forthcoming presentation of $\beta$, we will co--orient $\alpha_{3,1}$ to the right. 
\end{itemize}

Notice that $d(2; 1,2) \neq 1$, because this contradicts the assumption that $\beta$ is standardized.

\tcbox[size=fbox, colback=gray!30]{\textbf{Case (A):} $d(2;1,2) \geq 2$.} 

An example of our branched surface appears in \Cref{fig:Positive4Braid_CminC3_new}. We prove that $B$ has no sink disks.


\begin{figure}[h!]\center
\labellist
\pinlabel {$\Gamma_1$} at 31 140
\pinlabel {$\Gamma_2$} at 58 140
\pinlabel {$\Gamma_3$} at 85 140
\pinlabel {$\mathcal{R}$} at 60 75
\tiny
\pinlabel {$x$} at 76 82
\pinlabel {$\varphi(x)$} at 106 66
\endlabellist
\includegraphics[scale=1.45]{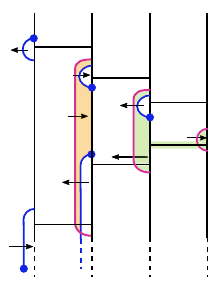}
\vspace{1em}
\caption{A positive 4--braid with $\Cmin = \Cthree$, and $d(2; 1,2) \geq 2$. We have applied our template to $\Gamma_1 \cup \Gamma_2$, and have chosen a single product disk from $\Gamma_3$. The resulting branched surface $B$ is sink disk free. $\mathcal{R}$ denotes the green sector whose boundary includes $\varphi(\alpha_{2,1})$ and $\alpha_{3,1}$.
}
\label{fig:Positive4Braid_CminC3_new}
\end{figure}

\begin{itemize}
\item \Seifert by design, $\alpha_{1,2}$ points out of $\mathcal{S}_1$, so this sector is not a sink. By \Cref{lemma:SeifertDisksNotSinks}, $\mathcal{S}_2$ is not a sink. Since $d(2;1,2) \geq 2$ and $\alpha_{3,1}$ is the unique co-oriented plumbing arc in $S_3$, $\varphi(\alpha_{2,1})$ encloses only $\alpha_{3,1}$. In particular, $\partial \mathcal{S}_3$ contains $\varphi(\alpha_{2,2})$; this arc is a right pointer, so $\mathcal{S}_3$ is not a sink. Finally, since $\mathcal{S}_4 \sim \mathbbm{b}_{3,1}$, we have $\partial \mathcal{S}_4$ contains $\alpha_{3,1}$. This arc is a left pointer exiting $\mathcal{S}_4$, so this sector is not a sink. We deduce that none of the Seifert disk sectors are sinks. 
\item \Polygon if a polygon sector $\mathcal{P}$ exists, it must lie in either $S_2$ or $S_3$, as these are the only disk sectors containing both plumbing and image arcs. By \Cref{lemma:PolygonSectorsNotSinks}, the polygon sectors in $S_2$ are not sinks. By construction, there is a unique potential polygon sector (in \Cref{fig:Positive4Braid_CminC3_new}, it is the green sector labeled $\mathcal{R}$) in $S_3$, and it contains $\varphi(\alpha_{2,1})$ in its boundary. As $\alpha_{2,1}$ is a right pointer, $\varphi(\alpha_{2,1})$ is a left pointer exiting $\mathcal{R}$, so this sector is not a sink. We deduce that there are no polygon sector sink disks in $B$. 
\item \Horizontal by \Cref{lemma:TemplatesHorizontalSectorsNotSinks}, there is at most one possible horizontal sector $\mathcal{H}$ spanning $\Gamma_1 \cup \Gamma_2$ that is a sink, and it is the sector containing $\mathbbm{b}_{2,1}$. By design, the right endpoint of $\mathbbm{b}_{2,1}$ is not enclosed by any arcs, thus $\mathcal{H} \sim \mathcal{S}_3$; we already argued the latter is not a sink. We deduce that no horizontal sector spanning $\Gamma_1 \cup \Gamma_2$ is a sink. 

We argue that no horizontal sector $\mathcal{H}$ spanning $\Gamma_3$ is a sink. If $\mathcal{H}$ contains $\mathbbm{b}_{3,1}$, then $\mathcal{H} \sim \mathcal{S}_4$, which is not a sink. Next, notice that $\mathbbm{b}_{3, 2}, \ldots, \mathbbm{b}_{3, d(2; 1,2)}$ are all part of the same branch sector, as they are all enclosed, on the left, by $\varphi(\alpha_{2,1})$. This means that $\mathbbm{b}_{3, 2} \sim \ldots \sim \mathbbm{b}_{3, d(2; 1,2)} \sim \mathcal{R}$; in our polygon sector analysis, we already proved this sector is not a sink. Finally, suppose $\mathcal{H} \sim \mathbbm{b}_{3,s}$, for $d(2; 1,2) + 1 \leq s \leq c_3$. The bands $\mathbbm{b}_{3,s}$ are not enclosed, on the right, by any arcs. Therefore, $\mathcal{H} \sim \mathcal{S}_4$, which we already argued is not a sink. We deduce that no horizontal sectors in $B$ are sinks.
\end{itemize}

Therefore, we can construct a sink disk free branched surface in \textbf{Case (A)}. We now determine the number of linked pairs in $\tau_B$. By  \Cref{lemma:TemplateLinking}, the arcs $\alpha_{2,1}$ and $\alpha_{2,2}$ are the unique pair of linked arcs in $S_1 \cup S_2$. By consulting \Cref{fig:Positive4Braid_CminC3_new}, one sees that $\alpha_{3,1}$ does not link with any arcs in $S_2$: while traversing $\lambda$ on $\tau_B$, there are no sectors coming between $x$ and $\varphi(x)$. We therefore have a unique pair of linked arcs in $\tau_B$. This concludes the construction in \textbf{Case (A)}.

\begin{figure}[h!]\center
\labellist \tiny
\pinlabel {$\Gamma_1$} at 31 168
\pinlabel {$\Gamma_2$} at 58 168
\pinlabel {$\Gamma_3$} at 85 168
\pinlabel {$\mathbbm{b}_{2,\star}$} at 55 112
\pinlabel {(A)} at 58 0
\pinlabel {$\Gamma_1$} at 172 168
\pinlabel {$\Gamma_2$} at 198 168
\pinlabel {$\Gamma_3$} at 228 168
\pinlabel {$\mathbbm{b}_{2,\star}$} at 195 106
\pinlabel {(B)} at 198 0
\pinlabel {$\Gamma_1$} at 315 168
\pinlabel {$\Gamma_2$} at 340 168
\pinlabel {$\Gamma_3$} at 370 168
\pinlabel {$\mathbbm{b}_{2,\star}$} at 337 106
\pinlabel {(C)} at 340 0
\endlabellist
\includegraphics[scale=1.2]{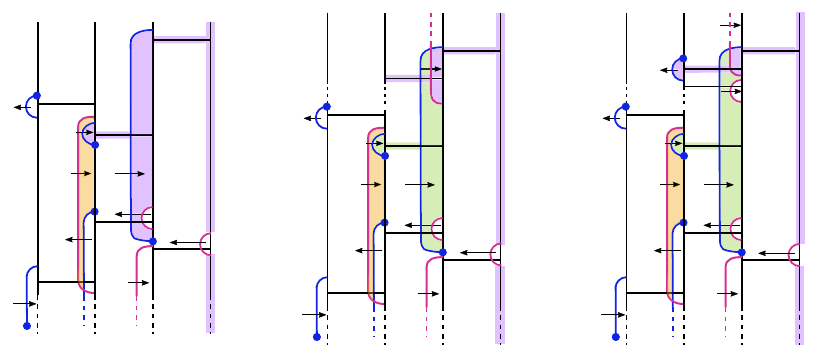}
\vspace{1em}
\caption{A positive 4--braid with $\Cmin = \Cthree$, and $d(2; 1,2) =0$. We have applied our template to $\Gamma_1 \cup \Gamma_2$, then modified the presentation of $\beta$ so that it begins with a $\sigma_3$ letter. The three cases of whether $\rho_3$ contains zero, one, or at least two $\sigma_2$ letters are displayed in frame \textbf{(A), (B),} and \textbf{(C)}, respectively. Said differently, \textbf{(A)} is where $d_L(3; 1,2) = 2$, \textbf{(B)} is where $d_L(3; 1,2) = 3$, and \textbf{(C)} is where $d_L(3; 1,2) \geq 4$.}
\label{fig:Positive4Braid_CminC3_new_2}
\end{figure}

\tcbox[size=fbox, colback=gray!30]{\textbf{Case (B):} $d(2;1,2) =0$.} 
We have already applied our template to $\Gamma_1 \cup \Gamma_2$; to proceed, we modify the presentation of $\beta$, and only then choose a product disk from $\Gamma_3$. We follow the following procedure:

\begin{enumerate}
\item At present, $\beta$ is presented as $\sigma_1 \omega_1 \sigma_1 \omega_2$, where $\omega_1$ contains at least two $\sigma_2$ letters and no $\sigma_1$ letters. By assumption, we have $d(2;1,2)=0$, so by applying braid relations if necessary, we can write our braid as $\sigma_1 \sigma_2^2 \theta_1 \sigma_1 \theta_2$, where $\theta_1$ contains no $\sigma_1$ letters. We now identify the last $\sigma_3$ letter in the braid. This allows us to write the braid as $\sigma_1 \sigma_2^2 \rho_1 \sigma_1 \rho_2 \sigma_3 \rho_3$, where $\rho_3$ contains no $\sigma_3$ letters. 
\item Now pivot $\beta$ about the identified $\sigma_3$. Now, our braid reads as $\sigma_3 \rho_3 \sigma_1 \sigma_2^2 \rho_1 \sigma_1 \rho_2$. 
\end{enumerate}

\Cref{fig:Positive4Braid_CminC3_new_2} presents three examples of the result. 
This modified presentation identifies a new ``first'' product disk in $\Gamma_3$. We emphasize: it is possible that the relative position of the unique plumbing arc in $S_2$ which is enclosed, on the left, by a right pointer has changed -- this arc used to be called $\alpha_{2,1}$, but we know refer to it as $\alpha_{2, \star}$, as in \Cref{fig:Positive4Braid_CminC3_new_2}. Moreover, note that in our new presentation, the band $\mathbbm{b}_{3,1}$ may be enclosed, on the left, by a right pointing image arc, as in \Cref{fig:Positive4Braid_CminC3_new_2} (B, C). 
We already built our branched surface in $\Gamma_1 \cup \Gamma_2$; we now add the product disk swept out by $\alpha_{3,1}$, and co--orient this arc to the right. We check that the resulting $B$ is sink disk free.

\begin{itemize}
\item \Seifert the arc $\alpha_{2,1}$ points out of $\mathcal{S}_1$, so the sector $\mathcal{S}_1$ is not a sink. By \Cref{lemma:SeifertDisksNotSinks}, $\mathcal{S}_2$ is not a sink. By design, $\alpha_{3,1}$ is contained in $\partial \mathcal{S}_3$; this arc is a right pointer, so $\mathcal{S}_3$ is not a sink. We now argue that $\mathcal{S}_4$ is not a sink. 

Since $\beta$ is generic, $c_3 \geq 3$. In particular, $\mathbbm{b}_{3,3}$ is not enclosed, on the right, by any arc, so $\mathcal{S}_4 \sim \mathbbm{b}_{3,3}$. We now understand how $\mathbbm{b}_{3,3}$ is enclosed on the left. 

	\begin{itemize}
	\item[$\boldsymbol{\circ}$] There is a unique left pointing image arc in $S_3$, and by design, it does not enclose any bands in $\Gamma_3$. In particular, $\mathbbm{b}_{3,3}$ cannot be enclosed by a left pointing image arc. 
	\item[$\boldsymbol{\circ}$] If $\mathbbm{b}_{3,3}$ is not enclosed on the left, then $\mathcal{S}_4 \sim \mathcal{S}_3$; we already argued the latter is not a sink. 
	\item[$\boldsymbol{\circ}$] Suppose $\mathbbm{b}_{3,3}$ is enclosed, on the left, by the right pointer $\varphi(\alpha_{2,t})$. As image arcs always enclose bands on the right, we have that $\mathcal{S}_4 \sim \mathbbm{b}_{3,3} \sim \mathbbm{b}_{2,t+1}$. Therefore, $\mathcal{S}_4$ is contained in a horizontal sector which spans $\Gamma_2$. By \Cref{lemma:TemplatesHorizontalSectorsNotSinks}, we know that there is a unique horizontal sector spanning $\Gamma_1 \cup \Gamma_2$ which could be a sink, and it is the sector containing the band $\mathbbm{b}_{2, \star}$ which is enclosed, on the left, by a right pointing plumbing arc. Since $\varphi(\alpha_{2,t})$ is a right pointer, then $\alpha_{2,t}$ is a left pointer; by the design of our template, we know that $\alpha_{2,t+1}$ is either also a left pointer, or the band $\mathbbm{b}_{2,t}$ is not enclosed by any plumbing arc. Regardless, we deduce that $t+1 \neq \star$, and so $\mathcal{S}_4$ is not a sink.  
	\end{itemize}
Therefore, none of the Seifert disk sectors are sink disks.
\item \Polygon if a polygon sector $\mathcal{P}$ exists, it lies in either $S_2$ or $S_3$. By \Cref{lemma:PolygonSectorsNotSinks}, the polygon sectors in $S_2$ are not sinks. 

Suppose $\partial \mathcal{P} \subset S_3$. Then, by design, $\partial \mathcal{P}$ contains $\alpha_{3,1}$ and $\varphi(\alpha_{2,\star})$; see \Cref{fig:Positive4Braid_CminC3_new_2}. In particular, $\mathcal{P} \sim \mathbbm{b}_{2,\star}$. Once again, we have three cases to consider; the cases are determined by whether $d_L(3;1,2) = 2$ or $d_L(3;1,2) \geq 3$. (These are exactly the cases displayed in  \Cref{fig:Positive4Braid_CminC3_new_2}, and correspond to whether $\rho_3$ contains zero, one, or at least two $\sigma_2$ letters.)
	\begin{itemize}
	\item[$\boldsymbol{\circ}$] If $d_L(3;1,2) = 2$, as in \Cref{fig:Positive4Braid_CminC3_new_2} (A), then not only does $\mathcal{P}$ contain $\mathbbm{b}_{2,1}$, but $\mathcal{P} \sim \mathbbm{b}_{3,1}$. Therefore, $\mathcal{P} \sim \mathcal{S}_4$. We already proved $\mathcal{S}_4$ is never a sink, so neither is $\mathcal{P}$. 
	\item[$\boldsymbol{\circ}$] If $d_L(3;1,2) \geq 3$, as in  \Cref{fig:Positive4Braid_CminC3_new_2} (B, C), then $\partial \mathcal{P}$ also contains some right pointing image arc. (In our current presentation of our braid, this arc is $\varphi(\alpha_{2,c_2})$.) Therefore, $\mathcal{P}$ is not a sink.
	\end{itemize}
\item \Horizontal We first observe: if $d_L(3; 1,2) = 2$ (as in \Cref{fig:Positive4Braid_CminC3_new_2} (A)), then $\star = 1$; however, if $d_L(3; 1,2) \geq 3$, then $\star = d_L(3;1,2) - 1$. 

Applying \Cref{lemma:TemplatesHorizontalSectorsNotSinks}, there is at most one potential horizontal sink disk sector spanning $\Gamma_1 \cup \Gamma_2$, and in the current presentation of the braid, it is the sector containing $\mathbbm{b}_{2,\star}$. Therefore, to finish the proof, we need to prove two things: (1) that the horizontal sector containing $\mathbbm{b}_{2,\star}$ is not a sink, and (2) that no horizontal sector spanning $\Gamma_3$ is a sink. 
\end{itemize}

\tcbox[size=fbox, colback=gray!10]{The horizontal sector $\mathcal{H}$ containing $\mathbbm{b}_{2,\star}$ is not a sink.} 

	\begin{itemize}
	\item[$\boldsymbol{\circ}$] If $d_L(3;1,2) = 2$, then $\mathbbm{b}_{2,\star} = \mathbbm{b}_{2,1}$. When we studied the polygon sectors, we showed that $\mathbbm{b}_{2,1} \sim \mathcal{S}_4$, which is not a sink.
	\item[$\boldsymbol{\circ}$] If $d_L(3;1,2) \geq 3$, then $\mathbbm{b}_{2, \star}$ is in the same sector as a polygon sector in $S_3$, which we already proved is not a sink.
	\end{itemize}

Therefore, no horizontal sector spanning $\Gamma_1 \cup \Gamma_2$ is a sink.

\tcbox[size=fbox, colback=gray!10]{No horizontal sector spanning $\Gamma_3$ is a sink.} 

Suppose $\mathcal{H}$ is a horizontal sector spanning $\Gamma_3$. Notice that there is a unique band in $\Gamma_3$ which is enclosed, on the right, by an image arc: it is $\mathbbm{b}_{3,2}$. Therefore, if $\mathcal{H} \sim \mathbbm{b}_{3,2}$, then since this band is enclosed, on the right, by a left pointer, $\mathcal{H}$ is not a sink; see \Cref{fig:Positive4Braid_CminC3_new_2}. For all other bands $\mathbbm{b}_{3,r}, r \neq 2$, if $\mathcal{H} \sim \mathbbm{b}_{3,r}$, $\mathcal{H} \sim \mathcal{S}_4$. We already proved that $\mathcal{S}_4$ is not a sink, so we deduce that no horizontal sector spanning $\Gamma_3$ is a sink. 

This concludes our sink disk analysis; we deduce that when $d(2; 1,2) = 0$, the branched surface $B$ is sink disk free. We now turn to our analysis of the train track. 

By \Cref{lemma:TemplateLinking}, this is a unique linked pair coming from arcs in $S_1 \cup S_2$. Additionally, we see that $\alpha_{2, \star+1}$ is linked with both $\mathbbm{b}_{2, \star}$ and $\mathbbm{b}_{3,1}$, but $\mathbbm{b}_{2, \star}$ and $\mathbbm{b}_{3,1}$ are not linked with each other. Therefore, we have a single linked triple of arcs. 
\end{proof}

\begin{prop}  \label{prop:Generic4BraidCMinIsCOne}
Suppose $\beta$ is a generic positive 4--braid such that $\Cmin = \Cone$. Then there exists a sink disk free branched surface $B$ for $\widehat{\beta}$ such that $\tau_B$ has exactly one pair of linked arcs. 
\end{prop}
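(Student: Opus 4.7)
Since $\Cmin = \Cone$ prescribes taking $c_2 - 1$ disks from $\Gamma_2$, $c_3 - 1$ from $\Gamma_3$, and a single disk from $\Gamma_1$ (see Table~\ref{table:Generic4Braids}), the plan is to mirror the construction of Proposition~\ref{prop:Generic4BraidCMinIsCThree}: apply the template of Definition~\ref{defn:Templates} to $\Gamma_2 \cup \Gamma_3$ and then adjoin a single product disk from $\Gamma_1$. Because $\beta$ is generic, so that $c_2 \geq 3$, and in standard form, one can cyclically conjugate $\beta$ into the presentation $\beta \approx \sigma_2 \omega_1 \sigma_2 \omega_2$ with $\omega_1$ nonempty and containing no $\sigma_2$ letters. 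Pivoting about this distinguished first $\sigma_2$ and applying the template assigns co-orientations to every plumbing arc in $S_2 \cup S_3$. The extra disk from $\Gamma_1$ is the one swept out by $\alpha_{1,1}$, co-oriented to the left, mirroring the worked example of Section~\ref{section:example}.

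In analogy with the previous proposition, the argument splits into cases based on $d(1;1,2)$: a generic subcase in which this quantity is large enough that $\alpha_{1,1}$ and its image sit far away from the linked pair produced by the template, and a degenerate subcase in which the presentation of $\beta$ must be further refined (by a second pivot, this time about a well-chosen $\sigma_1$) before $\alpha_{1,1}$ is attached.

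The bulk of the work is a branch sector analysis. By Lemma~\ref{lemma:ProductDisksAren'tSinkDisks} the product disks themselves are never sinks. Lemmas~\ref{lemma:SeifertDisksNotSinks}, \ref{lemma:PolygonSectorsNotSinks}, and \ref{lemma:TemplatesHorizontalSectorsNotSinks} applied to $\Gamma_2 \cup \Gamma_3$ dispatch every sector supported in $S_2 \cup S_3$ except for the single flagged horizontal sector from Lemma~\ref{lemma:TemplatesHorizontalSectorsNotSinks}. I would then verify: that $\mathcal{S}_1$ is not a sink since $\alpha_{1,1}$ exits it; that $\mathcal{S}_2$ is not a sink since $\varphi(\alpha_{1,1})$ is a right pointer exiting it; that $\mathcal{S}_4$ is not a sink because sufficiently many image arcs in $S_3$ are right pointers; that the flagged horizontal sector either merges with a verified disk sector or meets $\varphi(\alpha_{1,1})$ in its boundary; and that every horizontal sector spanning $\Gamma_1$ merges with a Seifert disk sector already shown not to be a sink.

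Finally, for the linked pair count, Lemma~\ref{lemma:TemplateLinking} yields exactly one linked pair from the template, namely $\alpha_{3,1}$ and $\alpha_{3,2}$. The only possible additional linking involves the endpoints of $\alpha_{1,1}$ with arcs in $S_2$, since the $S_1$ endpoint is isolated from the rest of the construction. Because $\alpha_{1,1}$ is a left pointer, $\varphi(\alpha_{1,1})$ is a right pointer, and a direct inspection of the four parity patterns in Figures~\ref{fig:SameDiskLinking} and~\ref{fig:AdjacentDiskLinking} shows that in the chosen presentation none of them can occur. The main obstacle is the degenerate subcase: after the second pivot, one must verify that the resulting construction produces neither a sink in $S_2$ nor a linked triple, since the present proposition (in contrast to Proposition~\ref{prop:Generic4BraidCMinIsCThree}) claims exactly one \emph{linked pair} rather than allowing a linked triple.
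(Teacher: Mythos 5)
Your skeleton matches the paper's: apply the template to $\Gamma_2 \cup \Gamma_3$ from a presentation $\beta \approx \sigma_2\sigma_1 w_1 \sigma_3 w_2$, adjoin the single product disk swept out by $\alpha_{1,1}$, dispatch everything supported in $\Gamma_2\cup\Gamma_3$ via Lemmas \ref{lemma:SeifertDisksNotSinks}, \ref{lemma:PolygonSectorsNotSinks}, \ref{lemma:TemplatesHorizontalSectorsNotSinks}, and \ref{lemma:TemplateLinking}, and then check that $\alpha_{1,1}$ neither creates a sink nor links with anything in $S_2$. The gap is in the one step that is genuinely new here: the co-orientation of $\alpha_{1,1}$. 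You fix it to be a left pointer in all cases, whereas the paper conditions the choice on $d(1;1,2)$ \emph{and} on the pattern of co-orientations of the plumbing arcs of $S_2$ enclosed by $\varphi(\alpha_{1,1})$ -- a right pointer when $d(1;1,2)=0$ or when $\varphi(\alpha_{1,1})$ encloses at least two coherently oriented arcs, and a left pointer otherwise (\Cref{fig:Positive4Braid_CminC1_new}). The conditional choice is not cosmetic: when $2 \leq d(1;1,2) \leq c_2-2$ and the enclosed arcs $\alpha_{2,2},\ldots,\alpha_{2,d(1;1,2)}$ are coherently oriented, they cobound a polygon sector in $S_2$ with $\varphi(\alpha_{1,1})$, and making $\alpha_{1,1}$ a left pointer turns $\varphi(\alpha_{1,1})$ into a right pointer so that every smooth arc in the boundary of that polygon points inward -- a sink disk. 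This is precisely the configuration in which the paper flips $\alpha_{1,1}$ to a right pointer. Your proposed escape hatch for the ``degenerate subcase'' -- a second pivot about a well-chosen $\sigma_1$ -- does not address this, because the obstruction is the co-orientation of $\alpha_{1,1}$, not its position in the braid word; it is also not what the paper does (the re-presentation trick belongs to Case (B) of \Cref{prop:Generic4BraidCMinIsCThree}, whose structure you appear to be transplanting here).

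A symptom of the same problem is that your sink-disk checklist is internally inconsistent. The claim that $\mathcal{S}_1$ is not a sink ``since $\alpha_{1,1}$ exits it'' requires $\alpha_{1,1}$ to be a \emph{right} pointer; with your left co-orientation $\alpha_{1,1}$ points into $\mathcal{S}_1$, and the paper must instead argue (in its Case (B)) that $\mathcal{S}_1 \sim \mathbbm{b}_{1,2} \sim \mathbbm{b}_{2,d(1;1,2)}$ is a horizontal sector spanning $\Gamma_2$ and then invoke \Cref{lemma:TemplatesHorizontalSectorsNotSinks} -- an argument unavailable when $d(1;1,2)=0$. Conversely, your claim that $\mathcal{S}_2$ is not a sink ``since $\varphi(\alpha_{1,1})$ is a right pointer exiting it'' requires the \emph{left} co-orientation. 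The two bullets cannot both hold for a single choice; in the paper each belongs to a different branch of the case analysis. Your linking analysis (one pair $\alpha_{3,1},\alpha_{3,2}$ from the template, plus a check that $\alpha_{1,1}$ links with nothing in $S_2$) is correct in outline and survives either co-orientation, but the sink-disk portion needs the paper's conditional construction to go through.
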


\begin{proof}

First, we find a suitable presentation of $\beta$: by conjugating and applying the braid relations, we may assume $\beta \approx \sigma_2 \sigma_1 w_1 \sigma_3 w_2$, where $w_1$ is either empty, or contains only $\sigma_1$ or $\sigma_2$ letters. (We quickly justify this assumption: as $\beta$ is generic, $c_1 \geq 3$. In particular, after potentially applying the braid relation $\sigma_1 \sigma_3 = \sigma_3 \sigma_1$, we can write the braid such that there exists at least one $\sigma_2$ letter followed by at least one $\sigma_1$ letter; conjugate $\beta$ to begin with said $\sigma_2$ letter. Thus, $\beta \approx \sigma_2 \sigma_1 \beta''$, where $\beta''$ is arbitrary. Identifying the first $\sigma_3$ letter in $\beta''$ allows us to write the braid as $\beta \approx \sigma_2 \sigma_1 w_1 \sigma_3 w_2$, where $w_1$ could be the empty word.)

We use this presentation of $\beta$ to build $B$: we will apply our standard template to $\Gamma_2 \cup \Gamma_3$, and include the product disk swept out by $\alpha_{1,1}$. The co--orientation of $\alpha_{1,1}$ is dependent on $d(1; 1,2)$; recall that this value is either zero, or at least two, as $\beta$ is standardized. 

\textbf{Case (A):} We set $\alpha_{1,1}$ to be a right pointer if:
\begin{itemize}
\item If $d(1; 1,2) = 0$, as in \Cref{fig:Positive4Braid_CminC1_new} (A).
\item If $d(1; 1,2) \geq 2$ and $\varphi(\alpha_{1,1})$ encloses at least two arcs, all of which are coherently oriented, as in \Cref{fig:Positive4Braid_CminC1_new}, (B), (C). These frames show two different cases: in (B), $2 \leq d(1; 1,2) \leq c_2 - 2$, and in (C), $d(1; 1,2) = c_2 - 1$. 
\end{itemize}

\textbf{Case (B):} We set $\alpha_{1,1}$ to be a left pointer if:
\begin{itemize}
\item If $d(1; 1,2) \geq 2$ and $\varphi(\alpha_{1,1})$ encloses exactly one co--oriented arc, as in \Cref{fig:Positive4Braid_CminC1_new}, (D), (E).
\item If $d(1; 1,2) \geq 2$ and $\varphi(\alpha_{1,1})$ encloses at least two arcs, all of which are co--oriented, but such that the arcs are not coherently oriented; see \Cref{fig:Positive4Braid_CminC1_new}, (F).
\item If $\varphi(\alpha_{1,1})$ encloses only right pointing plumbing arcs (not pictured). 
\end{itemize}

\begin{figure}[h!]\center
\labellist \tiny
\pinlabel {(A)} at 42 175
\pinlabel {(B)} at 152 175
\pinlabel {(C)} at 262 175
\pinlabel {(D)} at 42 -5
\pinlabel {(E)} at 152 -5
\pinlabel {(F)} at 262 -5
\pinlabel {$\Gamma_1$} at 31 335
\pinlabel {$\Gamma_2$} at 58 335
\pinlabel {$\Gamma_1$} at 141 335
\pinlabel {$\Gamma_2$} at 168 335
\pinlabel {$\Gamma_1$} at 251 335
\pinlabel {$\Gamma_2$} at 278 335
\pinlabel {$\Gamma_1$} at 31 155
\pinlabel {$\Gamma_2$} at 58 155
\pinlabel {$\Gamma_1$} at 141 155
\pinlabel {$\Gamma_2$} at 168 155
\pinlabel {$\Gamma_1$} at 251 155
\pinlabel {$\Gamma_2$} at 278 155
\pinlabel {$\mathbbm{b}_{2,c_2}$} at 276 240
\pinlabel {$\mathbbm{b}_{2,c_2}$} at 56 76
\pinlabel {$\mathbbm{b}_{2,c_2}$} at 165 76
%
%
\endlabellist
\includegraphics[scale=1.25]{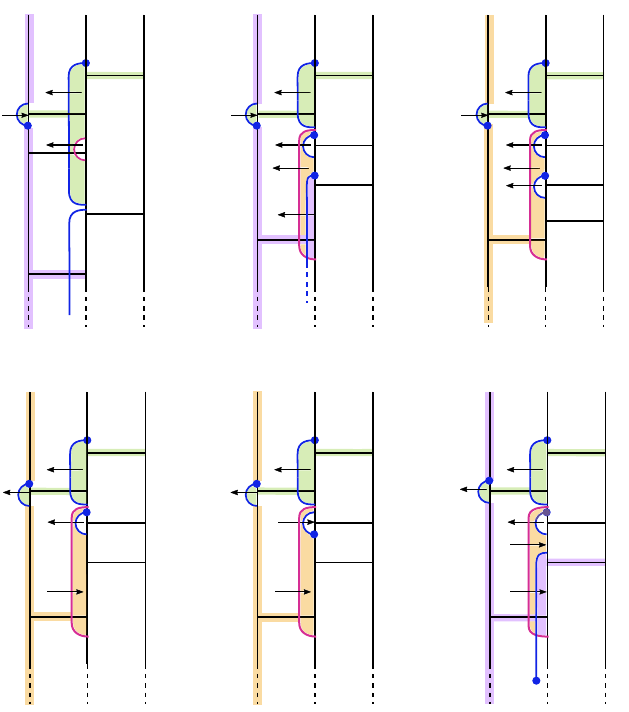}
\vspace{1em}
\caption{A positive 4--braid with $\Cmin = \Cone$. We only see $\Gamma_1 \cup \Gamma_2$ displayed in the figure. In (A), (B), and (C) $\alpha_{1,1}$ is a right pointer; in (D), (E), and (F), $\alpha_{1,1}$ is a left pointer. We see that $\alpha_{1,1}$ does not link with any arcs in $S_2$. In (C), (D), and (E), we also have that $d(1;1,2) = c_2-1$.}
\label{fig:Positive4Braid_CminC1_new}
\end{figure}

These instructions yield a branched surface $B$ with the characteristics described in \Cref{table:Generic4Braids}. By \Cref{lemma:TemplateLinking}, the application of our template to $\Gamma_2 \cup \Gamma_3$ contributes a single linked pair. Therefore, to show there is exactly one pair of linked arcs, it remains to show that $\alpha_{1,1}$ does not link with any other arcs. Since linking between arcs can only occur if the arcs are in adjacent columns, we need only verify that $\alpha_{1,1}$ does not link with any arc in $S_2$. By consulting \Cref{fig:Positive4Braid_CminC1_new}, we see that regardless of the co--orientation of $\alpha_{1,1}$, there is never a bolded endpoint in $\partial S_2$ that comes between the bolded endpoint of $\alpha_{1,1}$ and its isotoped image. Thus, $\alpha_{1,1}$ does link with any plumbing arc in $S_2$, and $B$ has a unique pair of linked arcs.

Next, we show that $B$ is sink disk free.

\begin{itemize}
\item \Seifert By \Cref{lemma:SeifertDisksNotSinks}, $\mathcal{S}_3$ is not a sink. By construction, $S_4$ contains only image arcs, and $\varphi(\alpha_{3,2}) \subset \partial S_4$. This arc is a right pointer, exiting $\mathcal{S}_4$, so $\mathcal{S}_4$ is not a sink. To show that $\mathcal{S}_1$ and $\mathcal{S}_2$ are not sinks, we analyze \textbf{Case (A)} and \textbf{Case (B)} separately.

\tcbox[size=fbox, colback=gray!30]{\textbf{Case (A):} $\alpha_{1,1}$ is a right pointer.} 

We refer to \Cref{fig:Positive4Braid_CminC1_new} (A), (B), (C)) throughout. In \textbf{Case (A)}, we have that $\alpha_{1,1}$ is a right pointer; this arc is in $\partial \mathcal{S}_1$, so $\mathcal{S}_1$ is not a sink. It remains to show that $\mathcal{S}_2$ is not a sink. 

If $S_2$ contains a right pointing plumbing arc $\alpha_{2,\star}$, then by design, this arc does not co--bound a polygon sector in $S_2$ with $\varphi(\alpha_{1,1})$. In particular, $\alpha_{2,\star} \subset \partial S_2$, and $\mathcal{S}_2$ is not a sink. Otherwise, $S_2$ contains only left pointing plumbing arcs, and we need to do some different analysis. 

\begin{itemize}
\item[$\boldsymbol{\circ}$] Suppose $d(1; 1,2) = 0$, as in \Cref{fig:Positive4Braid_CminC1_new} (A). Since $\beta$ is generic, $c_2 \geq 3$, and $\mathcal{S}_2 \sim \mathbbm{b}_{2,c_2}$. In particular, $\mathcal{S}_2$ is a horizontal sector spanning $\Gamma_2$. Applying \Cref{lemma:TemplatesHorizontalSectorsNotSinks}, we know that no horizontal sector spanning $\Gamma_2$ is a sink, so $\mathcal{S}_2$ is not a sink. 
\item[$\boldsymbol{\circ}$] Suppose $2 \leq d(1; 1,2) \leq c_2-2$, as in \Cref{fig:Positive4Braid_CminC1_new} (B). In this case, $\mathcal{S}_2 \sim \mathbbm{b}_{2,c_2}$, and we can apply the same argument as in the $d(1; 1,2)=0$ case to see that $\mathcal{S}_2$ is not a sink. 
\item[$\boldsymbol{\circ}$] Suppose $d(1; 1,2) = c_2-1$, as in \Cref{fig:Positive4Braid_CminC1_new} (C). Since $\beta$ is generic, $c_1 \geq 3$, $d_L(2; c_2, 1) \geq 1$ and $\mathcal{S}_2 \sim \mathbbm{b}_{1,3}$. But this band is not enclosed, on either side, by arcs, and so $\mathcal{S}_2 \sim \mathbbm{b}_{1,3} \sim \mathcal{S}_1$. But we already argued that $\mathcal{S}_1$ is not a sink, so neither is $\mathcal{S}_2$.
\end{itemize}

This concludes the Seifert disk analysis in \textbf{Case (A)}, so we move onto \textbf{Case (B)}. 

\tcbox[size=fbox, colback=gray!30]{\textbf{Case (B):} $\alpha_{1,1}$ is a left pointer.} 

We refer to \Cref{fig:Positive4Braid_CminC1_new} (D), (E), (F)) throughout. In this case, we have constructed $B$ so that $\partial \mathcal{S}_2$ contains the right pointing $\varphi(\alpha_{1,1})$. This arc points out of $\mathcal{S}_2$, and so $\mathcal{S}_2$ is not a sink. We now focus on proving that $\mathcal{S}_1$ is not a sink: in this case, $\mathcal{S}_1 \sim \mathbbm{b}_{1,2} \sim \mathbbm{b}_{2, d(1; 1,2)}$. In particular, $\mathcal{S}_1$ is a horizontal sector spanning $\Gamma_2$. By applying \Cref{lemma:TemplatesHorizontalSectorsNotSinks}, we know that no horizontal sector spanning $\Gamma_2$ is a sink, so $\mathcal{S}_1$ is not a sink.

We see that there are no Seifert disk sink disks in \textbf{Case (B)}. We turn to other sector types.

\item \Polygon as the only Seifert disks with both co--oriented plumbing and image arcs are $S_2$ and $S_3$, a polygon sector $\mathcal{P}$ must be contained in one of these two Seifert disks. By \Cref{lemma:PolygonSectorsNotSinks}, no polygon sector in $S_3$ is a sink. So, we need only check that if $\mathcal{P} \subset S_2$, it is not a sink. Consulting \Cref{fig:Positive4Braid_CminC1_new}, we see that a polygon sector can only arise when $d(1; 1,2) \leq c_2-2$, i.e. when $\alpha_{2,2} \ldots, \alpha_{2, d(1; 1,2)}$ are all co-oriented arcs. This occurs only in cases (B) and (F); indeed, our design ensures that neither is a sink.
\item \Horizontal By \Cref{lemma:TemplatesHorizontalSectorsNotSinks}, there is at most one potential sink disk spanning $\Gamma_2 \cup \Gamma_3$, and it is the horizontal sector $\mathcal{H} \sim \mathbbm{b}_{3,1}$. However, this band is also contained in the sector $\mathcal{S}_4$, which we already proved is not a sink. We deduce that no horizontal sector spanning $\Gamma_2 \cup \Gamma_3$ is a sink. 

Suppose $\mathcal{H}$ is a horizontal sector spanning $\Gamma_1$. By design, $\mathcal{S}_1$ contains all bands $\mathbbm{b}_{1, 2}, \ldots, \mathbbm{b}_{1,c_1}$. Since we already proved that $\mathcal{S}_1$ is never a sink, then if $\mathcal{H} \sim \mathbbm{b}_{1,k}$, where $2 \leq k \leq c_1$, $\mathcal{H}$ is not a sink. It remains to prove that a horizontal sector containing $\mathbbm{b}_{1,1}$ is not a sink: by design, the right endpoint of $\mathbbm{b}_{1,1}$ is enclosed by $\alpha_{2,1}$, which is left pointer exiting $\mathcal{H}$. We deduce that $\mathcal{H} \sim \mathbbm{b}_{1,1}$ is not a sink. 
\end{itemize}

Therefore, if $\beta$ is a generic positive 4--braid with $\Cmin = \Cone$, we have produced a sink disk free branched surface with a unique pair of linked arcs. 
\end{proof}
\begin{prop} \label{prop:Generic4BraidCMinIsCTwo}
Suppose $\beta$ is a generic positive 4--braid such that $\Cmin=\Ctwo$. Then there exists a sink disk free branched surface $B$ for $\widehat{\beta}$ such that $\tau_B$ has exactly one pair of linked arcs.
\end{prop}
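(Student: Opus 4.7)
The plan is to mimic the structure of \Cref{prop:Generic4BraidCMinIsCThree} and \Cref{prop:Generic4BraidCMinIsCOne}, but exploit the fact that when $\Cmin=\Ctwo$ the two ``heavy'' columns $\Gamma_1$ and $\Gamma_3$ are non-adjacent. Consulting \Cref{table:Generic4Braids}, I want to place $c_1-1$ co-oriented product disks in $\Gamma_1$, exactly one in $\Gamma_2$, and $c_3-1$ in $\Gamma_3$. Since $\Gamma_1$ and $\Gamma_3$ share no Seifert disk, I will apply the single-column template from \Cref{defn:Templates} to each of them independently, thereby placing co-oriented disks on \emph{every} plumbing arc in $S_1$ and in $S_3$. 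The template on $\Gamma_1$ is applied with respect to a distinguished $\sigma_1$ letter (using $\sigma_2$ letters as the reference for the left/right split); the template on $\Gamma_3$ is applied with the roles of $\sigma_2$ and $\sigma_4$ interchanged, i.e.\ using the unique lower-adjacent generator $\sigma_2$ as the reference for the split of the $\alpha_{3,\star}$. Because $\beta$ is generic ($c_1,c_2,c_3\geq 3$) and standardized, a suitable cyclic conjugation $\beta\approx\sigma_1\omega$ realizing both pivots exists by the same kind of case analysis used in \Cref{prop:Generic4BraidCMinIsCOne}.

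Next, I would select the single product disk in $\Gamma_2$. Let $\alpha_{2,\star}$ denote its plumbing arc. The choice of $\star$ and of its co-orientation is guided by two requirements: (i) the arc $\varphi(\alpha_{2,\star})$ or $\alpha_{2,\star}$ itself should supply an outward-pointing arc into each of the sectors $\mathcal{S}_2$ and $\mathcal{S}_3$, and (ii) $\alpha_{2,\star}$ should link with at most one other plumbing arc. Concretely, I would take $\alpha_{2,\star}=\alpha_{2,1}$ with the co-orientation determined by whether the first $\sigma_2$ letter (in the chosen presentation) lies to the left or right of the first $\sigma_1$ and $\sigma_3$ letters; a short case analysis on $d(1;\cdot,\cdot)$ and $d(2;\cdot,\cdot)$ near this arc, mirroring Cases (A)--(F) of \Cref{prop:Generic4BraidCMinIsCOne}, fixes the co-orientation.

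With $B$ defined, I would check the sink-disk-free condition by the standard sector split. \textbf{Seifert disk sectors:} $\mathcal{S}_1$ and $\mathcal{S}_4$ are not sinks by the design of the single-column templates on $\Gamma_1$ and $\Gamma_3$ respectively; for $\mathcal{S}_2$ and $\mathcal{S}_3$, the arc $\alpha_{2,\star}$ or its isotoped image provides the required outward-pointing arc, with the remaining situations dispatched by the same argument as in \Cref{lemma:SeifertDisksNotSinks}. \textbf{Polygon sectors:} every polygon sector in $S_2$ is bounded on one side by a co-oriented image arc from $\Gamma_1$ and on the other side by a co-oriented image arc from $\Gamma_3$, and exactly one of these points outward; polygon sectors in $S_1$ and $S_3$ do not occur because we used \emph{every} plumbing arc in those disks, so \Cref{lemma:PolygonSectorsNotSinks} applies vacuously. \textbf{Horizontal sectors:} those contained entirely in $\Gamma_1$ or $\Gamma_3$ are handled by the single-column versions of \Cref{lemma:TemplatesHorizontalSectorsNotSinks}; those spanning $\Gamma_1\cup\Gamma_2$ or $\Gamma_2\cup\Gamma_3$ either meet $\mathcal{S}_2$ or $\mathcal{S}_3$ (already shown not to be sinks) or encounter an outward-pointing arc induced by $\alpha_{2,\star}$.

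Finally, the linking count. By \Cref{lemma:TemplateLinking}, each single-column template contributes no linked pairs within its own column, so $S_1$ and $S_3$ individually contain no linked pairs. Since $\Gamma_1$ and $\Gamma_3$ are non-adjacent, arcs in $S_1$ cannot link with arcs in $S_3$. The only possible linking therefore involves $\alpha_{2,\star}$ with an arc in $S_1$ or in $S_3$. By the choice of $\alpha_{2,\star}=\alpha_{2,1}$ and its co-orientation, exactly one such linking occurs (as in the ``adjacent disk'' picture of \Cref{fig:AdjacentDiskLinking}), yielding the claimed unique linked pair. The main obstacle will be ensuring that this delicate choice of $\alpha_{2,\star}$ simultaneously makes $B$ sink disk free \emph{and} produces exactly one (rather than zero or two) linked pair; this will require a small case analysis akin to, but easier than, that in \Cref{prop:Generic4BraidCMinIsCOne}, since $\alpha_{2,\star}$ is the unique product disk of $\Gamma_2$ and has no ``neighbor'' within its own column to complicate the linking count.
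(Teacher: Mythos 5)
Your overall architecture (disks distributed per \Cref{table:Generic4Braids}, sector-by-sector sink analysis, linking confined to $\alpha_{2,1}$ and its neighbours) matches the paper's, but the construction itself has a genuine flaw in $\Gamma_3$. You propose to co-orient the plumbing arcs of $S_3$ by the single-column ``left-then-right'' pattern, using $\sigma_2$ letters as the reference for the split. The reason the single-column template keys its left-to-right transition to the first $\sigma_{i+1}$ letter is precisely so that, at the transition, the bands of $\Gamma_{i+1}$ sitting between $\mathbbm{b}_{i,k}$ and $\mathbbm{b}_{i,k+1}$ prevent $\varphi(\alpha_{i,k})$ from immediately enclosing $\mathbbm{b}_{i,k+1}$ on the right. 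In the last column of a $4$--braid there is no $\Gamma_4$, so $d(3;k,k+1)=0$ for every $k$: at your transition band $\mathbbm{b}_{3,k+1}$ you get a right-pointing $\alpha_{3,k+1}$ enclosing it on the left and a right-pointing $\varphi(\alpha_{3,k})$ enclosing it on the right, which is exactly the sink disk of \Cref{fig:SinkDiskInBand}. The paper avoids this by giving $S_3$ the \emph{right-then-left} pattern ($\alpha_{3,1}$ right, all subsequent arcs left) — i.e.\ the $S_{j+1}$ half of the two-column template — and it is this pattern, not $\alpha_{2,1}$, that produces the unique linked pair, namely $\alpha_{3,1}$ with $\alpha_{3,2}$.

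This also exposes the second gap: your plan to realize ``exactly one linked pair'' by engineering $\alpha_{2,1}$ to link with precisely one arc in an adjacent column is both unmotivated (fewer linked pairs give a better slope bound, and the paper arranges $\alpha_{2,1}$ to link with nothing) and unargued — you flag it yourself as ``the main obstacle'' without resolving it. Two smaller but real confusions: polygon sectors in $S_2$ are bounded by the image arcs $\varphi(\alpha_{1,j})$ and the plumbing arc $\alpha_{2,1}$, not by ``image arcs from $\Gamma_3$'' (those live in $S_4$); and $S_3$ \emph{does} contain both plumbing arcs and the image arc $\varphi(\alpha_{2,1})$ when $d(2;1,2)\geq 2$, so polygon sectors there cannot be dismissed — the paper analyzes them explicitly (the sectors marked in \Cref{fig:Positive4Braid_CminC2}~(B)).
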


\begin{proof}
First, we find a suitable presentation for $\beta$: conjugating as necessary, we assume $\beta \approx \sigma_1 \sigma_2 \beta'$. As $\beta$ is standardized, in this presentation of the braid word, $d(1; 1,2) \geq 2$; additionally, we know that $d(2; 1,2) = 0$ or $d(2; 1,2) \geq 2$. To build our branched surface $B$, we:

\begin{itemize}
\item co--orient $\alpha_{1,1}$ to the left, and all subsequent plumbing arcs to the right.  
\item co--orient $\alpha_{3,1}$ to the right, and all subsequent plumbing arcs to the left. 
\item if $d(2; 1,2) = 0$, co--orient $\alpha_{2,1}$ to the right, as in \Cref{fig:Positive4Braid_CminC2} (A); otherwise, $d(2; 1,2) \geq 2$, and we co--orient $\alpha_{2,1}$ to the left, as in \Cref{fig:Positive4Braid_CminC2} (B). In particular, we take exactly one product disk from $\Gamma_2$. 
\end{itemize}

Examples of the two types of resulting branched surfaces are demonstrated in \Cref{fig:Positive4Braid_CminC2}.

\begin{figure}[h!]\center
\labellist \tiny
\pinlabel{(A)} at 60 5
\pinlabel{(B)} at 200 5
\pinlabel {$\mathcal{P}$} at 218 125
\pinlabel {$\mathcal{H}$} at 42 102
\pinlabel {$\mathcal{H}$} at 180 102
\endlabellist
\includegraphics[scale=1.2]{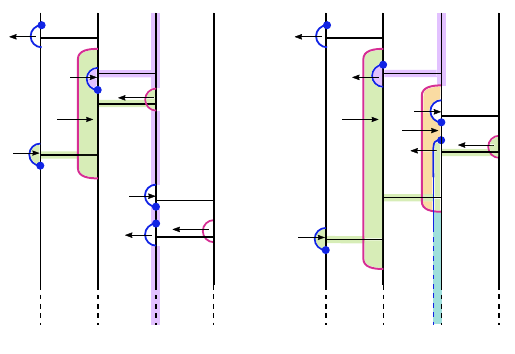}
\caption{Two examples of positive 4--braids with $\Cmin = \Ctwo$; $\alpha_{2,1}$ is the only co-oriented plumbing arc in $S_2$. In \textbf{(A)}, $d(2; 1,2) = 0$, and $\alpha_{2,1}$ is a right pointer. In \textbf{(B)}, $d(2; 1,2) \geq 2$, and $\alpha_{2,1}$ is a left pointer. Neither the polygon sector $\mathcal{P}$, nor the horizontal sectors $\mathcal{H}$, are sinks.}
\label{fig:Positive4Braid_CminC2}
\end{figure}

First, we check that $B$ has a unique pair of linked arcs: the proof of \Cref{lemma:TemplateLinking} ensures that there is no linking between any arcs $\alpha_{1,s}$ and $\alpha_{1,s+1}$ for any $s$, and that there is a single linked pair between $\alpha_{3,1}$ and $\alpha_{3,2}$. Consulting with \Cref{fig:Positive4Braid_CminC2}, we see that $\alpha_{2,1}$ does not link with any arc in either adjacent column. 

Next, we show $B$ is sink disk free:
\begin{itemize}
\item \Seifert since $S_1$ (resp. $S_4$) contains only co--oriented plumbing (resp. image) arcs with a mix of co--orientations, $\mathcal{S}_1$ and $\mathcal{S}_4$ are not sinks. Regardless of the value of $d(1; 1,2)$, $\varphi(\alpha_{1,1})$ is a right pointer in $\partial \mathcal{S}_2$; this arc points out of the sector, so $\mathcal{S}_2$ is not a sink. It remains to show $\mathcal{S}_3$ is not a sink disk.

When $\alpha_{2,1}$ is a right pointer, $\partial \mathcal{S}_3$ contains the left pointer $\alpha_{3,1}$, which points out of the sector; see \Cref{fig:Positive4Braid_CminC2} (A). When $\alpha_{2,1}$ is a left pointer, as in \Cref{fig:Positive4Braid_CminC2} (B), $\varphi(\alpha_{2,1})$ is a right pointer in $\partial \mathcal{S}_3$, so $\mathcal{S}_3$ is not a sink. 
\item \Polygon the only Seifert disks with both co--oriented plumbing and image arcs are $S_2$ and $S_3$; thus, these are the only disks with potential to contain polygon sectors. 

If $\mathcal{P}$ is a polygon sector in $S_2$, then $\partial \mathcal{P}$ contains $\varphi(\alpha_{1,1})$ and $\alpha_{2,1}$. Since $\varphi(\alpha_{1,1})$ encloses $\mathbbm{b}_{2,2}, \ldots, \mathbbm{b}_{1, d(1; 1,2)}$, then $\mathcal{P} \sim \mathbbm{b}_{2,2} \sim \ldots \sim  \mathbbm{b}_{1, d(1; 1,2)}$. If $d(2; 1,2) = 0$, then $\mathbbm{b}_{2,2}$ is enclosed, on the right, by the left pointer $\varphi(\alpha_{2,1})$ which exits the sector, and it is not a sink; see \Cref{fig:Positive4Braid_CminC2} (A). Otherwise, $d(2; 1,2) \geq 2$, and $\mathbbm{b}_{2,2}$ is enclosed, on the right, by a right pointing plumbing arc, as in \Cref{fig:Positive4Braid_CminC2} (B); we see that it is not a sink. So, no polygon sector in $S_2$ is a sink. 

Suppose, instead, that $\mathcal{P}$ is a polygon sector in $S_3$. By design, this means $d(2; 1,2) \geq 2$, and $\partial \mathcal{P}$ contains $\varphi(\alpha_{2,1})$. Either (a) $\partial \mathcal{P}$ also contains $\alpha_{3,1}$, or (b) it doesn't. If the former occurs, then $\alpha_{3,1}$ leaves the sector, and it is not a sink (this is the orange sector in \Cref{fig:Positive4Braid_CminC2} (B)); if the latter occurs, then $\varphi(\alpha_{2,1})$ points out of the sector, and it is not a sink (see the teal sector in $S_3$ in \Cref{fig:Positive4Braid_CminC2} (B)). We deduce that no polygon sector in $B$ can be a sink. 
\item \Horizontal Suppose $\mathcal{H}$ is a horizontal sector spanning $\Gamma_1$. If $\mathcal{H} \sim \mathbbm{b}_{1,1}$, then $\mathcal{H} \sim \mathcal{S}_2$, which is not a sink. If $\mathcal{H} \sim \mathbbm{b}_{1,2}$, then $\mathcal{H}$ is in the same sector as the polygon sector $\mathcal{P} \subset S_2$; we already argued that this is not a sink. Suppose $\mathcal{H} \sim \mathbbm{b}_{1,r}$, for $3 \leq r \leq c_1$. These bands are all enclosed, on the right, by a left pointing image arc. Therefore, such an $\mathcal{H}$ is not a sink. 

Suppose $\mathcal{H}$ spans $\Gamma_2$. If $\mathcal{H} \sim \mathbbm{b}_{2,1}$, then $\mathcal{H} \sim \mathcal{S}_3$, so it is not a sink. While studying polygon sectors, we already argued that the sector containing $\mathbbm{b}_{2,2}$ is not a sink. Therefore, suppose $\mathcal{H} \sim \mathbbm{b}_{2, q}$, for $3 \leq q \leq c_2$. Such a band is either enclosed, on the right, by a left pointing plumbing arc $\alpha_{3,\star}$, or $\mathcal{H} \sim \mathbbm{b}_{2, q} \sim \mathcal{S}_3$. Since $\mathcal{S}_3$ is not a sink, neither is $\mathcal{H}$. 

Finally, suppose $\mathcal{H}$ spans $\Gamma_3$. If $\mathcal{H} \sim \mathbbm{b}_{3,1}$, then $\mathcal{H} \sim \mathcal{S}_4$, which is not a sink. Similarly, if $\mathcal{H} \sim \mathbbm{b}_{3, c_3}$, then $\mathcal{H} \sim \mathcal{S}_3$, which is not a sink. Otherwise, $\mathcal{H} \sim \mathbbm{b}_{3,t}$, for $2 \leq t \leq c_3-1$; such bands are enclosed, on the right, by the left pointing $\alpha_{3,t}$, so $\mathcal{H}$ is not a sink. 
\end{itemize}

Thus, $B$ is sink disk free branched surface, and $\tau_B$ contains a unique pair of linked arcs.
\end{proof}

We apply this work about generic positive 4-braids to produce taut foliations.

\begin{prop} \label{thm:MainThmGeneric4Braid}
Suppose $K \approx \widehat{\beta}$ is a non-trivial, generic prime positive braid knot on $n=4$ strands with $g(K) \geq 2$. Then for all $r < \frac{4}{3}g(K)$, $S^3_r(K)$ admits a taut foliation. 
\end{prop}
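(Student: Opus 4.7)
The plan is to assemble the three cases of \Cref{prop:Generic4BraidCMinIsCThree,prop:Generic4BraidCMinIsCOne,prop:Generic4BraidCMinIsCTwo} (split according to the value of $\Cmin$) into a single slope bound, then read off a taut foliation via \Cref{thm:foliations}. In each case the relevant proposition already delivers a sink disk free branched surface $B$ for $\widehat{\beta}$ whose train track $\tau_B$ has a minimal amount of linking (one linked pair, or in one subcase of $\Cmin=\Cthree$, a single linked triple). By \Cref{prop:BisLaminar}, such a $B$ is automatically laminar, so the entire task reduces to a numerical lower bound on $\TauSup$.

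The first step is to translate the bound $r < \tfrac{4}{3}g(K)$ into arithmetic on the $c_i$'s. Setting $C := c_1+c_2+c_3$, the fiber surface $F$ is built from $4$ disks and $C$ bands, so $2g(K)-1 = -\chi(F) = C-4$, i.e.\ $g(K) = (C-3)/2$ and
\begin{equation*}
\tfrac{4}{3}g(K) \;=\; \tfrac{2(C-3)}{3}.
\end{equation*}
Thus it suffices to show $\TauSup \geq (2C-6)/3$.

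The second step is a case-by-case computation of $\TauSup$ using \Cref{table:Generic4Braids} and the linking counts established in the three propositions. In each case the total number of product disks is $C - \Cmin - 1$, and after accounting for the linking contribution (a single linked pair, or a single linked triple handled as in the proof of \Cref{prop:Generic4BraidCMinIsCThree}), one obtains the uniform estimate
\begin{equation*}
\TauSup \;\geq\; C - \Cmin - 2.
\end{equation*}
The third step is to invoke the definition of $\Cmin$: since $\Cone+\Ctwo+\Cthree = C$ and $\Cmin$ is chosen to be a (tie-broken) minimum, one has $\Cmin \leq C/3$. Substituting yields
\begin{equation*}
\TauSup \;\geq\; C - \tfrac{C}{3} - 2 \;=\; \tfrac{2C-6}{3} \;=\; \tfrac{4}{3}g(K).
\end{equation*}

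Finally, since $B$ is laminar and every slope $r < \TauSup$ is fully carried by $\tau_B$, \Cref{thm:foliations} produces a taut foliation of $S^3_r(K)$ for each such $r$, in particular for every $r < \tfrac{4}{3}g(K)$. I expect the main obstacle to lie not in this proposition but in the three propositions feeding it, namely the verification that the prescribed branched surface is sink disk free and linking-minimal in the $\Cmin=\Cthree$, $d(2;1,2)=0$ subcase; once those are in place, the present proof is just the short numerical argument above.
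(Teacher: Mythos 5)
Your proposal is correct and follows essentially the same route as the paper: reduce to the three propositions indexed by $\Cmin$, count product disks as $\mathcal{C}-\Cmin-1$ and deduct one for the unique linked pair (or triple) to get $\TauSup = \mathcal{C}-\Cmin-2$, then use $\Cmin \leq \mathcal{C}/3$ together with $2g(K)-1=\mathcal{C}-4$ to conclude $\TauSup \geq \tfrac{4}{3}g(K)$ before invoking \Cref{prop:BisLaminar} and \Cref{thm:foliations}. No substantive differences from the paper's argument.
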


\begin{proof}
Let $K$ be any knot satisfying the stated hypotheses. By \Cref{defn:standard}, we can put $\beta$ into standard form; for simplicity, we also call the new braid $\beta$. After computing the value of $\Cmin$ for $\beta$, we build a branched surface with the characteristics described by \Cref{table:Generic4Braids}. By applying one of Propositions \ref{prop:Generic4BraidCMinIsCThree}, \ref{prop:Generic4BraidCMinIsCOne}, or \ref{prop:Generic4BraidCMinIsCTwo}, we obtain a sink disk free branched surface $B$ such that $\tau_B$ contains exactly one pair of linked arcs, or one linked triple of arcs. 

We now compute the slopes carried by $\tau_B$ in terms of $g(K)$: first, note that $\Cone + \Ctwo + \Cthree = \mathcal{C}$. Note: we cannot have that for all $j \in \{1, 2, 3\}, \ \mathcal{C}_{j} < \frac{\mathcal{C}}{3}$, nor can we have that for all $j$, $\mathcal{C}_{j} > \frac{\mathcal{C}}{3}$. Therefore, there exists some $j \in \{1,2,3\}$ with $\mathcal{C}_{j} \leq \frac{\mathcal{C}}{3}$, and for some $k \in \{1,2,3\}$, $\mathcal{C}_k \geq \frac{\mathcal{C}}{3}$. In particular, this implies that $\Cmin \leq \frac{\mathcal{C}}{3}$, and $\mathcal{C} - \Cmin \geq \frac{2}{3} \mathcal{C}$. 

\Cref{table:Generic4Braids} keeps track of the total number of product disks used to build $B$. In every case, we use $\mathcal{C}-\Cmin-1$ product disks. Therefore, there are $\mathcal{C}-\Cmin-1$ arcs contributing maximally to the train track. Since our construction guaranteed either a unique pair of linked arcs, or a unique linked triple, $\tau_B$ carries all slopes up to $\mathcal{C}-\Cmin-2$. That is, $\TauSup = \mathcal{C}-\Cmin-2$.

As $2g(K)-1 = \mathcal{C} - n = \mathcal{C}-4$, then $$g(K) = \frac{\mathcal{C}-3}{2}  \implies \frac{4}{3}g(K) = \frac{4}{3}\left( \frac{\mathcal{C}-3}{2}\right) = \frac{2\mathcal{C}}{3}- 2$$

We now compare $\TauSup$ to the $g(K)$:
\begin{align*}
\TauSup = \mathcal{C}-\Cmin-2 \geq  \frac{2\mathcal{C}}{3} - 2 = \frac{4}{3}g(K)
\end{align*}

Thus, $\tau_B$ carries all slopes $r < \frac{4}{3}g(K)$. Applying \Cref{prop:BisLaminar}, we see that $B$ is laminar; applying \Cref{thm:foliations}, we conclude that $S^3_r(K)$ admits a taut foliation for all $r < \frac{4}{3}g(K)$.
\end{proof}

\subsection{Sparse 4--braids} \label{section:Edge4Braids} We now prove an analogue of \Cref{thm:main} for sparse positive 4--braids. 

\begin{prop} \label{thm:MainThmSparse4Braid}
Suppose $K \approx \beta$, where $\beta$ is a prime, sparse, positive 4--braid. Then for all $r < 2g(K) - 2$, $S^3_r(K)$ has a taut foliation. 
\end{prop}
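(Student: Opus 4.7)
The plan is to mirror the template-based strategy of Propositions~\ref{prop:Generic4BraidCMinIsCThree}--\ref{prop:Generic4BraidCMinIsCTwo}: build a sink-disk free branched surface $B$ from the Bennequin fiber surface and a well-chosen collection of co-oriented product disks, then invoke \Cref{prop:BisLaminar} and \Cref{thm:foliations}. Since $2g(K)-2 = \mathcal{C}-5$, and there are $c_1+c_2+c_3-3 = \mathcal{C}-3$ plumbing arcs available in total, the desired bound will follow if I can use every plumbing arc while arranging the co-orientations so that $\tau_B$ contains at most two linked pairs (or an equivalent amount of linking from a linked triple).

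I will organize the argument by cases according to which of $c_1, c_2, c_3$ equal $2$. Using the $\sigma_j \leftrightarrow \sigma_{4-j}$ reflection symmetry of $4$-braids, the cases reduce to: (i) a single sparse column, with subcases $c_1=2$ (edge) and $c_2 = 2$ (middle); (ii) two sparse columns, with subcases $c_1 = c_2 = 2$ and $c_1 = c_3 = 2$; and (iii) $c_1 = c_2 = c_3 = 2$, where $\widehat{\beta}$ belongs to a small explicit family that I will handle directly. In each case, the key combinatorial input is \Cref{lemma:prime2}: whenever $c_i=2$, every conjugated presentation satisfies $d(i;1,2) \geq 1$ and $d_L(i;1,2) \geq 1$, which is precisely the ingredient needed to rule out the new sink-disk configurations forced by having only two crossings in a column. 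In each case I will pivot $\beta$ into a standardized cyclic conjugate chosen so the two-column template of \Cref{defn:Templates} can be applied to a pair $\Gamma_i \cup \Gamma_{i+1}$ of non-sparse columns, contributing the single linked pair from \Cref{lemma:TemplateLinking}; the one or two plumbing arcs in each sparse column are then co-oriented individually, with their directions dictated by the local structure near the sparse $\sigma_i$ letters so as to avoid sink disks and to limit additional linking to at most one extra pair.

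The sink-disk verification will proceed along the lines of the generic-case arguments but with every invocation of $c_i \geq 3$ replaced by a substitute that exploits the lower bounds on $d(i;1,2)$ and $d_L(i;1,2)$. The main obstacle is the middle-column case $c_2 = 2$: the sparse column sits between two non-sparse columns, so the lone product disk from $\Gamma_2$ can interact, via both linking and horizontal sectors, with disks from $\Gamma_1$ and $\Gamma_3$ simultaneously. In particular, the horizontal-sector analysis of \Cref{lemma:TemplatesHorizontalSectorsNotSinks} must be redone by hand to exclude sinks that span all three columns and no longer get absorbed by an adjacent Seifert disk sector. Once $B$ is verified to be sink-disk free with at most two linked pairs, the train-track count yields $\TauSup \geq (\mathcal{C}-3) - 2 = \mathcal{C}-5 = 2g(K)-2$, and \Cref{thm:foliations} produces taut foliations in $S^3_r(K)$ for every $r < 2g(K)-2$.
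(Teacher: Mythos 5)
There is a genuine gap, and it sits exactly where you locate your ``main obstacle.'' The paper's proof of this proposition begins with a structural observation that your plan misses: after conjugating a prime, standardized positive $4$--braid into the form $\sigma_1\sigma_2\beta'$, standardization forces $d(1;1,2)\geq 2$ while primeness forces $d(1;1,2)\leq c_2-1$, hence $c_2\geq 3$ \emph{always}. So the middle column is never sparse, and the cases $c_2=2$, $c_1=c_2=2$, and $c_1=c_2=c_3=2$ in your decomposition are vacuous. The ``middle-column'' case you flag as the hardest part --- and for which you propose to redo the horizontal-sector analysis of \Cref{lemma:TemplatesHorizontalSectorsNotSinks} by hand --- does not occur, and recognizing this is the first substantive step of the argument rather than an optional simplification. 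Separately, note that \Cref{lemma:prime2} is stated only for $i\geq 2$, so your blanket appeal to it ``whenever $c_i=2$'' is off for $i=1$, where the relevant input is instead $d(1;1,2)\geq 2$ and $d(1;2,1)\geq 2$ from standardization and primeness.

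The surviving cases are $c_1=c_3=2$ (with $c_2\geq 4$) and exactly one of $c_1,c_3$ equal to $2$, and here your plan runs into two further problems. First, in the doubly sparse case there is no pair of adjacent non-sparse columns, so your central step --- ``apply the two-column template of \Cref{defn:Templates} to a pair $\Gamma_i\cup\Gamma_{i+1}$ of non-sparse columns'' --- cannot be executed as stated; the paper instead applies the template to $\Gamma_1\cup\Gamma_2$ (pairing the sparse column with the non-sparse one) and takes \emph{no} product disks from $\Gamma_3$. Second, your insistence on using all $\mathcal{C}-3$ plumbing arcs while keeping the linking at two pairs is both unverified and unnecessary: the paper discards the product disks of (at least one of) the sparse columns entirely, arranges exactly one linked pair, and still gets $\TauSup=\mathcal{C}-5=2g(K)-2$, since $(\mathcal{C}-4)-1=(\mathcal{C}-3)-2$. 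Including the extra disks from a sparse column is precisely the kind of move that \Cref{section:WeirdExamples} shows can force additional linking and new potential sink sectors, and you give no co-orientation rule or sink-disk verification for those extra disks. To repair the proposal: prove $c_2\geq 3$ first, reduce to the three realizable cases, and in each one follow the paper's pattern of dropping the sparse column's disks rather than forcing them into $B$.
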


\begin{proof}
Assume $\widehat \beta$ is a knot satisfying the stated hypotheses. If $\beta$ is sparse, then there exists at least one column $\Gamma_i$ with $c_i =2$. Building a sink disk free branched surface carrying all slopes $r < 2g(K)-2$ requires some case analysis which is determined by how many columns have exactly two bands:

\begin{enumerate}
\item all three columns have exactly two bands, i.e. $c_1 = c_2 = c_3 = 2$.
\item exactly two columns have two bands, i.e. one of the following occurs:
	\begin{enumerate}
	\item  $c_1 = c_2 = 2$ and $c_3 \geq 3$, or
	\item $c_2 = c_3 = 2$ and $c_1 \geq 3$, or
	\item $c_1 = c_3 = 2$ and $c_2 \geq 3$.
	\end{enumerate}
\item exactly one of $c_1, c_2$, or $c_3$ has two bands, i.e one of the following holds:
	\begin{enumerate}
	\item  $c_1=2$ and $c_2, c_3 \geq 3$, or
	\item $c_2 =2$ and $c_1, c_3 \geq 3$, or
	\item $c_3 = 2$ and $c_1, c_2 \geq 3$.
	\end{enumerate}
\end{enumerate}

Our braid is realized on $4$ strands. By cyclically conjugating if necessary, we may assume that $\beta \approx \sigma_1 \sigma_2 \beta'$. Since $\beta$ is prime, $d(1; 1,2) \leq c_2-1$. Moreover, since $\beta$ is standardized, we have that $d(1; 1,2) \geq 2$. Combining these observations, we see that $2 \leq c_2-1$, hence $3 \leq c_2$. We deduce that cases (1), (2a), (2b), and (3b) do not occur. Therefore, we need only construct branched surfaces for cases (2c), (3a), and (3c). 

\tcbox[size=fbox, colback=gray!30]{\textbf{Case (2c):} $c_1 = c_3 = 2$, and $c_2 \geq 3$.} 

We begin with Case (2c), where $c_1 = c_3 = 2$. Notice that if $\beta$ is standardized, we have $d(1; 1,2) \geq 2$ and $d(1; 2,1) \geq 2$, thus $c_2 \geq 4$. To construct the branched surface in this case, we first cyclically conjugate the braid so that $\beta \approx \sigma_1 \sigma_2 \beta'$. Then, we apply our standard template to $\Gamma_1 \cup \Gamma_2$: there will be a single co--oriented plumbing arc in $S_1$, $\alpha_{1,1}$, and it will be co--oriented to the left. In this instance, we will not include any product disks from $\Gamma_3$. We claim that we have constructed a sink disk free branched surface. We refer to \Cref{fig:Positive4Braid_NotGeneric} throughout.

\begin{figure}[H]\center
\labellist
\pinlabel {$\Gamma_1$} at 35 165
\pinlabel {$\Gamma_2$} at 62 165
\pinlabel {$\Gamma_3$} at 90 165
\endlabellist
\includegraphics[scale=1.2]{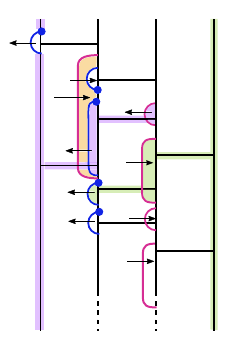}
\caption{A sample sparse positive 4-braid with $c_1 = c_3 = 2$. The branch sector $\mathcal{S}_4$ is not a sink.}
\label{fig:Positive4Braid_NotGeneric}
\end{figure}

\begin{itemize}
\item \Seifert 
First, note that $\mathcal{S}_1 \sim \mathbbm{b}_{1,2}$. Since $2 \leq d(1; 1,2) \leq c_2-1$, $\mathbbm{b}_{1,2}$ must be enclosed, on the right, by a left pointing plumbing arc; see \Cref{fig:Positive4Braid_NotGeneric}. We deduce that $\mathcal{S}_1$ is not a sink. 

Since $\mathbbm{b}_{1,1}$ is not enclosed on the right, $\mathcal{S}_2 \sim \mathbbm{b}_{1,1}$. As $\alpha_{1,1}$ is a left pointer, $\mathcal{S}_2$ is not a sink. 

The disk sector $\mathcal{S}_3$ contains only image arcs. Since $c_2 \geq 4$, then $S_3$ will contain both left pointing and right pointing image arcs. Thus, $\mathcal{S}_3$ is not a sink.

Finally, we analyze $\mathcal{S}_4$. Since we used no product disks from $\Gamma_3$, we know that $\mathcal{S}_4 \sim \mathbbm{b}_{3,1}$. By assumption, $\widehat{\beta}$ is not a connected sum, and $\beta$ is prime, then $d(2; 1, d(1; 1,2)) = 0$ and $d(2; d(1; 1,2)+1, c_2) = 0$. Therefore, we know that $\beta$ can be written as $\sigma_1 \sigma_2^{k_1} \sigma_1 \sigma_3 \sigma_2^{k_2} \sigma_3$, where $k_1, k_2 \geq 2$. In particular, $d_L(3; 1,2) \geq 1$, and $\mathbbm{b}_{3,1}$ is enclosed, on the left, by some arc $\alpha_{2,t}$, for $t \geq 2$. Therefore, $\mathcal{S}_4 \sim \mathbbm{b}_{3,1} \sim \mathbbm{b}_{2, t+1}$, as in \Cref{fig:Positive4Braid_NotGeneric}. We see that $\mathbbm{b}_{2,t+1}$ is enclosed on the left by the left pointing plumbing arc $\alpha_{2,t+1}$, which must be a left pointer. So, $\mathcal{S}_4$ is not a sink.

\item \Polygon The unique Seifert disk with both plumbing and image arcs is $S_2$. In fact, there is a single polygon sector in $S_2$ (see \Cref{fig:Positive4Braid_NotGeneric}), and it is not a sink because $\alpha_{2,1}$ points out of the sector.

\item \Horizontal Suppose $\mathcal{H}$ spans $\Gamma_1$. We assumed that $c_1=2$; as observed in our Seifert disk sector analysis, $\mathbbm{b}_{1,1} \sim \mathcal{S}_2$, and $\mathbbm{b}_{1,2} \sim \mathcal{S}_1$. But $\mathcal{S}_1$ and $\mathcal{S}_2$ are not sinks, so no horizontal sector spanning $\Gamma_1$ is a sink.

Suppose $\mathcal{H}$ spans $\Gamma_3$. Since there are no plumbing arcs in $S_3$, there are no image arcs in $S_4$. Therefore, $\mathcal{S}_4 \sim \mathbbm{b}_{3,1} \sim \mathbbm{b}_{3,2}$. We deduce that any horizontal sector spanning $\Gamma_3$ is in the same branch sector as $\mathcal{S}_4$, which we already proved is not a sink.

Finally, if $\mathcal{H}$ spans $\Gamma_2$, then $\mathcal{H}$ contains $\mathbbm{b}_{2,t}$, where $1 \leq t \leq c_2$. If $\mathcal{H}$ contains $\mathbbm{b}_{2,1}$ (resp. $\mathbbm{b}_{2,c_2}$), then $\mathcal{H} \sim \mathcal{S}_3$ (resp. $\mathcal{S}_2$), which we already proved is not a sink. Otherwise, $\mathcal{H}$ contains $\mathbbm{b}_{2,t}$, where $2 \leq t \geq c_2-1$. But $\mathbbm{b}_{2,t}$ is enclosed, on the left, by the left pointing $\alpha_{2,t}$, so $\mathcal{H}$ is not a sink. 

We deduce that no horizontal sectors in $B$ are sinks.
\end{itemize}

Thus, when we are in case (2c), we can build a sink disk free branched surface. The proof of \Cref{lemma:TemplateLinking} tells us that the unique linked pair of arcs in $\tau_B$ arises from $\alpha_{2,1}$ and $\alpha_{2,2}$. Therefore, $\TauSup = 1 + (\Ctwo - 1)-1 = \Ctwo - 1$. Since
$$\mathcal{C} = \Cone + \Ctwo + \Cthree = 2 + \Ctwo + 2 = \Ctwo + 4,$$ 
we have $2g(K) - 1 = \mathcal{C} - n = (\Ctwo + 4) - 4 = \Ctwo$. We deduce that $\TauSup = 2g(K) - 2$. 
Therefore, we have constructed a sink disk free branched surface $B$ whose train track $\tau_B$ carries all slopes $r < 2g(K)-2$.

To finish the sparse 4--braid case, we construct branched surfaces in Cases (3a) and (3c). 

\tcbox[size=fbox, colback=gray!30]{\textbf{Case (3a):} $c_1= 2$, and $c_2, c_3 \geq 3$.} 

In this case, we choose no product disks from $\Gamma_1$, $c_2-1$ product disks from $\Gamma_2$, and $c_3-1$ product disks from $\Gamma_3$. To begin, we follow the setup of \Cref{prop:Generic4BraidCMinIsCOne} to find a suitable cyclic conjugation of our braid, and to then apply our template to $\Gamma_2 \cup \Gamma_3$. We now check that we have constructed a sink disk free branched surface. (We recommend the reader refer to \Cref{fig:Positive4Braid_CminC1_new} to see the presentation of $\beta$; erasing $\alpha_{1,1}$ and $\varphi(\alpha_{1,1})$ from the figures reveals how $B$ looks in $\Gamma_1 \cup \Gamma_2$.)

\begin{itemize}
\item \Seifert since $B$ contains no product disks from $\Gamma_1$, we have that $\mathcal{S}_1 \sim \mathbbm{b}_{1,1} \sim \mathbbm{b}_{1,2}$. By design, we have that $\mathbbm{b}_{1,1}$ is enclosed, on the right, by $\alpha_{2,1}$, which is a left pointer. Therefore, $\mathcal{S}_1$ is not a sink. 

If $S_2$ contains a right pointing plumbing arc, then $\partial \mathcal{S}_2$ contains an exiting arc, and so $\mathcal{S}_2$ is not a sink. On the other hand, suppose $S_2$ contains only left pointers. 
Since we used no product disks from $\Gamma_1$ to build $B$, this means $\mathcal{S}_2 \sim \mathbbm{b}_{2,c_2}$; moreover, the design of our template tells us that $d(2; c_2-1, c_2) \geq 1$. We deduce that $\mathbbm{b}_{2,c_2}$ is enclosed, on the right, by some arc $\alpha_{3, s}$. In particular, $\mathcal{S}_2 \sim \mathbbm{b}_{2,c_2} \sim \mathbbm{b}_{3,s}$.

\begin{itemize}
\item[$\boldsymbol{\circ}$] If $s=1$, then $\mathbbm{b}_{2,c_2}$ is enclosed by $\alpha_{3,1} = \alpha_{3,s}$, then $\mathcal{S}_2 \sim \mathbbm{b}_{2,c_2} \sim \mathbbm{b}_{3,1} \sim \mathcal{S}_4$. But $\mathcal{S}_4$ contains both left and right pointing image arcs, so $\mathcal{S}_4$ is not a sink, and neither is $\mathcal{S}_2$. 
\item[$\boldsymbol{\circ}$] Otherwise, $s \geq 2$. By design, $\alpha_{3,s}$ is a left pointer, so $\mathcal{S}_2$ is not a sink.
\end{itemize}

We deduce that $\mathcal{S}_2$ is not a sink. By applying \Cref{lemma:SeifertDisksNotSinks}, $\mathcal{S}_3$ is not a sink. 

Finally, by design, $\mathcal{S}_4$ contains both left and right pointing image arcs, and so $\mathcal{S}_4$ is not a sink. Therefore, no Seifert disk sectors are sink disks.    

\item \Polygon The only Seifert disk with both plumbing and image arcs is $S_3$. Directly applying \Cref{lemma:PolygonSectorsNotSinks}, we see that no polygon sector in $S_3$ is a sink. 

\item \Horizontal Suppose $\mathcal{H}$ is a horizontal sector spanning $\Gamma_1$. Since $B$ contains no product disks from $\Gamma_1$, we have that $\mathcal{S}_1 \sim \mathbbm{b}_{1,1} \sim \mathbbm{b}_{1,2}$. In particular, if $\mathcal{H}$ spans $\Gamma_1$, then $\mathcal{H} \sim \mathcal{S}_1$, and we already proved the latter is not a sink. 

Suppose $\mathcal{H}$ is a horizontal sector spanning either $\Gamma_2$ or $\Gamma_3$. By \Cref{lemma:TemplatesHorizontalSectorsNotSinks}, the only potential horizontal sector spanning $\Gamma_2 \cup \Gamma_3$ is the sector containing $\mathbbm{b}_{3,1}$. By design, $\mathbbm{b}_{3,1} \sim \mathcal{S}_4$; we already proved the latter is not a sink, so we deduce that no horizontal sector of $B$ is a sink.
\end{itemize}

We deduce that we have produced a sink disk free branched surface for Case (3a). We will analyze the slopes carried by $\tau_B$ after executing the analogous construction for Case (3c).

\tcbox[size=fbox, colback=gray!30]{\textbf{Case (3c:)} $c_3= 2$, and $c_1, c_2 \geq 3$.} 

Similar to Case (3a), we choose $c_1-1$ product disks from $\Gamma_1$, $c_2-1$ product disks from $\Gamma_2$, and no product disks from $\Gamma_3$. As in \Cref{prop:Generic4BraidCMinIsCThree}, we cyclically conjugate $\beta$ (and potentially apply the braid relation $\sigma_1 \sigma_3 = \sigma_3\sigma_1$) to present it as $\sigma_1 \omega_1 \sigma_1 \omega_2$, where $\omega_1$ is non-empty, contains no $\sigma_1$ letters, and contains some $\sigma_2$ letters; since $\beta$ is standardized, we know that $d(1; 1,2) \geq 2$. We now build $B$ by directly applying our template to $\Gamma_1 \cup \Gamma_2$ (we suggest the reader refer to \Cref{fig:Positive4Braid_CminC3_new_2} to see the branched surface; it is obtained by erasing the $\alpha_{3,1}$ and its image from the figure). We claim the result has no sink disks:

\begin{itemize}
\item \Seifert By the design of our standard template, $\alpha_{1,2}$ is a right pointer, and $\alpha_{1,2} \subset  \partial \mathcal{S}_1$. Therefore, this arc points out of $\mathcal{S}_1$, and we know the sector is not a sink. Applying \Cref{lemma:SeifertDisksNotSinks}, we see that $\mathcal{S}_2$ is not a sink. Since we have chosen no product disks from $\Gamma_3$, the only arcs in $S_3$ are image arcs; in particular, $S_3$ contains both left and right pointing image arcs, and so $\mathcal{S}_3$ is not a sink. 

Finally, it remains to show that $\mathcal{S}_4$ is not a sink. First, we observe that $\mathcal{S}_4 \sim \mathbbm{b}_{3,1} \sim \mathbbm{b}_{3,2}$. Next, we notice that $d(2; 1,2)=0$: since $c_3 =2$, we must either have that $d(2; 1,2) = 0$, $d(2; 1,2) = 1$, or $d(2; 1,2) = 2$. However, if $d(2; 1,2) = 1$, then $\beta$ is not standardized, and if $d(2; 1,2) = 2$, then $\beta$ is a connected sum. We deduce that $d(2; 1,2) = 0$. This tells us some important information about $\mathbbm{b}_{3,1}$: if $t$ is the smallest value such that $d(2; t, t+1) \geq 1$, then $t \geq 2$. Since $\beta$ is not a connected sum, $t \leq c_2-1$. By the design of our standard template, the band $\mathbbm{b}_{3,1}$ must be enclosed, on the left, by the right pointing image arc $\varphi(\alpha_{2,t})$. Therefore, $\mathcal{S}_4 \sim \mathbbm{b}_{3,1} \sim \mathbbm{b}_{2, t+1}$. We deduce that $\mathcal{S}_4$ is part of a horizontal sector which spans $\Gamma_2$. Either $\mathcal{S}_4 \cap S_2$ contains $\alpha_{2, t+1}$ (which is a left pointer), or $\mathcal{S}_4 \sim \mathcal{S}_2$. Regardless of which occurs, we deduce that $\mathcal{S}_4$ is not a sink. 

\item \Polygon The unique Seifert disk with potential to contain polygon sectors is $S_2$. Applying \Cref{lemma:PolygonSectorsNotSinks}, we see that no polygon sector in $S_2$ is a sink.

\item \Horizontal By \Cref{lemma:TemplatesHorizontalSectorsNotSinks}, the only potential sink disk horizontal sector $\mathcal{H}$ spanning $\Gamma_1 \cup \Gamma_2$ is the sector containing $\mathbbm{b}_{2,1}$. However, we see that $\mathbbm{b}_{3,1} \sim \mathcal{S}_3$, which we already argued is not a sink. Finally, suppose $\mathcal{H}$ is a horizontal sector spanning $\Gamma_3$. Since $B$ contains no product disks from $\Gamma_3$, $\mathcal{H} \sim \mathcal{S}_4$, which we already proved is not a sink. 

\end{itemize}

We deduce that we can build a sink disk free branched surface for all braids satisfying the hypotheses of Case (3c). We now determine the supremal slopes carried by the train tracks for the branched surfaces build in both Case (3a) and (3c). This is straightforward: in both cases, $\Cmin = 2$, and we used $\mathcal{C}-\Cmin-2$ product disks, while having exactly one linked pair. Thus, $\TauSup = \mathcal{C} - \Cmin - 3 = \mathcal{C}- 5$. But $2g(K)-1 = \mathcal{C}-4$, so $\TauSup = 2g(K)-2$. 

Thus, if $\beta$ is sparse on 4-strands, we can produce a sink disk free branched surface $B$ such that $\tau_B$ carries all slopes $r <2g(K)-2$. Applying \Cref{prop:BisLaminar} and \Cref{thm:foliations}, we deduce that $S^3_r(K)$ admits a taut foliation for all $r < 2g(K)-2$.
\end{proof}

\subsection{\Cref{thm:main} for 4-braids.}

We can finally prove \Cref{thm:main} for positive 4-braids.

\begin{thm} \label{thm:Main4Braids}
If $K$ is a positive 4-braid with $g(K) \geq 2$, then $S^3_r(K)$ admits a taut foliation whenever $r < g(K)+1$.
\end{thm}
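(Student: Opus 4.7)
The plan is to combine the two propositions already established in this section with a standard reduction for composite knots, and then verify a small amount of genus arithmetic in each of the resulting subcases.

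First, I would dispose of the composite case. If $K$ is a non-trivial composite positive $4$-braid knot, then $K$ decomposes as a connected sum of fibered knots (since every positive braid knot is fibered), and by Delman--Roberts \cite{DelmanRoberts:composite} every rational Dehn surgery on $K$ admits a taut foliation; in particular this covers all $r < g(K)+1$. So from here on I may assume $K$ is prime. Putting $\beta$ into standard form (Definition \ref{defn:standard}) does not change the knot type, so I can split $\beta$ into the two subfamilies defined in Section \ref{section:Proof4Braids}: \emph{generic} (all $c_i \geq 3$) or \emph{sparse} (some $c_i = 2$).

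In the generic case, Proposition \ref{thm:MainThmGeneric4Braid} already produces a taut foliation in $S^3_r(K)$ for all $r < \frac{4}{3}g(K)$. It therefore suffices to observe that $g(K) \geq 3$ in this subcase, since then $\frac{4}{3}g(K) \geq g(K)+1$. This is immediate from the crossing count: $c_1, c_2, c_3 \geq 3$ gives $\mathcal{C} \geq 9$, and since $2g(K)-1 = \mathcal{C} - n = \mathcal{C}-4$, we get $g(K) \geq 3$. Hence the interval $r < g(K)+1$ is contained in the interval $r < \frac{4}{3}g(K)$, and Proposition \ref{thm:MainThmGeneric4Braid} delivers the conclusion.

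For the sparse case, Proposition \ref{thm:MainThmSparse4Braid} gives a taut foliation for every $r < 2g(K)-2$. Since $2g(K)-2 \geq g(K)+1$ is equivalent to $g(K) \geq 3$, again it suffices to rule out $g(K)=2$ in the sparse subcase. Inspecting the case breakdown in the proof of Proposition \ref{thm:MainThmSparse4Braid}, the only surviving configurations are (2c), (3a), and (3c); each of these forces either a single column with $c_i \geq 4$ (case (2c)) or two columns with $c_i \geq 3$ (cases (3a), (3c)). In every surviving case $\mathcal{C} \geq 8$, and because $\mathcal{C} = 2g(K)+3$ is odd this improves to $\mathcal{C} \geq 9$, hence $g(K) \geq 3$. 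Applying Proposition \ref{thm:MainThmSparse4Braid} now gives the desired taut foliations for every $r < g(K)+1$.

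The main obstacle I anticipate is simply making sure the slope bounds truly dominate $g(K)+1$ under the hypothesis $g(K) \geq 2$: both $\frac{4}{3}g(K)$ and $2g(K)-2$ dip below $g(K)+1$ exactly when $g(K)=2$, so everything hinges on showing that the genus-two case is impossible both in the generic subcase and in the surviving sparse subcases. This is the small crossing-count verification above, and once it is in hand the theorem follows directly from Propositions \ref{thm:MainThmGeneric4Braid} and \ref{thm:MainThmSparse4Braid} together with the Delman--Roberts reduction.
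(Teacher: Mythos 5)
Your proof is correct and follows the same skeleton as the paper's: reduce to prime, standardized braids, split into generic and sparse, invoke Propositions \ref{thm:MainThmGeneric4Braid} and \ref{thm:MainThmSparse4Braid}, and compare the slope bounds $\tfrac{4}{3}g(K)$ and $2g(K)-2$ against $g(K)+1$. The one genuine divergence is how the boundary case $g(K)=2$ is dispatched. The paper does not try to exclude genus two; it quotes the classification of genus-two positive braid knots ($g(K)=2$ forces $K\approx T(2,5)$, citing Baader--Lewark--Liechti) and then appeals to the already-known taut foliations on $S^3_r(T(2,5))$ for all $r<2g(K)-1$ from Eisenbud--Hirsch--Neumann and the author's $3$-braid paper. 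You instead show genus two cannot occur in either subcase: for generic braids $\mathcal{C}\geq 9$ gives $g(K)\geq 3$ outright, and in the surviving sparse cases (2c), (3a), (3c) the crossing counts ($c_2\geq 4$ in (2c); two columns with $c_i\geq 3$ in (3a), (3c)) give $\mathcal{C}\geq 8$, which the parity constraint $\mathcal{C}=2g(K)+3$ improves to $\mathcal{C}\geq 9$, again forcing $g(K)\geq 3$. Your route is more self-contained, since it avoids both the classification result and the external input on $T(2,5)$; the paper's route is more robust in that it would still apply even if some genus-two configuration had survived the case analysis. One caveat you share with the paper: the generic/sparse dichotomy is exhaustive only once every $c_i\geq 2$, which for a prime knot requires first destabilizing any column with $c_i\leq 1$; if that reduction drops the braid to three or fewer strands (e.g.\ $T(2,5)$ presented as $\sigma_1^5\sigma_2\sigma_3$), one falls back on the $3$-braid result of \cite{Krishna:3Braids}, exactly as in the proof of the full \Cref{thm:main}. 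It would be worth making that sentence explicit, but it is not a gap specific to your argument.
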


\begin{proof}
Suppose $K$ is a positive 4-braid, $g(K) \geq 2$, and $K \approx \widehat{\beta}$, where $\beta$ is a standardized 4-braid. If $\beta$ is generic, then by \Cref{thm:MainThmGeneric4Braid}, we know that for all $r < \frac{4}{3}g(K)$, $S^3_r(K)$ admits a taut foliation. In particular, for any such $K$ with $g(K) \geq 3$, we can immediately deduce that $S^3_r(K)$ admits a taut foliations for all $r < g(K)+1$. Our construction ensures that $\TauSup$ is an integer, so when $g(K)=2$, we see that $\displaystyle \TauSup = \bigg \lceil \frac{4}{3} \cdot 2 \bigg \rceil$, and we produce taut foliations in all surgeries $r < 3 = g(K)+1$. We deduce that the theorem holds for all generic positive braids on 4-strands. 

Otherwise, $\beta$ is a sparse positive 4-braid, and \Cref{thm:MainThmSparse4Braid} constructs taut foliations in $S^3_r(K)$ for all $r<2g(K)-2$. Therefore, for all $K$ with $g(K) \geq 3$, we can immediately deduce that $S^3_r(K)$ has a taut foliations whenever $r < g(K)+1$, and the same analysis as in the previous paragraph holds in the $g(K)=2$ case. 
\end{proof}

In addition to proving \Cref{thm:main} for positive 4--braids, this section demonstrates that branched surfaces that simultaneously minimize linking and avoid sink disks is a delicate process. Moreover, we hope it illustrates the combinatorial challenges presented by positive braids.

\section{Some Interesting Examples} \label{section:WeirdExamples}

In this section, we prove \Cref{thm:examples}:

\textbf{\Cref{thm:examples}.}
\textit{There are infinitely many hyperbolic 4-braid L-space knots $\{K_m\}$ such that $S^3_r(K_m)$ admits a taut foliation whenever $r < 2g(K_m)-2$.}

\begin{proof}
A \textit{twisted torus knot} $K(p, q; s, m)$ is obtained by taking the closure of a positive braid on $p$ strands, where the braid is of the form $\beta = (\sigma_{p-1} \sigma_{p-2} \ldots \sigma_2 \sigma_1)^q \cdot (\sigma_{p-1} \sigma_{p-2} \ldots \sigma_{p-s+1})^{ms}$ (that is, into the standard braid word for a positive torus knot, one adds in $m$ positive full twists into a subset of $s$ adjacent strands). Vafaee proved that twisted torus knots are sometimes L-space knots:

\begin{thm} \cite{Vafaee:TwistedTorusKnots} \label{Vafaee}
For $p \geq 2$, $k \geq 1$, $m \geq 1$, and $0 < s < p$, the twisted torus knot $K(p, kp \pm 1; s,m)$ is an L-space knot if and only if either $s=p-1$, or $s \in \{2, p-2\}$ and $m =1$. \hfill $\Box$
\end{thm}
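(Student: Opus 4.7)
The plan is to let $K_m = \widehat{\beta_m}$ be the 4-braid closures in Figure~\ref{fig:4Braid_Intro}, and to verify the four required properties in turn: L-space, hyperbolic, infinite family, taut foliations up to $2g(K_m)-2$. For the L-space property, I would show that each $\beta_m$ is conjugate to a twisted torus braid of the form appearing in Vafaee's Theorem~\ref{Vafaee} with $p=4$ and $s=p-1=3$, in which case no constraint on $m$ is needed and $K_m$ is automatically an L-space knot. Distinctness of the family would follow by computing $g(K_m)$ directly from the braid word via the standard formula $2g(K_m)-1 = c_1+c_2+c_3-4$ for positive 4-braids, observing that it grows linearly in $m$. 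Hyperbolicity would be checked by ruling out torus and satellite structure: for large $m$ the fiber has too large a genus to match any torus knot, and a positive braid closure of braid index $4$ that is not a torus knot and has no obvious cabling decomposition admits no essential torus in its complement.

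The heart of the argument is the branched surface construction. I would directly build a sink--disk free branched surface $B_m \subset X_{K_m}$ by applying the column-by-column machinery of Sections~\ref{section:background}--\ref{section:Proof4Braids}, but exploiting the repetitive block structure $(\sigma_3 \sigma_3 \sigma_2)^{2m}$ of $\beta_m$ to include \emph{more} product disks than the generic recipe from Theorem~\ref{thm:main} allows. Concretely, I would apply the template of Definition~\ref{defn:Templates} to two of the three columns (say $\Gamma_2 \cup \Gamma_3$) and then augment it by a carefully chosen collection of product disks supported in the remaining column $\Gamma_1$, with co-orientations alternating in the controlled manner used in Propositions~\ref{prop:Generic4BraidCMinIsCThree}--\ref{prop:Generic4BraidCMinIsCTwo}. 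The block structure ensures that every disk, polygon, and horizontal branch sector inherits an outward-pointing arc from the neighboring block, so the usual Seifert-disk / polygon-sector / horizontal-sector case analysis goes through essentially verbatim.

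The hard part will be controlling the linking count. The enlarged branched surface $B_m$ will use $D_m = c_1 + c_2 + c_3 - 3$ co-oriented product disks, and the target bound $\TauSup = 2g(K_m)-2$ forces the number of linked pairs in $\tau_{B_m}$ to be exactly one (so that $D_m - 1 = 2g(K_m)-2$ via the genus formula above). I would isolate the single unavoidable linked pair created by the template on $\Gamma_2 \cup \Gamma_3$, following Lemma~\ref{lemma:TemplateLinking}, and then check that the new disks added from $\Gamma_1$ are pairwise unlinked and do not link any arc from $\Gamma_2$ — this is the key combinatorial verification, and is where the specific block pattern of $\beta_m$ matters most. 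Once the linking count is confirmed, Proposition~\ref{prop:BisLaminar} promotes $B_m$ to a laminar branched surface, and Theorem~\ref{thm:foliations} produces a taut foliation in $S^3_r(K_m)$ for every $r < 2g(K_m)-2$, completing the proof.
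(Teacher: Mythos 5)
Your proposal does not prove the statement at hand. The statement is Vafaee's classification of which twisted torus knots $K(p, kp \pm 1; s, m)$ are L-space knots; in the paper this is an imported result, cited from \cite{Vafaee:TwistedTorusKnots} and stated without proof, and any genuine proof would go through Floer-homological input (Vafaee's analysis of knot Floer homology of $(1,1)$-knots), not through branched surfaces or taut foliations. What you have sketched instead is a proof attempt for \Cref{thm:examples}, and in its very first step you invoke \Cref{Vafaee} to conclude that the knots $K_m$ are L-space knots -- so as an argument for the stated theorem it is circular, and as written it establishes nothing about the ``only if'' direction or about general $p$, $s$, $m$ at all.

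Even judged as a sketch of \Cref{thm:examples}, two steps would fail. First, your hyperbolicity argument (``too large genus to be a torus knot, no obvious cabling, hence no essential torus'') is not a proof; the paper instead verifies hyperbolicity of $K_1$ and of the augmented link with SnapPy and then applies Thurston's hyperbolic Dehn surgery theorem to get infinitely many hyperbolic $K_m$. Second, your plan to use $D_m = c_1 + c_2 + c_3 - 3$ product disks with a single linked pair is numerically and combinatorially inconsistent: with $c_1 = 3$, $c_2 = 7+2m$, $c_3 = 11+4m$ one gets $D_m - 1 = 2g(K_m) - 1$, not $2g(K_m)-2$, and the paper shows (see \Cref{fig:CuriousExamples} and \Cref{table:Examples_Linking}) that co-orienting $\alpha_{1,2}$ or $\alpha_{1,3}$ in either direction forces an additional linked pair, which is exactly why the paper's construction for $K_m$ takes only the single disk $\alpha_{1,1}$ from $\Gamma_1$ (the $\Cmin = \Cone$ recipe of \Cref{prop:Generic4BraidCMinIsCOne}) and stops at $\TauSup = 2g(K_m)-2$.
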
 

To prove \Cref{thm:examples}, we consider the twisted torus knots with $p=4, k=2,$ and $s=3$; using Vafaee's notation, these are the knots $K_m :=K(4, 7; 3, m)$. We have $s=p-1$, so applying \Cref{Vafaee}, we see that $\{K_m \ | \ m \geq 1\}$ are L-space knots.

Next, we construct taut foliations in $S^3_r(K_m)$ for all $r < 2g(K_m)-2$. Let $\beta_m := (\sigma_1 \sigma_2 \sigma_3)^7 (\sigma_3 \sigma_2)^{3m}$. To begin, we show that we can find a standardized synonym for $\beta_m$. Finding such a synonym for $\beta_m$ requires concrete applications of the braid relations, which we execute below. Throughout, we write $i$ for $\sigma_i$, and we underline the letters of the braid word to which we apply a braid relation. 
\begin{align}
\beta_m &= (1 2 3)^7 (3 2)^{3m} \nonumber \\
&= (1 \ 2 \ \underline{3 \ 1} \ 2 \ 3 \ 1 \ 2 \ \underline{3 \ 1} \ 2 \ 3 \ 1 \ 2 \ \underline{3 \ 1} \ 2 \ 3 \ 1 \ 2 \ 3) (3 \ 2 \ 3 \ 2 \ 3 \ 2)^m \nonumber \\ 
&= (\underline{1 \ 2 \ 1} \ 3 \ 2 \ 3 \ \underline{1 \ 2 \ 1} \ 3 \ 2 \ 3 \ \underline{1 \ 2 \ 1} \ 3 \ 2 \ 3 \ 1 \ 2 \ 3) (3 \ 2 \ 3 \ 2 \ 3 \ 2)^m \nonumber \\
&= (2 \ 1 \ \underline{2 \ 3 \ 2} \ 3 \ 2 \ 1 \ \underline{2 \ 3 \ 2} \ 3 \ 2 \ 1 \ \underline{2 \ 3 \ 2} \ 3 \ 1 \ 2 \ 3) (3 \ 2 \ 3 \ 2 \ 3 \ 2)^m \nonumber \\
&= (2 \ 1 \ 3 \ 2 \ 3 \ 3 \ 2 \ 1 \ 3 \ 2 \ 3 \ 3 \ 2 \ \underline{1 \ 3} \ 2 \ \underline{3 \ 3 \ 1} \ 2 \ 3) (3 \ 2 \ 3 \ 2 \ 3 \ 2)^m \nonumber \\
&= (2 \ 1 \ 3 \ 2 \ 3 \ 3 \ 2 \ 1 \ 3 \ 2 \ 3 \ 3 \ 2 \ 3 \ \underline{1 \ 2 \ 1} \ 3 \ 3 \ 2 \ 3) (3 \ 2 \ 3 \ 2 \ 3 \ 2)^m \nonumber \\
&= (2 \ 1 \ 3 \ 2 \ 3 \ 3 \ 2 \ 1 \ 3 \ 2 \ 3 \ 3 \ \underline{2 \ 3 \ 2} \ 1 \ 2 \ 3 \ 3 \ 2 \ 3) (3 \ \underline{2 \ 3 \ 2} \ 3 \ 2)^m \nonumber \\
&= (2 \ 1 \ 3 \ 2 \ 3 \ 3 \ 2 \ 1 \ 3 \ 2 \ 3 \ 3 \ 3 \ 2 \ 3 \ 1 \ 2 \ 3 \ 3 \ 2 \ 3) (3 \ 3 \ 2 \ 3 \ 3 \ 2)^m \label{example:FinalBraid}
\end{align}

We take inventory of the crossings in each column for $\beta_m: c_1 = 3, c_2 = 7+2m$, and $c_3 = 11 + 4m$. In particular, for all $m \geq 1$, $\Cmin = \Cone$. We apply the construction of \Cref{prop:Generic4BraidCMinIsCOne} to produce a sink disk free branched surface for $K_m$. \Cref{fig:CuriousExamples} showcases a brick diagram for the standardized form in \Cref{example:FinalBraid}, and most of the associated branched surface produced by applying the construction of \Cref{prop:Generic4BraidCMinIsCOne}. (In the $m=1$ case, the full branched surface is visible in \Cref{fig:Example_Main}.)

By our usual elementary calculations, we have
\begin{align*}
2g(K_m) -1 &= \mathcal{C}-n = (3 + 7 + 2m + 11 + 4m)-4 = (21+6m)-4 = 17+6m, \text{ and } \\
\TauSup &= 1 + (7+2m - 1) + (11+4m - 1) - 1 = 16+6m = 2g(K_m)-2.
\end{align*}

Applying \Cref{prop:BisLaminar}, we see that $B$ is laminar; applying \Cref{thm:foliations}, we conclude that we can produce taut foliations in $S^3_r(K_m)$ for all $r < 2g(K)-2$. 

In \Cref{appendix}, we use SnapPy \cite{SnapPy} and Thurston's hyperbolic Dehn surgery \cite{Thurston:Vol4} \cite[Theorem A]{NeumannZagier} theorem to prove that infinitely many of the $\{K_m \ | \ m \geq 1\}$ are hyperbolic. 

These knots have braid index 4: for all $m \geq 1$, $\beta_m$ is a positive 4-braid with a full twist on 4 strands. By Morton---Franks-Williams theorem tells us that braid index of $K_m$ is 4 \cite{Morton:KnotPolys, FranksWilliams}.
\end{proof}

\begin{figure}[H]\center
\labellist
\pinlabel {$(\sigma_3 \sigma_3 \sigma_2)^{2m}$} at 84 35
\pinlabel {$(\sigma_3 \sigma_3 \sigma_2)^{2m}$} at 250 35
\endlabellist
\includegraphics[scale=1.1]{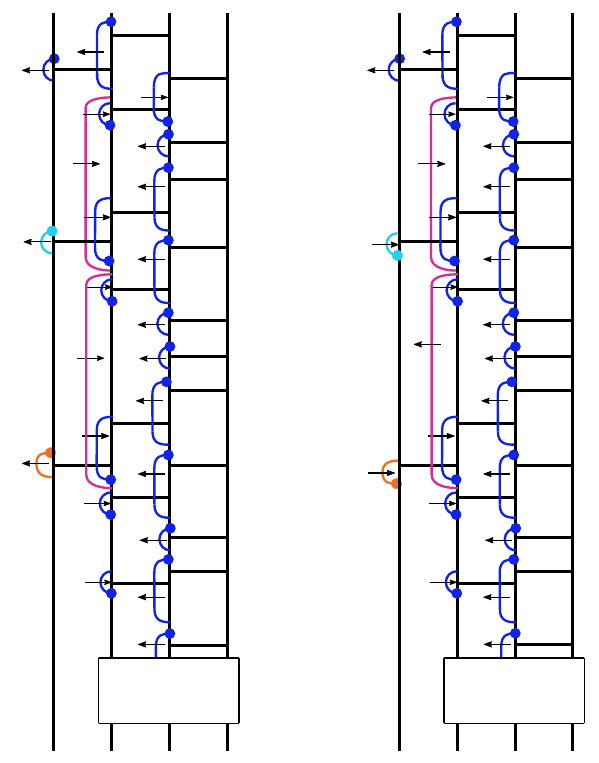}
\caption{Building the branched surface for $K_m$, where $m \geq 1$. We use $\alpha_{1,1}$ to build $B$, but no other arcs in $S_1$. For simplicity, we have suppressed all image arcs except $\varphi(\alpha_{1,1})$ and $\varphi(\alpha_{1,2})$, and hidden the part of the branched surface coming from the full twists on a subset of strands. We see that if we regardless of the co-orientation imposed on $\alpha_{1,2}$ (identified in light blue) or on $\alpha_{1,3}$ (identified in orange), the arc forces a linked pair to occur.}
\label{fig:CuriousExamples}
\end{figure}

Inspecting \Cref{fig:CuriousExamples}, one may try to build a ``better'' branched surface by adding a product disk swept out by $\alpha_{1,2}$ or $\alpha_{1,3}$, and hoping that the result carries more slopes than the existing branched surface. In \Cref{table:Examples_Linking}, we see that regardless of how we could co-orient these arcs, we will produce a linked pair with some other arc that we already used to build $B$. Therefore, there is no value added by including either disk into $B$, and it appears that our construction cannot produce taut foliations for slopes $r \in [2g(K_m)-2, 2g(K_m)-1)$. This leads us to ask the question posed in the introduction: \textit{for $r \in [2g(K_m)-2, 2g(K_m)-1)$, if $S^3_r(K_m)$ has a taut foliation, must a transversal for the foliation arise as a dual of the core of the Dehn surgery solid torus?}

 

\begin{table}[H]
\begin{tabular}{c||c|c}
 & left, then it links with: & right, then it links with: \\
 \hline
 If $\alpha_{1,2}$ is smoothed to the \ldots & $\alpha_{2,3}$ & $\alpha_{2,5}$ \\
 \hline
If $\alpha_{1,3}$ is smoothed to the \ldots & $\alpha_{2,5}$ & $\alpha_{1,1}$
\end{tabular}
\caption{Smoothing additional arcs in $S_1$ always produces a linked pair.}
\label{table:Examples_Linking}
\end{table}

\section{Proof of \Cref{thm:main} for positive braids on five or more strands} \label{section:proof}

In this section, we prove \Cref{thm:main} for all positive braid knots on at least five strands. We briefly sketch the strategy here, and then establish some preliminaries. 

To build our desired branched surface, we begin by standardizing our braid $\beta$ on $n \geq 5$ strands, and then determining whether the crossings are concentrated in the ``even'' or ``odd'' columns of the braid. \Cref{section:BuildInFront} provides instructions for how to build $B$ in $\Gamma_1 \cup \Gamma_2 \cup \Gamma_3$. \Cref{section:BuildInMiddle} details how to iteratively conjugate our braid to build $B$ in the intermediate columns of the braid. Eventually, we will reach $\Gamma_{n-3}$; at this point, we follow the instructions of \Cref{section:BuildInLast} to build $B$ in $\Gamma_{n-3} \cup \Gamma_{n-2} \cup \Gamma_{n-1}$. \Cref{section:Finale} details how to stitch all of the pieces of the construction together to construct a branched surface $B$ which is sink disk free (and therefore laminar). This sections also computes $\TauSup$ for a branched surface constructed this way, and shows that for any such $B$, $\tau_B$ carries all slopes $r < g(K)+1$.

Before we dive into the proof, we establish some preliminaries. 

\begin{defn}
A column $\Gamma_i$ is \textbf{homogeneous} $($resp. \textbf{heterogeneous}$)$ if, when we build our branched surface in $\Gamma_i$, the Seifert disk $S_i$ contains only left pointing \underline{plumbing} arcs $($resp. both left and right pointing \underline{plumbing} arcs$)$.
\end{defn}

We emphasize: the homogeneity or heterogeneity of $S_i$ (or $\Gamma_i$) is very much determined by and dependent upon the presentation of the braid when we apply our template to $\Gamma_i$. Much of our construction requires distinct analysis for the heterogeneous and homogeneous cases. 

\begin{lemma}
If $\Gamma_i$ is heterogeneous, then $c_i \geq 3$, but if $\Gamma_i$ is homogeneous, then $c_i \geq 2$. \hfill $\Box$
\end{lemma}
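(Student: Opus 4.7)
The plan is to prove the two assertions by a direct counting argument, leveraging the fact that the plumbing arcs in $S_i$ that the templates can touch in the column $\Gamma_i$ are exactly enumerated by the plumbing construction of \Cref{lemma:monodromy}. Specifically, that construction identifies the plumbing arcs in $S_i$ used to build the fiber surface in $\Gamma_i$ as precisely $\alpha_{i,1}, \alpha_{i,2}, \ldots, \alpha_{i, c_i - 1}$, so $S_i$ contains exactly $c_i - 1$ plumbing arcs. Both implications of the lemma will then follow by comparing this count against what the definitions of homogeneity and heterogeneity require.

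For the heterogeneous case, I would observe that by \Cref{defn:column} and the hypothesis of heterogeneity, $S_i$ must simultaneously contain at least one left pointing plumbing arc and at least one right pointing plumbing arc. In particular there must be at least two plumbing arcs in $S_i$, so $c_i - 1 \geq 2$ and hence $c_i \geq 3$, as claimed.

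For the homogeneous case, the hypothesis produces at least one (left pointing) plumbing arc in $S_i$, which gives $c_i - 1 \geq 1$ and thus $c_i \geq 2$; this is also consistent with \Cref{lemma:prime}, which already guarantees $c_i \geq 2$ for all $i$ whenever $\widehat{\beta}$ is a prime positive braid knot, so in the setting of our construction the inequality is automatic.

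There is no real obstacle here: the only substantive content is matching the definitions of homogeneous/heterogeneous to the indexing of plumbing arcs established in \Cref{section:monodromy}. I expect the written proof to be two short sentences, one for each implication, with no case analysis beyond the above.
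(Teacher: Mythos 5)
Your argument is correct and is precisely the intended justification: the paper states this lemma without proof (the $\Box$ follows the statement), and the evident reasoning is exactly your count that $S_i$ carries $c_i-1$ plumbing arcs, so heterogeneity (both a left and a right pointer) forces $c_i-1\geq 2$ while homogeneity needs only one arc. The only nitpick is a citation slip: the blanket bound $c_i\geq 2$ for prime closures is the unnumbered lemma in Section 4.1, not \Cref{lemma:prime} (which concerns $\mathcal{B}_i(\beta)\geq 2$), though either statement suffices.
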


\begin{lemma} \label{lemma:GeneityAndSinks}
Suppose we apply our template to $\Gamma_i$ or to $\Gamma_i \cup \Gamma_{i+1}$. Further assume that $S_i$ contains only plumbing arcs. Then if $\Gamma_i$ is heterogeneous, $\mathcal{S}_i$ is not a sink. 
\end{lemma}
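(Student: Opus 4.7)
The plan is to exhibit a plumbing arc on $\partial \mathcal{S}_i$ whose co-orientation points out of the sector. By the definition of heterogeneity, $S_i$ contains both left-pointing and right-pointing plumbing arcs, and the template's design forces a specific structure: the plumbing arcs $\alpha_{i,1}, \ldots, \alpha_{i,k}$ (those appearing before the first $\sigma_{i+1}$ letter in the pivoted presentation of $\beta$) are left pointers, while $\alpha_{i,k+1}, \ldots, \alpha_{i,c_i-1}$ are right pointers, with $1 \leq k \leq c_i - 2$.

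Since $S_i$ contains only plumbing arcs by hypothesis, the branch arcs within $S_i$ are exactly these $c_i - 1$ plumbing arcs, which subdivide $S_i$ into $c_i$ vertical strips. Direct inspection of the induced smoothings (as in \Cref{lemma:smoothing}) shows that every strip except possibly the leftmost (bounded on the right by the left pointer $\alpha_{i,1}$) and the rightmost (bounded on the left by the right pointer $\alpha_{i,c_i-1}$) carries at least one plumbing arc on its boundary that is co-oriented outward. In particular, the ``transition strip'' wedged between $\alpha_{i,k}$ and $\alpha_{i,k+1}$ has outward arrows on both sides, by the very reason the template prescribes the flip. This already resolves the case in which $\mathcal{S}_i$ is realized as any non-extreme strip of $S_i$.

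The remaining task is to argue that $\mathcal{S}_i$ cannot be either of the two extreme strips alone, so that it must pick up an outward plumbing arc from a neighboring strip. To do this, I would trace how $\mathcal{S}_i$ propagates through the horizontal sectors supported in the neighboring bands of $\Gamma_{i-1}$ and $\Gamma_i$; the goal is to show that these horizontal connections force $\mathcal{S}_i$ to extend into at least one non-extreme strip, regardless of whether the template was applied to $\Gamma_i$ alone or to $\Gamma_i \cup \Gamma_{i+1}$. The main obstacle will be carrying out this tracing uniformly across both template variants, and this is precisely where the hypothesis that $S_i$ contains only plumbing arcs is used essentially: with no image arcs cutting across $S_i$, the analysis of $\partial \mathcal{S}_i$ reduces cleanly to tracking how plumbing arcs and adjacent bands organize the branch sectors, and the outward arrow then follows from the heterogeneous template's design.
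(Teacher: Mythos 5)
There is a genuine gap, and it stems from a wrong picture of how the plumbing arcs decompose $S_i$. You model the $c_i-1$ plumbing arcs as cutting $S_i$ into $c_i$ parallel ``vertical strips,'' so that $\mathcal{S}_i$ might be an extreme strip seeing only a left pointer, and you then defer the crucial step to an (unexecuted) tracing argument through horizontal sectors in neighboring columns. But that is not how the arcs sit in $S_i$: each plumbing arc $\alpha_{i,s}$ cuts off only a small region of the Seifert disk containing the left attachment site of $\mathbbm{b}_{i,s}$ and the right attachment sites of the $d_L(i;s,s+1)$ bands it encloses, and these cut-off regions are pairwise disjoint. Hence the complement of these bites inside $S_i$ is a \emph{single} connected region; since by hypothesis $S_i$ contains no image arcs, there are no polygon sectors in $S_i$ either, so this region is exactly $\mathcal{S}_i$ and its boundary contains \emph{every} plumbing arc $\alpha_{i,1},\ldots,\alpha_{i,c_i-1}$. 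Heterogeneity supplies a right pointer among them, and a right-pointing plumbing arc points out of $\mathcal{S}_i$. That is the entire proof, and it is where the hypothesis ``only plumbing arcs'' is actually used -- not, as you suggest, to make a propagation argument through adjacent bands tractable.

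Concretely, then: your first paragraph (template structure, existence of a right pointer) is fine, but your second and third paragraphs do not close the argument. The claim that $\mathcal{S}_i$ could be an ``extreme strip'' is based on a decomposition of $S_i$ that does not occur, and the proposed repair -- showing $\mathcal{S}_i$ extends into a non-extreme strip via horizontal sectors in $\Gamma_{i-1}$ and $\Gamma_i$ -- is both unnecessary and unsubstantiated as written (you describe it as a plan with a stated ``main obstacle'' rather than carrying it out). Replace the strip analysis with the observation that $\partial\mathcal{S}_i$ contains all the plumbing arcs in $S_i$, and the lemma follows in one line.
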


\begin{proof}
This is straightforward: if $S_i$ contains only plumbing arcs, then $\partial \mathcal{S}_i$ contains $\alpha_{i,1}, \ldots, \alpha_{i, c_1-1}$. Since $\Gamma_i$ is heterogeneous, then there must be some right pointing arc in $S_i$, and this arc points out of $\mathcal{S}_i$, so this sector is not a sink.
\end{proof}

\begin{defn} \label{defn:Cmax}
Suppose $\beta$ is a standardized positive $n$-braid. We define $\Codd$ and $\Ceven$:
\begin{align*}
\Codd := \sum_{i \equiv 1 \mod 2}^{n-1} c_i \qquad \qquad \qquad \qquad  \Ceven := \sum_{i \equiv 0 \mod 2}^{n-1} c_i
\end{align*}
\end{defn}

\begin{defn} \label{defn:Cmax}
We define $\Cmax$ as follows:
\begin{align*}
\Cmax := 
    \begin{cases}
    	\Codd & \Codd \geq \Ceven \\ 
	\Ceven & \text{otherwise} 
    \end{cases}
\end{align*}
\end{defn}

We can now detail our construction of our branched surface in various parts of the braid.

\subsection{Building the branched surface in the leftmost columns} \label{section:BuildInFront}

We begin by describing the construction of the branched surface in the first three columns of the braid. There will be two constructions in this section: one for when $\Cmax = \Codd$, and the other for when $\Cmax = \Ceven$.


\begin{lemma} \label{lemma:Generic_Cmax=Codd_Gammas12}
Let $\beta$ be a standardized braid on $n \geq 5$ such that $\Cmax = \Codd$. We can build a branched surface $B$ in $\Gamma_1 \cup \Gamma_2 \cup \Gamma_3$ such that: 
\begin{enumerate}
\item $B$ contains $c_1 - 1$ product disks from $\Gamma_1$,
\item $B$ contains 1 product disk from $\Gamma_2$,
\item $B$ contains $c_3 - 1$ product disks from $\Gamma_3$ , 
\item there is at most one potential branch sector spanning $\Gamma_1 \cup \Gamma_2$ which could be a sink, and 
\item no arcs $\alpha, \alpha'$, where $\alpha, \alpha' \subset S_1 \cup S_2 \cup S_3$, form a linked pair in $\tau_B$.
\end{enumerate}
In particular, we build $B$ in $\Gamma_1$ and $\Gamma_3$ by applying our template to these columns, for some cyclic conjugations of $\beta$.
\end{lemma}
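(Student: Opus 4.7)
The plan is to construct $B$ column by column: apply the template of \Cref{defn:Templates} to $\Gamma_1$ and to $\Gamma_3$ under (possibly distinct) cyclic conjugations of $\beta$, and then add a single product disk in $\Gamma_2$ whose location and co-orientation are tuned to prevent both linking and (with one allowable exception) sink disks. First I would pivot $\beta$ so that an appropriate $\sigma_1$ letter begins the braid word and apply the template to $\Gamma_1$, producing the $c_1-1$ co-oriented plumbing arcs in $S_1$ required by property (1); by the proof of \Cref{lemma:TemplateLinking}, no two of these arcs form a linked pair. Then, after identifying a distinguished $\sigma_3$ letter and (re)conjugating $\beta$ as needed, I would apply the template to $\Gamma_3$ to produce the $c_3-1$ co-oriented plumbing arcs in $S_3$ required by property (3); again by \Cref{lemma:TemplateLinking}, this contributes no linked pairs among arcs in $S_3$.

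The heart of the argument is selecting the single product disk in $\Gamma_2$. The proof of \Cref{lemma:TemplateLinking} shows that linked pairs can only occur between arcs in the same or adjacent Seifert disks. Since $S_2$ contains only one plumbing arc $\alpha_{2,\star}$, the only linked pairs that could arise in $S_1 \cup S_2 \cup S_3$ involve $\alpha_{2,\star}$ paired with an arc in $S_1$ or $S_3$, and \Cref{fig:AdjacentDiskLinking} catalogs exactly the configurations which produce them. The flexibility to choose the two template pivots \emph{independently} is crucial: it lets me arrange that both the $S_1$-arc abutting $\alpha_{2,\star}$ and the $S_3$-arc abutting $\varphi(\alpha_{2,\star})$ have co-orientations avoiding every forbidden configuration in \Cref{fig:AdjacentDiskLinking}, establishing property (5).

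For property (4), I would carry out the sink disk analysis of all branch sectors contained in $\Gamma_1 \cup \Gamma_2$. The Seifert disk sectors $\mathcal{S}_1$ and $\mathcal{S}_2$ are handled by \Cref{lemma:SeifertDisksNotSinks} and \Cref{lemma:GeneityAndSinks}, treating the homogeneous and heterogeneous subcases of $\Gamma_1$ separately; since $S_1$ contains only plumbing arcs, no polygon sectors in $S_1$ exist. For horizontal sectors, the images of the plumbing arcs from $S_1$ inject right-pointing arcs into $S_2$, so after choosing $\alpha_{2,\star}$ and its co-orientation appropriately, every horizontal sector spanning $\Gamma_1 \cup \Gamma_2$ will contain an outward-pointing arc, with the exception of at most one sector analogous to the one identified in \Cref{lemma:TemplatesHorizontalSectorsNotSinks}.

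The main obstacle is the bookkeeping required to simultaneously satisfy the linking constraints of (5) and the sink disk constraints of (4): the choice of distinguished $\sigma_1$ and $\sigma_3$ letters and the selection of $\alpha_{2,\star}$ together with its smoothing must be made compatibly. I expect the argument to split into subcases based on whether $\Gamma_1$ is homogeneous or heterogeneous and on the local values of $d(1;\cdot,\cdot)$ near $\alpha_{2,\star}$, in the same spirit as the 4-braid proofs of \Cref{prop:Generic4BraidCMinIsCOne} and \Cref{prop:Generic4BraidCMinIsCThree}, where analogous constraints were balanced by careful choice of presentation.
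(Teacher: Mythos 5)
Your outline has the right skeleton --- template on $\Gamma_1$, template on $\Gamma_3$, one tuned product disk from $\Gamma_2$ --- but it never engages with the sector that the paper's entire case analysis revolves around, namely $\mathcal{S}_3$. Your sink-disk paragraph only discusses $\mathcal{S}_1$, $\mathcal{S}_2$, polygon sectors in $S_1$, and horizontal sectors, yet $S_3$ contains $c_3-1$ plumbing arcs plus the single image arc $\varphi(\alpha_{2,\star})$, and when $\Gamma_3$ comes out homogeneous every one of those plumbing arcs points \emph{into} $\mathcal{S}_3$. Whether $\mathcal{S}_3$ is a sink then hinges entirely on the co-orientation of $\alpha_{2,\star}$, which is exactly why the paper runs an ``attempt and repair'' argument: it first sets $\alpha_{2,1}$ to be a right pointer, checks whether $\mathcal{S}_3$ became a sink, and only then flips $\alpha_{2,1}$ to a left pointer in the bad cases (Cases (B.1)/(B.2)). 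The allowed exceptional sector in conclusion (4) is consequently \emph{not} always ``analogous to the one in \Cref{lemma:TemplatesHorizontalSectorsNotSinks}'': in Case (A) it is the Seifert disk sector $\mathcal{S}_3$ itself, in Cases (B.1)/(B.2) there is none, and in Case (C) it is the horizontal sector containing $\mathbbm{b}_{2,1}\sim\mathbbm{b}_{3,c_3}$. Relatedly, your predicted case split (homogeneity of $\Gamma_1$ and the values of $d(1;\cdot,\cdot)$ near $\alpha_{2,\star}$) is on the wrong quantities; the split that actually drives the proof is on $d(2;1,2)$ and on whether $\Gamma_3$ ends up homogeneous or heterogeneous after its template is applied.

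The second gap is your claim that the two template pivots can be chosen \emph{independently} so as to avoid every linking configuration of \Cref{fig:AdjacentDiskLinking}. They cannot: the $\Gamma_3$ pivot is forced by the sink-disk analysis (the paper pivots about $\alpha_{3,1}$ when $d(2;1,2)=0$ and about $\alpha_{3,\delta}$ with $\delta=d(2;1,2)$ otherwise, precisely so that $\varphi(\alpha_{2,1})$ and $\alpha_{3,1}$ sit correctly relative to one another), and the co-orientation of the $\Gamma_2$ arc is likewise forced by whether $\mathcal{S}_3$ would otherwise be a sink. There is no remaining freedom to ``tune'' against linking; instead one must verify, for the configuration the sink-disk constraints dictate, that $\alpha_{2,1}$ happens not to link with anything in $S_1$ or $S_3$ --- which is what the paper does by direct inspection in each case. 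Finally, two smaller inaccuracies: \Cref{lemma:SeifertDisksNotSinks} does not apply here since its hypothesis is that the template was applied to $\Gamma_j\cup\Gamma_{j+1}$ (here only one disk is taken from $\Gamma_2$, and in Case (B) its co-orientation is the opposite of what that template prescribes); and the images $\varphi(\alpha_{1,j})$ for $j\ge 2$ are \emph{left} pointers, not right pointers, so the horizontal-sector argument for $\Gamma_1$ needs the ``enclosed on the right by an exiting left pointer'' reading rather than the one you give.
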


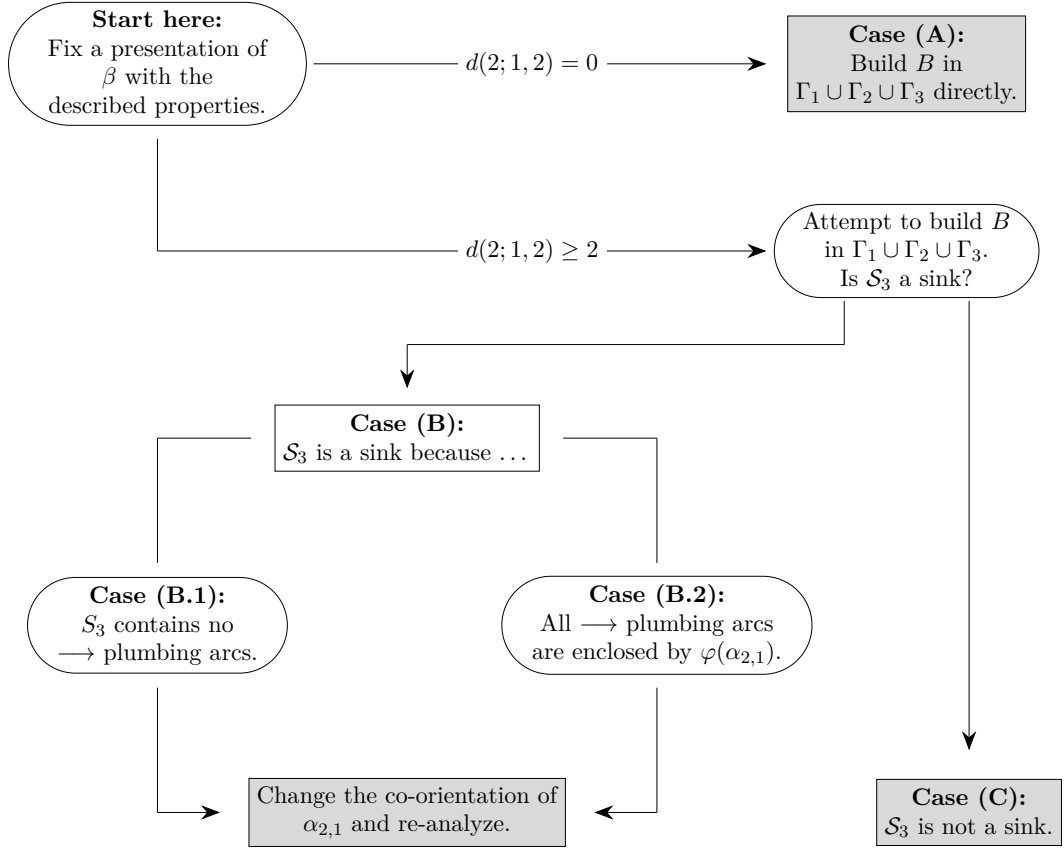
\begin{figure}[h!] \center
\scalebox{0.83}{
\begin{tikzpicture}
    \node[draw,
    	rounded rectangle,
        align=center, 
        minimum width=1cm,
        minimum height=1.2cm] at (0,0) {\textbf{Start here:} \\ Fix a  presentation of \\$\beta $ with the \\ described properties.};
 
    \node[draw,
        fill=gray!30,
    	align=center, 
        minimum width=2cm, 
        minimum height=1cm] at (12,0) {\textbf{Case (A):} \\ Build $B $ in \\ $ \Gamma_1 \cup \Gamma_2 \cup \Gamma_3$ directly.};
        
    \node[draw,
    	rounded rectangle,
        align=center, 
        minimum width=1cm,
        minimum height=1cm] at (12,-3) {Attempt to build $B$ \\ in $\Gamma_1 \cup \Gamma_2 \cup \Gamma_3$. \\ Is $\mathcal{S}_3$ a sink?};
        
    \node[draw,
        fill=gray!30,
    	align=center, 
        minimum width=2cm, 
        minimum height=1cm] at (13,-12) {\textbf{Case (C):} \\ $\mathcal{S}_3$ is not a sink.};
        
    \node[draw,
    	align=center, 
        minimum width=2cm, 
        minimum height=1cm] at (4,-6) {\textbf{Case (B):} \\ $\mathcal{S}_3$ is a sink because \ldots};        
        
    \node[draw,
        rounded rectangle,
    	align=center, 
        minimum width=2cm, 
        minimum height=1cm] at (0,-9) {\textbf{Case (B.1):} \\ $S_3$ contains no \\ $\longrightarrow$ plumbing arcs.};    

    \node[draw,
        rounded rectangle,
    	align=center, 
        minimum width=2cm, 
        minimum height=1cm] at (8,-9) {\textbf{Case (B.2):} \\ All $\longrightarrow$ plumbing arcs \\ are enclosed by $\varphi(\alpha_{2,1})$.};
        
    \node[draw,
    	fill=gray!30,
    	align=center, 
        minimum width=2cm, 
        minimum height=1cm] at (4,-12) {Change the co-orientation of \\ $\alpha_{2,1}$ and re-analyze.};       
        
	\draw[-{Stealth[length=3mm]}] (2.5,0) -- (9.75,0);
	\node[fill=white] at (6,0) {$d(2; 1,2)=0$};
	
	\draw[] (0,-1.2) -- (0,-3);
	\draw[-{Stealth[length=3mm]}] (0,-3) -- (9.75,-3);
	\node[fill=white] at (6,-3) {$d(2; 1,2)\geq 2$};
	
	\draw[-{Stealth[length=3mm]}] (13,-3.8) -- (13,-11);
	\draw[] (11,-3.8) -- (11,-4.5);
	\draw[] (4,-4.5) -- (11,-4.5);
	\draw[-{Stealth[length=3mm]}] (4,-4.5) -- (4,-5.2);
	
	\draw[] (1.5,-6) -- (0,-6);
	\draw[] (0,-6) -- (0,-8);
	\draw[] (0,-10) -- (0,-12);
	\draw[-{Stealth[length=3mm]}] (0,-12) -- (1,-12);
	
	\draw[] (6.5,-6) -- (8,-6);
	\draw[] (8,-6) -- (8,-8);
	\draw[] (8,-10) -- (8,-12);
	\draw[-{Stealth[length=3mm]}] (8,-12) -- (7,-12);
\end{tikzpicture}
}
\caption{Outline of the proof of \Cref{lemma:Generic_Cmax=Codd_Gammas12}. Terminal nodes are shaded gray.}
\label{fig:ProofOutline_FirstTwoColumns_GenericOdd}
\end{figure}

\begin{proof}
Since $\beta$ is standardized, for all $1 \leq i \leq c_i$, $d(1; i, i+1) = 0$ or $d(1; i, i+1) \geq 2$. This means that we may cyclically conjugate $\beta$ to present it as follows:
 $\beta \approx \sigma_1 \sigma_2 \gamma \sigma_2 \gamma'$, where $\gamma$ is either trivial, or it contains no $\sigma_1$ letters. We fix this presentation of $\beta$ for the remainder of the proof. 
 
We want to build $B$ in $\Gamma_1 \cup \Gamma_2 \cup \Gamma_3$. To start, we build $B$ in $\Gamma_1 \cup \Gamma_2$ as follows:
\begin{itemize}
\item we make $\alpha_{1,1}$ a left pointer, and $\alpha_{1,i}$ right pointers for all $i \in \{2, \ldots, c_1-1\}$,
\item we make $\alpha_{2,1}$ a right pointer.
\end{itemize}

The construction of $B$ in $\Gamma_2 \cup \Gamma_3$ is determined by whether $d(2;1,2)\geq 2$ or $d(2; 1,2) =0$, i.e. whether $\gamma$ does or does not contain any $\sigma_3$ letters, respectively. Proving these two cases sometimes requires further analysis; we organize this information in the flowchart in \Cref{fig:ProofOutline_FirstTwoColumns_GenericOdd}.


\tcbox[size=fbox, colback=gray!30]{\textbf{Case (A):} $d(2;1,2) =0$. } 

If $d(2; 1,2) = 0$, we fall into Case (A) of \Cref{fig:ProofOutline_FirstTwoColumns_GenericOdd}, and we build $B$ in $\Gamma_2 \cup \Gamma_3$ by pivoting $\beta$ about $\alpha_{3,1}$, and applying our template to $\Gamma_3$. In our figures, we show $B$ in $\Gamma_1 \cup \Gamma_2 \cup \Gamma_3$ using the original presentation of $\beta$. \Cref{fig:Generic_CmaxCodd_Gammas12} contains examples when $c_1 = 2$ and $c_1 \geq 3$.

\begin{figure}[h!] \center
\labellist
\tiny
\pinlabel {$\mathcal{P}$} at 44 100
\pinlabel {$\mathcal{P}$} at 222 100
\pinlabel {$\Gamma_1$} at 35 160
\pinlabel {$\Gamma_2$} at 64 160
\pinlabel {$\Gamma_3$} at 90 160
\pinlabel {$\Gamma_1$} at 215 160
\pinlabel {$\Gamma_2$} at 242 160
\pinlabel {$\Gamma_3$} at 272 160
\endlabellist
        \includegraphics[scale=1.4]{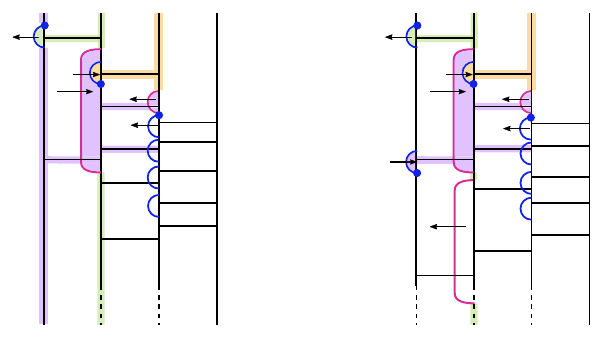}
        \caption{Constructing $B$ in $\Gamma_1 \cup \Gamma_2$ for Case \textbf{(A)}, where $d(2; 1,2)=0$.
        \textbf{Left:} The case where $c_1=2$. \textbf{Right:} An example where $c_1 \geq 3$. We know that $\alpha_{3,1}$ is a left pointer, but we do not know the co-orientations of the remaining plumbing arcs in $S_3$, so we suppress that information.}
        \label{fig:Generic_CmaxCodd_Gammas12}
\end{figure}

By construction, we see that items (1), (2), and (3) of the desired conclusion of the lemma hold. We check item (4): that there is at most one potential sink disk spanning $\Gamma_1 \cup \Gamma_2$.

\begin{itemize}
\item \Seifert if $c_1 = 2$, then $\mathcal{S}_1 \sim \mathbbm{b}_{1,2}$, and $\partial \mathcal{S}_1$ contains both $\varphi(\alpha_{1,1})$ and $\alpha_{2,1}$, as in \Cref{fig:Generic_CmaxCodd_Gammas12} (left). In particular, $\alpha_{2,1}$ is a right pointer exiting the sector, so $\mathcal{S}_1$ is not a sink. Alternatively, suppose $c_1 \geq 3$. In this case, $\alpha_{1,2}$ is a right pointer, pointing out of $\mathcal{S}_1$, so the sector is not a sink. 

We see that $\mathcal{S}_2 \sim \mathbbm{b}_{1,1}$, and $\partial \mathcal{S}_2$ contains $\alpha_{1,1}$. This arc points out of $\mathcal{S}_2$, so $\mathcal{S}_2$ is not a sink. 

The branch sector $\mathcal{S}_3$, a priori, could be a sink. 

\item \Polygon by definition, a polygon sector must lie in a Seifert disk and also contain both plumbing and image arcs in its boundary. We claim there are no polygon sectors spanning $\Gamma_1 \cup \Gamma_2$: 
\begin{itemize}
\item[$\boldsymbol{\circ}$] $S_1$ contains only plumbing arcs, so it does not contain any polygon sectors. 
\item[$\boldsymbol{\circ}$] $S_2$ contains exactly one sector co-bounded by both plumbing and image arcs, but this sector also contains $\mathbbm{b}_{1,2}$. Therefore, it does not lie in a Seifert disk, so it is not a polygon sector. 
\item[$\boldsymbol{\circ}$] Recall that $d(2;1,2)=0$, and that we will apply our template to $\Gamma_3$ at $\alpha_{3,1}$. Therefore, the plumbing arcs in $S_3$ do not cobound a sector with the unique image arc in $S_3$, and there are no polygon sectors in $S_3$. See \Cref{fig:Generic_CmaxCodd_Gammas12}.
\end{itemize}

\item \Horizontal Suppose $\mathcal{H}$ is a horizontal sector spanning \underline{exactly} $\Gamma_1$. We analyze the cases $c_1=2$ and $c_1 \geq 3$ separately. 

\begin{itemize}
\item[$\boldsymbol{\circ}$] Suppose $c_1 = 2$. If $\mathcal{H}$ contains $\mathbbm{b}_{1,1}$, then $\mathcal{H} \sim \mathcal{S}_2$. Alternatively, if $\mathcal{H}$ contains $\mathbbm{b}_{1,2}$, then $\mathcal{H} \sim \mathcal{S}_1$. We already proved that no Seifert disk sectors are sinks, so no horizontal sector spanning $\Gamma_1$ is a sink. 
\item[$\boldsymbol{\circ}$] Suppose $c_1 \geq 3$. If $\mathcal{H} \sim \mathbbm{b}_{1,1}$, then $\mathcal{H} \sim \mathcal{S}_2$, which is not a sink. If $\mathcal{H} \sim \mathbbm{b}_{1,2}$, then $\mathcal{H} \sim \mathcal{P}$, where $\mathcal{P}$ denotes the sector with $\partial \mathcal{P}$ containing $\alpha_{1,2}, \alpha_{2,1},$ and $\varphi(\alpha_{1,1})$; see \Cref{fig:Generic_CmaxCodd_Gammas12} (right). We already argued that $\mathcal{P}$ is not a sink. Therefore, we need only check what happens if $\mathcal{H} \sim \mathbbm{b}_{1, i}$, for $3 \leq i \leq c_1$. By construction, for $3 \leq i \leq c_1$, $\mathbbm{b}_{1,i}$ is enclosed, on the $\varphi(\alpha_{1, i-1})$, which is a left pointer. In particular, it points out of $\mathcal{H}$; see \Cref{fig:Generic_CmaxCodd_Gammas12} (right). Therefore, no horizontal sector spanning $\Gamma_1$ is a sink. 
\end{itemize}

Next, suppose $\mathcal{H}$ spans $\Gamma_2$. There are four possibilities, which we analyze in turn.
\begin{itemize}
\item[$\boldsymbol{\circ}$] \textit{$\mathcal{H}$ is enclosed, on the left, by $\alpha_{2,1}$}. In this case, $\mathcal{H} \sim \mathcal{S}_3$; we do not know whether or not $\mathcal{S}_3$ is a sink or not. 
\item[$\boldsymbol{\circ}$] \textit{$\mathcal{H}$ is enclosed, on the left, by $\varphi(\alpha_{1,1})$}. In this case, $\partial \mathcal{H}$ also contains $\alpha_{2,1}$, and this arc points out of $\mathcal{H}$; see \Cref{fig:Generic_CmaxCodd_Gammas12}. Therefore, this sector is not a sink.
\item[$\boldsymbol{\circ}$] \textit{$\mathcal{H}$ is enclosed, on the left, by $\varphi(\alpha_{1,j})$, $j \geq 2$.} The arcs $\varphi(\alpha_{1,j})$, $j \geq 2$ are left pointers, pointing out of $\mathcal{H}$; see \Cref{fig:Generic_CmaxCodd_Gammas12}. 
\item[$\boldsymbol{\circ}$] \textit{$\mathcal{H}$ is not enclosed, on the left, by any arc.} In this case, $\mathcal{H} \sim \mathcal{S}_2$, which we already argued is not a sink.
\end{itemize}

We deduce that there is exactly one potential sink disk branch sector contained in $\Gamma_1 \cup \Gamma_2$: it is the sector $\mathcal{S}_3$. We record this in \Cref{table:Bookkeeping_Cmax=Codd_Gammas12}. This verifies (3) in the statement of \Cref{lemma:Generic_Cmax=Codd_Gammas12}.
\end{itemize}

Finally, we check item (5) from the statement of \Cref{lemma:Generic_Cmax=Codd_Gammas12}: that there are no linked pairs arising from arcs which lie in $S_1 \cup S_2 \cup S_3$. This is straightforward: \Cref{lemma:TemplateLinking} tells us that there is no pairwise linking between arcs in $S_1$ or in $S_3$, and one directly checks that $\alpha_{2,1}$ does not link with any arcs in $S_1$ or $S_3$. This concludes the construction of $B$ in the case where $d(2;1,2)=0$. 

\tcbox[size=fbox, colback=gray!30]{The case where $d(2;1,2) \geq 2$.}

Next, we detail the construction of $B$ in the case that $d(2;1,2) \geq 2$. As suggested by \Cref{fig:ProofOutline_FirstTwoColumns_GenericOdd}, this requires some finesse -- namely, we will start with our construction of $B$ in $\Gamma_1 \cup \Gamma_2$, and then attempt to build $B$ in $\Gamma_3$ in a naive way. When we do this, it is possible that we built something with a sink disk. In the event that we produced a sink disk, we modify the co-orientation of the arcs to eliminate the offending sector. Throughout this remainder of the construction, we let $\delta = d(2;1,2)$ and $\epsilon = d(1; 1,2)$. In particular, since $\beta$ is standardized, $\delta \geq 2$ and $\epsilon \geq 2$.  

We already specified the construction of $B$ in $\Gamma_1 \cup \Gamma_2$ at the start of this proof. We now explain how to build $B$ in $\Gamma_3$: we first pivot $\beta$ about $\alpha_{3,\delta}$, and then apply our template to $\Gamma_3$. For convenience of labelling bands, we draw $B$ and $\beta$ in the original/pre-pivoted presentation. 

With this construction in place, there are two possibilities at this stage: either
\begin{enumerate}
\item[(B)] $\mathcal{S}_3$ becomes a sink, or
\item[(C)] $\mathcal{S}_3$ is not a sink.
\end{enumerate}

We analyze each of these cases in turn. 

\tcbox[size=fbox, colback=gray!10]{\textbf{Case (B):} $\mathcal{S}_3$ became a sink.} 

Suppose Case (B) occurs, and $\mathcal{S}_3$ is a sink. This can happen for two reasons; either
\begin{enumerate}
\item[$(B.1)$] $S_3$ contains no right pointing $\alpha_{3,i}$ arcs, as in \Cref{fig:Generic_CmaxCodd_Gammas12_A}, or
\item[$(B.2)$] $S_3$ contains some right pointing $\alpha_{3,i}$ arcs, but they co-bound a branch sector with the unique image arc in $S_3$; see \Cref{fig:Generic_CmaxCodd_Gammas12_A.2}.
\end{enumerate}

\begin{figure}[h!] \center
\labellist
\tiny
\pinlabel {$\mathcal{P}$} at 44 100
\pinlabel {$\mathcal{P}$} at 204 100
\pinlabel {$\Gamma_1$} at 35 170
\pinlabel {$\Gamma_2$} at 64 170
\pinlabel {$\Gamma_3$} at 90 170
\pinlabel {$\Gamma_1$} at 195 170
\pinlabel {$\Gamma_2$} at 225 170
\pinlabel {$\Gamma_3$} at 252 170
\pinlabel {$\mathcal{P}$} at 44 310
\pinlabel {$\mathcal{P}$} at 204 310
\pinlabel {$\Gamma_1$} at 35 370
\pinlabel {$\Gamma_2$} at 64 370
\pinlabel {$\Gamma_3$} at 90 370
\pinlabel {$\Gamma_1$} at 195 370
\pinlabel {$\Gamma_2$} at 225 370
\pinlabel {$\Gamma_3$} at 252 370
\pinlabel {swap smoothing} at 142 86
\pinlabel {of $\alpha_{2,1}$} at 142 72
\pinlabel {swap smoothing} at 142 300
\pinlabel {of $\alpha_{2,1}$} at 142 286
\pinlabel {$\delta$} at 90 307
\pinlabel {$\delta$} at 250 307
\pinlabel {$\delta$} at 90 108
\pinlabel {$\delta$} at 250 107
\endlabellist
        \includegraphics[scale=1.2]{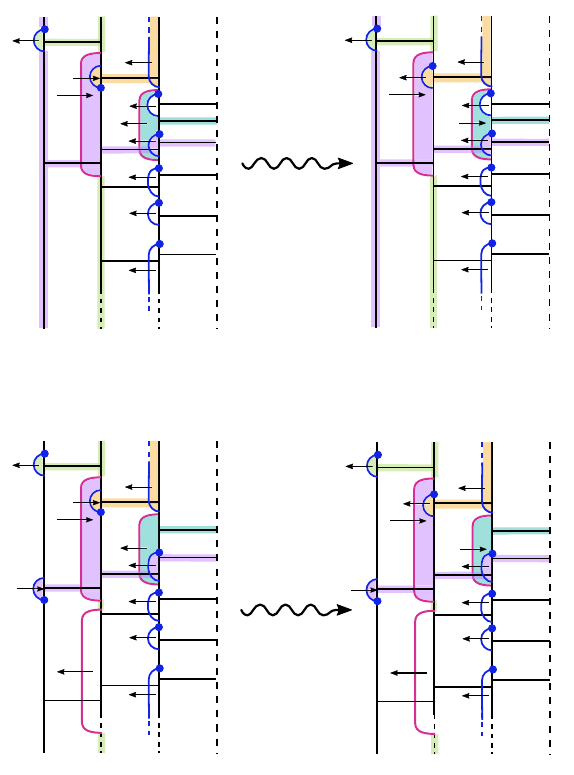}
        \caption{Understanding the $(B.1)$ case. \textbf{Top Row:} an example where $c_1=2$ and $\delta = d(2; 1,2) \geq 3$. We see that when $\alpha_{2,1}$ is a right pointer (as in the top row, left), $\mathcal{S}_3$ becomes a sink. So, we change the co-orientation of $\alpha_{2,1}$ and make it a left pointer (as in top row, right). Now, $\mathcal{S}_3$ is not a sink, because $\varphi(\alpha_{2,1})$ is a right pointer exiting $\mathcal{S}_3$. \textbf{Bottom Row:} an example where $c_1\geq 3$, and $\delta = d(2; 1,2) =2$. As in the top row, before doing any modifications, $\mathcal{S}_3$ is a sink, so we change the co-orientation of $\alpha_{2,1}$. Since we have not included any crossings from $\Gamma_4$, we dotted the vertical line corresponding to the $4^{\text{th}}$ strand of the braid. After the modification, the branch sector $\mathcal{P}$ is not a sink.}
        \label{fig:Generic_CmaxCodd_Gammas12_A}
\end{figure}

(Note: if $(B.1)$ occurs, then by the definition of our template, we must have that in the pre-pivot presentation of $\beta$, $d(3; \delta-2, \delta)=c_4$; in particular, we have that $d(3; \delta, 1) + d(3; 1, \delta-2) = 0$.)

\begin{figure}[h!] \center
\labellist
\tiny
\pinlabel {$\mathcal{P}$} at 44 100
\pinlabel {$\mathcal{P}$} at 204 100
\pinlabel {$\Gamma_1$} at 35 170
\pinlabel {$\Gamma_2$} at 64 170
\pinlabel {$\Gamma_3$} at 90 170
\pinlabel {$\Gamma_1$} at 195 170
\pinlabel {$\Gamma_2$} at 225 170
\pinlabel {$\Gamma_3$} at 252 170
\pinlabel {$\mathcal{P}$} at 44 310
\pinlabel {$\mathcal{P}$} at 204 310
\pinlabel {$\Gamma_1$} at 35 370
\pinlabel {$\Gamma_2$} at 64 370
\pinlabel {$\Gamma_3$} at 90 370
\pinlabel {$\Gamma_1$} at 195 370
\pinlabel {$\Gamma_2$} at 225 370
\pinlabel {$\Gamma_3$} at 252 370
\pinlabel {swap smoothing} at 142 86
\pinlabel {of $\alpha_{2,1}$} at 142 72
\pinlabel {swap smoothing} at 142 300
\pinlabel {of $\alpha_{2,1}$} at 142 286
\pinlabel {$\delta$} at 90 307
\pinlabel {$\delta$} at 250 307
\pinlabel {$\delta$} at 90 100
\pinlabel {$\delta$} at 250 100
\endlabellist
        \includegraphics[scale=1.15]{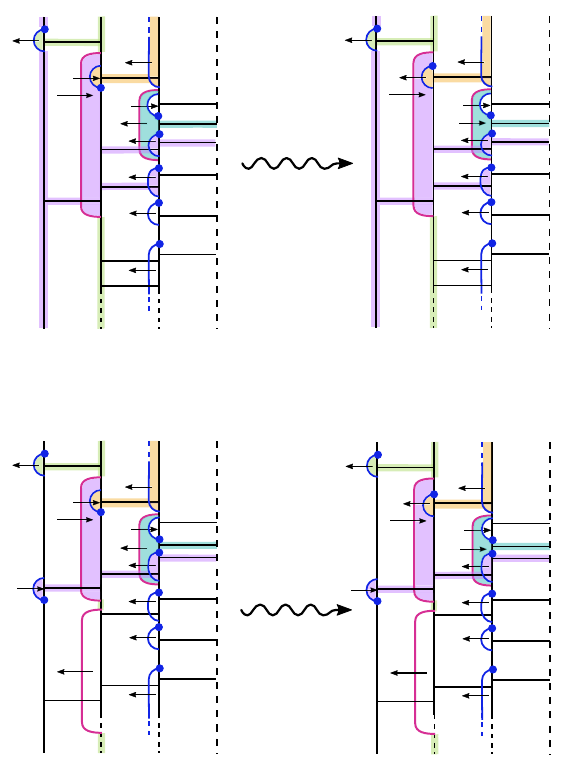}
        \caption{Understanding the $(B.2)$ case. \textbf{Top Row:} an example where $c_1=2$, $\epsilon = d(1; 1,2) = 3$, and $\delta = d(2; 1,2) \geq 3$. The arc $\varphi(\alpha_{2,1})$ encloses the only right pointing plumbing arcs in $S_3$. We see that when $\alpha_{2,1}$ is a right pointer (as in the top row, left), $\mathcal{S}_3$ becomes a sink. Changing the co-orientation of $\alpha_{2,1}$ yields (top row, right). Now, $\mathcal{S}_3$ is not a sink. \textbf{Bottom Row:} an example where $c_1\geq 3$, and $\delta = d(2; 1,2) =2$. As in the top row, before doing any modifications, $\mathcal{S}_3$ is a sink, so we change the co-orientation of $\alpha_{2,1}$.}
        \label{fig:Generic_CmaxCodd_Gammas12_A.2}
\end{figure}

Regardless of whether $(B.1)$ or $(B.2)$ holds, we can quickly eliminate the existing sink disk sector $\mathcal{S}_3$: we first pivot $\beta$ with respect to the unique $\sigma_1$ which is enclosed by a left pointing plumbing arc, thereby ``resetting'' our original presentation of $\beta$. With this presentation, the unique co-oriented arc in $S_2$ is $\alpha_{2,1}$, which is currently a right pointer. We now make $\alpha_{2,1}$ a left pointer. The results are seen in \Cref{fig:Generic_CmaxCodd_Gammas12_A} (for Case (B.1))and \Cref{fig:Generic_CmaxCodd_Gammas12_A.2} (for Case (B.2)).

It is clear that items (1), (2), and (3) of the statement of \Cref{lemma:Generic_Cmax=Codd_Gammas12} hold. We now analyze the branch sectors, and verify that there is at most one sink disk sector whose span lies in $\Gamma_1 \cup \Gamma_2$. 

\begin{itemize}
\item \Seifert We analyze the cases $c_1=2$ and $c_1 \geq 3$ separately. 

\begin{itemize}
\item[$\boldsymbol{\circ}$] If $c_1=2$, then $\mathcal{S}_1 \sim \mathbbm{b}_{1,2} \sim \mathbbm{b}_{2, 2} \sim \mathbbm{b}_{3, \delta}$; see \Cref{fig:Generic_CmaxCodd_Gammas12_A} (top row) and \Cref{fig:Generic_CmaxCodd_Gammas12_A.2} (bottom row). Since $\mathbbm{b}_{3, \delta}$ is enclosed, on the left, by a left pointing plumbing arc, and this arc points out of $\mathcal{S}_1$, we deduce this sector is not a sink. 
\item[$\boldsymbol{\circ}$] Suppose $c_1 \geq 3$. Then $\alpha_{1,2}$ points out of $\mathcal{S}_1$, and the sector is not a sink. 
\end{itemize}

The sector $\mathcal{S}_2$ contains $\varphi(\alpha_{1,1})$ in its boundary. This image arc is a right pointer, pointing out of $\mathcal{S}_2$, so this sector is not a sink. See \Cref{fig:Generic_CmaxCodd_Gammas12_A} and \Cref{fig:Generic_CmaxCodd_Gammas12_A.2}. The arc $\varphi(\alpha_{2,1})$ is part of $\partial \mathcal{S}_3$, and it points out of this disk sector. Thus, $\mathcal{S}_3$ is not a sink.

\item \Polygon By definition, a polygon sector must lie in a Seifert disk and also contain both plumbing and image arcs in its boundary. In this case, there are no polygon sectors spanning $\Gamma_1 \cup \Gamma_2$: 
\begin{itemize}
\item[$\boldsymbol{\circ}$] $S_1$ contains only plumbing arcs, so it does not contain any polygon sectors. 
\item[$\boldsymbol{\circ}$] $S_2$ contains a sector co-bounded by both plumbing and image arcs, but this sector also contains $\mathbbm{b}_{1,2}$. Therefore, it does not lie in a Seifert disk, so it is not a polygon sector. 
\item[$\boldsymbol{\circ}$] Since $d(2;1,2)\geq 2$, and we applied our template to $\Gamma_3$ at $\alpha_{3,\delta}$, the band $\mathbbm{b}_{3,\delta-1}$ is not enclosed, on the left, by a plumbing arc; see \Cref{fig:Generic_CmaxCodd_Gammas12_A} and \Cref{fig:Generic_CmaxCodd_Gammas12_A.2}. In particular, the unique sector in $S_3$ which is co-bounded by both plumbing and image arcs must contain $\mathbbm{b}_{3,\delta-1}$, so it does not lie in a Seifert disk. Thus, there are no polygon sectors in $S_3$.
\end{itemize}

\item \Horizontal Suppose $\mathcal{H}$ is a horizontal sector spanning $\Gamma_1$. We analyze the cases $c_1=2$ and $c_1 \geq 3$ separately. 

\begin{itemize}
\item[$\boldsymbol{\circ}$] When $c_1 = 2$, we can apply the same argument as in Case (A).
\item[$\boldsymbol{\circ}$] Suppose $c_1 \geq 3$. 
If $\mathcal{H} \sim \mathbbm{b}_{1,1}$, then $\mathcal{H} \sim \mathcal{S}_2$, which we already proved is not a sink. 
Suppose instead that $\mathcal{H} \sim \mathbbm{b}_{1,2}$. In this case, $\mathbbm{H} \sim \mathbbm{b}_{1,2} \sim \mathbbm{b}_{2,2} \sim \mathbbm{b}_{3, \delta}$. Since $\alpha_{3, \delta}$ is a left pointer, this arc points out of the sector, and $\mathcal{H}$ is not a sink.
%
The remaining bands $\mathbbm{b}_{1,3}, \ldots, \mathbbm{b}_{1,c_1}$ are enclosed, on the right, by the left pointers $\varphi(\alpha_{1,j-1})$; see \Cref{fig:Generic_CmaxCodd_Gammas12_A} and \Cref{fig:Generic_CmaxCodd_Gammas12_A.2}. So, no horizontal sector spanning $\Gamma_1$ is a sink. 
\end{itemize}

Next, suppose $\mathcal{H}$ is a horizontal sector whose span includes $\Gamma_2$. There are four possibilities, each of which we analyze in turn. We refer the reader to \Cref{fig:Generic_CmaxCodd_Gammas12_A} and \Cref{fig:Generic_CmaxCodd_Gammas12_A.2} throughout.

\begin{itemize}
\item[$\boldsymbol{\circ}$] \textit{$\mathcal{H}$ is enclosed, on the left, by $\alpha_{2,1}$}. But $\alpha_{2,1}$ is a left pointer exiting $\mathcal{H}$, so it is not a sink. 
\item[$\boldsymbol{\circ}$] \textit{$\mathcal{H}$ is enclosed, on the left, by $\varphi(\alpha_{1,1})$}. 
        %
       By construction, if $\mathcal{H}$ is enclosed on the left, by $\varphi(\alpha_{1,1})$, then $\mathcal{H} \sim \mathbbm{b}_{2,2}$. This band is enclosed, on the right, by $\alpha_{3, \delta}$, which is a left pointer. In particular, it points out of $\mathcal{H}$, so $\mathcal{H}$ is not a sink. 
%
\item[$\boldsymbol{\circ}$] \textit{$\mathcal{H}$ is enclosed, on the left, by $\varphi(\alpha_{1,j})$, $j \geq 2$.} The arcs $\varphi(\alpha_{1,j})$, $j \geq 2$ are left pointers exiting $\mathcal{H}$, so such sectors are not sinks. 
\item[$\boldsymbol{\circ}$] \textit{$\mathcal{H}$ is not enclosed, on the left, by any arc.} In this case, $\mathcal{H} \sim \mathcal{S}_2$, which is not a sink.
\end{itemize}
\end{itemize}

We deduce that there are no sink disks spanning $\Gamma_1 \cup \Gamma_2$ in Case (B).

We now study the linking behavior between arcs in $S_1, S_2$, and $S_3$ for Case (B). 
%
\Cref{lemma:TemplateLinking} confirms that there is no pairwise linking between arcs which are both in $S_1$ or both in $S_3$. There can be no pairwise linking between arcs in $S_1$ and $S_3$. Finally, one checks that we co-oriented $\alpha_{2,1}$ so that it does not link with arcs in either $S_1$ or $S_3$. Therefore, we have the desired linking behavior. This concludes the proof of \Cref{lemma:Generic_Cmax=Codd_Gammas12} for Case $(B)$. 

\tcbox[size=fbox, colback=gray!10]{\textbf{Case (C):} $\mathcal{S}_3$ did not become a sink when we applied our template to $\Gamma_3$.} 

We now turn to Case (C), and refer the reader to \Cref{fig:Generic_CmaxCodd_Gammas12_B} throughout. Once again, we have presented $\beta$ in its original/pre-pivoted form for convinience while labelling bands. If, when we applied our template to $\Gamma_3$, we determined that $\mathcal{S}_3$ is not a sink, we must have that for some minimal $j$ with $\delta + 1 \leq j \leq c_3$, there is an arc $\alpha_{3,j}$ which is a right pointer; note that, by assumption, $\alpha_{3,j} \subset \partial \mathcal{S}_3$. Once again, items (1), (2), (3) from the conclusions of of \Cref{lemma:Generic_Cmax=Codd_Gammas12} hold, so we need to check that there is at most potential branch sector spanning $\Gamma_1 \cup \Gamma_2$ which could be a sink.

\begin{figure}[h!] \center
\labellist
\tiny
\pinlabel {$\mathcal{P}$} at 44 115
\pinlabel {$\mathcal{P}$} at 198 115
\pinlabel {$\Gamma_1$} at 35 175
\pinlabel {$\Gamma_2$} at 64 175
\pinlabel {$\Gamma_3$} at 90 175
\pinlabel {$\Gamma_1$} at 190 175
\pinlabel {$\Gamma_2$} at 215 175
\pinlabel {$\Gamma_3$} at 245 175
\pinlabel {$\mathcal{P}$} at 44 310
\pinlabel {$\mathcal{P}$} at 198 310
\pinlabel {$\Gamma_1$} at 35 370
\pinlabel {$\Gamma_2$} at 64 370
\pinlabel {$\Gamma_3$} at 90 370
\pinlabel {$\Gamma_1$} at 190 370
\pinlabel {$\Gamma_2$} at 218 370
\pinlabel {$\Gamma_3$} at 245 370
\pinlabel {$j$} at 86 272
\pinlabel {$j$} at 238 237
\pinlabel {$j$} at 85 75
\pinlabel {$j$} at 238 58
\pinlabel {$\delta$} at 86 300
\pinlabel {$\delta$} at 238 300
\pinlabel {$\delta$} at 85 108
\pinlabel {$\delta$} at 238 108
\endlabellist
        \includegraphics[scale=1.2]{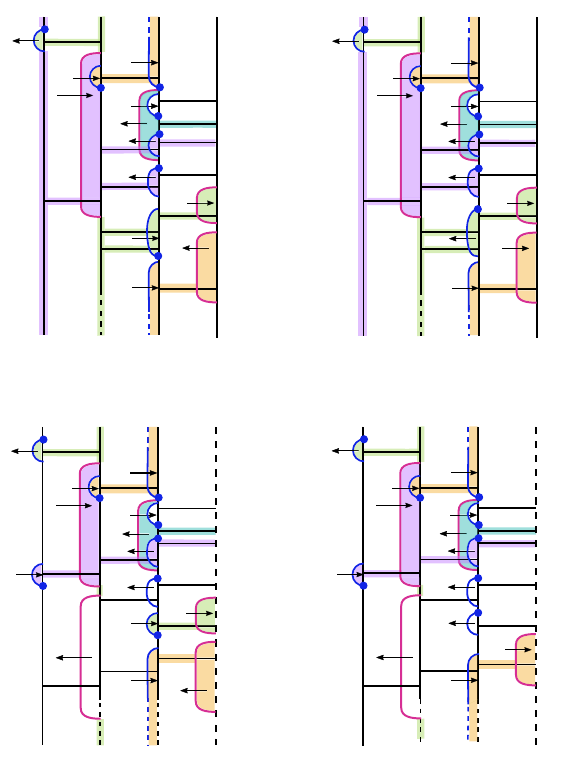}
        \caption{Understanding Case (C). Let $\mathcal{H}$ denote the horizontal sector where $\mathcal{H} \sim \mathbbm{b}_{2,1} \sim \mathbbm{b}_{3,c_3}$ (this sector is orange in all frames). \textbf{Top Row:} In the top row, we have that $c_1=2$. On the left, we have an example where $j < c_3$, and so the horizontal sector $\mathcal{H}$ is not a sink. On the right, we have $j = c_3$, and so a priori, it is possible that the horizontal sector $\mathcal{H}$ is a sink. \textbf{Bottom Row:} The analogous frames when $c_1 \geq 3$.}
        \label{fig:Generic_CmaxCodd_Gammas12_B}
\end{figure}

\begin{itemize}

\item \Seifert To see why $\mathcal{S}_1$ is not a sink, we must argue the cases where $c_1=2$ and $c_1 \geq 3$ separately. 

\begin{itemize}
\item[$\boldsymbol{\circ}$]  If $c_1=2$, then $\mathcal{S}_1$ contains $\mathbbm{b}_{1,2}$, and $\partial \mathcal{S}_1$ contains $\alpha_{2,1}$ in the boundary. Since this arc is a right pointer, pointing out of the sector, $\mathcal{S}_1$ is not a sink. 
\item[$\boldsymbol{\circ}$]  If $c_1 \geq 3$, then $\alpha_{1,2}$ points out of $\mathcal{S}_1$, so this sector is not a sink. 
\end{itemize}

As $\varphi(\alpha_{1,1}) \subset \partial \mathcal{S}_2$, and this arc is a right pointer, $\mathcal{S}_2$ is not a sink. By design, $\partial \mathcal{S}_3$ contains at least one outward pointing plumbing arc, namely $\alpha_{3,j}$.
\item \Polygon The same argument as in Case (B) verifies that there are no polygon sectors spanning $\Gamma_1 \cup \Gamma_2$. So, there are no polygon sectors which are sink disks. 
\item \Horizontal The same argument as Case (B) confirms that if $\mathcal{H}$ is a horizontal sector spanning exactly $\Gamma_1$, then $\mathcal{H}$ is not a sink. It remains to check that if $\mathcal{H}$ is a horizontal sector whose span includes $\Gamma_2$, then $\mathcal{H}$ is not a sink. We analyze the four possibilities in turn.

\begin{enumerate}
\item \textit{$\mathcal{H}$ is enclosed, on the left, by $\alpha_{2,1}$}. In this case, $\mathcal{H}$ meets $S_3$ in $\alpha_{3, c_3}$; see \Cref{fig:Generic_CmaxCodd_Gammas12_B}. In particular, this means $\mathcal{H} \sim \mathbbm{b}_{3,c_3}$. This band is enclosed, on the right, by $\varphi(\alpha_{3,c_3-1})$, which can either be a right or left pointer; see \Cref{fig:Generic_CmaxCodd_Gammas12_B}. If $j < c_3$, then $\varphi(\alpha_{3,c_3-1})$ is a left pointer (as in \Cref{fig:Generic_CmaxCodd_Gammas12_B} (left)), and $\mathcal{H}$ is not a sink. Otherwise, $j=c_3$ (as in \Cref{fig:Generic_CmaxCodd_Gammas12_B} (right)) and $\mathcal{H}$ is a right pointer -- in this case,  $\mathcal{H}$ could be a sink disk. (This is our one potential sink disk spanning $\Gamma_1 \cup \Gamma_2$ in this case.)
\item \textit{$\mathcal{H}$ is enclosed, on the left, by $\varphi(\alpha_{1,1})$}. In this case, $\partial \mathcal{H}$ also contains $\alpha_{2,1}$, which points out of the sector. Thus, $\mathcal{H}$ is not a sink. 
\item \textit{$\mathcal{H}$ is enclosed, on the left, by $\varphi(\alpha_{1,k})$, $k \geq 2$.} The arcs $\varphi(\alpha_{1,k})$, $k \geq 2$ are left pointers exiting $\mathcal{H}$, so such sectors are not sinks. See \Cref{fig:Generic_CmaxCodd_Gammas12_B}.
\item \textit{$\mathcal{H}$ is not enclosed, on the left, by any arc.} In this case, $\mathcal{H} \sim \mathcal{S}_2$, which is not a sink.
\end{enumerate}
\end{itemize}

Therefore, at present, there is at most one sector spanning $\Gamma_1 \cup \Gamma_2$ which could potentially be a sink -- it is the horizontal sector $\mathcal{H}$ which contains $\mathbbm{b}_{2,1}$. This concludes the proof of (4) from the statement of \Cref{lemma:Generic_Cmax=Codd_Gammas12} for the $(C)$ case. The same argument as in Case (B) confirms that we have no pairwise linking between any arcs in $S_1 \cup S_2 \cup S_3$.

This concludes the proof of \Cref{lemma:Generic_Cmax=Codd_Gammas12}. 
\end{proof}

In \Cref{table:Bookkeeping_Cmax=Codd_Gammas12}, we keep track of which sectors in $\Gamma_1 \cup \Gamma_2$ are the potential sink disks.

\begin{table}[H]
\begin{tabular}{ | c | c | c |}
\hline
\textsc{Case} &  \textsc{Status of $\Gamma_3$} &  \textsc{Potential Sink Disk contains} \\ \hline \hline
\hyperref[fig:Generic_CmaxCodd_Gammas12]{Case (A)} & unknown whether homogeneous or heterogeneous & $\mathcal{S}_3 \sim \mathbbm{b}_{2,1}$ \\  
 \hyperref[fig:Generic_CmaxCodd_Gammas12_A]{Case (B.1)} & homogeneous &  no sinks spanning $\Gamma_1 \cup \Gamma_2$ \\
 \hyperref[fig:Generic_CmaxCodd_Gammas12_A.2]{Case (B.2)} & heterogeneous &  no sinks spanning $\Gamma_1 \cup \Gamma_2$ \\
\hyperref[fig:Generic_CmaxCodd_Gammas12_B]{Case (C)} & heterogeneous & $\mathbbm{b}_{2,1} \sim \mathbbm{b}_{3, c_3}$ \\ \hline
\end{tabular} 
        \caption{Bookkeeping for \Cref{lemma:Generic_Cmax=Codd_Gammas12}, which builds $B$ in $\Gamma_1 \cup \Gamma_2 \cup \Gamma_3$ when $\Cmax=\Codd$. The third column details the unique potential sink disk spanning $\Gamma_1 \cup \Gamma_2$.}
        \label{table:Bookkeeping_Cmax=Codd_Gammas12}
\end{table}

Next, we describe how to build $B$ in $\Gamma_1 \cup \Gamma_2$ when $\Cmax = \Ceven$.

\begin{lemma} \label{lemma:Sparse_Cmax=Ceven_Gammas12}
Let $\beta$ be a standardized braid on $n \geq 5$ such that $\Cmax = \Ceven$. We can build a branched surface $B$ in $\Gamma_1 \cup \Gamma_2$ such that: 
\begin{enumerate}
\item $B$ contains $1$ product disk from $\Gamma_1$,
\item $B$ contains $c_2-1$ product disks from $\Gamma_2$,
\item $B$ contains no product disks from $\Gamma_3$, 
\item there is at most one sink disk sector spanning $\Gamma_1 \cup \Gamma_2$, and 
\item no arcs $\alpha, \alpha'$, where $\alpha, \alpha' \subset S_1 \cup S_2$, form a linked pair in $\tau_B$.
\end{enumerate}
\end{lemma}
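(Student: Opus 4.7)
The plan is to directly adapt the construction of \Cref{prop:Generic4BraidCMinIsCOne} to the present setting, where we have $n \geq 5$ strands and choose not to include any product disks from $\Gamma_3$. First I would cyclically conjugate $\beta$ to place a $\sigma_2 \sigma_1$ pair at the start: using $c_1, c_2 \geq 2$, standardization, and the far-commuting braid relations $\sigma_i \sigma_j \approx \sigma_j \sigma_i$ for $|i-j| \geq 2$, we can present $\beta \approx \sigma_2 \sigma_1 w_1 \sigma_{i_0} w_2$ with $w_1$ either empty or containing only $\sigma_i$ letters for $i \neq 1$. With this presentation, apply our template to $\Gamma_2$ alone (as in \Cref{defn:Templates}), assigning co-orientations to the $c_2 - 1$ plumbing arcs in $S_2$. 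Then add the single product disk swept by $\alpha_{1,1}$, with co-orientation determined by a case split on $d(1; 1, 2) \in \{0\} \cup [2, c_2 - 1]$ and on whether $\varphi(\alpha_{1,1})$ encloses coherently- or mixed-oriented plumbing arcs in $S_2$ — precisely mirroring the six cases pictured in \Cref{fig:Positive4Braid_CminC1_new}. Items (1)--(3) are immediate by construction.

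For item (5), \Cref{lemma:TemplateLinking} rules out linked pairs among arcs in $S_2$, and case-by-case inspection confirms that $\alpha_{1,1}$ does not link with any arc in $S_2$ under any of the prescribed co-orientations, exactly as in the final paragraph of \Cref{prop:Generic4BraidCMinIsCOne}. The bulk of the work is item (4), and proceeds by sector type. The Seifert disk sectors $\mathcal{S}_1$ and $\mathcal{S}_2$ are never sinks under any of the case combinations, by the same exit-arc arguments: $\mathcal{S}_1$ exits via $\alpha_{1,1}$ or via a left-pointing image arc of $S_2$ reached through $\mathbbm{b}_{1,2}$, and $\mathcal{S}_2$ exits via $\varphi(\alpha_{1,1})$ or via a right-pointing plumbing arc in $S_2$. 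Polygon sectors contained in $S_2$ are handled directly by \Cref{lemma:PolygonSectorsNotSinks}. Horizontal sectors spanning only $\Gamma_1$ exit via $\alpha_{2,1}$ or via the left-pointing image arcs in $S_2$, and most horizontal sectors spanning $\Gamma_2$ exit via a left-pointing plumbing arc in $S_2$ or merge with $\mathcal{S}_2$.

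The novelty, and main obstacle, is that no product disks are taken from $\Gamma_3$, so the sector $\mathcal{S}_3$ remains very large — it contains all of $S_3$ together with every band $\mathbbm{b}_{3,\star}$ — and certain horizontal sectors extending from $\Gamma_2$ into $\mathcal{S}_3$ cannot be ruled out as sinks using only data supported in $\Gamma_1 \cup \Gamma_2$. I expect exactly one such potential sink disk to survive the local analysis: the horizontal sector containing $\mathbbm{b}_{2,1}$, which then merges into $\mathcal{S}_3$. This plays the same role as the potential sink disk identified in Case (A) and Case (C) of \Cref{lemma:Generic_Cmax=Codd_Gammas12}, and will be eliminated by the construction of $B$ on the subsequent columns $\Gamma_3 \cup \Gamma_4$. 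The delicate bookkeeping — verifying that this is the \emph{only} sector that cannot yet be certified sink-free — is thus the crux of the argument, and the same case-splits as in \Cref{prop:Generic4BraidCMinIsCOne}, together with the extended enclosure analysis afforded by $n \geq 5$ strands, will carry it through.
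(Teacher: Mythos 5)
There is a genuine gap. Your plan is to apply the single-column template to $\Gamma_2$ and then choose the co-orientation of $\alpha_{1,1}$ by mirroring the case split of \Cref{prop:Generic4BraidCMinIsCOne}. The paper does something importantly different: it always co-orients $\alpha_{1,1}$ to the left, and -- crucially -- it does \emph{not} always use the template on $\Gamma_2$. The problematic configuration is the one the paper isolates as Cases (1B)/(2B): writing $\delta = d(1;1,2)$, suppose $d(2;1,\delta) = 0$, i.e.\ no $\sigma_3$ letter separates the block of $\sigma_2$'s sitting between the first two $\sigma_1$'s. Under the template on $\Gamma_2$, the arcs $\alpha_{2,1},\ldots,\alpha_{2,\delta}$ are then all left pointers, so the polygon sector $\mathcal{P}\subset S_2$ cobounded by $\varphi(\alpha_{1,1})$ and $\alpha_{2,1},\ldots,\alpha_{2,\delta}$ has every boundary arc pointing into it: $\varphi(\alpha_{1,1})$ is a right pointer entering the region it encloses, and each left-pointing $\alpha_{2,i}$ exits its band region into $\mathcal{P}$. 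This is an honest sink disk contained entirely in $S_2$, it cannot be repaired by anything you later do in $\Gamma_3\cup\Gamma_4$, and it coexists with the large undetermined sector reaching into $\Gamma_3$ -- so item (4) fails. This is precisely why the paper abandons the template in that case and instead co-orients \emph{all} of $\alpha_{2,1},\ldots,\alpha_{2,c_2-1}$ to the right (after a specific reconjugation putting $\beta$ in the form $\sigma_1\sigma_3 w_2\sigma_1 w_1''$). In the complementary case $d(2;1,\delta)\geq 2$ the template does work, essentially because some $\alpha_{2,i}$ with $i\leq\delta$ is then a right pointer and rescues $\mathcal{P}$; but your proof never makes this dichotomy.

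Two further points. First, your citations of \Cref{lemma:PolygonSectorsNotSinks} and of the sector analysis of \Cref{prop:Generic4BraidCMinIsCOne} are not available here: both presuppose that the two-column template has been applied (so that $\alpha_{2,1}$ is a right pointer, resp.\ that $S_3$ carries co-oriented plumbing arcs $\alpha_{3,\star}$ closing off the sectors $\mathcal{S}_2\sim\mathbbm{b}_{2,c_2}$ and $\mathbbm{b}_{2,q}$ via \Cref{lemma:TemplatesHorizontalSectorsNotSinks}). With no disks taken from $\Gamma_3$, those exit arcs do not exist and the arguments must be replaced, as the paper does, by tracing the relevant sectors through $\mathbbm{b}_{1,2}\sim\mathbbm{b}_{2,\delta}$ and through the image arcs $\varphi(\alpha_{2,\star})\subset S_3$. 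Second, your identification of the surviving potential sink as the sector containing $\mathbbm{b}_{2,1}$ is wrong in the generic case: under the template, $\mathbbm{b}_{2,1}$ is enclosed on the left by the outward-pointing $\alpha_{2,1}$, and the undetermined sector is instead the one containing $\mathbbm{b}_{2,j+1}\sim\mathbbm{b}_{3,1}$, where $\alpha_{2,j}$ is the last left pointer. Pinning down that this sector contains $\mathbbm{b}_{3,1}$ is not cosmetic: the continuation in \Cref{thm:PositiveNBraids_Even} calibrates $\beta$ with respect to exactly that band before applying the template to $\Gamma_4$.
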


\begin{proof}
To begin, we will adapt the presentation of our braid. This will depend on the distribution of crossings in the braid. 

\tcbox[size=fbox, colback=gray!30]{\textbf{Case (1)}: $c_1 \geq 3$.} 
Since $\beta$ is standardized, we know that for all $1 \leq j \leq c_1$, $d(1; j, j+1) = 0$ or $d(1; j, j+1) \geq 2$. We cyclically conjugate $\beta$ so that it begins with a $\sigma_1$ letter, and so that $d(1; 1,2) \geq 2$. More specifically, we can cyclically conjugate our braid to write it as follows:
\begin{equation} \label{braid_special_form}
\beta \approx \sigma_1 w_0 \ \sigma_1^{k_1} w_1 \ \sigma_1^{k_2} w_2 \ \ldots \ \sigma_1^{k_j} w_{j} \ \sigma_1^{k_{j+1}}
\end{equation}
where the $w_i$ sub-words are non-empty for all $0 \leq i \leq j$, the $k_1, \ldots, k_j$ are all positive, and for $0 \leq i \leq j$, the $w_i$ contain no $\sigma_1$ letters. (We allow the possibility that $k_{j+1}$ = 0). 

Either two things are possible:

\begin{enumerate}
\item[(1A)] There exists some $i$, where $0 \leq i \leq j$, where $w_i = \theta_0 \ \sigma_2^{\ell_1} \ \theta_1 \ \sigma_2^{\ell_2} \ \theta_2 \ \ldots \ \sigma_2^{\ell_t} \ \theta_t$ such that 
	\begin{itemize}
	\item[$\bullet$] $\theta_0, \ldots, \theta_{t}$ contain no $\sigma_1$ or $\sigma_2$ letters,
	\item[$\bullet$] $\theta_0$ or $\theta_t$ could be empty but $\theta_1, \ldots, \theta_{t-1}$ are non-empty,
	\item[$\bullet$] $\ell_1, \ldots, \ell_t$ are all positive,
	\item[$\bullet$] when we restrict our distance function to the subword $w_i$, we have $d(2; 1, \sum_{r=1}^t \ell_r) \geq 2$.
	\end{itemize}

\item[(1B)] No such $w_i$ exists. 
\end{enumerate}

To start, we focus our attention to Case (1A). Let $w_\star$ denote the first subword in $\beta$ for which the conditions specified in (1A) occur. We now more concisely write the braid in \Cref{braid_special_form} as:
$$\beta \approx \beta_1 \sigma_1 w_\star \sigma_1 \beta_2$$
After cyclically conjugating our braid, we write it as 
\begin{equation} \label{braid_special_form_1}
\beta \approx \sigma_1 w_\star \sigma_1 \beta_2 \beta_1
\end{equation}
By design, this is a presentation of $\beta$ such that $d(1; 1,2) \geq 2$, and $d(2; 1, d(1; 1,2)) \geq 2$. Going forwards, we will assume that if we are in Case (1A), then $\beta$ is presented as in \Cref{braid_special_form_1}. 

Suppose, instead, that when we analyze the presentation of the braid as in \Cref{braid_special_form}, we fall into Case (1B). If this occurs, then for the braid in \Cref{braid_special_form}, for all $i$ with $1 \leq i \leq c_2$, we have that $d(2; i, i+1) = d_L(2; i, i+1) = 0$, \textbf{or} $d(2; i, i+1)$ and $d_L(2; i, i+1)$ are simultaneously positive. There must exist some $i$ such that the latter occurs (otherwise we have a split link); so, we rewrite our braid to spotlight those first two $\sigma_2$ letters where this occurs, and cyclically conjugate as follows to produce a new presentation of the braid:
\begin{align}
\beta &\approx z_1 \ \sigma_2 \ (\theta_1 \sigma_1 \theta_2 \sigma_3 \theta_3) \  \sigma_2 \ z_2 \nonumber \\
\intertext{By assumption, we know that $\theta_1, \theta_2,$ and $\theta_3$ contain no $\sigma_2$ letters. We may assume that $\theta_2$ and $\theta_3$ contain no $\sigma_1$ letters (if they did, use braid relations to commute them into $\theta_1$). We cyclically conjugate $\beta$, and perform some braid relations, to get:}
 &\approx (\theta_1 \underline{\sigma_1 \theta_2} \sigma_3 \theta_3) \  \sigma_2 \ z_2 \ z_1 \ \sigma_2  \nonumber \\
&\approx (\underline{\theta_1  \theta_2} \sigma_1 \sigma_3 \theta_3) \  \sigma_2 \ z_2 \ z_1 \ \sigma_2  \nonumber \\
&\approx (\sigma_1 \sigma_3 \theta_3) \  \sigma_2 \ z_2 \ z_1 \ \sigma_2 \ \theta_1 \  \theta_2 \label{braid_special_form_2}
\end{align}

Going forwards, we assume that if $\beta$ falls under Case (1B), then $\beta$ is presented as in \Cref{braid_special_form_2}.


\tcbox[size=fbox, colback=gray!30]{\textbf{Case (2)}: $c_1 = 2$.}
Since $\beta$ is standardized and $\widehat{\beta}$ is prime, $d(1; 1,2)$ and $d(1; 2,1)$ are both at least two. Cyclically conjugate $\beta$ so that $\beta = \sigma_1 w_1 \sigma_1 w_2$. Without loss of generality, we may assume that $w_1$ contains at least one $\sigma_3$ letter (if not, then $w_2$ contains all $c_3$-many $\sigma_3$ letters in $\beta$, so conjugate the braid to read as $\sigma_1 w_2 \sigma_1 w_1$, and then swap the labelings of $w_1$ and $w_2$). We assume that $\beta$ is presented in this form for now. We now study the distribution of the $\sigma_3$ letters in $\beta$. Let $\delta := d(1; 1,2)$. Since $\beta$ is standardized, we know that $\delta \geq 2$. We either have:

\begin{enumerate}
\item[(2A)] $d(2; 1, \delta) \geq 1$. 
\item[(2B)] $d(2; 1, \delta) = 0$, as in \Cref{fig:NonGeneric_CmaxCeven_Gammas12_CaseA1A2}. This can happen for two reasons:
	\begin{enumerate}
	\item[(2B.1)] $\beta = \sigma_1 \sigma_3^k w_1' \sigma_1 w_2$, where $k \geq 1$, or 
	\item[(2B.2)] $\beta = \sigma_1 w_1'  \sigma_3^k \sigma_1 w_2$, where $k \geq 1$, and $w_1'$ contains no $\sigma_3$ letters.
	\end{enumerate}
\end{enumerate}

\begin{figure}[h!] \center
\labellist
\pinlabel {$\delta$} at 47 104
\pinlabel {$\delta$} at 200 125
\pinlabel {$\Gamma_1$} at 23 165
\pinlabel {$\Gamma_2$} at 48 165
\pinlabel {$\Gamma_3$} at 75 165
\pinlabel {$\Gamma_1$} at 175 165
\pinlabel {$\Gamma_2$} at 204 165
\pinlabel {$\Gamma_3$} at 230 165
\endlabellist
        \includegraphics[scale=1.25]{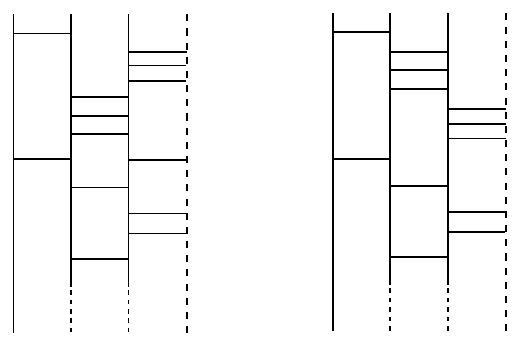}
        \caption{\textbf{Left:} An example of Case (B.1). \textbf{Right:} An example of Case (B.2).}
        \label{fig:NonGeneric_CmaxCeven_Gammas12_CaseA1A2}
\end{figure}

\textbf{\underline{Claim.}} After applying some braid moves, a braid of the form specified in Case (2B.2) can be put into the form specified in Case (2B.1). 

\textit{Proof of Claim.} This is straightforward. We begin with $\beta$ with the form specified in Case (2B.2).
\begin{align*}
\beta &= \sigma_1 w_1'  \underline{\sigma_3^k \sigma_1} w_2 \\
&= \underline{\sigma_1 w_1'} \sigma_1 \sigma_3^k w_2 \\
&\approx \sigma_1 \sigma_3^k w_2 \sigma_1 w_1'
\end{align*}
But this is exactly the form specified in Case (2B.1). \hfill \fbox{End of proof of Claim.}

Therefore, if $\beta$ appears as in Case (2B.2), we can and will modify the presentation and write it as in Case (2B.1). In fact, we will further modify this presentation in Case (2B.1) as follows:
\begin{align}
\beta & \approx \underline{\sigma_1 \sigma_3^k} w_1' \sigma_1 w_2 \nonumber \\
& \approx \underline{\sigma_3^{k-1}} \sigma_1 \sigma_3 w_1' \sigma_1 w_2 \nonumber \\
& \approx \sigma_1 \sigma_3 w_1' \sigma_1 w_2 \sigma_3^{k-1} \nonumber\\
& \approx \sigma_1 \sigma_3 w_1' \sigma_1 w_1'' \label{braid_special_form_3}
\end{align}

In particular, this means the $c_1=2$ case has only two sub-cases to consider:

\begin{enumerate}
\item[(2A)] $\beta \approx \sigma_1 w_1 \sigma_1 w_2$ and $d(2; 1, \delta) \geq 1$, or
\item[(2B)] $d(2; 1, \delta) = 0$, and $\beta \approx \sigma_1 \sigma_3 w_2 \sigma_1 w_1''$.
\end{enumerate}

To summarize, we have four cases total; letting $\delta = d(1; 1,2)$, we have 

\begin{itemize}
\item Case (1A): $c_1=3$, and $d(2; 1, \delta) \geq 2$, and $\beta \approx \sigma_1 w_\star \sigma_1 \beta_2 \beta_1$
\item Case (2A): $c_1=2$, $d(2; 1, \delta) \geq 2$, and $\beta \approx \sigma_1 w_1 \sigma_1 w_2$
\item Case (1B): $c_1=3$, but $d(2; 1, \delta) = 0$, and $\beta \approx \sigma_1 \sigma_3 \beta''$ 
\item Case (2B): $c_1=2$, $d(2; 1, \delta)=0$, and $\beta \approx \sigma_1 \sigma_3 w_2 \sigma_1 w_1''$
\end{itemize}

We can apply a single construction to both Case (1A) and Case (2A), and a different construction to both Case (1B) and Case (2B). We now build $B$ in $\Gamma_1 \cup \Gamma_2$. 

\tcbox[size=fbox, colback=gray!30]{Construction of $B$ in $\Gamma_1 \cup \Gamma_2$ in \textbf{Case (1A)} and \textbf{Case (2A)}.}

Note that not only is $d_L(2; 1, \delta) = 0$, but additionally, $\beta$ is standardized; therefore, there exists some minimal $j$, where $1 \leq j \leq \delta-1$ such that $d(2; j, j+1) \geq 2$. In particular, $d(2; 1, j) = 0$, and $d(2; j, j+1) \geq 2$, as seen in \Cref{fig:NotGeneric_CmaxCeven_Gammas12_CaseB} (we will refer to this figure throughout the proof). 

We now build $B$ in $\Gamma_1 \cup \Gamma_2$:
\begin{itemize}
\item smooth $\alpha_{1,1}$ to the left, 
\item smooth $\alpha_{2,1}, \ldots, \alpha_{2,j}$ to the left, 
\item smooth $\alpha_{2,j+1}, \ldots, \alpha_{2, c_2-1}$ to the right, and 
\item we take no product disks from $\Gamma_3$.
\end{itemize}

Items (1), (2), and (3) from the statement of the lemma are immediately satisfied; we need to check that there is a unique potential sink disk spanning $\Gamma_1 \cup \Gamma_2$. Note that by design, the arc $\varphi(\alpha_{2, j})$ encloses the bands $\mathbbm{b}_{3,1}, \ldots, \mathbbm{b}_{3, d(2; j, j+1)}$ on the left.

\begin{figure}[h!] \center
\labellist
\tiny
\pinlabel {$\mathcal{P}$} at 44 105
\pinlabel {$\mathcal{P}$} at 174 105
\pinlabel {$\Gamma_1$} at 36 165
\pinlabel {$\Gamma_2$} at 62 165
\pinlabel {$\Gamma_3$} at 90 165
\pinlabel {$\Gamma_1$} at 165 165
\pinlabel {$\Gamma_2$} at 193 165
\pinlabel {$\Gamma_3$} at 220 165
\pinlabel {$\delta$} at 58 102
\pinlabel {$\delta$} at 188 77
\pinlabel {$j$} at 58 126
\pinlabel {$j$} at 188 126
\endlabellist
        \includegraphics[scale=1.3]{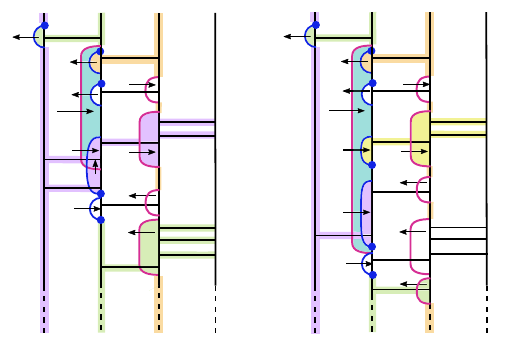}
        \caption{Examples where we building $B$ in $\Gamma_1 \cup \Gamma_2$ in Cases (1A) and (2A). We see that the sector $\mathcal{H} \sim \mathbbm{b}_{2, j+1} \sim \mathbbm{b}_{3,1}$ is the unique potential sink disk in $\Gamma_1 \cup \Gamma_2$. On the \textbf{left}, an example where $j+1 = \delta$, and on the \textbf{right}, an example where $j+1 < \delta$.}
        \label{fig:NotGeneric_CmaxCeven_Gammas12_CaseB}
\end{figure}

\begin{itemize}
\item \Seifert We see that $\mathcal{S}_2 \sim \mathbbm{b}_{1,1}$, and so $\partial \mathcal{S}_2$ contains $\alpha_{1,1}$. This arc points out of the sector, so $\mathcal{S}_2$ is not a sink. 

By design, we see that $S_2$ contains both left and right pointing plumbing arcs. Since we are choosing no product disks from $\Gamma_3$, we immediately deduce that $\mathcal{S}_3$ contains both left and right pointing image arcs, so $\mathcal{S}_3$ is not a sink.

Finally, we know that $\mathcal{S}_1 \sim \mathbbm{b}_{1,2} \sim \mathbbm{b}_{2, \delta}$, as in \Cref{fig:NotGeneric_CmaxCeven_Gammas12_CaseB}. 
\begin{itemize}
\item[$\bullet$] If $j+1 < \delta$, then $\mathbbm{b}_{2, \delta}$ is enclosed, on the right, by a left pointing image arc, as in \Cref{fig:NotGeneric_CmaxCeven_Gammas12_CaseB} (right). In particular, we see that $\mathcal{S}_1$ is not a sink. 
\item[$\bullet$] If $j+1 = \delta$, then $\mathbbm{b}_{2, \delta}$ is enclosed, on the right, by a right pointer, as in \Cref{fig:NotGeneric_CmaxCeven_Gammas12_CaseB} (left). Thus, $\mathcal{S}_1$ is actually a horizontal sector $\mathcal{H}$ for which, at present, we do not know the sink disk status.
\end{itemize}


%
%
\item \Polygon Of $S_1, S_2,$ and $S_3$, only $S_2$ contains both plumbing and image arcs, so this is the only Seifert disk spanning $\Gamma_1 \cup \Gamma_2$ with the potential to contain a polygon sector. Moreover, any such polygon sector must contain $\varphi(\alpha_{1,1})$ in its boundary. The number of polygon sectors in $S_2$ is at least one and at most two. There is always a polygon sector $\mathcal{P}$ whose boundary contains the arcs $\varphi(\alpha_{1,1})$, $\alpha_{2,1}, \ldots, \alpha_{2,\delta}$. By design, $\alpha_{2, \delta}$ points out of $\mathcal{P}$, so this sector is not a sink; see \Cref{fig:NotGeneric_CmaxCeven_Gammas12_CaseB}. 

If $d_L(2; \delta, \delta+1) \geq 2$, then there is another sector $\mathcal{P}'$ in $S_2$ which contains $\varphi(\alpha_{1,1})$ in its boundary. Recall that we only took one product disk from $\Gamma_1$. Therefore, $\mathcal{P}'$ also contains $\mathbbm{b}_{1,2}, \mathbbm{b}_{1, 3}, \ldots, \mathbbm{b}_{1, d_L(2; \delta, \delta+1) + 1}$. In particular, $\mathcal{P}' \sim \mathcal{S}_1$, and we already proved the latter is not a sink. We deduce that there are no polygon sector sink disks spanning $\Gamma_1 \cup \Gamma_2$. 
\item \Horizontal Suppose $\mathcal{H}$ is a horizontal sector spanning $\Gamma_1$. If $\mathcal{H} \sim \mathbbm{b}_{1,1}$, then $\mathcal{S} \sim \mathcal{S}_2$, which we already argued is not a sink. Otherwise, $\mathcal{H} \sim \mathbbm{b}_{1, k}$, where $2 \leq k \leq c_1$; this means $\mathcal{H} \sim \mathcal{S}_1 \sim \mathbbm{b}_{2, \delta}$. As described above, the sink disk status of this sector is determined by whether $\delta = j+1$ or $\delta \geq j+2$. In particular, if $\delta = j+1$, $\mathcal{H} \sim \mathbbm{b}_{3,1}$ (and we do not yet know the sink disk status of the sector), but if $\delta \geq j+2$, then $\mathcal{H}$ is not a sink. 

Suppose $\mathcal{H}$ is a sector spanning $\Gamma_2$. Since $\alpha_{2, 1}, \ldots, \alpha_{2, j}$ are all left pointers, if $\mathcal{H} \sim \mathbbm{b}_{2, t}$, for $1 \leq t \leq j$, then $\partial \mathcal{H}$ contains an outward pointing arc in the boundary, and hence is not a sink. If $\mathcal{H}$ contains $\mathbbm{b}_{2, j+1}$, we cannot yet determine the sink disk status of this sector. Otherwise, $\mathcal{H}$ contains $\mathbbm{b}_{2, j+2}, \ldots, \mathbbm{b}_{2, c_2}$; as these bands are enclosed, on the right, by outward pointing image arcs, such sectors cannot be sinks. See \Cref{fig:NotGeneric_CmaxCeven_Gammas12_CaseB}.
Therefore, there is exactly one horizontal sector which appears to be a sink: it is the sector $\mathcal{H} \sim \mathbbm{b}_{2, j+1} \sim \mathbbm{b}_{3,1}$. 
\end{itemize}

We deduce, as desired, that there is a unique sector for which the sink disk status is unknown. The proof of \Cref{lemma:TemplateLinking} ensures that there is no linking between arcs which both lie in $S_2$, and one can inspect  \Cref{fig:NotGeneric_CmaxCeven_Gammas12_CaseB} to see that $\alpha_{1,1}$ does not link with any arc in $S_2$. Therefore, there is no linking between arcs in $\Gamma_1 \cup \Gamma_2$. 

\tcbox[size=fbox, colback=gray!30]{Construction of $B$ in $\Gamma_1 \cup \Gamma_2$ for Case (1B) and Case (2B).}
To build $B$ in $\Gamma_1 \cup \Gamma_2$, we: smooth $\alpha_{1,1}$ to the left, smooth $\alpha_{2,1}, \ldots, \alpha_{2,c_2-1}$ to the right, and we take no product disks from $\Gamma_3$. We refer to 
\Cref{fig:NotGeneric_CmaxCeven_Gammas12_CaseA} throughout.

It is clear that items (1), (2), and (3) from the statement of the lemma are satisfied; so, we study the sink disks and linked arcs in $\Gamma_1 \cup \Gamma_2$. We claim that there is only one potential sink disk sector spanning $\Gamma_1 \cup \Gamma_2$; that sector contains $\mathbbm{b}_{2,1} \sim \mathcal{S}_3 \sim \mathbbm{b}_{3,1}$.

\begin{figure}[h!] \center
\labellist
\tiny
\pinlabel {$\mathcal{P}$} at 42 102
\pinlabel {$\mathcal{P}$} at 180 102
\pinlabel {$\mathcal{P}$} at 342 102
\pinlabel {$\Gamma_1$} at 33 165
\pinlabel {$\Gamma_2$} at 62 165
\pinlabel {$\Gamma_3$} at 90 165
\pinlabel {$\Gamma_1$} at 174 165
\pinlabel {$\Gamma_2$} at 200 165
\pinlabel {$\Gamma_3$} at 230 165
\pinlabel {$\Gamma_1$} at 335 165
\pinlabel {$\Gamma_2$} at 362 165
\pinlabel {$\Gamma_3$} at 390 165
\pinlabel {$\delta$} at 62 102
\pinlabel {$\delta$} at 200 102
\pinlabel {$\delta$} at 360 102
\endlabellist
        \includegraphics[scale=1.1]{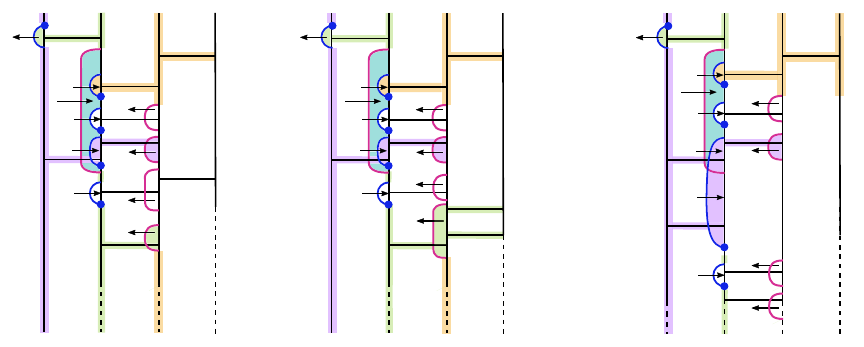}
        \caption{Some examples where we build $B$ in $\Gamma_1 \cup \Gamma_2$ in Cases (1B) and (2B). We see that $\mathcal{S}_2 \sim \mathbbm{b}_{1,1}$ and $\mathcal{S}_1 \sim \mathbbm{b}_{1,2}$, and that neither are sink disks. Additionally, we see that $\mathcal{S}_3$ appears to be a potential sink.}
        \label{fig:NotGeneric_CmaxCeven_Gammas12_CaseA}
\end{figure}

\begin{itemize}
%
%
\item \Seifert By design, $\mathcal{S}_2 \sim \mathbbm{b}_{1,1}$. This band is enclosed, on the left, by the left pointer $\alpha_{1,1}$, so $\mathcal{S}_2$ is not a sink. 

We see that $\mathcal{S}_1 \sim \mathbbm{b}_{1, t}$, where $2 \leq t \leq c_1$. In particular, $\mathbbm{b}_{1,2}$ is enclosed, on the right, by $\alpha_{2,\delta}$, so $\mathcal{S}_1 \sim \mathbbm{b}_{2,\delta}$. As we have taken no product disks from $\Gamma_3$, we know that $\partial \mathcal{S}_1$ contains $\varphi(\alpha_{2, \delta-1})$, which is a left pointer. Therefore, $\mathcal{S}_1$ is not a sink.

As $S_3$ contains only left pointing image arcs, and $\mathcal{S}_3 \sim \mathbbm{b}_{2,1}$, we see that $\partial \mathcal{S}_3$ contains only inward pointing arcs. Therefore, at present, we cannot determine the sink disk status of $\mathcal{S}_3$.

\item \Polygon Of $S_1, S_2,$ and $S_3$, only $S_2$ contains both plumbing and image arcs, so this is the only Seifert disk spanning $\Gamma_1 \cup \Gamma_2$ with the potential to contain a polygon sector. Indeed, there is a polygon sector $\mathcal{P}$ whose boundary contains the arcs $\varphi(\alpha_{1,1})$, $\alpha_{2,1}, \ldots, \alpha_{2,\delta}$. By design, all the plumbing arcs $\alpha_{2,1}, \ldots, \alpha_{2, \delta}$ point out of $\mathcal{P}$, so this sector is not a sink. 
\item \Horizontal Suppose $\mathcal{H}$ is a horizontal sector spanning $\Gamma_1$. As noted in our Seifert disk sector analysis, every band in $\Gamma_1$ is in the same sector as either $\mathcal{S}_1$ or $\mathcal{S}_2$. Since neither are sinks, no horizontal sector spanning $\Gamma_1$ is a sink. 

Suppose $\mathcal{H}$ is a sector spanning $\Gamma_2$. We see that $\mathbbm{b}_{2,1} \sim \mathcal{S}_3$; we already described why we cannot know the sink disk status of this sector just yet. However, suppose $\mathcal{H}$ is a horizontal sector spanning $\Gamma_2$ and $\mathcal{H} \sim \mathbbm{b}_{2,j}$, for some $2 \leq j \leq c_2$. Such a band $\mathbbm{b}_{2,j}$ is enclosed, on the right, by $\varphi(\alpha_{2,j-1})$, and this arc points out of $\mathcal{H}$, so such sectors are not sinks. 
\end{itemize}

This confirms item (4) in the statement of the lemma. Finally, we check that there are no linked pairs between arcs in $S_1$ and $S_2$. The proof of \Cref{lemma:TemplateLinking} confirms that there can be no pairwise linking between arcs in $S_2$, and by design, $\alpha_{1,1}$ does not link with any arc in $S_2$. This completes the proof of the lemma for Case (1B) and Case (2B). 
\end{proof}

We summarize the distinguished bands and potential sink disks from the constructions of \Cref{lemma:Sparse_Cmax=Ceven_Gammas12}.

\begin{table}[H]
\begin{tabular}{ | c | c | c |}
\hline
\textsc{Case} &  \textsc{Distinguished Band} & \textsc{Potential Sink Disk Sector contains} \\ \hline \hline
\hyperref[fig:NotGeneric_CmaxCeven_Gammas12_CaseB]{Cases (1A), (2A)} & $\mathbbm{b}_{2, j+1}$ & $\mathbbm{b}_{2,j+1} \sim \mathbbm{b}_{3, 1}$ \\
\hyperref[fig:NotGeneric_CmaxCeven_Gammas12_CaseA]{Cases (1B), (2B)} & $\mathbbm{b}_{2, 1}$ & $\mathbbm{b}_{2,1} \sim \mathcal{S}_3 \sim \mathbbm{b}_{3,1}$ \\ \hline
\hline
\end{tabular} 
        \caption{Bookkeeping for \Cref{lemma:Sparse_Cmax=Ceven_Gammas12}, which builds $B$ in $\Gamma_1 \cup \Gamma_2$ when $\Cmax=\Ceven$.}
        \label{table:Bookkeeping_Cmax=Ceven_Gammas12}
\end{table}

\begin{rmk} \label{NoClumpingOfArcs}
\textup{For the remainder of this paper, assume that if we build $B$ in $\Gamma_i$, for $i \geq 4$, then we either take all the product disks from $\Gamma_i$, or none of them. Said differently, assume that if $S_i$ contains both plumbing and image arcs, then either:
    \begin{enumerate}
    \item $\Cmax = \Codd$ and $i=2, 3$, or 
    \item $\Cmax = \Ceven$ and $i = 2$.
    \end{enumerate}
}
\end{rmk}

\Cref{NoClumpingOfArcs} will be vital as we construct $B$ to non-trivially spans all the columns of the braid.

\subsection{Building the branched surface in interior columns}  \label{section:BuildInMiddle}

To build our laminar branched surface so that it spans many columns of the braid, we will need to fluidly move throughout the braid to decide where to apply our templates. This requires the following definitions and lemmas.

\begin{defn}\label{defn:calibration}
Let $\beta$ be a braid word, and let $\beta'$ be some cyclic conjugation thereof. 
Let $F$ and $F'$ be the Bennequin surfaces directly constructed from the braid words $\beta$ and $\beta'$, respectively. $($As remarked in \Cref{section:monodromy}, $F$ and $F'$ are isotopic.$)$ 
Suppose $\sigma_{s,t}$ $($resp. $\mathbbm{b}_{s, t})$ is a distinguished letter $($resp. band$)$ in $\beta$ $($resp. $F)$ which, after cyclically conjugating to $\beta'$, becomes $\sigma_{s,t'}$ $($resp. $\mathbbm{b}_{s,t'})$ in $F'$. 
We say $\beta'$ is a \textbf{calibration of $\beta$ with respect to $\alpha_{s,t}$ $($or $\mathbbm{b}_{s,t})$} if in $\beta'$, the plumbing arc $\alpha_{s+1,1}$ encloses $\mathbbm{b}_{s,t'}$ on the right. 
\end{defn}

For a simple example of a calibration, see \Cref{fig:Calibration}.

\begin{figure}[h!] \center
\labellist
\tiny
\pinlabel {$\Gamma_{s-1}$} at 52 175
\pinlabel {$\Gamma_{s}$} at 80 175
\pinlabel {$\Gamma_{s+1}$} at 106 175
\pinlabel {$\Gamma_{s-1}$} at 236 175
\pinlabel {$\Gamma_s$} at 264 175
\pinlabel {$\Gamma_{s+1}$} at 292 175
\pinlabel {$\mathbbm{b}_{s,t}$} at 76 126
\pinlabel {$\mathbbm{b}_{s,t'}$} at 260 126
\pinlabel {calibrate w.r.t. $\mathbbm{b}_{s,t}$} at 170 105
\endlabellist
        \includegraphics[scale=1.2]{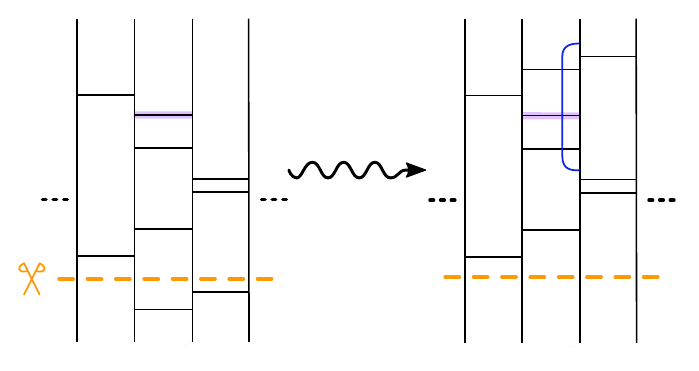}
        \caption{\textbf{Left:} We see $\beta$ and a distinguished band $\mathbbm{b}_{s,t}$ (here, the band is $\mathbbm{b}_{s,1}$).  The dotted orange line shows the axis along which we need to cyclically conjugate the braid. \textbf{Right:} The result after we (canonically) calibrate $\beta$ with respect to $\mathbbm{b}_{s,1}$, producing $\beta'$.  Note that in $\beta'$, the distinguished band has become $\mathbbm{b}_{s,2}$. }
        \label{fig:Calibration}
\end{figure}


\begin{lemma} \label{lemma:CanCalibrate}
Fix a braid word $\beta$. For any distinguished band $\mathbbm{b}_{s,t}$ in $\beta$, where $1\leq s \leq n-2$, there exists a calibration $\beta'$ of $\beta$ with respect to $\mathbbm{b}_{s,t}$. 
\end{lemma}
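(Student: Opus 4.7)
The plan is to produce the calibration $\beta'$ directly by a single cyclic conjugation. Starting from the position of the distinguished letter $\sigma_{s,t}$ in $\beta$ (the $t^{\text{th}}$ occurrence of $\sigma_s$, corresponding to the band $\mathbbm{b}_{s,t}$), I would traverse the braid word cyclically in the backward direction and locate the nearest preceding occurrence of $\sigma_{s+1}$. Call this letter $\sigma_{s+1,r}$. Such a letter exists since $c_{s+1} \geq 2$ (otherwise the arc $\alpha_{s+1,1}$ referenced in \Cref{defn:calibration} would not even be defined, so the conclusion of the lemma would be vacuous, and for the prime standardized braids considered throughout this section the bound $c_{s+1} \geq 2$ is guaranteed by \Cref{lemma:prime}).

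Next, I would cyclically conjugate $\beta$ so that $\sigma_{s+1,r}$ is the leading letter of the resulting presentation $\beta'$. Relabeling according to this new presentation, the distinguished letter becomes $\sigma_{s,t'}$ for some $t'$, and correspondingly the distinguished band becomes $\mathbbm{b}_{s,t'}$. By our choice of $\sigma_{s+1,r}$ as the nearest preceding $\sigma_{s+1}$ letter, the subword of $\beta'$ running from the initial $\sigma_{s+1}$ up through the distinguished $\sigma_{s,t'}$ contains no intervening $\sigma_{s+1}$ letter. Hence $\sigma_{s,t'}$ lies strictly between the first and the second occurrences of $\sigma_{s+1}$ in $\beta'$.

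To close the argument, I would invoke the bulleted enclosure description immediately following \Cref{defn:PositiveNBraidsEncloses}: in the presentation $\beta'$, the arc $\alpha_{s+1,1}$ encloses on the right precisely the $d_L(s+1;1,2)$ bands in $\Gamma_s$ that sit between the first and the second $\sigma_{s+1}$ letters of $\beta'$. Since $\mathbbm{b}_{s,t'}$ is one such band by the previous paragraph, it is enclosed on the right by $\alpha_{s+1,1}$ in $\beta'$. This is exactly the condition demanded by \Cref{defn:calibration}, so $\beta'$ is the sought calibration.

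There is no real technical obstacle here; the lemma is a structural observation that the plumbing conventions from \Cref{section:monodromy} allow one to reposition the ``first plumbing arc'' of column $s+1$ so that it targets any prescribed band in column $s$. The only points to check are that the backward cyclic search terminates (immediate from finiteness of $\beta$ and $c_{s+1} \geq 1$) and that the new labels $t'$ and $r'$ behave as expected under the cyclic conjugation, which is transparent from the left-to-right indexing conventions used throughout.
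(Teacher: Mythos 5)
Your proposal is correct and is essentially the paper's own argument: locating the nearest $\sigma_{s+1}$ letter cyclically preceding the distinguished $\sigma_{s,t}$ and conjugating it to the front is exactly what the paper achieves by first pivoting about $\sigma_{s,t}$ and then pivoting about the \emph{last} $\sigma_{s+1}$ in that presentation, and both yield the same $\beta'$ in which the distinguished band sits between $\sigma_{s+1,1}$ and $\sigma_{s+1,2}$, hence is enclosed on the right by $\alpha_{s+1,1}$. The only cosmetic quibble is that the existence of a $\sigma_{s+1}$ letter is justified in the paper by primeness giving $c_{s+1}\geq 2$ (the lemma asserting $c_i \geq 2$, rather than the one on $\mathcal{B}_i$), though the bound you cite does imply it.
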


\begin{proof}
Let $\beta$ be a braid word and let $\sigma_{s,t}$ be a distinguished letter therein. Recall that $\mathbbm{b}_{s,t}$ is the band corresponding to $\alpha_{s,t}$ in $\beta$. In particular, we can write our braid as $\beta = \gamma_1 \sigma_s \gamma_2$, where the distinguished $\sigma_s$ is the distinguished letter $\sigma_{s,t}$. We pivot $\beta$ with respect to $\sigma_s$, producing the braid word $\widetilde{\beta}$, where $\widetilde{\beta} = \sigma_s \gamma_2 \gamma_1$. We next identify the last $\sigma_{s+1}$ letter in this presentation of the braid; that is, we write $\widetilde{\beta} = \sigma_s \ \widetilde{\gamma_2} \ \sigma_{s+1} \ \widetilde{\gamma_1}$, where there are no $\sigma_{s+1}$ letters in $\widetilde{\gamma_1}$. Pivoting $\beta$ with respect to $\sigma_{s+1}$, we produce $\beta' = \sigma_{s+1} \ \widetilde{\gamma_1} \ \sigma_s \ \widetilde{\gamma_2}$. (Note that $\widetilde{\gamma_1}$ could contain other $\sigma_s$ letters, and that is allowed.)

We claim that this is a calibration of $\beta$ with respect to $\mathbbm{b}_{s,t}$: by design, the presentation $\beta'$ of the braid is such that the first occurrence of a $\sigma_{s+1}$ letter appears before the distinguished $\sigma_s$ letter. Since $\beta$ is prime, we know that $c_{s+1} \geq 2$. In particular, a second occurrence of a $\sigma_{s+1}$ letter exists in $\beta'$, it happens after our distinguished $\sigma_s$ letter, and it somewhere in $\widetilde{\gamma_2}$. The plumbing arc $\alpha_{s+1,1}$ lies in $S_{s+1}$, has one endpoint above $\mathbbm{b}_{s+1,1}$, and the other above $\mathbbm{b}_{s+1,2}$. Moreover, $\alpha_{s+1,1}$ will enclose, on the left, all the bands $\mathbbm{b}_{s,1}, \ldots, \mathbbm{b}_{s, d_L(s+1; 1,2)}$ (as in \Cref{fig:Calibration} (right)). After calibration, our distinguished band is $\mathbbm{b}_{s,t'}$, where $1 \leq t' \leq d_L(s+1; 1,2)$. We conclude that $\alpha_{s+1,1}$ encloses our distinguished band on the right.
\end{proof}

We remark: there are many cyclic conjugations of $\beta$ which are \textit{also} calibrations of $\beta$ with respect to $\mathbbm{b}_{s,t}$. 
We do not want to consider these other calibrations, so we define the following.

\begin{defn}
The \textbf{canonical calibration of $\beta$ with respect to $\mathbbm{b}_{s,t}$} is the calibration of $\beta$ produced via the algorithm described in the proof of \Cref{lemma:CanCalibrate}. 
\end{defn}

\begin{lemma} \label{lemma:Continue}
Suppose we built a branched surface containing product disks from some non-trivial subset of the columns in $\Gamma_1 \cup \ldots \cup \Gamma_i$, where $4 \leq i \leq n-4$. Additionally, assume that we applied our template to $\Gamma_i$ to build $B$ in this column -- in particular, assume that our branched surface meets $S_i$ in only plumbing arcs and no image arcs.
After performing some pivots and canonical calibrations, we can build our branched surface in $\Gamma_{i+1} \cup \Gamma_{i+2}$ such that: $(1)$ there are no product disks chosen from $\Gamma_{i+1}$, $(2)$ we applied our template to $\Gamma_{i+2}$ $($so there are $c_{i+2}-1$ co-oriented product disks chosen from $\Gamma_{i+2})$, $(3)$ the unique potential sink disk spanning $\Gamma_i \cup \Gamma_{i+1}$ is $\mathcal{S}_{i+2}$, and $(4)$ there is no pairwise linking between arcs in $\Gamma_i \cup \Gamma_{i+1} \cup \Gamma_{i+2}$. 
\end{lemma}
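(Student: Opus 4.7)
The plan is to adapt the construction of Lemma~\ref{lemma:Sparse_Cmax=Ceven_Gammas12} (specifically the Case~(1B)/(2B) subconstruction) to the interior pair of columns $\Gamma_{i+1} \cup \Gamma_{i+2}$, using the canonical calibration of Lemma~\ref{lemma:CanCalibrate} as the bridge between what has already been assembled in $\Gamma_i$ and the new choices to be made in $\Gamma_{i+2}$. Let $\mathbbm{b}_{i,\star}$ denote the pivot band corresponding to the $\sigma_i$ letter about which $\beta$ was pivoted when our template was applied to $\Gamma_i$. The first step is to canonically calibrate $\beta$ with respect to $\mathbbm{b}_{i,\star}$, producing a new cyclic presentation of $\beta$ in which $\alpha_{i+1,1}$ encloses $\mathbbm{b}_{i,\star}$ on the right. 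This change of presentation is purely bookkeeping: the co-orientations already chosen for the product disks in $\Gamma_1 \cup \ldots \cup \Gamma_i$ are absolute data on $F$ and are untouched. Only the indexing of bands within $\Gamma_i$ is cyclically permuted.

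Next, in this calibrated presentation of $\beta$, apply our template to $\Gamma_{i+2}$ by pivoting about the first $\sigma_{i+2}$ letter occurring after the calibration pivot. This contributes $c_{i+2}-1$ co-oriented product disks: $\alpha_{i+2,1}$ is smoothed to the right, and all subsequent plumbing arcs in $S_{i+2}$ are smoothed to the left. Take no product disks from $\Gamma_{i+1}$. This establishes conclusions~(1) and~(2). The resulting local picture on $\Gamma_{i+1} \cup \Gamma_{i+2}$ is formally identical to the Cases~(1B)/(2B) picture of Figure~\ref{fig:NotGeneric_CmaxCeven_Gammas12_CaseA}, with the image arcs coming from $\Gamma_i$ playing the role of the ``$\alpha_{1,1}$''-image, and the plumbing arcs from $\Gamma_{i+2}$ playing the role of the plumbing arcs in $S_2$.

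The sink disk analysis now runs in parallel with the end of the proof of Lemma~\ref{lemma:Sparse_Cmax=Ceven_Gammas12}. Since $S_{i+1}$ contains no plumbing arcs, a polygon or horizontal sector meeting $S_{i+1}$ is carved only by the image arcs descending from the template on $\Gamma_i$; the calibration is designed precisely so that the first such image arc is a right pointer that escapes each candidate sink in $S_{i+1}$. In turn, $\mathcal{S}_{i+1}$ picks up an outward-pointing image arc on its boundary, just as $\mathcal{S}_2$ did in Figure~\ref{fig:NotGeneric_CmaxCeven_Gammas12_CaseA}; any polygon sector in $S_{i+1}$ meets the pivot band $\mathbbm{b}_{i,\star}$ and therefore is pulled into the sector of $\mathcal{S}_{i+1}$ or contains an outward plumbing arc by the template on $\Gamma_{i+2}$; and horizontal sectors spanning $\Gamma_{i+1}$ either absorb into $\mathcal{S}_{i+1}$, into $\mathcal{S}_{i+2}$, or encounter a left-pointing image arc $\varphi(\alpha_{i+2, t})$ on the right. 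This leaves $\mathcal{S}_{i+2}$ as the only candidate sink sector spanning $\Gamma_i \cup \Gamma_{i+1}$, establishing~(3).

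For conclusion~(4), observe that Lemma~\ref{lemma:TemplateLinking} applied to the template on $\Gamma_{i+2}$ rules out linked pairs among arcs in $S_{i+2}$, and the inductive hypothesis for $\Gamma_i$ rules out linked pairs in $S_i$. Arcs in $S_i$ and $S_{i+2}$ lie in non-adjacent Seifert disks and cannot link, and $S_{i+1}$ contains no plumbing arcs at all. The main obstacle will be the sink-disk verification at the $\Gamma_i$--$\Gamma_{i+1}$ interface: one must confirm that after calibration, the alternating co-orientations inherited from the template on $\Gamma_i$ produce outward pointers on the boundary of every candidate sink in $S_{i+1}$, across the various sub-cases determined by $d(i+2;1,2) = 0$ versus $\geq 2$ and by whether the plumbing arcs in $S_{i+2}$ enclose image arcs from $\Gamma_i$ coherently or not, precisely the sub-case analysis that governed Lemmas~\ref{lemma:Generic_Cmax=Codd_Gammas12} and~\ref{lemma:Sparse_Cmax=Ceven_Gammas12}.
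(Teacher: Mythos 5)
Your overall architecture (calibrate, take nothing from $\Gamma_{i+1}$, template $\Gamma_{i+2}$, then run the sector analysis) matches the paper, but two of your concrete choices break the argument. First, you calibrate with respect to a band $\mathbbm{b}_{i,\star}$ in $\Gamma_i$. By Definition~\ref{defn:calibration}, calibrating with respect to a band in column $s$ positions $\alpha_{s+1,1}$; so your calibration positions $\alpha_{i+1,1}$, an arc you never use, since no product disks are taken from $\Gamma_{i+1}$. The paper instead calibrates with respect to $\mathbbm{b}_{i+1,1}$, which is what forces $\alpha_{i+2,1}$ to enclose $\mathbbm{b}_{i+1,1}$ on the right. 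This is not cosmetic: in the homogeneous case ($d(i;1,c_i-1)=0$, which your proposal never isolates), $S_i$ carries only left-pointing plumbing arcs, so $\partial\mathcal{S}_i$ has no outward plumbing arc and $\mathcal{S}_i\sim\mathbbm{b}_{i,c_i}\sim\mathbbm{b}_{i+1,1}$; the \emph{only} escape route for $\mathcal{S}_i$ is the left-pointing $\alpha_{i+2,1}$ enclosing $\mathbbm{b}_{i+1,1}$ on the right, which your calibration does not guarantee. Without it, $\mathcal{S}_i$ can be a sink.

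Second, you co-orient $\alpha_{i+2,1}$ to the right and all subsequent arcs in $S_{i+2}$ to the left. That is the second-column pattern of the \emph{two-column} template, and by Lemma~\ref{lemma:TemplateLinking} it produces exactly one linked pair inside $S_{i+2}$ --- directly contradicting conclusion~(4). The single-column template of Definition~\ref{defn:Templates} (arcs before the first $\sigma_{i+3}$ left, the rest right) is what the paper applies to $\Gamma_{i+2}$, and it is linking-free and makes $\alpha_{i+2,1}$ a left pointer, which is exactly the co-orientation needed in the sink-disk step above. Relatedly, your analogy with Cases~(1B)/(2B) of Lemma~\ref{lemma:Sparse_Cmax=Ceven_Gammas12} does not transfer: there the two active columns are adjacent ($\Gamma_1$ and $\Gamma_2$), whereas here the empty column $\Gamma_{i+1}$ sits between the active ones, so the image arcs from $\Gamma_i$ (living in $S_{i+1}$) and the plumbing arcs from $\Gamma_{i+2}$ (living in $S_{i+2}$) never share a Seifert disk, and the local pictures are not formally identical.
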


\begin{proof}
Suppose we built $B$ in $\Gamma_1 \cup \ldots \cup \Gamma_i$ such that we already applied our template to $\Gamma_i$. (We refer the reader to \Cref{defn:Templates} to recall the design of our templates.) Therefore, there is a unique band $\mathbbm{b}_{i, k}$ such that $\mathbbm{b}_{i, k-1}$ is not enclosed, on the left, by a plumbing band. We pivot the braid about $\mathbbm{b}_{i, k}$, so that $\beta \approx \sigma_i \omega$; in this new presentation, $\sigma_{i,1}$ is enclosed (on the left) by a left pointing plumbing arc, and $\sigma_{i, c_i}$ is not enclosed (on the left) by any plumbing arcs. We refer to this cyclic presentation of the braid as $P_1$, and \Cref{fig:CalibrateAndContinue} (left) shows some examples of $\Gamma_i \cup \Gamma_{i+1}$ in $P_1$.

There are two cases to consider:

\begin{enumerate}
\item[$\boldsymbol{\circ}$] \textbf{Case (1)}: The heterogenous case. Here, $d(i; 1, c_i - 1) > 0$, as in \Cref{fig:CalibrateAndContinue} (top row), or 
\item[$\boldsymbol{\circ}$] \textbf{Case (2)}: The homogeneous case. Here, $d(i; 1, c_i-1) = 0$, as in \Cref{fig:CalibrateAndContinue} (bottom row).
\end{enumerate}

We begin by building our branched surface in $\Gamma_{i+2}$, and then go on to show that, in both the heterogeneous and homogeneous cases, we created as most one sink disk. 

We quickly establish some notation: let $\mathbbm{b}_{i,p}$ denote the first $\sigma_i$ band in $\Gamma_i$ which is followed by $\sigma_{i+1}$ bands; that is, $d(i; 1, p) = 0$, and $d(i; p, p+1) \geq 1$. Note that when $\Gamma_{i}$ is homogeneous, $p = c_{i-1}$. To build $B$, we canonically calibrate $\beta$ with respect to $\mathbbm{b}_{i+1,1}$, thereby presenting our braid as $\sigma_{i+2} \omega_1 \sigma_{i+1} \omega_2$, where the $\sigma_{i+1}$ letter written explicitly used to be $\sigma_{i+1,1}$ (in our old presentation of the braid as $P_1$). With this presentation of $\beta$, we apply our template to $\Gamma_{i+2}$. The result is seen in \Cref{fig:CalibrateAndContinue}, where we show an incomplete picture of what the branched surface looks like in $\Gamma_i \cup \Gamma_{i+1} \cup \Gamma_{i+2}$ with respect to the $P_1$ presentation of the braid. 

\begin{figure}[h!] \center
\labellist
\tiny
\pinlabel {$\mathcal{H}$} at 41 320
\pinlabel {$\mathcal{H}$} at 244 320
\pinlabel {$\Gamma_i$} at 32 410
\pinlabel {$\Gamma_{i+1}$} at 60 410
\pinlabel {$\Gamma_{i+2}$} at 88 410
\pinlabel {$\Gamma_i$} at 234 410
\pinlabel {$\Gamma_{i+1}$} at 262 410
\pinlabel {$\Gamma_{i+2}$} at 290 410
\pinlabel {$\mathcal{H}$} at 41 52
\pinlabel {$\mathcal{H}$} at 244 52
\pinlabel {$\Gamma_i$} at 32 186
\pinlabel {$\Gamma_{i+1}$} at 60 186
\pinlabel {$\Gamma_{i+2}$} at 88 186
\pinlabel {$\Gamma_i$} at 234 186
\pinlabel {$\Gamma_{i+1}$} at 262 186
\pinlabel {$\Gamma_{i+2}$} at 290 186
\pinlabel {calibrate w.r.t. $\mathbbm{b}_{i+1,1}$} at 158 322
\pinlabel {calibrate w.r.t. $\mathbbm{b}_{i+1,1}$} at 158 100
\pinlabel {$\mathbbm{b}_{i+1,1}$} at 60 352
\pinlabel {$\mathbbm{b}_{i+1,2}$} at 258 352
\pinlabel {$\mathbbm{b}_{i+1,1}$} at 60 90
\pinlabel {$\mathbbm{b}_{i+1,2}$} at 258 90
\pinlabel {$p$} at 28 370
\pinlabel {$p$} at 25 98
%
\endlabellist
        \includegraphics[scale=1.2]{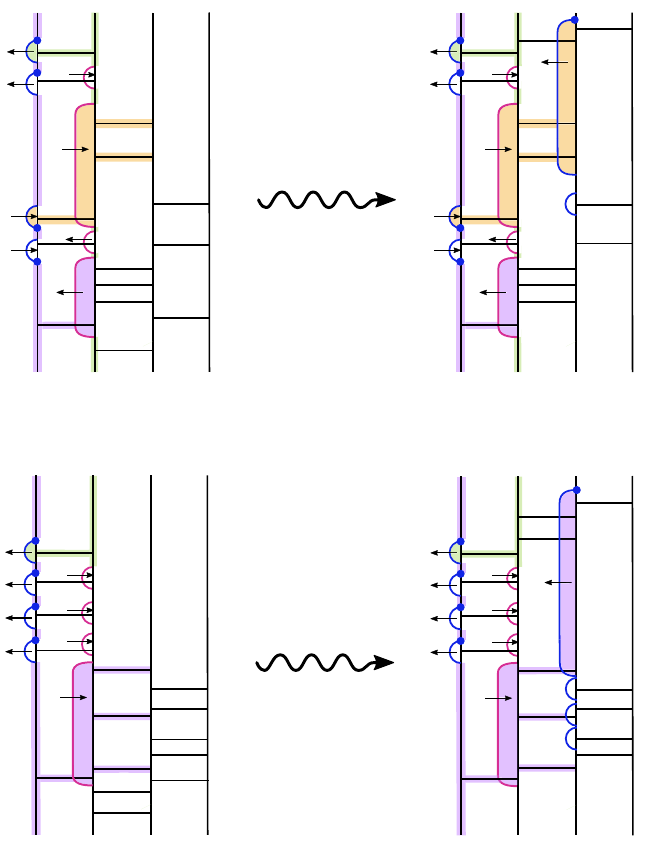}
        \caption{\textbf{Top Row:} We see an example of the heterogeneous case, how to canonically calibrate $\beta$ (on the left) with respect to $\mathbbm{b}_{i+1,1}$ to product $\beta'$ (on the right). The sector $\mathcal{P}$ is not a sink, because $\alpha_{i+2,1}$ points out of the region. \textbf{Bottom Row:} we see an analogous example when $\Gamma_i$ is homogeneous. Note: in both rows, we do not know whether $\alpha_{i+2,t}$, where $t \neq 1$, is a left or right pointer, so we have suppressed that information.}
        \label{fig:CalibrateAndContinue}
\end{figure}


\tcbox[size=fbox, colback=gray!30]{\textbf{Case (1):} Sink disk analysis when $\Gamma_i$ is heterogeneous.}

We now verify that there are no sink disk sectors spanning $\Gamma_i \cup \Gamma_{i+1}$ in the heterogeneous case, and refer to \Cref{fig:CalibrateAndContinue} (top row) throughout. Note that unless stated otherwise, our labelling of bands is with respect to the $P_1$ presentation of $\beta$. 

\begin{itemize}

\item \Seifert 
	\begin{itemize}
	\item[$\boldsymbol{\circ}$] By assumption, $S_i$ contains only plumbing arcs, and $\Gamma_i$ is heterogeneous. Therefore, $\partial \mathcal{S}_i$ contains $\alpha_{i, p+1}$, which points out of the sector; thus, $\mathcal{S}_i$ is not a sink. 
	\item[$\boldsymbol{\circ}$] Since we take no disks from $\Gamma_{i+1}$, $\varphi(\alpha_{i, 1})$ is contained in $\partial \mathcal{S}_{i+1}$, pointing out of the sector. Thus, $\mathcal{S}_{i+1}$ is not a sink. 
	\item[$\boldsymbol{\circ}$] Though we are assuming that we have applied our template to $\Gamma_{i+2}$, at present, we cannot argue that $\mathcal{S}_{i+2}$ is not a sink: we do not know whether $\Gamma_{i+2}$ is heterogeneous or homogeneous. Therefore, this is a potential sink disk sector. 
	\end{itemize}
\item \Polygon By definition, a polygon sector must lie in a Seifert disk and also contain both plumbing and image arcs in its boundary. By assumption, $S_{i}$ contains only plumbing arcs, so $S_i$ cannot contain any polygon sectors. We have taken no product disks from $\Gamma_{i+1}$, and have applied our template to $\Gamma_{i+2}$. Therefore, $S_{i+1}$ contains only image arcs, and $\Gamma_{i+2}$ contains only plumbing arcs. Therefore, none of $S_i, S_{i+1},$ and $S_{i+2}$ contain both plumbing and image arcs, and so there are no polygon sectors in $\Gamma_i \cup \Gamma_{i+1}$. 
\item \Horizontal Suppose $\mathcal{H}$ is a horizontal sector spanning $\Gamma_i$. There are three cases to consider:

	\begin{enumerate}
	\item \textit{$\mathcal{H}$ contains $\mathbbm{b}_{i, j}$, where $1 \leq j \leq p$.} Here, the bands $\mathbbm{b}_{i,j}$ are enclosed, on the left, by an outward pointing plumbing arc; so these sectors are not sinks; see \Cref{fig:CalibrateAndContinue} (top row). 
	\item \textit{$\mathcal{H}$ contains $\mathbbm{b}_{i,p+1}$.} This band is enclosed by inward pointing arcs on both the right and left, as in \Cref{fig:CalibrateAndContinue} (top row, left). When we canonically calibrated our braid with respect to $\mathbbm{b}_{i+1, 1}$, and then applied our template to $\Gamma_{i+2}$, we ensured that the sector $\mathcal{H}$ meets $S_{i+2}$ in a left pointing plumbing arc (in the new presentation of $\beta$, this arc is $\alpha_{i+2,1})$. 
We deduce that $\mathcal{H}$ is not a sink.
	\item \textit{$\mathcal{H}$ contains $\mathbbm{b}_{i, j}$, where $p+2 \leq j \leq c_i$.} By design, the band $\mathbbm{b}_{i,j}$ is enclosed, on the right, by an outward pointing image arc. Therefore, $\mathcal{H}$ is not a sink.
	\end{enumerate}

We now analyze the horizontal sectors spanning $\Gamma_{i+1}$. There are three cases to consider.
\begin{itemize}
\item[$\bullet$] The bands $\mathbbm{b}_{i+1, 1}, \ldots, \mathbbm{b}_{i+1, d(i; p, p+1)}$ are all part of the same branch sector $\mathcal{H}$; this sector also contains $\mathbbm{b}_{i, p+1}$. We already argued (in our analysis of the horizontal sectors spanning $\Gamma_i$) that the sector containing this band is not a sink. 
\item[$\bullet$] The bands $\mathbbm{b}_{i+1, j}$, where $j \geq d(i; p, p+1) +1$ are either enclosed (on the left) by a left pointing image arc,  or they are not enclosed (on the left) by any arc. When the former happens, the arc points out of the horizontal sector, so it is not a sink; when the latter happens, the horizontal sector also contains $\mathcal{S}_{i+1}$, which we already proved is not a sink.
%
\end{itemize}
%
\end{itemize}

We deduce that in \textbf{Case (1)}, $\mathcal{S}_{i+2}$ is the only potential sink disk spanning $\Gamma_i \cup \Gamma_{i+1}$.

\tcbox[size=fbox, colback=gray!30]{\textbf{Case (2):} Sink disk analysis when $\Gamma_i$ is homogeneous.}

We now do our sink disk analysis in Case (2), where $d(i; 1, c_i-1)=0$; a typical example appears in \Cref{fig:CalibrateAndContinue} (bottom row). Since we are assuming that $\beta$ is not a connected sum, we must have that $d(i; c_i - 1, c_i) := \delta_1 >0$ and $d(i; c_i, 1) =: \delta_2 >0$. Notice that in this case, the Seifert disk $S_i$ does not contain any right pointing plumbing arcs, which is why this analysis differs from Case (1). 

\begin{itemize}
\item \Seifert In this case, since $\mathcal{S}_i$ contains only plumbing arcs, $S_i \sim \mathbbm{b}_{i, c_i} \sim \mathbbm{b}_{i+1,1}$. By design, we canonically calibrated our braid with respect to $\mathbbm{b}_{i+1,1}$, and then applied our template to $\Gamma_{i+2}$. So, the band known as $\mathbbm{b}_{i+1,1}$ in the $P_1$ presentation of $\beta$ is subsequently enclosed, on the right, by a left pointing plumbing arc; see \Cref{fig:CalibrateAndContinue} (bottom row, right). Therefore, $\mathcal{S}_i$ is not a sink. $S_{i+1}$ contains only outward pointing image arcs, so it is not a sink. We cannot yet conclude that $S_{i+2}$ is not a sink, as we do not yet know whether $S_{i+2}$ is heterogeneous or homogeneous.
\item \Polygon By the same argument from \textbf{Case (1)}, there are no polygon sectors spanning $\Gamma_{i} \cup \Gamma_{i+1} \cup \Gamma_{i+2}$. 
\item \Horizontal No horizontal sector whose span includes $\mathbbm{b}_{i,j}$, where $1 \leq j \leq c_i-1$, is a sink: as the band $\mathbbm{b}_{i,j}$ is enclosed, on the left, by the outward pointing plumbing arc $\alpha_{i,j}$. See \Cref{fig:CalibrateAndContinue} (bottom row, right). The sector containing $\mathbbm{b}_{i, c_i}$ is the branch sector $\mathcal{S}_{i}$, which we already argued is not a sink.

If a horizontal sector $\mathcal{H}$ spans $\Gamma_{i+1}$, it contains some band $\mathbbm{b}_{i+1, k}$. There are two cases to consider, based on whether $1 \leq k \leq \delta_1$ or $\delta_1 + 1 \leq k \leq c_{i+1}$. 

Suppose $\mathcal{H}$ contains $\mathbbm{b}_{i+1, k}$ with $1 \leq k \leq \delta_1$. These bands are all enclosed, on the left, by $\varphi(\alpha_{i, c_i-1})$, and so they are all in the same sector as $\mathcal{S}_i$; we already proved this sector is not a sink. Otherwise, $\mathcal{H}$ contains $\mathbbm{b}_{i+1, k}$ where $\delta_1 + 1 \leq k \leq c_{i+1}$, then $\mathcal{H}$ is in the same horizontal sector as $\mathcal{S}_{i+1}$, which we already argued is not a sink. See \Cref{fig:CalibrateAndContinue} (bottom row).
\end{itemize}

We deduce that in Case (2), $\mathcal{S}_{i+2}$ is the only potential sink disk spanning $\Gamma_i \cup \Gamma_{i+1}$. \Cref{lemma:TemplateLinking} confirms that there is no pairwise linking between arcs which are both in $S_1$ or both in $S_3$, and there can be no pairwise linking between arcs in $S_1$ and $S_3$.
\end{proof}

We summarize our findings in \Cref{table:Bookkeeping_InteriorRedux}.

\begin{table}[H]
\begin{tabular}{ | c | c | c |}
\hline
\textsc{Case} &  \textsc{Distinguished Band} & \textsc{Potential Sink Disk Sector contains} \\ \hline \hline
$\Gamma_i$ is heterogeneous & $\mathbbm{b}_{i+1, 1}$ & $\mathcal{S}_{i+2}$ \\  
$\Gamma_i$ is homogeneous & $\mathbbm{b}_{i+1, 1}$ & $\mathcal{S}_{i+2}$ \\  \hline
\end{tabular} 
        \caption{Bookkeeping for \Cref{lemma:Continue}, where we build $B$ in the ``interior'' columns of the braid.}
        \label{table:Bookkeeping_InteriorRedux}
\end{table}

In \Cref{lemma:Continue}, we assumed that we had already built our branched surface in $\Gamma_1 \cup \ldots \Gamma_i$, where $i \geq 4$. We now prove an analogous statement to \Cref{lemma:Continue} for the case where $i=3$ and $\Cmax = \Codd$. 

\begin{lemma} \label{lemma:Continue_Special}
Suppose $\Cmax = \Codd$, and $n \geq 7$. Further suppose that we followed \Cref{lemma:Generic_Cmax=Codd_Gammas12} to construct a branched surface for $\widehat{\beta}$ in $\Gamma_1 \cup \Gamma_2 \cup \Gamma_3$ $($so, in particular, we added $c_1-1$ $($resp. $c_3-1$$)$ co-oriented product disks from $\Gamma_1$ $($resp. $\Gamma_3$$)$ into $B$, and exactly one co-oriented product disk from $\Gamma_2$ into $B$$)$. Suppose we have not yet added any product disks from $\Gamma_4 \cup \ldots \cup \Gamma_{n-1}$ into $B$. Then, after performing some pivots and canonical calibrations, we can build our branched surface in $\Gamma_4 \cup \Gamma_5$ so that: $(1)$ $B$ contains no co-oriented product disks from $\Gamma_4$, $(2)$ we applied our template to $\Gamma_5$ $($so there are $c_5-1$ co-oriented product disks from $\Gamma_5$ in our branched surface$)$, $(3)$ there is at most one potential sink disk spanning $\Gamma_1 \cup \ldots \cup \Gamma_4$, and it is $\mathcal{S}_5$, and $(4)$ there is no pairwise linking between arcs in $\Gamma_3 \cup \Gamma_{4} \cup \Gamma_{5}$. 
\end{lemma}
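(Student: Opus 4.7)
The approach will closely parallel the proof of \Cref{lemma:Continue}. The plan is: pivot $\beta$ about an appropriate $\sigma_3$ band to align $S_3$'s plumbing arcs with the setup needed, canonically calibrate $\beta$ with respect to $\mathbbm{b}_{4,1}$, apply our template to $\Gamma_5$, take no product disks from $\Gamma_4$, and then verify the sink disk and linking conclusions via a column-by-column branch sector analysis. Items (1) and (2) of the conclusion are immediate from the construction, and item (4) will follow from \Cref{lemma:TemplateLinking} together with the fact that $\alpha_{2,1}$ lives in a column non-adjacent to $S_5$, so the only new linking to check is between arcs in $S_3$ and $S_5$, which cannot link since they lie in non-adjacent Seifert disks.

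The main work is in verifying item (3), which divides naturally into two parts. First, we must show that the previously identified ``potential sinks'' from \Cref{lemma:Generic_Cmax=Codd_Gammas12} (recorded in \Cref{table:Bookkeeping_Cmax=Codd_Gammas12}) are now resolved: namely $\mathcal{S}_3 \sim \mathbbm{b}_{2,1}$ in Case (A), and the horizontal sector containing $\mathbbm{b}_{2,1} \sim \mathbbm{b}_{3, c_3}$ in Case (C). In each case, the plan is to choose the pivot in $\Gamma_3$ so that, after calibrating with respect to $\mathbbm{b}_{4,1}$ and applying our template to $\Gamma_5$, the offending sector now contains a band in $\Gamma_4$ whose right endpoint is enclosed by the left-pointing arc $\alpha_{5,1}$ (the exact same ``rescue'' mechanism as in the heterogeneous case of \Cref{lemma:Continue}, where $\alpha_{i+2,1}$ saved the sector containing $\mathbbm{b}_{i, p+1}$). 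Second, the new branch sectors spanning $\Gamma_3 \cup \Gamma_4$ will be analyzed exactly as in \Cref{lemma:Continue}: split into the heterogeneous ($d(3; 1, c_3 - 1) > 0$) and homogeneous ($d(3; 1, c_3 - 1) = 0$) sub-cases for $\Gamma_3$, run the Seifert / polygon / horizontal sector checks, and conclude that every such sector either contains an outward-pointing arc on its boundary or is absorbed into a sector already shown to be safe, leaving $\mathcal{S}_5$ as the unique remaining potential sink.

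The main obstacle I anticipate is Case (A) of the preceding lemma: there, $\mathcal{S}_3$ is genuinely a potential sink at the end of the $\Gamma_1 \cup \Gamma_2 \cup \Gamma_3$ construction, and the state of $\Gamma_3$ (heterogeneous versus homogeneous) was not pinned down. The delicate point is to choose the pivot and calibration in $\Gamma_3 \cup \Gamma_4$ so that $\mathcal{S}_3 \sim \mathbbm{b}_{2,1}$ merges with a horizontal sector crossing into $\Gamma_4 \cup \Gamma_5$ that contains an outward-pointing arc; this will likely require a further sub-case split based on whether $S_3$ contains any right-pointing plumbing arcs and on the value of $d(3; c_3, 1)$, together with a careful verification that the chosen pivot remains compatible with the already-built portion of $B$ in $\Gamma_1 \cup \Gamma_2 \cup \Gamma_3$ (the fiber surface and branched surface are of course invariant under cyclic conjugation, but the labelling of bands and plumbing arcs shifts, and we must confirm that no new pairwise linking is introduced between $\alpha_{2,1}$ and arcs in $S_5$).
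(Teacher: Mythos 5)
Your overall architecture matches the paper's — leave $\Gamma_4$ empty, calibrate and apply the template to $\Gamma_5$, then run a Seifert/polygon/horizontal sector check, with item (4) handled exactly as you describe — but the verification is aimed at the wrong place, and the calibration you specify is not the one the paper uses where it matters. You flag Case (A) of \Cref{lemma:Generic_Cmax=Codd_Gammas12} as the main obstacle, yet that is the one case the paper dispatches immediately: there $d(2;1,2)=0$, so $\varphi(\alpha_{2,1})$ encloses no plumbing arcs on $S_3$, the sector $\mathcal{S}_3$ is homeomorphic to the model sector of \Cref{lemma:Continue}, and that lemma's pivot-and-calibrate argument applies verbatim. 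The genuinely new work is in Cases (B.1), (B.2), and (C), where $d(2;1,2)\geq 2$: the single image arc $\varphi(\alpha_{2,1})$ then encloses a run of bands of $\Gamma_3$ and produces polygon/horizontal sectors in $S_3$ — the sector containing $\mathbbm{b}_{3,\delta-1}$ in (B.1), $\mathbbm{b}_{3,p}$ in (B.2), and $\mathbbm{b}_{3,j}$ in (C) — whose only possible outward-pointing boundary arc must come from $S_5$. Your proposal never identifies these sectors, and your ``further sub-case split'' is proposed for Case (A), where none is needed.

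Relatedly, the paper does not calibrate with respect to $\mathbbm{b}_{4,1}$ in these cases: it calibrates with respect to $\mathbbm{b}_{4,\epsilon}$, where $\epsilon$ is case-specific ($\epsilon = d(3;\delta-2,\delta-1)$ in (B.1), $d(3;p-1,p)$ in (B.2), $d(3;j-1,j)$ in (C)), i.e.\ the last band of $\Gamma_4$ lying in the problematic sector, so that $\alpha_{5,1}$ is guaranteed to enclose $\mathbbm{b}_{4,\epsilon}$ on the right and hence to point out of exactly that sector. A uniform choice of ``$\mathbbm{b}_{4,1}$'' is not meaningful until you fix a presentation compatible with the one inherited from \Cref{lemma:Generic_Cmax=Codd_Gammas12}, and — unlike in \Cref{lemma:Continue} — you cannot freely re-pivot $\Gamma_3$ there, because $\varphi(\alpha_{2,1})$ already pins down where the left/right transition among the $\alpha_{3,\star}$ sits. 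So the gap is concrete: the proof must split into the four sub-cases recorded in \Cref{table:Bookkeeping_Cmax=Codd_Gammas12}, identify in each the sector of $S_3$ created by $\varphi(\alpha_{2,1})$ whose fate depends on $\Gamma_4\cup\Gamma_5$, and choose the calibration band in $\Gamma_4$ accordingly; none of this is supplied by the proposal.
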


\begin{proof}
In \Cref{lemma:Generic_Cmax=Codd_Gammas12}, there were four possible ways that we constructed $B$ in $\Gamma_3$. To continue building $B$ in $\Gamma_3 \cup \Gamma_4 \cup \Gamma_5$, we will analyze the four cases in turn. To begin, we use the labeling of bands and presentation from \Cref{lemma:Generic_Cmax=Codd_Gammas12}.

\tcbox[size=fbox, colback=gray!30]{Construction of $B$ in $\Gamma_1 \cup \Gamma_2 \cup \Gamma_3$ is inherited from Case (A) of \Cref{lemma:Generic_Cmax=Codd_Gammas12}.}

As summarized in \Cref{table:Bookkeeping_Cmax=Codd_Gammas12}, there is one potential sink disk spanning $\Gamma_1 \cup \Gamma_2$, and it is $\mathcal{S}_3$. Additionally, in $S_3$, $\partial \mathcal{S}_3$ contains $c_3-1$ arcs and one image arc; see \Cref{fig:Generic_CmaxCodd_Gammas12}. In this case, we can apply the same procedure as in the proof of \Cref{lemma:Continue} to pivot $\beta$ and then build $B$ in $\Gamma_4 \cup \Gamma_5$; for an example of the result, see \Cref{fig:Continue_Special_A}. We claim that the sink disk analysis from \Cref{lemma:Continue} carries through without issue: this follows from the fact that the unique image arc on $S_3$ does not enclose any bands on the left (i.e. the arc $\varphi(\alpha_{2,1})$ does not enclose any other plumbing arcs on $S_3$), so the branch sector $\mathcal{S}_3$ is homeomorphic to the sector seen in \Cref{fig:CalibrateAndContinue} (Upper Right, Lower Right). We deduce that the unique potential sink disk spanning $\Gamma_1 \cup \ldots \cup \Gamma_4$ is $\mathcal{S}_5$. 

\begin{figure}[h!] \center
\labellist
\tiny
\pinlabel {$\Gamma_1$} at 32 240
\pinlabel {$\Gamma_{2}$} at 60 240
\pinlabel {$\Gamma_{3}$} at 88 240
\pinlabel {$\Gamma_{4}$} at 116 240
\pinlabel {$\Gamma_{5}$} at 146 240
\pinlabel {$\Gamma_1$} at 262 240
\pinlabel {$\Gamma_{2}$} at 290 240
\pinlabel {$\Gamma_{3}$} at 318 240
\pinlabel {$\Gamma_{4}$} at 346 240
\pinlabel {$\Gamma_{5}$} at 374 240
\endlabellist
        \includegraphics[scale=1]{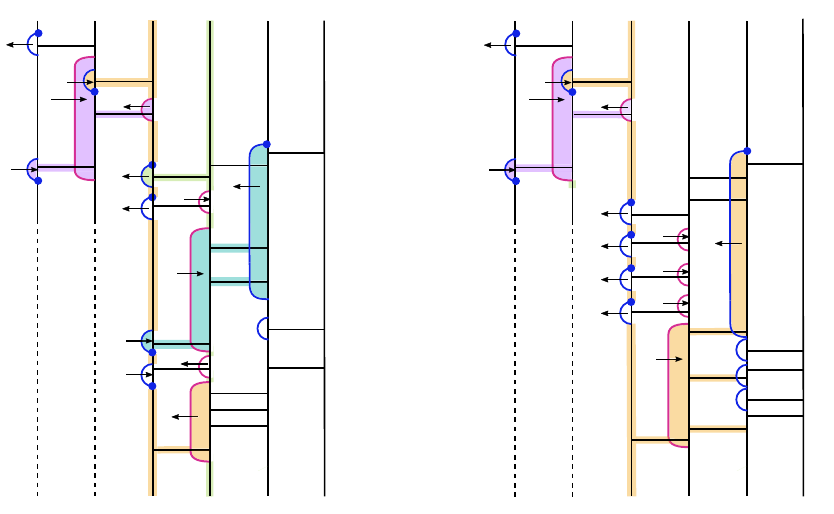}
        \caption{Assume we build $B$ in $\Gamma_1 \cup \Gamma_2 \cup \Gamma_3$ according to Case (A) of \Cref{lemma:Generic_Cmax=Codd_Gammas12}. Since $\partial \mathcal{S}_3 \cap S_3$ is $\varphi(\alpha_{2,1}), \alpha_{3,1}, \ldots, \alpha_{3,c_3-1}$, we can directly pivot our braid and then apply the construction -- and proof -- from \Cref{lemma:Continue} to build $B$ in $\Gamma_3 \cup \Gamma_4 \cup \Gamma_5$. On the \textbf{left}, $\Gamma_3$ is heterogeneous, and on the \textbf{right}, $\Gamma_3$ is homogeneous. We deduce that there is only one potential sink disk in $\Gamma_1 \cup \Gamma_2 \cup \Gamma_3 \cup \Gamma_4$, and it is $\mathcal{S}_5$. As we do not want to focus on the distribution of bands in $\Gamma_1 \cup \Gamma_2$, we have dotted the vertical lines representing the first and second strands of the braid.}
        \label{fig:Continue_Special_A}
\end{figure}

\tcbox[size=fbox, colback=gray!30]{Construction of $B$ in $\Gamma_1 \cup \Gamma_2 \cup \Gamma_3$ is inherited from Case (B.1) of \Cref{lemma:Generic_Cmax=Codd_Gammas12}.}

We need to do some new analysis for this and subsequent cases. We use the labelings from \Cref{lemma:Generic_Cmax=Codd_Gammas12} throughout, and we refer to \Cref{fig:Continue_Special_B.1} throughout. As summarized in \Cref{table:Bookkeeping_Cmax=Codd_Gammas12}, we know that the unique potential sink disk spanning $\Gamma_1 \cup \Gamma_2$ is the sector $\mathcal{H}$ containing $\mathbbm{b}_{3, \delta-1}$; we zoom in on this sector in \Cref{fig:Continue_Special_B.1}. 

We recall: in Case (B.1) of \Cref{lemma:Generic_Cmax=Codd_Gammas12}, we let $\delta = d(2; 1, 2)$; additionally, we knew that $d(3; \delta-2, \delta) = c_4$, and that $d(3; \delta-2, \delta-1)$ and $d(3; \delta-1, \delta)$ were both positive. Let $\epsilon = d(3; \delta-2, \delta-1)$. To build $B$ in $\Gamma_5$, we canonically calibrate $\beta$ with respect to $\mathbbm{b}_{4, \epsilon}$, and then apply our template to $\Gamma_5$; see \Cref{fig:Continue_Special_B.1}. We claim there is a unique sink disk spanning $\Gamma_1 \cup \Gamma_2 \cup \Gamma_3 \cup \Gamma_4$, and it is $\mathcal{S}_5$.

\begin{itemize}
\item \Seifert In \Cref{lemma:Generic_Cmax=Codd_Gammas12}, we proved that $\mathcal{S}_1, \mathcal{S}_2$, and $\mathcal{S}_3$ are not sinks. By assumption (from being in Case (B.1)), we know that $\Gamma_3$ is homogeneous; it follows that all the image arcs in $S_4$ are right pointers, so $\mathcal{S}_4$ is not a sink. Since we do not know whether $\Gamma_5$ is homogeneous or heterogeneous, we cannot yet argue that $\mathcal{S}_5$ is not a sink. 
\item \Polygon The only possible Seifert disks with polygon sectors are $S_2$ and $S_3$, since these are the only Seifert disks spanning $\Gamma_1 \cup \ldots \cup \Gamma_4$ that contain both plumbing and image arcs. Our proof of \Cref{lemma:Generic_Cmax=Codd_Gammas12} tells us that the unique polygon sector in $S_2$ is not a sink. By design, the polygon sector $\mathcal{P}$ such that $\mathcal{P} \cap S_3 \neq \varnothing$ is not a sink: we constructed $B$ so that $\mathcal{P} \sim \mathbbm{b}_{3, \delta-1} \sim \mathbbm{b}_{4, \epsilon}$, and we know that $\mathbbm{b}_{4, \epsilon}$ is enclosed, on the right, by $\alpha_{5,1}$, which is a left pointer (see \Cref{fig:Continue_Special_B.1}). We deduce that $\mathcal{P}$ is not a sink, and that there are no sink disk polygon sectors spanning $\Gamma_1 \cup \ldots \cup \Gamma_4$. 
\item \Horizontal Our proof of \Cref{lemma:Generic_Cmax=Codd_Gammas12} ensures that the unique horizontal sector spanning $\Gamma_1 \cup \Gamma_2$ that could be a sink is the sector containing $\mathbbm{b}_{3,\delta-1}$. As argued in our polygon sector analysis above, the branch sector containing $\mathbbm{b}_{3, \delta-1}$ is not a sink. Moreover, since $\Gamma_3$ is homogeneous, then for all $i \neq \delta-1$, the horizontal sector containing $\mathbbm{b}_{3,i}$ is not a sink because $\alpha_{3, i}$ points out of the sector. Therefore, at this point, we know that no horizontal sector spanning $\Gamma_1 \cup \Gamma_2 \cup \Gamma_3$ is a sink. 

It remains to show that no sector spanning $\Gamma_4$ is a sink. This is straightforward: recall that we have labelled our bands in $\Gamma_4$ such that $d(3; \delta-2, \delta-1)= \epsilon \geq 1$ and $d(3; \delta-2, \delta) = c_4$. We know that the bands $\mathbbm{b}_{4, 1}, \ldots, \mathbbm{b}_{4, \epsilon}$ are all enclosed, on the left by $\varphi(\alpha_{3, \delta-2})$, and in particular, this means $\mathbbm{b}_{4, 1} \sim \ldots \sim \mathbbm{b}_{4, \epsilon}$; see \Cref{fig:Continue_Special_B.1}.
We already proved that the sector containing $\mathbbm{b}_{4, \epsilon}$ is not a sink, so it follows that no horizontal sector containing $\mathbbm{b}_{4, j}$, where $j \in \{1, \ldots, \epsilon\}$ is a sink. Finally, notice that the remaining bands in $\Gamma_4$ are all concentrated between $\mathbbm{b}_{3, \delta-1}$ and $\mathbbm{b}_{3, \delta}$ (this is because $d(3; \delta-1, \delta) = c_4 - \epsilon$). Since we did not include the product disk swept out by $\alpha_{3, \delta-1}$ into our branched surface, the bands $\mathbbm{b}_{4, \epsilon+1}, \ldots, \mathbbm{b}_{c_4}$ are not enclosed, on the left, by any arcs. In particular, these bands are all in the same branch sector as $\mathcal{S}_4$, which we already proved is not a sink. We deduce that there are no sink disk horizontal sectors spanning $\Gamma_1 \cup \Gamma_2 \cup \Gamma_3 \cup \Gamma_4$. 
\end{itemize}

We deduce that in Case (B.1), we can build a branched surface which contains product disks from $\Gamma_1 \cup \Gamma_2 \cup \Gamma_3 \cup \Gamma_5$ such that the unique potential sink disk spanning $\Gamma_1 \cup \Gamma_2 \cup \Gamma_3 \cup \Gamma_4$ is $\mathcal{S}_5$.

\begin{figure}[h!] \center
\labellist \tiny
%
\pinlabel {$\Gamma_{2}$} at 40 174
\pinlabel {$\Gamma_{3}$} at 68 174
\pinlabel {$\Gamma_{4}$} at 96 174
\pinlabel {$\Gamma_{5}$} at 123 174
\pinlabel {$1$} at 38 146
\pinlabel {$1$} at 60 94
\pinlabel {$1$} at 123 128
\pinlabel {$\delta$} at 60 60
\pinlabel {$\epsilon$} at 92 94
\pinlabel {$\Gamma_{2}$} at 235 174
\pinlabel {$\Gamma_{3}$} at 262 174
\pinlabel {$\Gamma_{4}$} at 290 174
\pinlabel {$\Gamma_{5}$} at 318 174
\pinlabel {$1$} at 235 146
\pinlabel {$1$} at 254 96
\pinlabel {$1$} at 318 128
\pinlabel {$\delta$} at 254 60
\pinlabel {$\epsilon$} at 286 94
\endlabellist
        \includegraphics[scale=1]{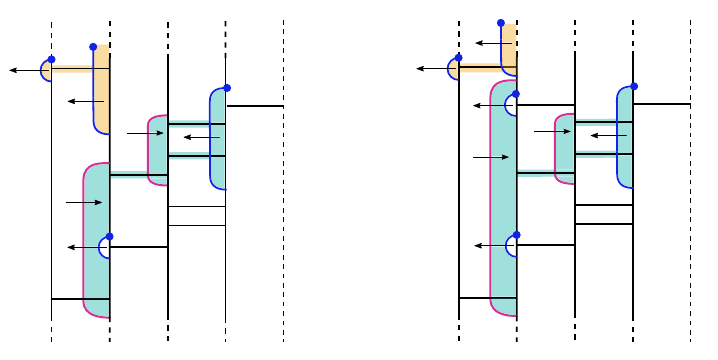}
        \caption{Assume we build $B$ in $\Gamma_1 \cup \Gamma_2 \cup \Gamma_3$ according to Case (B.1) of \Cref{lemma:Generic_Cmax=Codd_Gammas12}. Zooming in on the bands enclosed, on the left, by $\varphi(\alpha_{2,1})$ from \Cref{fig:Generic_CmaxCodd_Gammas12_A} (top right, bottom right); to be consistent, we have used the same cyclic conjugated presentation of the braid from that figure. On the \textbf{left}, the case where $d(2; 1,2) = 2$, and on the \textbf{right}, we have $d(2; 1, 2) \geq 3$. For emphasis, we have partially drawn $\alpha_{3, \alpha_{c_3}}$ on $S_3$. We see that the shaded sector, which contains $\mathbbm{b}_{3, \delta-1}$, is not a sink, and that $\mathcal{S}_4 \sim \mathbbm{b}_{4, \epsilon+1} \sim \ldots \sim \mathbbm{b}_{4, c_4}$.}
        \label{fig:Continue_Special_B.1}
\end{figure}

\tcbox[size=fbox, colback=gray!30]{Construction of $B$ in $\Gamma_1 \cup \Gamma_2 \cup \Gamma_3$ is inherited from Case (B.2) of \Cref{lemma:Generic_Cmax=Codd_Gammas12}.}

As summarized in \Cref{table:Bookkeeping_Cmax=Codd_Gammas12}, we can build $B$ in $\Gamma_1 \cup \Gamma_2 \cup \Gamma_3$ such that there are no sink disks spanning $\Gamma_1 \cup \Gamma_2$. We recall: in this case, we know that $\Gamma_3$ is heterogeneous, but that all the right pointing plumbing arcs co-bound a subsurface in $\Gamma_3$; see \Cref{fig:Generic_CmaxCodd_Gammas12_A.2}. Assume the same presentation of the braid as in \Cref{lemma:Generic_Cmax=Codd_Gammas12}, and now let $\mathbbm{b}_{3, p}$ denote the first right pointing plumbing arc in $\Gamma_3$; in particular, this means $\mathbbm{b}_{3, p-1}$ is a left pointer and $1 \leq p \leq \delta-2$. In \Cref{fig:Continue_Special_B.2}, we see examples where $p = 1$ and where $p = 2$. By the definition of our template, we know that $d(3; \delta, c_3) = 0$, $d(3; c_3, p-1) = 0$, and $d(3; p-1, p) := \epsilon \geq 1$. To build $B$ in $\Gamma_4 \cup \Gamma_5$, we canonically calibrate $\beta$ with respect to $\mathbbm{b}_{4, \epsilon}$, and then apply our template to $\Gamma_5$; see \Cref{fig:Continue_Special_B.2}. We claim that the resulting branched surface contains at most one sink disk in $\Gamma_1 \cup \ldots \cup \Gamma_4$, and it is $\mathcal{S}_5$.

\begin{figure}[h!] \center
\labellist \tiny
%
\pinlabel {$\Gamma_{2}$} at 40 174
\pinlabel {$\Gamma_{3}$} at 68 174
\pinlabel {$\Gamma_{4}$} at 96 174
\pinlabel {$\Gamma_{5}$} at 123 174
\pinlabel {$1$} at 38 144
\pinlabel {$p$} at 64 118
\pinlabel {$1$} at 123 150
\pinlabel {$\delta$} at 60 60
\pinlabel {$\epsilon$} at 92 122
\pinlabel {$\Gamma_{2}$} at 235 174
\pinlabel {$\Gamma_{3}$} at 262 174
\pinlabel {$\Gamma_{4}$} at 290 174
\pinlabel {$\Gamma_{5}$} at 318 174
\pinlabel {$1$} at 232 144
\pinlabel {$p$} at 254 94
\pinlabel {$1$} at 318 128
\pinlabel {$\delta$} at 254 60
\pinlabel {$\epsilon$} at 286 94
\endlabellist
        \includegraphics[scale=1]{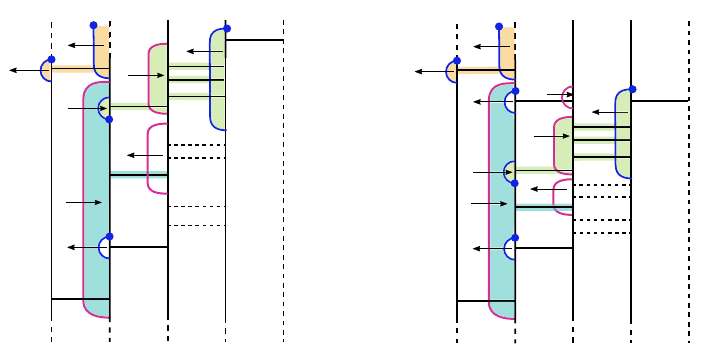}
        \caption{Assume we build $B$ in $\Gamma_1 \cup \Gamma_2 \cup \Gamma_3$ according to Case (B.2) of \Cref{lemma:Generic_Cmax=Codd_Gammas12}. Zooming in on the bands enclosed, on the left, by $\varphi(\alpha_{2,1})$ from \Cref{fig:Generic_CmaxCodd_Gammas12_A.2}; to be consistent, we have used the same cyclic conjugated presentation of the braid from that figure. On the \textbf{left}, the case where $p=1$, and on \textbf{right}, we have $p=2$. For emphasis, we have partially drawn $\alpha_{3, c_3}$ on $S_3$. The green shaded sector, which contains $\mathbbm{b}_{3, p}$, is not a sink. We have drawn bands in $\Gamma_4$ as dashed to indicate that we do not know their precise distribution within the column.
}
        \label{fig:Continue_Special_B.2}
\end{figure}

Since we already proved that there are no sink disks spanning $\Gamma_1 \cup \Gamma_2$, we need only prove that there is at most one sink disk spanning $\Gamma_3 \cup \Gamma_4$. 

\begin{itemize}
\item \Seifert The sector $\mathcal{S}_3$ is not a sink because $\varphi(\alpha_{2,1})$ points out of the sector. Since $\Gamma_3$ is heterogeneous and we have used no product disks from $\Gamma_4$ to build $B$, $\partial \mathcal{S}_4$ must contain an outward pointing arc. Since we do not know whether $\Gamma_5$ is heterogeneous or homogeneous, we cannot make any claims about $\mathcal{S}_5$; this is the unique potential sink disk spanning $\Gamma_3 \cup \Gamma_4$. 
\item \Polygon By \Cref{lemma:Generic_Cmax=Codd_Gammas12}, we know that no polygon sector in $S_3$ is a sink. Since $S_4$ (resp. $S_5$) contains only image (resp. plumbing) arcs, there are no polygon sectors in $S_4$ or $S_5$. We deduce that there are no sink disk polygon sectors in $\Gamma_3 \cup \Gamma_4$. 
\item \Horizontal Suppose $\mathcal{H}$ is a horizontal sector spanning $\Gamma_3$. We prove that $\mathcal{H}$ is not a sink:
	\begin{itemize}
	\item[$\circ$] If $\mathcal{H} \sim \mathbbm{b}_{3,i}$, where $i \in \{1, \ldots, p-1\} \cup \{\delta, \ldots, c_3\}$, then $\mathcal{H}$ is enclosed, on the left, by the left pointing plumbing arc $\alpha_{3,i}$; we deduce that $\mathcal{H}$ is not a sink. 
	\item[$\circ$] If $\mathcal{H} \sim \mathbbm{b}_{3,i}$ where $i \in \{p+1, \ldots \delta-1\}$, then $\mathcal{H}$ is enclosed, on the right, by the left pointing image arc $\varphi(\alpha_{3,i-1})$, so $\mathcal{H}$ is not a sink.
	\item[$\circ$] If $\mathcal{H} \sim \mathbbm{b}_{3, p}$, then $\mathcal{H}$ is in the same branch sector as $\mathbbm{b}_{4, \epsilon}$. By construction, this sector meets $S_5$ in $\alpha_{5,1}$, which is a right pointer. We deduce that $\mathcal{H}$ is not a sink.
	\end{itemize}
	
It remains to show that if a horizontal sector $\mathcal{H}$ spans $\Gamma_4$, then $\mathcal{H}$ is not a sink. Recall that by the definition of Case (B.2), we know that $d(3; \delta, c_3) + d(3; c_3, 1) + d(3; 1, p-1) = 0$. In particular, this means that all of our bands in $\Gamma_4$ are concentrated in between $\mathbbm{b}_{3, p-1}$ and $\mathbbm{b}_{3, \delta}$. The bands $\mathbbm{b}_{4,1}, \ldots, \mathbbm{b}_{4, \epsilon}$ are all part of the same branch sector; by design, $\mathbbm{b}_{4, \epsilon}$ is enclosed, on the right, by an outward pointing arc. So, if $\mathcal{H} \sim \mathbbm{b}_{4, i}$, where $i \in \{1, \ldots, \epsilon\}$, then $\mathcal{H}$ is not a sink. If $\mathcal{H} \sim \mathbbm{b}_{4, i}$, where $i \in \{\epsilon +1, \ldots, c_4\}$, then $\mathcal{H}$ meets $S_4$ in an outward pointing image arc, so it is not a sink. Otherwise, $\mathcal{H} \sim \mathbbm{b}_{4,i} \sim \mathcal{S}_4$, and we already proved that $\mathcal{S}_4$ is not a sink. We conclude that there are no horizontal sink disk sectors in $\Gamma_3 \cup \Gamma_4$. 
\end{itemize}

Therefore, the unique potential sink disk spanning $\Gamma_1 \cup \Gamma_2 \cup \Gamma_3 \cup \Gamma_4$ is the branch sector $\mathcal{S}_5$.

\tcbox[size=fbox, colback=gray!30]{Construction of $B$ in $\Gamma_1 \cup \Gamma_2 \cup \Gamma_3$ is inherited from Case (C) of \Cref{lemma:Generic_Cmax=Codd_Gammas12}.}

As summarized in \Cref{table:Bookkeeping_Cmax=Codd_Gammas12}, there is one potential sink disk spanning $\Gamma_1 \cup \Gamma_2$, and it is the horizontal sector $\mathcal{H} \sim \alpha_{2,1} \sim \alpha_{3, c_3}$. Therefore, to prove that we can construct $B$ in $\Gamma_1 \cup \ldots \cup \Gamma_5$ such that there are no sink disks in $\Gamma_1 \cup \Gamma_2 \cup \Gamma_3 \cup \Gamma_4$, it need to first build $B$ in $\Gamma_4 \cup \Gamma_5$ and then show that there are no sink disks spanning $\Gamma_3 \cup \Gamma_4$. We will refer to \Cref{fig:Generic_CmaxCodd_Gammas12_B} throughout.

To build $B$ in $\Gamma_4 \cup \Gamma_4$, we first define $\epsilon := d(3; j-1, j)$, where (as in \Cref{fig:Generic_CmaxCodd_Gammas12_B}) $\alpha_{3, j}$ is the first right pointer in $S_4$. By the definition of our template, we know that $\epsilon \geq 1$. To build $B$ in $\Gamma_5$, we canonically calibrate $\beta$ with respect to $\mathbbm{b}_{4, \epsilon}$, and then apply our template to $\Gamma_5$. We check there are no sink disks spanning $\Gamma_3 \cup \Gamma_4$. 

\begin{itemize}
\item \Seifert By being in Case (C) of \Cref{lemma:Generic_Cmax=Codd_Gammas12}, we know that $\mathcal{S}_3$ is not a sink. Since $\Gamma_3$ is heterogeneous and we have included no product disks from $\Gamma_4$ into $B$, $\partial \mathcal{S}_4$ contains both left and right pointing image arcs, so $\mathcal{S}_4$ is not a sink. The sink disk status of $\mathcal{S}_5$ is unknown, as we do not know whether it is heterogeneous or homogeneous. 
\item \Polygon The only polygon sector in $\Gamma_3 \cup \Gamma_4$ lies in $S_3$; our proof of \Cref{lemma:Generic_Cmax=Codd_Gammas12} ensures that this sector is not a sink. 
\item \Horizontal First, suppose $\mathcal{H}$ spans $\Gamma_3$. Adapting our usual argument, we see that the only sector that could potentially be a sink is the sector $\mathcal{H} \sim \mathbbm{b}_{3, j}$, because this sector meets $S_3$ in a right pointer and $S_4$ in a left pointer. However, by design, this sector contains $\mathbbm{b}_{4,\epsilon}$, and this band is enclosed, on the right, by a left pointing plumbing arc. We deduce that $\mathcal{H}$ is not a sink. Instead suppose $\mathcal{H}$ spans $\Gamma_4$. In this case, as in \Cref{fig:Continue_Special_B.2}, every band in $\Gamma_4$ is either (1) enclosed, on the left, by a left pointing image arc, or (2) in the same branch sector as $\mathcal{S}_4$, or (3) it is in the same branch sector as $\mathbbm{b}_{4, \epsilon}$. In each of these three cases, we deduce that the horizontal sector is not a sink, hence there are no horizontal sink disk sectors in $\Gamma_3 \cup \Gamma_4$. 
\end{itemize}

We deduce that, in Case (C) of \Cref{lemma:Generic_Cmax=Codd_Gammas12}, we can build $B$ in the first five columns of the braid such that the unique potential sink disk spanning $\Gamma_1 \cup \ldots \cup \Gamma_4$ is $\mathcal{S}_5$. \Cref{lemma:TemplateLinking} confirms that there is no pairwise linking between arcs which are both in $S_3$ or both in $S_5$, and there can be no pairwise linking between arcs in $S_1$ and $S_3$.
\end{proof}


\subsection{Building $B$ in the rightmost columns.} \label{section:BuildInLast}

In this section, we describe how to build our branched surface $B$ in the last few columns of the braid. 

\begin{lemma} \label{lemma:Generic_EndOfBraid_LastColumnIsSparse}
Let $\beta$ be a positive braid on $n$ strands, where $n \geq 5$. Suppose we constructed a branched surface in $\Gamma_1 \cup \ldots \cup \Gamma_{n-2}$, such that we applied our template to $\Gamma_{n-2}$. Further assume that $\partial \mathcal{S}_{n-2}$ contains $c_{n-2}-1$ plumbing arcs and at most one image arc, and that the Seifert disk $S_{n-2}$ does not contain any polygon sectors. If we build $B$ in $\Gamma_{n-2} \cup \Gamma_{n-1}$ by taking no product disks from $\Gamma_{n-1}$, then there are no sink disks spanning $\Gamma_{n-2} \cup \Gamma_{n-1}$. 
\end{lemma}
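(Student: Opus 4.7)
The plan is to run the standard sink-disk-free analysis on $\Gamma_{n-2}\cup\Gamma_{n-1}$, organized by sector type. By hypothesis we have applied our template to $\Gamma_{n-2}$, so $S_{n-2}$ contains $c_{n-2}-1$ co-oriented plumbing arcs, and by \Cref{lemma:smoothing} their images sit in $S_{n-1}$ with opposite co-orientations. Because we add no product disks from $\Gamma_{n-1}$, these image arcs are the only co-oriented arcs in $S_{n-1}$, and $S_n$ contains no co-oriented arcs at all. Throughout I will split on whether $\Gamma_{n-2}$ is heterogeneous or homogeneous.

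For the Seifert disk sectors, when $\Gamma_{n-2}$ is heterogeneous, \Cref{lemma:GeneityAndSinks} immediately rules out $\mathcal{S}_{n-2}$; correspondingly $S_{n-1}$ contains both left- and right-pointing image arcs, so $\mathcal{S}_{n-1}$ is not a sink; moreover a right-pointing plumbing arc in $S_{n-2}$ produces a left-pointing image arc bordering $\mathcal{S}_n$ on its left side and exiting the sector into $\mathcal{S}_{n-1}$. When $\Gamma_{n-2}$ is homogeneous, every plumbing arc is a left pointer and every image arc is a right pointer; then $\mathcal{S}_{n-2}$ is handled by the at-most-one bordering image arc supplied by the hypothesis (if present, it is a right pointer exiting $\mathcal{S}_{n-2}$), or, if no such image arc is present, by observing that $\mathcal{S}_{n-2}$ then merges through an unenclosed band of $\Gamma_{n-3}$ into a sector already treated in the previous construction step. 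The sector $\mathcal{S}_{n-1}$ is not a sink in the homogeneous case because any right-pointing image arc exits it on the right.

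For polygon sectors, the hypothesis that $S_{n-2}$ has no polygon sectors, combined with the fact that $S_{n-1}$ contains only image arcs and $S_n$ contains no arcs, means no polygon sector lives in $\Gamma_{n-2}\cup\Gamma_{n-1}$. For horizontal sectors I would use the standard enclosure dichotomy: a horizontal sector containing a band $\mathbbm{b}_{n-2,j}$ is either enclosed on the right by an outward-pointing image arc, or coincides with a Seifert disk sector already handled; a horizontal sector spanning $\Gamma_{n-1}$ is absorbed into $\mathcal{S}_n$, since no arcs subdivide $S_n$ and no arcs in $S_{n-1}$ further partition the bands of $\Gamma_{n-1}$ into distinct sectors.

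The hardest step will be verifying that $\mathcal{S}_n$ is not a sink when $\Gamma_{n-2}$ is homogeneous: in this case all image arcs in $S_{n-1}$ are right pointers, so each of them points into $\mathcal{S}_n$. The argument must exploit the absence of product disks in $\Gamma_{n-1}$: every band $\mathbbm{b}_{n-1,j}$ belongs to $\mathcal{S}_n$, so $\mathcal{S}_n$ extends back along the extreme band $\mathbbm{b}_{n-1,1}$ (or $\mathbbm{b}_{n-1,c_{n-1}}$) into the portion of $S_{n-1}$ lying above (or below) all image arcs, and thence merges with a sector from $\Gamma_{n-3}\cup\Gamma_{n-2}$ already known to admit an outward-pointing arc. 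The parity hypothesis $n-2\equiv n\equiv m \pmod 2$ enters precisely here, ensuring consistency with the $\Cmax$-driven choice of columns in \Cref{lemma:Continue} so that this incoming sector has the expected orientation data and the outward-pointing arc is genuinely available.
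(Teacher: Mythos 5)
Your skeleton (split by sector type, split $\Gamma_{n-2}$ into heterogeneous/homogeneous, use the fact that no disks come from $\Gamma_{n-1}$ so all of $\Gamma_{n-1}$ merges with $\mathcal{S}_n$) matches the paper's, and your heterogeneous case is essentially correct modulo one omission: when no band of $\Gamma_{n-1}$ is enclosed on the left by a left-pointing image arc, $\mathcal{S}_n$ has no exiting arc of its own and you must instead observe that it merges with $\mathcal{S}_{n-1}$ (or $\mathcal{S}_{n-2}$); you gesture at this for horizontal sectors but not for $\mathcal{S}_n$ itself.

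The genuine gap is in the homogeneous case. First, your claim that the single image arc in $\partial\mathcal{S}_{n-2}$, if present, ``is a right pointer exiting $\mathcal{S}_{n-2}$'' is backwards: the only scenario in which such an arc exists is $n=5$ with the construction in $\Gamma_3$ inherited from Case (A) of \Cref{lemma:Generic_Cmax=Codd_Gammas12}, where the arc is $\varphi(\alpha_{2,1})$ with $\alpha_{2,1}$ a right pointer, so the image is a \emph{left} pointer pointing \emph{into} $\mathcal{S}_3$ --- which is exactly why $\mathcal{S}_3$ is recorded as the potential sink disk in \Cref{table:Bookkeeping_Cmax=Codd_Gammas12}. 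That arc cannot rescue $\mathcal{S}_{n-2}$. Second, your proposed rescue of $\mathcal{S}_n$ (and of $\mathcal{S}_{n-2}$ when no image arc is present) by merging \emph{backward} into ``a sector from $\Gamma_{n-3}\cup\Gamma_{n-2}$ already known to admit an outward-pointing arc'' is not available: the lemma's hypotheses say nothing about the sectors in $\Gamma_{n-3}$, and the parity condition does not supply the orientation data you appeal to. The argument that actually closes this case is self-contained in $\Gamma_{n-2}\cup\Gamma_{n-1}$ and hinges on a choice you never make: cyclically conjugate $\beta$ so that $\alpha_{n-2,1}$ is a left pointer and $\mathbbm{b}_{n-2,c_{n-2}}$ is not enclosed on the left by any arc. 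Then one chains the identifications $\mathcal{S}_{n-2}\sim\mathbbm{b}_{n-2,c_{n-2}}\sim\mathbbm{b}_{n-1,1}\sim\cdots\sim\mathbbm{b}_{n-1,c_{n-1}}\sim\mathcal{S}_n\sim\mathcal{S}_{n-1}\sim\mathbbm{b}_{n-2,1}$ (the $\Gamma_{n-1}$ bands are unenclosed on the right because no disks were taken there, and unenclosed on the left by homogeneity), so all three Seifert disk sectors are one sector $\mathcal{S}$, and the left pointer $\alpha_{n-2,1}\subset\partial\mathcal{S}$ exits it. Without fixing that presentation and running this merge chain, the homogeneous case is not established.
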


\begin{proof}
Just as in \Cref{lemma:Continue}, there will be two cases to consider, based on whether $\Gamma_{n-2}$ is heterogeneous or homogeneous. 

\begin{enumerate}
\item[$\boldsymbol{\circ}$] \textbf{Case (1)}: The heterogenous case. Here, $d(n-2; 1, c_i - 1) > 0$, as in \Cref{fig:LastTwoColumns}.
\item[$\boldsymbol{\circ}$] \textbf{Case (2)}: The homogeneous case. Here, $d(n-2; 1, c_i-1) = 0$, as in \Cref{fig:LastTwoColumns_Homogeneous}.
\end{enumerate}

Throughout, we assume three things: (1) $\beta$ is cyclically conjugated so that it begins with a $\sigma_{n-2}$ letter, (2) $\alpha_{n-2,1}$ is a left pointer, and (3) $\mathbbm{b}_{n-2,c_{n-2}}$ is not enclosed, on the left, by any arc. We quickly justify why we can do this:

\begin{itemize}
\item If $S_{n-2}$ has no image arcs, then this is immediate. 
\item Suppose $S_{n-2}$ contains exactly one image arc, $c_{n-2}$ plumbing arcs, and no polygon sectors. Since $S_{n-2}$ contains one image arc, \Cref{NoClumpingOfArcs} tells us that either (a) $\Cmax = \Codd$ and $\Gamma_3 = \Gamma_{n-2}$, or (b) $\Cmax = \Ceven$ and $\Gamma_{2} = \Gamma_{n-2}$. However, since we are also assuming that $n \geq 5$, we deduce that 
$n=5$, $\Cmax = \Codd$, and the construction of $B$ in $\Gamma_{n-2} = \Gamma_{3}$ is inherited from Case (A) of \Cref{lemma:Generic_Cmax=Codd_Gammas12}. In particular, this means that $\mathbbm{b}_{n-2, c_{n-2}}$ is not enclosed, on the left, by any arc, and so we can cyclically conjugate $\beta$ into the form described above.
\end{itemize}

We now move onto the constructions of $B$ in $\Gamma_{n-2} \cup \Gamma_{n-1}$.

\tcbox[size=fbox, colback=gray!30]{Sink disk analysis for \textbf{Case (1):} $\Gamma_{n-2}$ is heterogeneous.}
Suppose $\beta$ is cyclically conjugated as described, and $\Gamma_{n-2}$ is heterogeneous; see \Cref{fig:LastTwoColumns}. Additionally, let $\mathbbm{b}_{n-2, p}$ denote the first band which is followed by $\mathbbm{b}_{n-1}$ bands. (That is, $2 \leq p \leq c_{n-2}-2$,  $d(n-2; 1, p) = 0$, and $d(n-2; p, 1) = c_{n-1}$.) We now perform our sink disk analysis.

\begin{figure}[h!] \center
\labellist
\tiny
\pinlabel {$\Gamma_{n-2}$} at 35 190
\pinlabel {$\Gamma_{n-1}$} at 62 190
\pinlabel {$\Gamma_{n-2}$} at 148 190
\pinlabel {$\Gamma_{n-1}$} at 178 190
\pinlabel {$\Gamma_{n-2}$} at 262 190
\pinlabel {$\Gamma_{n-1}$} at 290 190
%
\pinlabel {$p$} at 30 152
\pinlabel {$p$} at 142 152
\pinlabel {$p$} at 258 152
\endlabellist
        \includegraphics[scale=1.2]{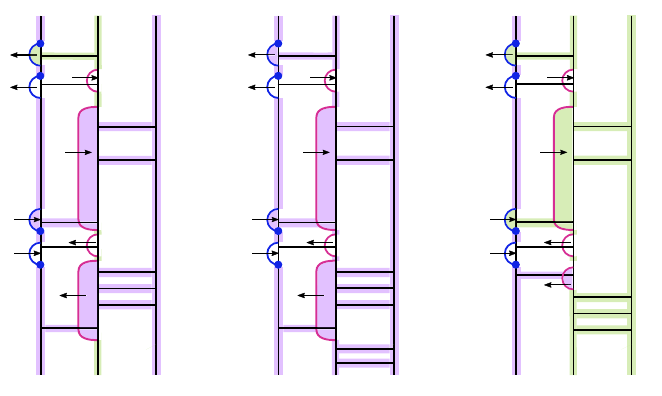}
        \caption{Building $B$ in $\Gamma_{n-2} \cup \Gamma_{n-1}$ when $\Gamma_{n-2}$ is heterogeneous and $n-2 \equiv m \mod 2$. We see that $\mathcal{S}_n$ contains all the bands in $\Gamma_{n-1}$, as well as $\mathbbm{b}_{n-2,p+1}$ and $\mathbbm{b}_{n-1, 1}$.}
        \label{fig:LastTwoColumns}
\end{figure}

\begin{itemize}
\item \Seifert $\mathcal{S}_{n-2}$ is not a sink: the arc $\alpha_{n-2, p+1}$ is in $\partial \mathcal{S}_{n-2}$, and it points out of this Seifert disk. Since $\alpha_{n-2,1}$ is a left pointer, and we have taken no disks from $\Gamma_{i+1}$, then $\varphi(\alpha_{n-2,1})$ is in $\partial \mathcal{S}_{n-1}$. It points out of the sector, so $\mathcal{S}_{n-1}$ is not a sink.

Finally, we argue that $\mathcal{S}_n$ is not a sink. Since $B$ includes no product disks from $\Gamma_{n-1}$, there are no arcs on $S_n$, hence $\mathcal{S}_n \sim \mathbbm{b}_{n-1,1} \sim \ldots \sim \mathbbm{b}_{n-1, c_{n-1}}$, as seen in \Cref{fig:LastTwoColumns}. 
%
Since $\Gamma_{n-2}$ is heterogeneous, we know that there are {both} left and right pointing image arcs in $S_{n-1}$. The distribution of the bands in $\Gamma_{n-1}$ determines whether or not some or none of the bands will be enclosed, on the left, by a left pointing image arc. In particular, either (a) there is some band $\mathbbm{b}_{n-1, j}$ which is enclosed, on the left, by a left pointing image arc (as in \Cref{fig:LastTwoColumns} (left and middle)), or (b) there is no such band (as in \Cref{fig:LastTwoColumns} (right)). If (a) occurs, then $\mathcal{S}_{n-1}$ contains an outward pointing arc in its boundary, hence it is not a sink; however, if (b) occurs, then $\mathcal{S}_{n-1} \sim \mathcal{S}_{n-1}$, and we already argued that the latter is not a sink. Therefore, none of the Seifert disk sectors are sinks. 
\item \Polygon By definition, a polygon sector must lie in a Seifert disk and also contain both plumbing and image arcs in its boundary. By assumption, $\mathcal{S}_{n-2}$ does not contain a polygon sector. As $S_{n-1}$ and $S_n$ do not contain both co-oriented plumbing and image arcs, there are no polygon sectors spanning $\Gamma_{n-2} \cup \Gamma_{n-1}$. 
\item \Horizontal Suppose $\mathcal{H}$ is a horizontal sector spanning $\Gamma_{n-2}$. There are three possibilities:
\begin{enumerate}
\item \textit{$\mathcal{H} \sim \mathbbm{b}_{n-2,j}$ where $1 \leq j \leq p$.} In this case, $\mathcal{H}$ meets $S_{n-2}$ in an outward pointing plumbing arc, so $\mathcal{H}$ is not a sink. 
\item \textit{$\mathcal{H} \sim \mathbbm{b}_{n-2,p+1}$.} In this case, by design, $\mathcal{H} \sim \mathbbm{b}_{n-1,1}$. We previously argued that the sector $\mathcal{S}_n$ contains all the bands in $\Gamma_{n-1}$, so in particular, $\mathcal{H} \sim \mathcal{S}_n$. We already showed that $\mathcal{S}_n$ is not a sink, so neither is $\mathcal{H}$.
\item \textit{$\mathcal{H} \sim \mathbbm{b}_{n-2,j}$ where $p+2 \leq j \leq c_{n-2}$.} In this case, $\mathcal{H}$ meets $S_{n-1}$ in an outward pointing image arc, so this sector is not a sink. 
\end{enumerate}

Therefore, no horizontal sector $\mathcal{H}$ spanning $\Gamma_{n-2}$ is a sink. Suppose $\mathcal{H}$ spans $\Gamma_{n-1}$. Since $\mathcal{S}_{n}$ contains all the bands in $\Gamma_{n-1}$, then such an $\mathcal{H}$ must be in the same sector as $\mathcal{S}_{n}$, which we already argued is not a sink. We deduce that there are no horizontal sectors spanning $\Gamma_{n-2} \cup \Gamma_{n-1}$. 
\end{itemize}

So, when $\Gamma_{n-2}$ is heterogenous, there are no sink disks spanning $\Gamma_{n-2} \cup \Gamma_{n-1}$. We turn to Case (2).

\tcbox[size=fbox, colback=gray!30]{Sink disk analysis for \textbf{Case (2):}  $\Gamma_{n-2}$ is homogeneous.}

Throughout, we refer to  \Cref{fig:LastTwoColumns_Homogeneous}. 

\begin{figure}[h!] \center
\labellist
\tiny
\pinlabel {$\Gamma_{n-2}$} at 35 190
\pinlabel {$\Gamma_{n-1}$} at 62 190
\pinlabel {$\Gamma_{n-2}$} at 148 190
\pinlabel {$\Gamma_{n-1}$} at 178 190
\pinlabel {$\Gamma_{n-2}$} at 262 190
\pinlabel {$\Gamma_{n-1}$} at 290 190
%
\pinlabel {$\delta$} at 60 54
\pinlabel {$\delta$} at 174 72
\pinlabel {$\delta$} at 288 101
\endlabellist
        \includegraphics[scale=1.2]{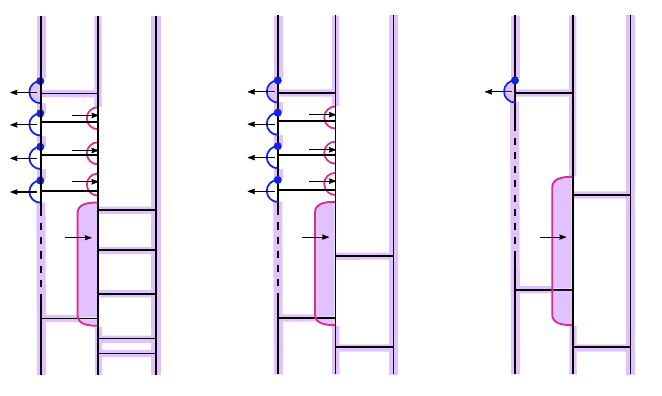}
        \caption{Building $B$ in $\Gamma_{n-2} \cup \Gamma_{n-1}$ when $\Gamma_{n-2}$ is homogeneous and $n-2 \equiv m \mod 2$. We see that $\mathcal{S}_{n-2} \sim \mathcal{S}_{n-1} \sim \mathcal{S}_n$, and that this sector contains all the bands in $\Gamma_{n-1}$. We note that we have included the case where $c_{n-2} \geq 3$ and $c_{n-1}=2$, and also the case where $c_{n-2} \geq 3$ and $c_{n-1}=2$.}
        \label{fig:LastTwoColumns_Homogeneous}
\end{figure}

\begin{itemize}
\item \Seifert We will prove that the Seifert disk sectors $\mathcal{S}_{n-2}$, $\mathcal{S}_{n-1}$, and $\mathcal{S}_n$ are all part of the same branch sector. Let $\mathcal{S}$ denote the branch sector containing $\mathcal{S}_{n-2}$. 

We already argued that we could cyclically conjugate $\beta$ so that $\mathbbm{b}_{n-2,c_{n-2}}$ is not enclosed, on the left, by any arc. This automatically tells us that regardless of whether $\partial \mathcal{S}_{n-2}$ contains an image arc or not, we must have that $\mathcal{S} \sim \mathbbm{b}_{n-2, c_{n-2}}$. 
But this band is enclosed, on the right, by $\varphi(\alpha_{n-2, c_{n-2}-1})$, as seen in \Cref{fig:LastTwoColumns_Homogeneous}. Let $\delta:= d(n-2; c_{n-2}-1, c_{n-2})$. The arc $\varphi(\alpha_{n-2, c_{n-2}-1})$ encloses the bands $\mathbbm{b}_{n-1, 1}, \ldots, \mathbbm{b}_{n-1, \delta}$ on the left, and so $\mathcal{S} \sim \mathcal{S}_{n-2} \sim \mathbbm{b}_{n-1,1} \sim \ldots \sim \mathbbm{b}_{n-1, \delta}$. However, these bands are not enclosed on the right by \textit{any} arcs, so $\mathcal{S} \sim \mathcal{S}_n$.

In fact, \textit{none} of the bands in $\Gamma_{n-1}$ are enclosed, on the right, by arcs. Therefore, we have that $\mathcal{S} \sim \mathbbm{b}_{n-1, \delta+1} \sim \ldots \sim \mathbbm{b}_{n-1, c_{n-1}}$. But this is the homogenous case, so we know that these bands are not enclosed, on the left, by any arcs, as in \Cref{fig:LastTwoColumns_Homogeneous}. Therefore, not only does $\mathcal{S}$ contain \textit{all} the bands in $\Gamma_{n-1}$, it also contains $\mathcal{S}_{n-1}$. Therefore, $\mathcal{S} \sim \mathcal{S}_{n-2} \sim \mathcal{S}_{n-1} \sim \mathcal{S}_n$, proving the claim. 

We now argue that $\mathcal{S}$ is not a sink. By construction, $\mathcal{S} \sim \mathbbm{b}_{n-2,1}$, as in \Cref{fig:LastTwoColumns_Homogeneous}. In particular, $\alpha_{n-2,1}$ is contained in $\partial \mathcal{S}$; this arc is smoothed to the left, and it points out of $\mathcal{S}$, so this sector is not a sink. Therefore, none of the Seifert disk sectors spanning $\Gamma_{n-2} \cup \Gamma_{n-1}$ are sinks. 
\item \Polygon The same argument as in the heterogeneous case allows us to deduce that there are no polygon sectors spanning $\Gamma_{n-2} \cup \Gamma_{n-1}$. 
\item \Horizontal Suppose $\mathcal{H}$ is a horizontal sector spanning $\Gamma_{n-2}$. We already argued that $\mathbbm{b}_{n-2,1}$ and $\mathbbm{b}_{n-2,c_{n-2}}$ are part of $\mathcal{S}$, so it remains to study the other bands in $\Gamma_{n-2}$. For all $2 \leq j \leq c_{n-2}-1$, the band $\mathbbm{b}_{n-2,j}$ is enclosed, on the left, by the outward pointing plumbing arc $\alpha_{n-2,j}$. Thus, no sector containing these bands is a sink. 

We already argued that the all the bands in $\Gamma_{n-1}$ are part of the sector $\mathcal{S}$, which is not a sink. Therefore, no horizontal sector spanning $\Gamma_{n-1}$ is a sink. We deduce that no horizontal sectors spanning $\Gamma_{n-2} \cup \Gamma_{n-1}$ are sink disks. 
\end{itemize}

Therefore, there are no sink disks spanning $\Gamma_{n-2} \cup \Gamma_{n-1}$ in the homogeneous case.
\end{proof}

\begin{lemma} \label{lemma:Generic_EndOfBraid_LastColumnIsFull}
Let $\beta$ be a positive braid in $\mathcal{B}_n$, where either $n \geq 6$, or $n=5$ with $\Cmax = \Ceven$ and $\Gamma_2$ heterogeneous. 
Suppose we build $B$ in $\Gamma_1 \cup \ldots \cup \Gamma_{n-3}$, such that $S_{n-3}$ contains $c_{n-3}-1$ plumbing arcs and at most one image arc. Then we can build $B$ in $\Gamma_{n-2} \cup \Gamma_{n-1}$ such that there are no sink disks spanning $\Gamma_{n-3} \cup \Gamma_{n-2} \cup \Gamma_{n-1}$, and such that there is no pairwise linking between arcs in $\Gamma_{n-3} \cup \Gamma_{n-2} \cup \Gamma_{n-1}$. 
\end{lemma}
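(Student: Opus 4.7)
The plan is to mirror the strategy of \Cref{lemma:Continue}, adapted to the fact that there is no ``next'' column $\Gamma_n$ to worry about, and to combine the analysis with the end--of--braid techniques from \Cref{lemma:Generic_EndOfBraid_LastColumnIsSparse}. By hypothesis, $B$ has already been built up through $\Gamma_{n-3}$ with $S_{n-3}$ containing $c_{n-3}-1$ plumbing arcs and at most one image arc, so we are in precisely the situation where \Cref{lemma:Continue} instructs us how to proceed; we simply need to verify that the usual obstructions that \Cref{lemma:Continue} leaves as ``potential sink disks in $\mathcal{S}_{i+2}$'' (here, $\mathcal{S}_{n-1}$) are in fact automatically resolved because $\Gamma_{n-1}$ is the final column.

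First, identify the unique band $\mathbbm{b}_{n-3,k}$ such that $\mathbbm{b}_{n-3,k-1}$ is not enclosed on the left by a plumbing arc, pivot $\beta$ about $\sigma_{n-3,k}$, and then canonically calibrate $\beta$ with respect to $\mathbbm{b}_{n-2,1}$ (using \Cref{lemma:CanCalibrate}). Then apply our template from \Cref{defn:Templates} to $\Gamma_{n-1}$, adding all $c_{n-1}-1$ product disks from $\Gamma_{n-1}$, and add no disks from $\Gamma_{n-2}$. This matches the construction of \Cref{lemma:Continue}: the horizontal sector $\mathcal{H}\sim \mathbbm{b}_{n-3,p+1}$ (which spans $\Gamma_{n-3}\cup\Gamma_{n-2}$) meets $S_{n-1}$ in the left--pointing arc $\alpha_{n-1,1}$ produced by calibration, so it is not a sink. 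All of the sector--by--sector arguments in the proofs of Cases (1) and (2) of \Cref{lemma:Continue} then carry over verbatim to show that no polygon or horizontal sector spanning $\Gamma_{n-3}\cup\Gamma_{n-2}$ is a sink, and that $\mathcal{S}_{n-3}$ and $\mathcal{S}_{n-2}$ are not sinks.

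The genuinely new analysis concerns $\mathcal{S}_{n-1}$ and $\mathcal{S}_n$, which replace the ``potential sink'' $\mathcal{S}_{i+2}$ of \Cref{lemma:Continue}. For $\mathcal{S}_{n-1}$, note that it contains only the plumbing arcs $\alpha_{n-1,1},\ldots,\alpha_{n-1,c_{n-1}-1}$ (and possibly the isotoped image arc $\varphi(\alpha_{n-2,\cdot})$ if the hypothesis allows it; but by \Cref{NoClumpingOfArcs} this can only occur in the very small cases $n=5$, $\Cmax=\Ceven$, $\Gamma_2$ heterogeneous, which is precisely the exceptional clause in the statement). If $\Gamma_{n-1}$ is heterogeneous then some $\alpha_{n-1,j}$ is a right pointer, which exits $\mathcal{S}_{n-1}$, so $\mathcal{S}_{n-1}$ is not a sink; and then $S_n$ contains both left-- and right--pointing image arcs, so $\mathcal{S}_n$ is not a sink either. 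If $\Gamma_{n-1}$ is homogeneous, then exactly as in Case (2) of the proof of \Cref{lemma:Generic_EndOfBraid_LastColumnIsSparse}, a direct tracing shows $\mathcal{S}_{n-1}\sim\mathcal{S}_n\sim \mathbbm{b}_{n-1,c_{n-1}}$, and this merged sector is saved by the left--pointing $\alpha_{n-1,1}$ that lies in its boundary by construction. Polygon sectors exist only in $S_{n-1}$ (by \Cref{NoClumpingOfArcs} the only Seifert disk containing both kinds of arcs), and \Cref{lemma:PolygonSectorsNotSinks} handles them; horizontal sectors spanning $\Gamma_{n-1}$ are handled by \Cref{lemma:TemplatesHorizontalSectorsNotSinks} together with the observation that the lone potential sink of that lemma (containing the band in $\Gamma_{n-1}$ enclosed by $\alpha_{n-1,1}$) is exactly $\mathcal{H}\sim \mathbbm{b}_{n-2,\cdot}$, which we already disposed of via the calibration.

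Finally, linking: \Cref{lemma:TemplateLinking} applied to our template on $\Gamma_{n-1}$ produces no pairwise linking among arcs in $S_{n-1}$, and since $S_{n-2}$ contains no plumbing arcs, the only potential cross--column linking would be between $S_{n-3}$ and $S_{n-2}$; but the image arcs in $S_{n-2}$ came from arcs in $S_{n-3}$ that lie in a previously--built piece of $B$ and so their orientations are already locked to preclude linking, exactly as in the linking verification at the end of \Cref{lemma:Continue}. The main obstacle is the bookkeeping in the homogeneous/heterogeneous split for $\mathcal{S}_n$, where unlike the interior case one must argue directly (rather than by deferring to ``the next column'') that the boundary of the merged sector $\mathcal{S}_{n-1}\cup\mathcal{S}_n$ contains an outward--pointing arc; that is handled by the calibration choice of $\alpha_{n-1,1}$ as a left pointer.
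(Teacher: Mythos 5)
There is a genuine gap, and it sits exactly where the new content of this lemma lives: the co-orientation rule for the final column. You instruct us to ``apply our template from \Cref{defn:Templates} to $\Gamma_{n-1}$,'' but that template assigns co-orientations by locating the first $\sigma_{i+1}$ letter after the pivot; for $i=n-1$ there are no $\sigma_n$ letters, so the rule is vacuous/undefined and does not determine whether $\alpha_{n-1,2},\ldots,\alpha_{n-1,c_{n-1}-1}$ point left or right. Your later case split (``if $\Gamma_{n-1}$ is heterogeneous\ldots if homogeneous\ldots'') presupposes that this choice has been made. The paper's proof replaces the template here with a bespoke, mirrored rule: it locates the first band $\mathbbm{b}_{n-1,j}$ that is \emph{followed} by a $\mathbbm{b}_{n-2}$ band (via $d_L$), co-orients $\alpha_{n-1,1},\ldots,\alpha_{n-1,j-1}$ left and the rest right, and chooses a presentation in which $\mathbbm{b}_{n-2,1}$ precedes $\mathbbm{b}_{n-1,1}$ (so it is \emph{not} enclosed on the right) --- note that in the $j=1$ case this makes $\alpha_{n-1,1}$ a \emph{right} pointer, the opposite of what your calibration step produces. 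Without a concrete rule, the analysis of $\mathcal{S}_{n-1}$ and $\mathcal{S}_n$ cannot be carried out; and if one reads the template as ``all left pointers,'' then $\partial\mathcal{S}_{n-1}$ consists entirely of inward-pointing plumbing arcs and no image arcs, and $\mathcal{S}_{n-1}$ can genuinely be a (half) sink disk when every band of $\Gamma_{n-2}$ is enclosed on the right. Your proposed rescue in the homogeneous case also does not transfer: in \Cref{lemma:Generic_EndOfBraid_LastColumnIsSparse}, Case (2), the merged sector is saved by the left-pointing $\alpha_{n-2,1}\subset S_{n-2}$, but here $S_{n-2}$ contains no plumbing arcs, and a left-pointing $\alpha_{n-1,1}$ points \emph{into} $\mathcal{S}_{n-1}$ rather than out of it.

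A second, independent gap: you claim the sector-by-sector arguments of \Cref{lemma:Continue} ``carry over verbatim'' to $\Gamma_{n-3}\cup\Gamma_{n-2}$. But \Cref{lemma:Continue} explicitly assumes $S_i$ contains only plumbing arcs and no image arcs, whereas the present lemma allows $S_{n-3}$ to contain one image arc (this happens precisely when $n=5$ or $n=6$, by \Cref{NoClumpingOfArcs}). In those cases one must separately verify that $\mathcal{S}_{n-3}$ is not a sink and that the polygon sector of $S_{n-3}$ is harmless, by tracing back through the sub-cases of \Cref{lemma:Generic_Cmax=Codd_Gammas12} and \Cref{lemma:Sparse_Cmax=Ceven_Gammas12}; this is a substantial portion of the paper's proof and cannot be waved through by citation to \Cref{lemma:Continue}. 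The linking discussion is essentially fine (no plumbing arcs in $S_{n-2}$, and $S_{n-3}$, $S_{n-1}$ are non-adjacent), but the construction itself needs to be pinned down before the sink-disk analysis can be completed.
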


\begin{rmk}
\textup{We will prove the analogue of \Cref{lemma:Generic_EndOfBraid_LastColumnIsFull} when $n=5$ and $\Gamma_2$ is not heterogeneous in \Cref{thm:PositiveNBraids_Even}.}
\end{rmk}

\textit{Proof of \Cref{lemma:Generic_EndOfBraid_LastColumnIsFull}.} 
First, we establish the presentation of $\beta$ that we will use for the proof of this lemma. 
We conjugate $\beta$ so that:
        \begin{itemize}
            \item $\beta$ begins with $\sigma_{n-3}$, and 
            \item $\mathbbm{b}_{n-3, c_{n-3}}$ is not enclosed, on the right, by a plumbing arc.
        \end{itemize}

In particular, this means that $\beta = \sigma_{n-3} \ w_1 \ \sigma_{n-2} \ w_2 \ \sigma_{n-1} \ w_3$, where the identified $\sigma_{n-2}$ is the first letter of its kind in $\beta$. Without loss of generality, we may assume that $w_1$ contains no $\sigma_{n-1}$ letters (if it did, we may use the braid relations and then cyclically conjugate $\beta$ to move them into $w_3$). This is the presentation of $\beta$ that we use going forward: $\beta = \sigma_{n-3} w_1 \sigma_{n-2} w_2$, where $w_1$ contains no $\sigma_{n-2}$ or $\sigma_{n-1}$ letters. See \Cref{fig:LastThreeColumns_Heterogeneous} for an example. 

To prove the lemma, we will need to consider two cases:

        \begin{enumerate}
        \item[$\boldsymbol{\circ}$] \textbf{Case (1)}: $\Gamma_{n-3}$ is heterogenous, i.e. when $d(n-3; 1, c_{n-3} - 1) > 0$.
        \item[$\boldsymbol{\circ}$] \textbf{Case (2)}: $\Gamma_{n-3}$ is homogeneous, i.e. when $d(n-3; 1, c_{n-3}-1) = 0$.
        \end{enumerate}

\tcbox[size=fbox, colback=gray!30]{Sink disk analysis for \textbf{Case (1)}: $\Gamma_{n-3}$ is heterogeneous.} 

We begin with the heterogeneous case, and establish some preliminaries. 

\textbf{Observation \#1:} Suppose we build $B$ in $\Gamma_{n-3}$ by applying our template (as in \Cref{defn:Templates}) to $\Gamma_{n-3}$. Since we build $B$ in $\Gamma_{n-3}$ in a standard way, we can label bands in this column accordingly: we let $\alpha_{n-3,p}$ denote the last left pointing plumbing arc in $S_{n-3}$, and let $\delta = d(n-3; p,p+1)$. (Note that since $\Gamma_{n-3}$ is heterogeneous, $p \leq c_{n-3}-2$, and that $\delta \geq 1$; see \Cref{fig:LastThreeColumns_Heterogeneous} for some examples.) Therefore, the arcs $\alpha_{n-3, 1}, \ldots, \alpha_{n-3, p}$ are all left pointers, and $\alpha_{n-3, p+1}, \ldots, \alpha_{n-3, c_{n-3}-1}$ are all right pointers.

This setup tells us information about how the bands in $\Gamma_{n-3}$ are enclosed on the left and right:
    \begin{itemize}
        \item[$\boldsymbol{\circ}$] The bands $\mathbbm{b}_{n-3,2}, \ldots, \mathbbm{b}_{n-3, p+1}$ are all enclosed, on the right, by right pointing image arcs.
	\item[$\boldsymbol{\circ}$] The bands $\mathbbm{b}_{n-3, p+2}, \ldots, \mathbbm{b}_{n-3, c_{n-3}}$ are all enclosed, on the right, by left pointing image arcs. 
	\item[$\boldsymbol{\circ}$] The arc $\varphi(\alpha_{n-3, p})$ encloses the bands $\mathbbm{b}_{n-2,1}, \ldots, \mathbbm{b}_{n-2,\delta}$ on the left. 
	\item[$\boldsymbol{\circ}$] The bands $\mathbbm{b}_{n-2,\delta+1}, \ldots, \mathbbm{b}_{n-2,c_{n-2}}$ are either enclosed, on the left, by left pointing image arcs, or not enclosed by image arcs at all.
    \end{itemize}

See \Cref{fig:LastThreeColumns_Heterogeneous} for some examples.

\textbf{Observation \#2:} Regardless of the design of $B$ in $\Gamma_{n-3}$, we know something about the distribution of bands in $\Gamma_{n-1}$ with respect to $\Gamma_{n-2}$. Namely, there can be no $i$, where $1 \leq i \leq c_{n-2}$, such that $d(c_{n-2}; i, i+1) = c_{n-1}$ (if there were, that would mean that $\beta$ is a connected sum, which contradictions our assumption that $\widehat{\beta}$ is a prime knot). Therefore, we know that there exists some minimal $j$ such that $d_L(n-1; j, j+1) \geq 1$. Said differently, we let $\mathbbm{b}_{n-1, j}$ denote the first band in $\Gamma_{n-1}$ which is followed by a $\mathbbm{b}_{n-2}$ band, and so that $d_L(n-1; 1, j) = 0$. There are two sub-cases to consider: 
\begin{itemize}
	\item \textbf{Case (1a):} $\Gamma_{n-3}$ is heterogeneous and $j=1$.
	\item \textbf{Case (1b):} $\Gamma_{n-3}$ is heterogeneous and $j \geq 2$.
\end{itemize}

With these preliminary observations in place, we now construct $B$ in $\Gamma_{n-2} \cup \Gamma_{n-1}$ in the case where $\Gamma_{n-3}$ is heterogeneous and show that there are no sink disks spanning $\Gamma_{n-3} \cup \Gamma_{n-2} \cup \Gamma_{n-1}$.

 \begin{figure}[h!] \center
\labellist \tiny
\pinlabel {(A)} at 72 225
\pinlabel {(B)} at 252 225
\pinlabel {(C)} at 422 225
\pinlabel {(D)} at 72 10
\pinlabel {(E)} at 252 10
\pinlabel {(F)} at 422 10
\pinlabel {$\Gamma_{n-3}$} at 43 415
\pinlabel {$\Gamma_{n-2}$} at 70 415
\pinlabel {$\Gamma_{n-1}$} at 100 415
\pinlabel {$\Gamma_{n-3}$} at 222 415
\pinlabel {$\Gamma_{n-2}$} at 250 415
\pinlabel {$\Gamma_{n-1}$} at 280 415
\pinlabel {$\Gamma_{n-3}$} at 394 415
\pinlabel {$\Gamma_{n-2}$} at 422 415
\pinlabel {$\Gamma_{n-1}$} at 450 415
\pinlabel {$j=1$} at 98 335
\pinlabel {$j=1$} at 278 335
\pinlabel {$j=1$} at 450 335
\pinlabel {$p$} at 38 376
\pinlabel {$p$} at 217 376
\pinlabel {$p$} at 390 376
\pinlabel {$\delta$} at 70 358
\pinlabel {$\delta$} at 250 350
\pinlabel {$\delta$} at 422 358
\pinlabel {$\Gamma_{n-3}$} at 43 200
\pinlabel {$\Gamma_{n-2}$} at 70 200
\pinlabel {$\Gamma_{n-1}$} at 100 200
\pinlabel {$\Gamma_{n-3}$} at 222 200
\pinlabel {$\Gamma_{n-2}$} at 250 200
\pinlabel {$\Gamma_{n-1}$} at 280 200
\pinlabel {$\Gamma_{n-3}$} at 394 200
\pinlabel {$\Gamma_{n-2}$} at 422 200
\pinlabel {$\Gamma_{n-1}$} at 450 200
\pinlabel {$j=1$} at 98 90
\pinlabel {$j=1$} at 278 120
\pinlabel {$j=1$} at 450 120
\pinlabel {$p$} at 38 160
\pinlabel {$p$} at 217 163
\pinlabel {$p$} at 390 163
\pinlabel {$\delta$} at 70 135
\pinlabel {$\delta$} at 250 143
\pinlabel {$\delta$} at 422 143
\endlabellist
        \includegraphics[scale=1]{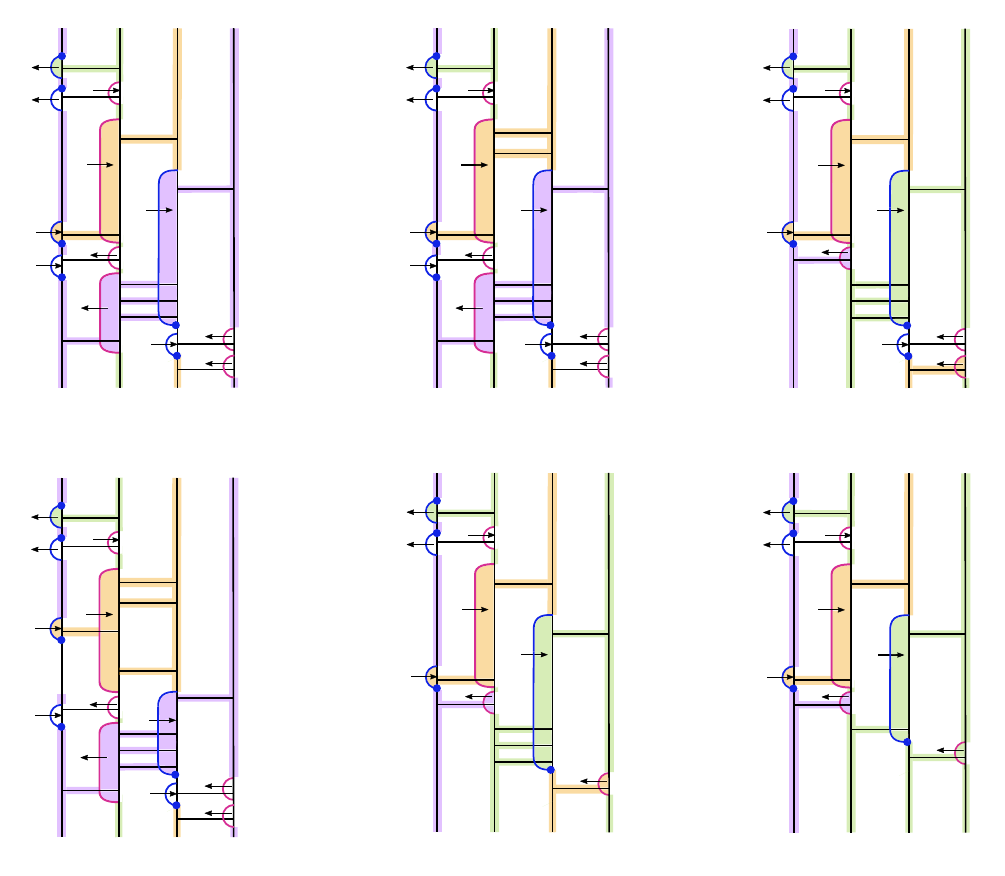}
        \caption{Building $B$ in $\Gamma_{n-3} \cup \Gamma_{n-2} \cup \Gamma_{n-1}$ in Case (1a), where $\Gamma_{n-3}$ is heterogeneous and $j=1$. We apply our template to $\Gamma_{n-3}$, and then finish building the branched surface by coherently orienting all the plumbing arcs in $\Gamma_{n-1}$ to the right. Frames (A) -- (F) show some different possible configurations of $\Gamma_{n-1}$ with respect to $\Gamma_{n-2}$. Let $\delta = d(n-3; p, p+1)$, where $\alpha_{n-3,p}$ is the last left pointing plumbing arc in $S_{n-3}$. (For simplicity, in this figure, $p=2$ in all the frames.) In (A), $\delta=1$. In (B), $\delta=2$. In (C), $\mathcal{S}_n \sim \mathcal{S}_{n-2}$. In (D), the first band following $\mathbbm{b}_{n-1,1}$ is $\mathbbm{b}_{n-2, t}$, where $t > \delta+1$. In (E), $c_{n-1} = 2$ and $c_{n-2} \geq 3$. In (F), $c_{n-1} = c_{n-2} = 2$.} 
        \label{fig:LastThreeColumns_Heterogeneous}
\end{figure}

\tcbox[size=fbox, colback=gray!10]{\textbf{Case (1a):} $\Gamma_{n-3}$ is heterogeneous and $j = 1$.}

To build $B$ in $\Gamma_{n-2} \ \cup \ \Gamma_{n-1}$, we take no product disks from $\Gamma_{n-2}$, and we co-orient the arcs $\alpha_{n-1, 1}, \alpha_{n-1, 2}, \ldots, \alpha_{n-1, c_{n-1}-1}$ to the right; see \Cref{fig:LastThreeColumns_Heterogeneous}. Note that we chose our presentation of $\beta$ so that $\mathbbm{b}_{n-2,1}$ precedes $\mathbbm{b}_{n-1,1}$. This means, in particular, that  $\mathbbm{b}_{n-2,1}$ is not enclosed, on the right, by any plumbing arcs. We claim that there are no sink disks spanning $\Gamma_{n-3} \cup \Gamma_{n-2} \cup \Gamma_{n-1}$:

\begin{itemize}
\item \Seifert we show that $S_{n-3}, S_{n-2}, S_{n-1}$, and $S_n$ are not sinks.

\begin{itemize}[label={$\bullet$}]
	\item To begin, we show that $\mathcal{S}_{n-3}$ is never a sink. This analysis depends upon  the value of $n$.

        \begin{itemize}[label={$\boldsymbol{\circ}$}]
        %
        \item Suppose $n=5$. In this case, $n-3 = 2$. By the assumptions of this lemma, we know that $\Cmax = \Ceven$ and $\Gamma_{n-3} = \Gamma_2$ is heterogeneous. In particular, the construction of $B$ in $\Gamma_1 \cup \Gamma_2$ is inherited from Case (1A) or (2A) of \Cref{lemma:Sparse_Cmax=Ceven_Gammas12}; see \Cref{fig:NotGeneric_CmaxCeven_Gammas12_CaseB}. In \Cref{lemma:Sparse_Cmax=Ceven_Gammas12}, we proved that $\mathcal{S}_2 = \mathcal{S}_{n-3}$ is not a sink.
        \item Suppose $n = 6$. In this case, $n-3 = 3$, hence $\mathcal{S}_{n-3} = \mathcal{S}_3$ and $\Cmax = \Codd$. Thus, the construction of $B$ in $\Gamma_{n-3} = \Gamma_3$ is inherited from an application of \Cref{lemma:Generic_Cmax=Codd_Gammas12} to $\Gamma_1 \cup \Gamma_2 \cup \Gamma_3$. In particular, $B \cap S_3$ contains $c_3-1$ plumbing arcs and one image arc. 
        
        As summarized in \Cref{table:Bookkeeping_Cmax=Codd_Gammas12}, there were four sub-cases of \Cref{lemma:Generic_Cmax=Codd_Gammas12}. We check that $\mathcal{S}_3$ is not a sink when $\Gamma_{n-3}$ is heterogeneous.
    
            	\begin{itemize}[label={$\bullet$}]
            	\item \textbf{Case (A) of \Cref{lemma:Generic_Cmax=Codd_Gammas12}.} Referring to \Cref{fig:Generic_CmaxCodd_Gammas12}, we see that $\partial \mathcal{S}_3$ contains $\alpha_{2,1}$, $\varphi(\alpha_{2,1})$, and $\alpha_{3,1}, \ldots, \alpha_{3, c_3-1}$. In particular, we assumed that $\Gamma_3$ is heterogeneous, so $\mathcal{S}_3$ contains some outward pointing plumbing arc $\alpha_{3, k}$, so it is not a sink.
            	\item \textbf{Case (B.1) of \Cref{lemma:Generic_Cmax=Codd_Gammas12}.} We are currently assuming that $\Gamma_3$ is heterogeneous. Since Case (B.1) of \Cref{lemma:Generic_Cmax=Codd_Gammas12} assumes that $\Gamma_3$ is homogeneous, we need not consider this case. 
            	\item \textbf{Case (B.2) of \Cref{lemma:Generic_Cmax=Codd_Gammas12}.} As noted in \Cref{table:Bookkeeping_Cmax=Codd_Gammas12}, there are no sink disks spanning $\Gamma_1 \cup \Gamma_2$, so in particular, $\mathcal{S}_3$ is not a sink. 
            	\item \textbf{Case (C) of \Cref{lemma:Generic_Cmax=Codd_Gammas12}.} As noted in \Cref{table:Bookkeeping_Cmax=Codd_Gammas12}, the unique potential sink disk spanning $\Gamma_1 \cup \Gamma_2$ is a horizontal sector $\mathcal{H} \not \sim \mathcal{S}_3$, so as desired, $\mathcal{S}_3$ is not a sink. 
            	\end{itemize}
        \item Suppose $n \geq 7$. \Cref{NoClumpingOfArcs} tells us that $\mathcal{S}_{n-3}$ contains only plumbing arcs. Since $\alpha_{n-3, p+1}$ is a right pointer, it points out of the Seifert disk, and so $\mathcal{S}_{n-3}$ is not a sink.
        \end{itemize}

We deduce that $\mathcal{S}_{n-3}$ is not a sink. Next, we show that $S_{n-2}, S_{n-1}$, $S_n$ are not sinks. 

\item By assumption, $\Gamma_{n-3}$ is heterogeneous, so there exist both left and right pointing plumbing arcs in $S_{n-3}$. Therefore, there are both right and left pointing image arcs in $S_{n-2}$. As we include zero product disks from $\Gamma_{n-2}$ into $B$, $S_{n-2}$ contains only image arcs. Therefore, $\partial S_{n-2}$ contains an outward pointing arc, and the sector is not a sink. See \Cref{fig:LastThreeColumns_Heterogeneous}. 
\item By design, $S_{n-1}$ contains only right pointing plumbing arcs, so $\mathcal{S}_{n-1}$ is not a sink. 
\item Finally, we analyze $\mathcal{S}_n$. By design, $\mathbbm{b}_{n-1, 1}$ is not enclosed, on the right, by any arc; therefore, $\mathcal{S}_n \sim \mathbbm{b}_{n-1, 1}$. In particular, $\partial \mathcal{S}_n$ contains $\alpha_{n-1,1}$. Since $j=1$, then by assumption, $\mathbbm{b}_{n-1,1}$ is followed by some band in $\Gamma_{n-2}$; we call this band $\mathbbm{b}_{n-2, k}$. In particular, notice that $k \geq \delta +1$, and $\alpha_{n-1,1}$ encloses $\mathbbm{b}_{n-2,k}$ on the right; see \Cref{fig:LastThreeColumns_Heterogeneous}. There are now two cases to analyze separately: either $\mathbbm{b}_{n-2,k}$ is enclosed, on the left, by an image arc, or $\mathbbm{b}_{n-2,k}$   is not bounded, on the left, by any image arcs. 
	\begin{itemize}[label={$\boldsymbol{\circ}$}]
	\item[$\circ$] Suppose $\mathbbm{b}_{n-2,k}$ is enclosed, on the left, by an image arc (as in frames (A), (B), and (D) of \Cref{fig:LastThreeColumns_Heterogeneous}). We see that it must be a left pointer: the only bands in $\Gamma_{n-2}$ which are enclosed, on the left, by right pointers are the bands $\mathbbm{b}_{n-2,1}, \ldots, \mathbbm{b}_{n-2, \delta}$; the remaining bands are all enclosed, on the left, by left pointers. Since $k \geq \delta+1$, $\mathbbm{b}_{n-2, k}$ is enclosed, on the left, by a left pointer. We deduce that $\mathcal{S}_{n}$ is not a sink, because $\partial \mathcal{S}_n$ contains the outward pointing arc. 
	\item[$\circ$] Suppose $\mathbbm{b}_{n-2, k}$ is not enclosed, on the left, by any arc (as in frames (C), (E), and (F) of \Cref{fig:LastThreeColumns_Heterogeneous}). In this case, $\mathbbm{b}_{n-2,k}$ is contained in $\mathcal{S}_{n-2}$, which we already proved is not a sink. Therefore, we can deduce that $\mathcal{S}_n$ is not a sink, as desired.
\end{itemize}
\end{itemize}

We deduce that $\mathcal{S}_n$ is not a sink, so none of the Seifert disks sectors spanning $\Gamma_{n-3} \cup \Gamma_{n-2} \cup \Gamma_{n-1}$ are sink disks. 

\item \Polygon Polygon sectors exist when a Seifert disk contains both plumbing and image arcs. Since $S_{n-2}, S_{n-1},$ and $S_n$ contain only one type of arc, none of them can contain polygon sectors. It remains to see whether or not $S_{n-3}$ contains both plumbing and image arcs. This is dependent on the value of $n$.

\begin{itemize}[label={$\bullet$}]
%
%
\item Suppose $n=5$. By the assumptions of this lemma, we know that $\Cmax = \Ceven$ and $\Gamma_{n-3} = \Gamma_2$ is heterogeneous. In particular, the construction of $B$ in $\Gamma_1 \cup \Gamma_2$ is inherited from Case (2B) of \Cref{lemma:Sparse_Cmax=Ceven_Gammas12}. By design, we know that there is a unique polygon sector in $S_2$, and we proved it is not a sink in \Cref{lemma:Sparse_Cmax=Ceven_Gammas12}.
\item Suppose $n=6$. In this case, the construction of $B$ in $\Gamma_{n-3} = \Gamma_3$ is inherited from \Cref{lemma:Generic_Cmax=Codd_Gammas12}. We check that there are no polygon sinks in this case, based on the four sub-cases of \Cref{lemma:Generic_Cmax=Codd_Gammas12}.
	\begin{itemize}[label={$\boldsymbol{\circ}$}]
	\item \textbf{Case (A) of \Cref{lemma:Generic_Cmax=Codd_Gammas12}.} Referring to \Cref{fig:Generic_CmaxCodd_Gammas12}, we see that $\partial \mathcal{S}_3$ contains $\alpha_{2,1}$, $\varphi(\alpha_{2,1})$, and $\alpha_{3,1}, \ldots, \alpha_{3, c_3-1}$. In particular, the only branch sector co-bounded by the unique image arc in $S_3$ and the plumbing arcs in $S_3$ is $\mathcal{S}_3$. Hence, $S_3$ does not contain any polygon sectors. 
	\item \textbf{Case (B.1) of \Cref{lemma:Generic_Cmax=Codd_Gammas12}.} We are assuming that $\Gamma_3$ is heterogeneous. However, Case (B.1) of \Cref{lemma:Generic_Cmax=Codd_Gammas12} assumes that $\Gamma_3$ is homogeneous, so we need not consider this case.
	\item \textbf{Case (B.2) of \Cref{lemma:Generic_Cmax=Codd_Gammas12}.} As noted in \Cref{table:Bookkeeping_Cmax=Codd_Gammas12}, there are no sink disks spanning $\Gamma_1 \cup \Gamma_2$, so in particular, there are no polygon sink disk sectors in $S_3$. 
	\item \textbf{Case (C) of \Cref{lemma:Generic_Cmax=Codd_Gammas12}.} As seen in \Cref{table:Bookkeeping_Cmax=Codd_Gammas12}, no polygon sector in $S_3$ is a sink.
	\end{itemize}
\item Suppose $n \geq 7$. As noted in \Cref{NoClumpingOfArcs}, the Seifert disk $S_{n-3}$ must contain only plumbing arcs, so it cannot contain any polygon sectors.
\end{itemize}
We deduce that there are no polygon sink disk sectors spanning $\Gamma_{n-3} \cup \Gamma_{n-2} \cup \Gamma_{n-1}$. 
\item \Horizontal Finally, we analyze the horizontal sectors spanning $\Gamma_{n-3} \cup \Gamma_{n-2} \cup \Gamma_{n-1}$. Throughout, we let $\mathcal{H}$ denote a horizontal sector, and we consult \Cref{fig:LastThreeColumns_Heterogeneous}.

	\begin{itemize}
	\item[$\circ$] Suppose $\mathcal{H}$ spans $\Gamma_{n-3}$.
        			\begin{itemize}[label={$\bullet$}]
        			\item If $\mathcal{H} \sim \mathbbm{b}_{n-3,i}$, for $1 \leq i \leq p$, then $\mathcal{H}$ is not a sink, because $\partial \mathcal {H}$ contains $\alpha_{n-3, i}$, and these arcs all point into $S_{n-3}$.
        			\item  If $\mathcal{H} \sim \mathbbm{b}_{n-3, p+1}$, then $\mathcal{H} \sim \mathbbm{b}_{n-2, 1}, \ldots, \mathbbm{b}_{n-2, \delta}$. However, we know that $\mathbbm{b}_{n-2,1}$ is not enclosed, on the right, by any arc. Therefore, $\mathcal{H} \sim S_{n-1}$, and we already proved this sector is not a sink.
        			\item  Finally, if $\mathcal{H} \sim \mathbbm{b}_{n-3, i}$, for $p+2 \leq i \leq c_{n-3}$, then $\mathcal{H}$ is enclosed, on the right, $\varphi(\alpha_{n-3, i-1})$, and this arc points out of $\mathcal{H}$ and into $S_{n-2}$.
        			\end{itemize}
                We deduce that any horizontal sector $\mathcal{H}$ spanning $\Gamma_{n-3}$ is not a sink. 
	\item[$\circ$] Suppose $\mathcal{H}$ spans $\Gamma_{n-2}$.
        			\begin{itemize}[label={$\bullet$}]
        			\item If $\mathcal{H} \sim \mathbbm{b}_{n-2, i}$, for $1\leq i \leq \delta$, then $\mathcal{H} \sim S_{n-1}$, which we already argued is not a sink. 
        			\item Otherwise, $\mathcal{H} \sim \mathbbm{b}_{n-2, i }$ where $ \delta+1 \leq i \leq c_{n-2}$. Either $\mathbbm{b}_{n-2,i}$ is not enclosed, on the left, by an image arc (so it is in the same sector as $\mathcal{S}_{n-2}$, which we already proved is not a sink), or it is enclosed, on the left, by an image arc $\varphi(\alpha_{n-3, j})$, where $ p+2 \leq j \leq c_{n-3}$; these arcs point out of $\mathcal{H}$ and into in $S_{n-2}$, so $\mathcal{H}$ is not a sink. 
        			\end{itemize}
                We deduce that any horizontal sector $\mathcal{H}$ spanning $\Gamma_{n-2}$ is not a sink. 
	\item[$\circ$] Suppose $\mathcal{H}$ spans $\Gamma_{n-1}$. 
        			\begin{itemize}[label={$\bullet$}]
        			\item Suppose $\mathcal{H} \sim \mathbbm{b}_{n-1,1}$. Since this band is not enclosed, on the right, by any arc, we know that $\mathbbm{b}_{n-1,1} \sim \mathcal{S}_n$. Since $\mathcal{S}_n$ is not a sink, neither is $\mathcal{H}$. 
        			\item Instead suppose that $\mathcal{H}$ contains $\mathbbm{b}_{n-1, s}$, where $2 \leq s \leq c_n$. The band $\mathbbm{b}_{n-1, s}$ is enclosed, on the right, by $\varphi(\alpha_{n-1,s-1})$; this arc points out of $\mathbbm{b}_{n-1,s}$ and into $S_n$, so $\mathcal{H}$ is not a sink.
        			\end{itemize}
            We deduce that no horizontal sector $\mathcal{H}$ spanning $\Gamma_{n-1}$ is a sink.
	\end{itemize}
	Therefore, no horizontal sector spanning $\Gamma_{n-3} \cup \Gamma_{n-2} \cup \Gamma_{n-1}$ is a sink. 
\end{itemize}

This concludes the proof the $j=1$ sub-case of the heterogenous case. We now move onto the $j \geq 2$ case, and refer to \Cref{fig:LastThreeColumns_Heterogeneous_Case2} throughout.

\tcbox[size=fbox, colback=gray!10]{\textbf{Case (1b):} $\Gamma_{n-3}$ is heterogeneous and $j \geq 2$} 

First, we describe the construction of the branched surface in $\Gamma_{n-1}$: we co-orient $\alpha_{n-1, 1}, \ldots, \alpha_{n-1, j-1}$ to the left, and co-orient $\alpha_{n-1, j}, \ldots, \alpha_{n-1, c_{n-1}-1}$ to the right; see \Cref{fig:LastThreeColumns_Heterogeneous_Case2} for an example. We note: since $\widehat{\beta}$ is not a connected sum, if $j \geq 2$, then $c_{n-1} \geq j+1$, hence $c_{n-1} \geq 3$. We claim that there are no sink disk spanning $\Gamma_{n-3} \cup \Gamma_{n-2} \cup \Gamma_{n-1}$:

\begin{itemize}
\item \Seifert applying the same arguments from the $j=1$ case, we see that $\mathcal{S}_{n-3}$ and $\mathcal{S}_{n-2}$ are not sinks. 

Recall that we defined $j$ so that $d_L(n-1; j, j+1) \geq 1$, and $\mathbbm{b}_{n-1,j}$ is the first band in $\Gamma_{n-1}$ for which this holds. In particular, since $j \geq 2$, we know that $c_{n-1} \geq 3$. In particular, since $\alpha_{n-1, 1}, \ldots, \alpha_{n-1, j-1}$ are all left pointers, and $\alpha_{n-1, j}, \ldots, \alpha_{n-1, c_{n-1}-1}$ are all right pointers, we see that $S_{n-1}$ and $S_n$ each contain both left and right pointing arcs. We deduce that neither $\mathcal{S}_{n-1}$ and $\mathcal{S}_n$ are sinks. 
\item \Polygon The same argument as in the $j=1$ case shows that there are no sink disk polygon sectors spanning $\Gamma_{n-3} \cup \Gamma_{n-2} \cup \Gamma_{n-1}$.  
\item \Horizontal Suppose $\mathcal{H}$ is a horizontal sector spanning $\Gamma_{n-3} \cup \Gamma_{n-2} \cup \Gamma_{n-1}$. 
	\begin{itemize}
	\item[$\circ$] If $\mathcal{H}$ spans $\Gamma_{n-1}$, we can apply the proof from the $j=1$ case to see that $\mathcal{H}$ is not a sink. 
	\item[$\circ$] If $\mathcal{H}$ spans $\Gamma_{n-2}$, the proof from the $j=1$ case applies, and we see that $\mathcal{H}$ is not a sink. 
	\item[$\circ$] Suppose $\mathcal{H}$ spans $\Gamma_{n-1}$.
	\begin{itemize}
		      \item[$\bullet$] If $\mathcal{H} \sim \mathbbm{b}_{n-1, i}$ where $1 \leq i \leq j-1$, then $\partial \mathcal{H}$ contains $\alpha_{n-1,i}$, which points out of $\mathcal{H}$ and into $S_{n-1}$. Therefore, $\mathcal{H}$ is not a sink. 
		      \item[$\bullet$] If $\mathcal{H} \sim \mathbbm{b}_{n-1, j}$, then $\mathcal{H}$ also contains some band $\mathbbm{b}_{n-2, k}$, where $k \geq 2$. (Note that our presentation of $\beta$ ensures that $k \neq 1$, and recall that $\delta = d(n-3; p, p+1)$.)
		
        		\begin{itemize}
        		          \item[$\circ$] If $2 \leq k \leq \delta$, then $\mathcal{H}$ meets $S_{n-2}$ in $\varphi(\alpha_{n-3,p})$. Therefore, we know that $\mathcal{H} \sim \mathbbm{b}_{n-2,1} \sim \mathbbm{b}_{n-2, 2} \sim \ldots \sim \mathbbm{b}_{n-2, \delta}$. But $\mathbbm{b}_{n-2,1} \sim \mathcal{S}_{n-1}$, which we already argued is not a sink. We deduce that $\mathcal{H} \sim \mathcal{S}_{n-1}$, so $\mathcal{H}$ is not a sink. 
        		          \item[$\circ$] If $k \geq \delta+1$, then $\mathcal{H}$ meets $S_{n-2}$ in a left pointing image arc, or it is not enclosed by any arc (and $\mathcal{H} \sim \mathcal{S}_{n-2}$); see \Cref{fig:LastThreeColumns_Heterogeneous_Case2}. In both cases, we can deduce that $\mathcal{H}$ is not a sink.
        		\end{itemize}
          
	           \item[$\bullet$]  Finally, suppose $\mathcal{H}$ contains $\mathbbm{b}_{n-1, i}$ for $j+1 \leq i \leq c_{n}$. The band $\mathbbm{b}_{n-1, i}$ is enclosed, on the right, by $\varphi(\alpha_{n-1, j-1})$; this arc points out of $\mathbbm{b}_{n-1, i}$, and so $\mathcal{H}$ is not a sink.
            
	\end{itemize}
	\end{itemize}
Therefore, we have no horizontal sink disk sectors spanning $\Gamma_{n-3} \cup \Gamma_{n-2} \cup \Gamma_{n-1}$. 
\end{itemize}

\begin{figure}[h!] \center
\labellist
\tiny
\pinlabel {$\mathcal{H}$} at 94 331
\pinlabel {$\mathcal{H}$} at 264 278
\pinlabel {$\Gamma_{n-3}$} at 44 400
\pinlabel {$\Gamma_{n-2}$} at 72 400
\pinlabel {$\Gamma_{n-1}$} at 100 400
\pinlabel {$\Gamma_{n-3}$} at 215 400
\pinlabel {$\Gamma_{n-2}$} at 242 400
\pinlabel {$\Gamma_{n-1}$} at 270 400
\pinlabel {$j=2$} at 98 322
\pinlabel {$j=2$} at 268 268
\pinlabel {$p$} at 38 362
\pinlabel {$p$} at 210 362
\pinlabel {$\delta$} at 65 328
\pinlabel {$\delta$} at 235 336

\pinlabel {$\mathcal{H}$} at 94 70
\pinlabel {$\mathcal{H}$} at 264 70
\pinlabel {$\Gamma_{n-3}$} at 44 185
\pinlabel {$\Gamma_{n-2}$} at 72 185
\pinlabel {$\Gamma_{n-1}$} at 100 185
\pinlabel {$\Gamma_{n-3}$} at 215 185
\pinlabel {$\Gamma_{n-2}$} at 242 185
\pinlabel {$\Gamma_{n-1}$} at 270 185
\pinlabel {$j=2$} at 98 62
\pinlabel {$j=2$} at 268 62
\pinlabel {$p$} at 38 152
\pinlabel {$p$} at 210 152
\pinlabel {$\delta$} at 65 132
\pinlabel {$\delta$} at 235 132
%
%
%
\pinlabel {(A)} at 72 210
\pinlabel {(B)} at 242 210
\pinlabel {(D)} at 72 5
\pinlabel {(E)} at 242 5
%
%
\endlabellist
        \includegraphics[scale=1.2]{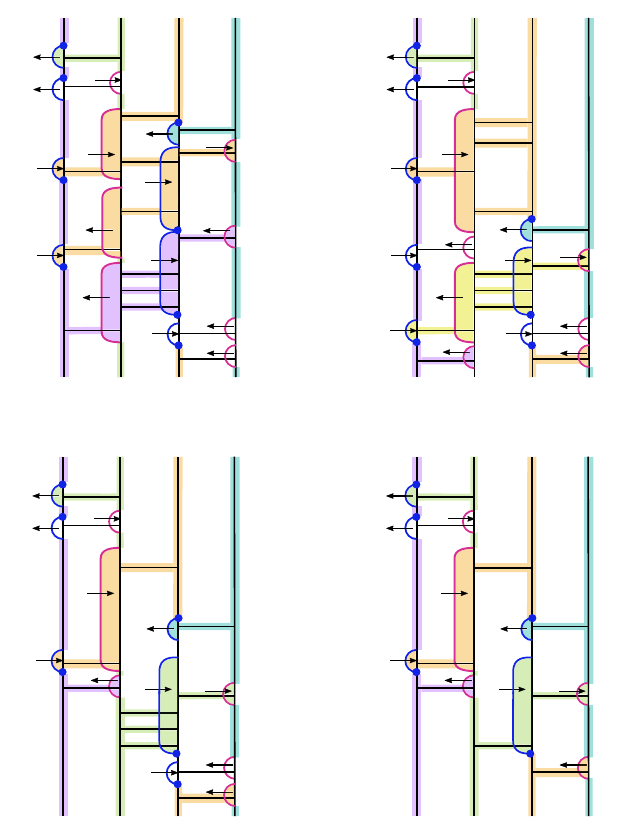}
        \caption{Building $B$ in $\Gamma_{n-3} \cup \Gamma_{n-2} \cup \Gamma_{n-1}$ in Case (1b), where $\Gamma_{n-3}$ is heterogeneous and $j\geq 2$ (for simplicity of the figures, we let $j=2$ throughout). We apply our template to $\Gamma_{n-3}$, and then finish building the branched surface co-orienting $\alpha_{n-1,1} , \ldots, \alpha_{n-1, j-1}$ to the left, and $\alpha_{n-1,j}, \ldots, \alpha_{n-1, c_{n-1}-1}$ to the right. Frames (A) -- (D) show some different possible configurations of $\Gamma_{n-1}$ with respect to $\Gamma_{n-2}$. Recall that $\delta := d(n-3; p, p+1)$, where $\alpha_{n-3,p}$ is the last left pointing plumbing arc in $S_{n-3}$. (For simplicity, in this figure, $p=2$ in all the frames.) Finally, let $\mathcal{H}$ denote the horizontal sector containing $\mathbbm{b}_{n-1,j}$. In (A), $\mathcal{H}$ meets $S_{n-2}$ in a right pointing image arc, and so $\mathcal{H} \sim \mathcal{S}_{n-1}$. In (B), $\mathcal{H}$ meets $S_{n-2}$ in a left pointing image arc. In (C), $\mathcal{H}$ is not enclosed by any arcs when it meets $S_{n-2}$, and so $\mathcal{H} \sim \mathcal{S}_{n-2}$. In (D), $c_{n-2} = 2$.} 
        \label{fig:LastThreeColumns_Heterogeneous_Case2}
\end{figure}

We see that there are no sink disks spanning $\Gamma_{n-3} \cup \Gamma_{n-2} \cup \Gamma_{n-1}$ in the $j\geq 2$ subcase of the heterogeneous case. This concludes the proof of the case where $\Gamma_{n-3}$ is heterogeneous, and we can proceed to study the case where $\Gamma_{n-3}$ is homogeneous.

\tcbox[size=fbox, colback=gray!30]{Sink disk analysis for \textbf{Case (2):} $\Gamma_{n-3}$ is homogeneous.}

We now turn to the case where $\Gamma_{n-3}$ is homogeneous, i.e. $d(n-3; 1, c_{n-3}-1) = 0$.  As in Cases (1a) and (1b), we let $\delta := d(n-3; c_{n-3}-1, c_{n-3})$; additionally, we let  $\mathbbm{b}_{n-1,j}$ denote the first band in $\Gamma_{n-1}$ such that $d_L(n-1; j, j+1) \geq 1$. 

We make a few quick observations. 

\textbf{Observation \#1:} since $\Gamma_{n-3}$ is homogeneous, the bands $\mathbbm{b}_{n-2, 1}, \ldots, \mathbbm{b}_{n-2, \delta}$ are all enclosed, on the left, by $\varphi(\alpha_{c_{n-3}, c_{n-3}-1})$, and that the bands $\mathbbm{b}_{n-2, \delta+1}, \ldots, \mathbbm{b}_{n-2, c_{n-2}}$ are not enclosed, on the left, by any arcs. (One can refer  to Figures \ref{fig:LastThreeColumns_Homogeneous} and \ref{fig:LastThreeColumns_Homogeneous_Case2}.) 

\textbf{Observation \#2:} since $\Gamma_{n-3}$ is homogeneous, $d(n-3; c_{n-3}-1, c_{n-3}) + d(n-3; c_{n-3}, 1) = c_{n-3}$, where each term on the left is strictly positive. In particular, all the bands in $\Gamma_{n-2}$ are concentrated between three consecutive bands in $\Gamma_{n-3}$: $\mathbbm{b}_{n-3, c_{n-3}-1}$, $\mathbbm{b}_{n-3, c_{n-3}}$, and $\mathbbm{b}_{n-3,1}$. Finally, this also means that $d_L(n-2; 1, \delta) = 0$ and $d_L(n-2; \delta+1, c_{n-2}) = 0$, as in Figures \ref{fig:LastThreeColumns_Homogeneous} and \ref{fig:LastThreeColumns_Homogeneous_Case2}.

With these preliminaries in place, we can proceed with building $B$ in $\Gamma_{n-2} \cup \Gamma_{n-1}$. As in the case where $\Gamma_{n-3}$ is heterogeneous (i.e. Cases (1a) and (1b)), we have two sub-cases to consider: 

\begin{itemize}
	\item \textbf{Case (2a):} $\Gamma_{n-3}$ is homogeneous and $j=1$.
	\item \textbf{Case (2b):} $\Gamma_{n-3}$ is homogeneous and $j \geq 2$.
\end{itemize}

\tcbox[size=fbox, colback=gray!10]{\textbf{Case (2a):} $\Gamma_{n-3}$ is homogeneous and $j=1$}

We begin by building $B$ in $\Gamma_{n-2} \cup \Gamma_{n-1}$: we take no product disks from $\Gamma_{n-2}$, and we take $c_{n-1}-1$ disks from $\Gamma_{n-1}$ by co-orienting $\alpha_{n-1,1}, \ldots, \alpha_{n-1, c_{n-1}-1}$ to the right. 

Since $j=1$, we must have that $d(n-2, 1, \delta) = 0$ (otherwise, there would exist some $s$ such that $1 \leq s \leq \delta$ such that $d(n-2; s, s+1) = 1$, and this contradicts that $\beta$ is standardized). Analogously, we see that there cannot exist some $s$ with $\delta+1 \leq s \leq c_{n-2}$ such that $d(n-2; s, s+1) = 1$. We deduce that $\mathbbm{b}_{n-1, 1}$ must occur at the same ``level'' as $\mathbbm{b}_{n-3, c_{n-3}}$, as seen in \Cref{fig:LastThreeColumns_Homogeneous}. More precisely, we have that $d_L(n-2; \delta, \delta+1) = d(n-2; \delta, \delta+1) = 1$. 

With this established, we can now prove that there are no sink disks spanning $\Gamma_{n-3} \cup \Gamma_{n-2} \cup \Gamma_{n-1}$; we refer to \Cref{fig:LastThreeColumns_Homogeneous} throughout.

\begin{itemize}
\item \Seifert We first show that $\mathcal{S}_{n-3}$ is not a sink. This requires analyzing the cases $n=6$ and $n \geq 7$ separately. 

	\begin{itemize}[label={$\bullet$}]
	%
	\item Suppose $n = 6$. In this case, $n-3 = 3$, hence $\mathcal{S}_{n-3} = \mathcal{S}_3$ and $\Cmax = \Codd$. Thus, the construction of $B$ in $\Gamma_{n-3} = \Gamma_3$ is inherited from an application of \Cref{lemma:Generic_Cmax=Codd_Gammas12} to $\Gamma_1 \cup \Gamma_2 \cup \Gamma_3$. In particular, $B \cap S_3$ contains $c_3-1$ plumbing arcs and one image arc. Moreover, by consulting \Cref{table:Bookkeeping_Cmax=Codd_Gammas12}, we see that if $\Gamma_3$ is homogeneous, then we are either in Case (A) or Case (B.1) of \Cref{lemma:Generic_Cmax=Codd_Gammas12}. So, we must argue that for these sub-cases, $S_{n-3}$ is not a sink when we build $B$ in $\Gamma_{n-1} = \Gamma_{5}$ as above. 

        	\begin{itemize}[label={$\boldsymbol{\circ}$}]
        		      \item In Case (A) of \Cref{lemma:Generic_Cmax=Codd_Gammas12}, we know that $\partial \mathcal{S}_3$ contains $\varphi(\alpha_{2,1}), \alpha_{3, 1}, \ldots, \alpha_{3, c_3-1}$ (see \Cref{fig:Generic_CmaxCodd_Gammas12}), and that $\mathcal{S}_3 \sim \mathbbm{b}_{3, c_3}$. 
        		      \item In Case (B.1) of \Cref{lemma:Generic_Cmax=Codd_Gammas12}, we consult \Cref{table:Bookkeeping_Cmax=Codd_Gammas12} to see that $\mathcal{S}_3$ is not a sink.
        	\end{itemize}
	
        Therefore, to prove that $\mathcal{S}_{n-3}$ is not a sink when $n=6$, it suffices to show that the sector containing $\mathbbm{b}_{3, c_3}$ is not a sink. In fact, this observation also holds in the $n \geq 7$ case:
	
	\item If $n \geq 7$, then by \Cref{NoClumpingOfArcs}, $S_{n-3}$ contains only plumbing arcs, so $\mathcal{S}_{n-3} \sim \mathbbm{b}_{n-3, c_{n-3}}$.
	\end{itemize}

Therefore, we need to show that when $n \geq 6$, the sector containing $\mathbbm{b}_{n-3, c_{n-3}}$ is not a sink. Let $\mathcal{S}$ denote this sector. By design, the sector $\mathcal{S} \sim \mathbbm{b}_{n-2, 1}$. This band is not enclosed, on the right, by any bands, so $\mathcal{S}_{n-3} \sim \mathcal{S}_{n-1}$. As $S_{n-1}$ contains only right pointing plumbing arcs, the arc $\alpha_{{n-1},1}$ points out of $\mathcal{S}_{n-1}$. Thus, $\mathcal{S} \sim \mathcal{S}_{n-1} \sim \mathcal{S}_{n-3}$ is not a sink, as desired.

It remains to show that $\mathcal{S}_{n-2}$ and $\mathcal{S}_n$ are not sinks.  All the arcs in $S_{n-2}$ are right pointing image arcs, so $\mathcal{S}_{n-2}$ is not a sink. 

Finally, by design, we see that $\mathcal{S}_{n} \sim \mathbbm{b}_{n-1, 1} \sim \mathbbm{b}_{n-2, \delta+1}$; see \Cref{fig:LastThreeColumns_Homogeneous}. However, we already noticed that $\mathbbm{b}_{n-2, \delta+1}$ is not enclosed, on the left, by any arcs. We deduce that $\mathcal{S}_{n} \sim \mathcal{S}_{n-2}$, which we already argued is not a sink. 

We deduce that none of the Seifert disk sectors spanning $\Gamma_{n-3} \cup \Gamma_{n-2} \cup \Gamma_{n-1}$ are sinks.
\item \Polygon As above, we analyze the $n=6$ and $n \geq 7$ cases separately.

	\begin{itemize}[label={$\bullet$}]
	\item[$\circ$] If $n=6$, $S_{n-3}$ contains both plumbing and image arcs, so $S_{n-3} = S_3$ could contain a polygon sector. We are assuming that $\Gamma_3$ is homogeneous, putting us in Cases (A) and (B.1) of \Cref{lemma:Generic_Cmax=Codd_Gammas12}. However, as we saw in the proof of that lemma, there are no polygon sectors in $\Gamma_3$ in Case (A); see  \Cref{fig:Generic_CmaxCodd_Gammas12}. In Case (B.1), there is a sector which appears like it could be a polygon sector, but it contains a band in $\Gamma_3$ (see \Cref{fig:Generic_CmaxCodd_Gammas12_A}), so it does not lie in $S_{n-3}$. We deduce that there are no polygon sectors in $S_3$, regardless of which sub-case of \Cref{lemma:Generic_Cmax=Codd_Gammas12} occurs. 
	
	As we chose no product disks from $\Gamma_{n-2}$ and $c_{n-1}-1$ product disks from $\Gamma_{n-1}$, the remaining Seifert disks cannot contain both plumbing and image arcs, and there are no polygon sectors in $S_{n-2}, S_{n-1}$, and $S_n$. 
	%
	%
	\item[$\circ$] If $n \geq 7$, then \Cref{NoClumpingOfArcs} ensures that $S_{n-2}, S_{n-1}$, and $S_n$ all only contain only plumbing or image arcs. Therefore, none of these Seifert disks can contain a polygon sector, and in particular, there are no polygon sink disk sectors spanning $\Gamma_{n-3} \cup \Gamma_{n-2} \cup \Gamma_{n-1}$. 
	\end{itemize}

\item \Horizontal Suppose $\mathcal{H}$ is a horizontal sector spanning $\Gamma_{n-3} \cup \Gamma_{n-2} \cup \Gamma_{n-1}$. (We recommend the reader refer to \Cref{fig:LastThreeColumns_Homogeneous} throughout.) Note: we need not analyze the $n=6$ and $n \geq 7$ cases separately.

	\begin{itemize}
	\item[$\circ$] Suppose $\mathcal{H}$ spans $\Gamma_{n-3}$. Suppose $\mathcal{H}$ contains $\mathbbm{b}_{n-3, i}$, for $1 \leq i \leq c_{n-3}-1$. This means $ \partial \mathcal{H}$ contains $\alpha_{n-3, i}$, and this arc points out of $\mathbbm{b}_{n-3, i}$ and into $S_{n-3}$. Therefore, $\mathcal{H}$ is not a sink. Alternatively, if $\mathcal{H} \sim \mathbbm{b}_{n-3, c_{n-3}}$, then $\mathcal{H} \sim \mathbbm{b}_{n-2, 1} \sim \ldots \sim \mathbbm{b}_{n-2, \delta} \sim \mathcal{S}_{n-1}$; we already argued that $\mathcal{S}_{n-1}$ is not a sink. 
	\item[$\circ$] Suppose $\mathcal{H}$ spans $\Gamma_{n-2}$. If $\mathcal{H}$ contains $\mathbbm{b}_{n-2, i}$, for $1 \leq i \leq \delta$, then $\mathcal{H} \sim \mathcal{S}_{n-1}$, which we already proved is not a sink. Alternatively, suppose $\mathcal{H} \sim \mathbbm{b}_{n-2, i}$, for $\delta +1 \leq i \leq c_{n-2}$. These arcs are not enclosed, on the left, by any arcs, so $\mathcal{H} \sim \mathcal{S}_{n-2}$, which we proved is not a sink. Therefore, no horizontal sector spanning $\Gamma_{n-2}$ is a sink.
	\item[$\circ$] Suppose $\mathcal{H}$ spans $\Gamma_{n-1}$. If $\mathcal{H} \sim \mathbbm{b}_{n-1, 1}$, then $\mathcal{H} \sim \mathcal{S}_{n}$, which is not a sink. Otherwise, $\mathcal{H} \sim \mathbbm{b}_{n-1, i}$, where $2 \leq i \leq c_{n-1}$. Notice that $\mathbbm{b}_{n-1, i}$ is enclosed, on the right, by $\varphi(\alpha_{n-1, i-1})$, and this arc points out of $\mathbbm{b}_{n-1, i}$. Therefore, $\mathcal{H}$ is not a sink. 
	\end{itemize}
	
Therefore, no horizontal sector whose span includes $\Gamma_{n-3} \cup \Gamma_{n-2} \cup \Gamma_{n-1}$ is a sink.
\end{itemize}
We deduce there are no sink disks spanning $\Gamma_{n-3} \cup \Gamma_{n-2} \cup \Gamma_{n-1}$ in Case (2a).

\begin{figure}[h!] \center
\labellist \tiny
\pinlabel {(A)} at 66 5
\pinlabel {(B)} at 246 5
\pinlabel {(C)} at 418 5
\pinlabel {$\Gamma_{n-3}$} at 38 200
\pinlabel {$\Gamma_{n-2}$} at 65 200
\pinlabel {$\Gamma_{n-1}$} at 95 200
\pinlabel {$\Gamma_{n-3}$} at 218 200
\pinlabel {$\Gamma_{n-2}$} at 245 200
\pinlabel {$\Gamma_{n-1}$} at 275 200
\pinlabel {$\Gamma_{n-3}$} at 390 200
\pinlabel {$\Gamma_{n-2}$} at 417 200
\pinlabel {$\Gamma_{n-1}$} at 445 200
\pinlabel {$j=1$} at 94 92
\pinlabel {$j=1$} at 274 84
\pinlabel {$j=1$} at 444 84
\pinlabel {$\delta$} at 65 118
\pinlabel {$\delta$} at 245 100
\pinlabel {$\delta$} at 416 100
\endlabellist
        \includegraphics[scale=1.02]{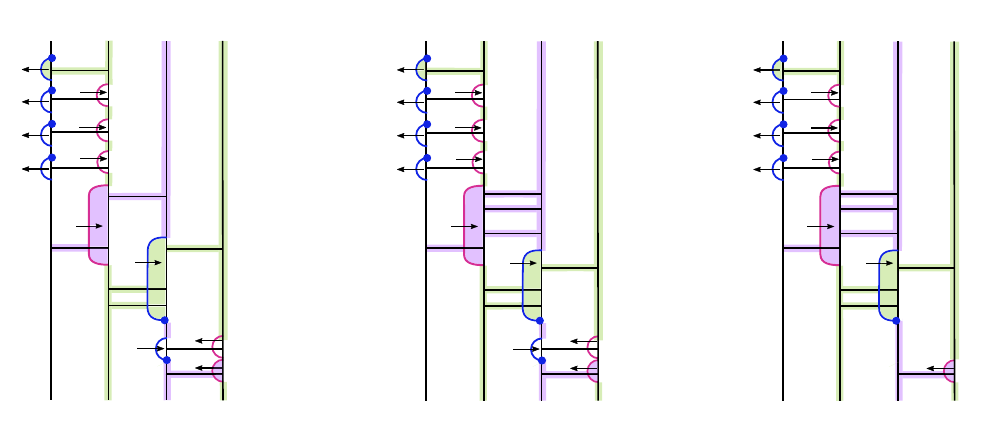}
        \caption{Constructing $B$ in $\Gamma_{n-3} \cup \Gamma_{n-2} \cup \Gamma_{n-1}$ when $\Gamma_{n-3}$ is homogeneous and $j=1$. All the arcs in $\Gamma_{n-1}$ are coherently oriented to the right. By design, $S_{n-3} \sim \mathcal{S}_{n-1}$ and $\mathcal{S}_{n-2} \sim \mathcal{S}_n$. In (A), $\delta := d(n-3; c_{n-3}-1, c_{n-3}) = 1$. In (B), $\delta \geq 3$. In (C), $c_{n-1} = 2$.}
        \label{fig:LastThreeColumns_Homogeneous}
\end{figure}

\tcbox[size=fbox, colback=gray!10]{\textbf{Case (2b):} $\Gamma_{n-3}$ is homogeneous and $j \geq 2$} 

First, we describe how to build the branched surface in $\Gamma_{n-1}$: we choose no product disks from $\Gamma_{n-2}$, and $c_{n-1}-1$ product disks from $\Gamma_{n-1}$ by co-orienting $\alpha_{n-1,1}, \ldots, \alpha_{n-1, j-1}$ to the left and we co-orienting $\alpha_{n-1, j}, \ldots, \alpha_{n-1, c_{n-1}-1}$ to the right. 

We note: since $\widehat{\beta}$ is not a connected sum, if $j \geq 2$, then $c_{n-1} \geq j+1$, hence $c_{n-1} \geq 3$. We prove that there are no sink disks spanning $\Gamma_{n-3} \cup \Gamma_{n-2} \cup \Gamma_{n-1}$, and refer to \Cref{fig:LastThreeColumns_Homogeneous_Case2} throughout.

\begin{figure}[h!] \center
\labellist \tiny
\pinlabel {(A)} at 66 5
\pinlabel {(B)} at 235 5
\pinlabel {(C)} at 406 5
\pinlabel {$\Gamma_{n-3}$} at 38 200
\pinlabel {$\Gamma_{n-2}$} at 65 200
\pinlabel {$\Gamma_{n-1}$} at 95 200
\pinlabel {$\Gamma_{n-3}$} at 208 200
\pinlabel {$\Gamma_{n-2}$} at 235 200
\pinlabel {$\Gamma_{n-1}$} at 265 200
\pinlabel {$\Gamma_{n-3}$} at 380 200
\pinlabel {$\Gamma_{n-2}$} at 408 200
\pinlabel {$\Gamma_{n-1}$} at 435 200
\pinlabel {$j$} at 88 74
\pinlabel {$j$} at 258 46
\pinlabel {$j$} at 430 74
\pinlabel {$\delta$} at 65 118
\pinlabel {$\delta$} at 235 100
\pinlabel {$\delta$} at 406 118
\endlabellist
        \includegraphics[scale=1.05]{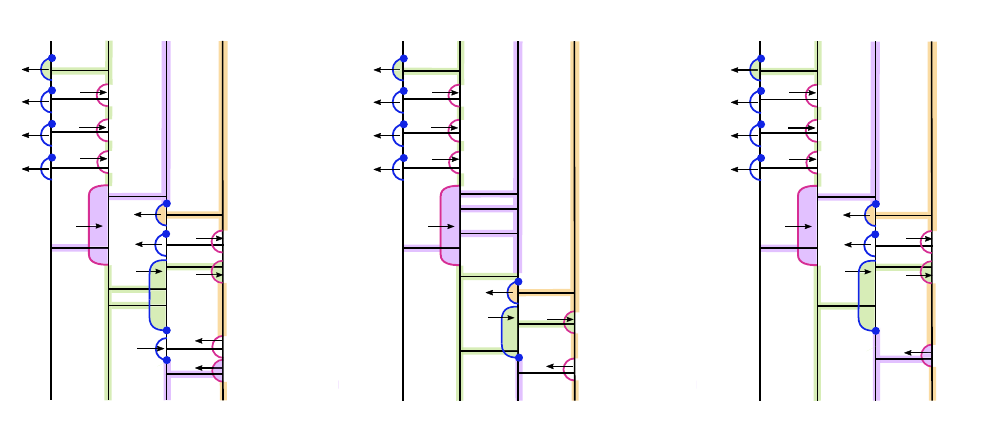}
        \caption{Constructing $B$ in $\Gamma_{n-3} \cup \Gamma_{n-2} \cup \Gamma_{n-1}$ when $\Gamma_{n-3}$ is homogeneous and $j \geq 2$. By design, $S_{n-3} \sim \mathcal{S}_{n-1}$. In (A), $\delta := d(n-3; c_{n-3}-1, c_{n-3}) = 1$. In (B), $\delta \geq 3$. In (C), $c_{n-2} = 2$.}
        \label{fig:LastThreeColumns_Homogeneous_Case2}
\end{figure}

\begin{itemize}
\item \Seifert The same argument as in Case (2a) tells us that either $n = 6$ and $\mathcal{S}_{n-3} = \mathcal{S}_3$ is not a sink (because the construction of $B$ in $\Gamma_3$ is inherited from Case (B.1) of \Cref{lemma:Generic_Cmax=Codd_Gammas12}), or $n \geq 6$ and $\mathcal{S}_{n-3} \sim \mathbbm{b}_{n-3, c_{n-3}} \sim \mathcal{S}_{n-1}$.  When the latter occurs, we know that $\alpha_{n-1, j}$ is a right pointing plumbing arc, pointing out of $\mathcal{S}_{n-1}$, so $\mathcal{S}_{n-3} \sim \mathcal{S}_{n-1}$ is not a sink. Additionally, all the image arcs in $S_{n-2}$ point out of $\mathcal{S}_{n-2}$, and so $\mathcal{S}_{n-2}$ is not a sink. Finally, $\varphi(\alpha_{n-1,1})$ is in $\partial \mathcal{S}_n$, and points out of $\mathcal{S}_{n-1}$ and into $\mathbbm{b}_{n-1, 2}$, so $\mathcal{S}_n$ is not a sink. We deduce that none of the Seifert disk sectors spanning $\Gamma_{n-3} \cup \Gamma_{n-2} \cup \Gamma_{n-1}$ are sink disks. 
\item \Polygon The same argument from Case (2a) tells us that there are no sink disk polygon sectors spanning these columns. 
\item \Horizontal Suppose $\mathcal{H}$ is a horizontal sector spanning $\Gamma_{n-3} \cup \Gamma_{n-2} \cup \Gamma_{n-1}$. We argue that such an $\mathcal{H}$ cannot be a sink. 
	\begin{itemize}
	\item[$\circ$] If $\mathcal{H}$ spans $\Gamma_{n-3}$, we apply the same argument from Case (2a) to see that $\mathcal{H}$ is not a sink.
	\item[$\circ$] If $\mathcal{H}$ spans $\Gamma_{n-2}$, we apply the same argument from Case (2a) to see that $\mathcal{H}$ is not a sink. 
	\item[$\circ$] Suppose $\mathcal{H}$ spans $\Gamma_{n-1}$. If $\mathcal{H} \sim \mathbbm{b}_{n-1, i}$, where $1 \leq i \leq j-1$, then $\partial \mathcal{H}$ contains $\alpha_{n-1, i}$. This arc points out of $\mathcal{H}$ and into $S_{n-1}$, so $\mathcal{H}$ is not a sink. 
	
	If $\mathcal{H} \sim \mathbbm{b}_{n-1, j}$, then $\mathcal{H}$ also contains $\mathbbm{b}_{n-2, \delta+1}$, as in \Cref{fig:LastThreeColumns_Homogeneous_Case2}. This band is not enclosed, on the left, by any arcs, so $\mathcal{H} \sim \mathcal{S}_{n-2}$; we already argued that this sector is not a sink. 
	
	Finally, if $\mathcal{H} \sim \mathbbm{b}_{n-1, i}$ for $j+1 \leq i \leq c_{n-1}$, then $\partial \mathcal{H}$ contains $\varphi(\alpha_{n-1, i-1})$; this arc points out of the $\mathbbm{b}_{n-1, i}$ and into $S_n$, so $\mathcal{H}$ is not a sink.
	\end{itemize}
We deduce that no horizontal sector spanning $\Gamma_{n-3} \cup \Gamma_{n-2} \cup \Gamma_{n-1}$ is a sink. 
\end{itemize}

Therefore, there are no sink disk branch sectors spanning $\Gamma_{n-3} \cup \Gamma_{n-2} \cup \Gamma_{n-1}$.

Finally, we simultaneously verify that for all four of the cases of this lemma (i.e. Cases (1a), (1b), (2a), and (2b)), there is no pairwise linking between arcs in $\Gamma_{n-3} \cup \Gamma_{n-2} \cup \Gamma_{n-1}$. Since we chose no disks from $\Gamma_{n-2}$, we need only show that there is no pairwise linking between arcs in $S_{n-3}$ and $S_{n-1}$. As linking only occurs between $\alpha$ and $\alpha'$ if they are in the same column or adjacent columns, it suffices to show that there is no pairwise linking if $\alpha$ and $\alpha'$ lie in the same Seifert disk. This follows from the proof of \Cref{lemma:TemplateLinking}. \hfill $\Box$ \\

\subsection{Final\'e, a.k.a. sewing the constructions together.} \label{section:Finale} 

Finally, we combine the constructions of the previous sections to build taut foliations for positive $n$-braid knots, where $n \geq 5$.

\begin{thm} \label{thm:PositiveNBraids_Odd}
Suppose $K$ is a positive $n$-braid knot. Fix a standardized positive $n$-braid $\beta$ with $\widehat{\beta} \approx K$ and $n \geq 5$. Suppose, for this $\beta$, that $\Cmax = \Codd$. There is a laminar branched surface $B$ for $K$ such that $\tau_B$ fully carries all slopes $r < g(K)+1$. 
\end{thm}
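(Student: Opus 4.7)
The plan is to assemble the branched surface $B$ column by column by splicing together \Cref{lemma:Generic_Cmax=Codd_Gammas12}, \Cref{lemma:Continue_Special}, \Cref{lemma:Continue}, and either \Cref{lemma:Generic_EndOfBraid_LastColumnIsSparse} or \Cref{lemma:Generic_EndOfBraid_LastColumnIsFull} (selected by the parity of $n$), then verify $B$ is globally sink disk free (and therefore laminar via \Cref{prop:BisLaminar}), and finally count the slopes fully carried by $\tau_B$. Throughout, only single-column templates are used, so \Cref{lemma:TemplateLinking} will give a clean zero-linking bound.

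First I would apply \Cref{lemma:Generic_Cmax=Codd_Gammas12} to build $B$ in $\Gamma_1 \cup \Gamma_2 \cup \Gamma_3$, contributing $c_1 - 1$ disks from $\Gamma_1$, one disk from $\Gamma_2$, and $c_3 - 1$ disks from $\Gamma_3$; this yields no pairwise linking inside $S_1 \cup S_2 \cup S_3$ and leaves at most one outstanding potential sink disk spanning $\Gamma_1 \cup \Gamma_2$. When $n \geq 7$ I would next invoke \Cref{lemma:Continue_Special} to extend $B$ through $\Gamma_4 \cup \Gamma_5$ (no disks from $\Gamma_4$, $c_5 - 1$ disks from $\Gamma_5$), which absorbs the outstanding potential sink into $\mathcal{S}_5$. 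For $n \geq 9$, I would iterate \Cref{lemma:Continue} across $\Gamma_6 \cup \Gamma_7$, $\Gamma_8 \cup \Gamma_9$, and so on, each iteration taking $c_{2k+1} - 1$ disks from the next odd column $\Gamma_{2k+1}$, no disks from the intermediate even column $\Gamma_{2k}$, and leaving $\mathcal{S}_{2k+3}$ as the sole outstanding potential sink to be dispatched by the subsequent iteration. I would terminate the iteration at the odd column immediately preceding the appropriate closing step: if $n$ is odd then $\Gamma_{n-2}$ is the last odd column with an applied template, and I invoke \Cref{lemma:Generic_EndOfBraid_LastColumnIsSparse} to cap off $\Gamma_{n-2} \cup \Gamma_{n-1}$ with no disks from $\Gamma_{n-1}$; if $n$ is even then $\Gamma_{n-3}$ is the last odd column with an applied template, and I invoke \Cref{lemma:Generic_EndOfBraid_LastColumnIsFull} to cap off $\Gamma_{n-2} \cup \Gamma_{n-1}$ with $c_{n-1} - 1$ disks from $\Gamma_{n-1}$. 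In both cases the closing lemma's hypotheses on the distribution of arcs in the last Seifert disk touched by the prior iteration are automatically met, and the closing lemma eliminates the remaining potential sink.

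For the slope count, the total disk count is $1 + \sum_{i \text{ odd}, \ 1 \leq i \leq n-1} (c_i - 1) = \Codd - \ell + 1$, where $\ell = (n-1)/2$ if $n$ is odd and $\ell = n/2$ if $n$ is even. Since every applied template is a single-column template, \Cref{lemma:TemplateLinking} gives zero intra-column linking; and because disks only ever appear in $\Gamma_2$ and odd-indexed columns, the only adjacent pair of disk-bearing columns outside the initial block is $(\Gamma_2, \Gamma_3)$, whose linking was already zeroed out by \Cref{lemma:Generic_Cmax=Codd_Gammas12}. Thus $\TauSup$ equals the disk count. Combining with the identity $2g(K) - 1 = \mathcal{C} - n$ --- which forces $\mathcal{C}$ and $n$ to have opposite parities, and hence $\Codd \geq \Ceven + 1$ whenever $n$ is even --- a short algebraic comparison shows $\TauSup = \Codd - \ell + 1 \geq g(K) + 1$ in both parity regimes. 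Applying \Cref{thm:foliations} then yields the desired conclusion. The main obstacle will be the handoff bookkeeping across the chain of lemmas: each cyclic conjugation and canonical calibration must be reconciled with the presentation inherited from the previous stage, and the ``sole outstanding potential sink disk'' promised by each intermediate lemma must coincide exactly with the Seifert disk sector dispatched by the next lemma.
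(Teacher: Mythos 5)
Your assembly is the same as the paper's: \Cref{lemma:Generic_Cmax=Codd_Gammas12} for the initial block, \Cref{lemma:Continue_Special} to reach $\Gamma_5$, iterated applications of \Cref{lemma:Continue} through the odd columns, and a parity-dependent choice of \Cref{lemma:Generic_EndOfBraid_LastColumnIsSparse} or \Cref{lemma:Generic_EndOfBraid_LastColumnIsFull} to close; your linking argument and slope count also match (your count in the odd-$n$ case is in fact slightly cleaner, avoiding the paper's appeal to integrality of $\TauSup$).

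The gap is at $n=5$. You claim that when $n$ is odd you may always cap off $\Gamma_{n-2}\cup\Gamma_{n-1}$ by \Cref{lemma:Generic_EndOfBraid_LastColumnIsSparse}, with its hypotheses ``automatically met.'' But for $n=5$ the column $\Gamma_{n-2}=\Gamma_3$ carries an image arc $\varphi(\alpha_{2,1})$ in addition to the $c_3-1$ plumbing arcs, and the proof of \Cref{lemma:Generic_EndOfBraid_LastColumnIsSparse} only justifies its standing assumptions --- in particular that $\mathbbm{b}_{n-2,c_{n-2}}$ is not enclosed on the left by any arc after conjugation --- when the construction in $\Gamma_3$ is inherited from Case (A) of \Cref{lemma:Generic_Cmax=Codd_Gammas12} (where $d(2;1,2)=0$). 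In Cases (B.1), (B.2), and (C) of that lemma the image arc $\varphi(\alpha_{2,1})$ encloses bands of $\Gamma_3$ on the left, the co-orientation of $\alpha_{2,1}$ may have been flipped, and the outstanding potential sink disk is not simply $\mathcal{S}_3$; none of this is covered by the closing lemma. The paper handles exactly these three sub-cases with separate, bespoke Seifert/polygon/horizontal sector analyses of $\Gamma_3\cup\Gamma_4$ before concluding $B$ is sink disk free --- this is a substantial portion of the proof, not bookkeeping. (Your $n=6$ and $n\geq 7$ handoffs are fine: \Cref{lemma:Generic_EndOfBraid_LastColumnIsFull} performs its own internal case analysis on the sub-cases of \Cref{lemma:Generic_Cmax=Codd_Gammas12}, and for $n\geq 7$ \Cref{NoClumpingOfArcs} guarantees $S_{n-2}$ contains only plumbing arcs, so the sparse closing lemma genuinely applies.)
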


Before we execute the construction of the branched surface, we note: at every stage of the construction (i.e. within the proofs of Lemmas \ref{lemma:Generic_Cmax=Codd_Gammas12}, \ref{lemma:Sparse_Cmax=Ceven_Gammas12}, \ref{lemma:Continue}, \ref{lemma:Generic_EndOfBraid_LastColumnIsSparse}, and \ref{lemma:Generic_EndOfBraid_LastColumnIsFull}), we always conjugated our brain into a specific presentation before building our branched surface in those columns. Throughout the forthcoming proof, we will need to intermittently conjugate $\beta$.

\begin{proof}
To start, we follow the procedure outlined in \Cref{lemma:Generic_Cmax=Codd_Gammas12}, which builds $B$ in $\Gamma_1 \cup \Gamma_2 \cup \Gamma_3$. As such, the branched surface contains product disks from $\Gamma_1$, $\Gamma_2$, and $\Gamma_3$. Our construction of the branched surface in the remaining columns depends on $n$. We first build $B$ for all $n \geq 5$, and subsequently produce a lower bound on $\TauSup$.  

\tcbox[size=fbox, colback=gray!30]{Suppose $n=5$.} 
In this case, our braid has four columns, and we already constructed $B$ in $\Gamma_1\cup \Gamma_2 \cup \Gamma_3$. Therefore, it suffices to see how to build $B$ in $\Gamma_4$, and then check that no sink disks were produced. When $n=5$, we take no product disks from $\Gamma_4$. As there were four sub-cases of \Cref{lemma:Generic_Cmax=Codd_Gammas12}, we need to analyze each separately to determine why $B$ has no sink disks. Recall that we summarized the outcomes of \Cref{lemma:Generic_Cmax=Codd_Gammas12} in \Cref {table:Bookkeeping_Cmax=Codd_Gammas12}. 

\tcbox[size=fbox, colback=gray!10]{Suppose $n=5$, and Case (A) from \Cref{lemma:Generic_Cmax=Codd_Gammas12} holds.}  In this case, as we recorded in \Cref {table:Bookkeeping_Cmax=Codd_Gammas12}, we know that there is a unique potential sink disk spanning $\Gamma_1 \cup \Gamma_2$, and it is the sector $\mathcal{S} \sim \mathcal{S}_3 \sim \mathbbm{b}_{2,1}$ (we suggest the reader consult  \Cref{fig:Generic_CmaxCodd_Gammas12} to ``see'' $B$ in $\Gamma_1 \cup \Gamma_2 \cup \Gamma_3$). By design, $\partial S_{n-2} = \partial S_3$ contains exactly one image arc and $c_3-1$ plumbing arcs, and there are no polygon sectors in $S_3$. Therefore, if we take no product disks from $\Gamma_4$, we can apply \Cref{lemma:Generic_EndOfBraid_LastColumnIsSparse} and deduce that there are no sink disks spanning $\Gamma_3 \cup \Gamma_4$. This produces a sink disk free branched surface. 

\tcbox[size=fbox, colback=gray!10]{Suppose $n=5$, and Case (B.1) from \Cref{lemma:Generic_Cmax=Codd_Gammas12} holds.} 
As we recorded in \Cref {table:Bookkeeping_Cmax=Codd_Gammas12}, we know that there are no sink disks spanning $\Gamma_1 \cup \Gamma_2$; we suggest the reader refer to \Cref{fig:Generic_CmaxCodd_Gammas12_A} to ``see'' $B$ in $\Gamma_1 \cup \Gamma_2 \cup \Gamma_3$. 

In this case, we know that $d(3; \delta-2, \delta)=c_4$. Moreover, since $\widehat{\beta}$ is not a connected sum, then we cannot have all the bands in $\Gamma_4$ concentrated between a pair of consecutive bands in $\Gamma_3$. We deduce that $d(3; \delta-2, \delta-1) \geq 1$ and $d(3; \delta-1, \delta) \geq 2$. Let $d(3; \delta-2, \delta) = \epsilon_1$ and $d(3; \delta-1, \delta) = \epsilon_2$. We include no product disks from $\Gamma_4$ into $B$, and now argue that there are no sink disks spanning $\Gamma_3 \cup \Gamma_4$. This will allow us to deduce that we have constructed a sink disk free branched surface. 

\begin{itemize}
\item \Seifert As summarized in \Cref {table:Bookkeeping_Cmax=Codd_Gammas12}, $\mathcal{S}_3$ is not a sink. Since $S_4$ contains only right pointing image arcs, $\mathcal{S}_4$ is not a sink. 

We now argue that $\mathcal{S}_5 \sim \mathcal{S}_4$, hence $\mathcal{S}_5$ is not a sink. By design, we have chosen no product disks from $\Gamma_4$, so in particular, $\mathcal{S}_5 \sim \mathbbm{b}_{4,\epsilon_1 + \epsilon_2}$. Moreover, by design, $\mathbbm{b}_{4, \epsilon_1 + \epsilon_2}$ is not enclosed, on the left, by any arc; see \Cref{fig:LastThreeColumns_Local_1}. Therefore, $\mathcal{S}_4 \sim \mathbbm{b}_{4, \epsilon_1 + \epsilon_2}$, and we deduce that $\mathcal{S}_4 \sim \mathcal{S}_5$.

\begin{figure}[h!] \center
\labellist \tiny
%
\pinlabel {$\Gamma_{3}$} at 42 174
\pinlabel {$\Gamma_{4}$} at 70 174
\pinlabel {$\delta$} at 40 62
\pinlabel {$\delta-2$} at 40 130
\pinlabel {$\epsilon_1$} at 70 100
\pinlabel {$\epsilon_1 + \epsilon_2$} at 69 62
\endlabellist
        \includegraphics[scale=1.02]{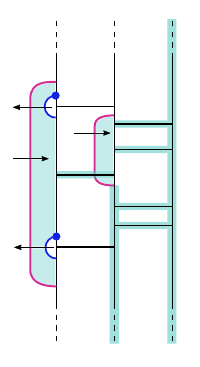}
        \caption{Zooming on on the bands $\mathbbm{b}_{3,\delta-2}, \mathbbm{b}_{3, \delta-1}$, and $\mathbbm{b}_{3, \delta}$ in \Cref{fig:Generic_CmaxCodd_Gammas12_A}. We see that $\mathcal{S}_{4} \sim \mathcal{S}_5$.}
        \label{fig:LastThreeColumns_Local_1}
\end{figure}

\item \Polygon There is a unique sector $\mathcal{P}$ in $\Gamma_3 \cup \Gamma_4$ which lies in a Seifert disk and contains both plumbing and image arcs in its boundary; see \Cref{fig:LastThreeColumns_Local_1}. By design, $\mathcal{P} \sim \mathbbm{b}_{3, \delta-1} \sim \mathbbm{b}_{4, \epsilon_1} \sim \mathcal{S}_5$, and we already proved that $\mathcal{S}_5$ is not a sink. 
\item \Horizontal Suppose $\mathcal{H}$ is a horizontal sector spanning $\Gamma_3$. If $\mathcal{H}$ spans $\Gamma_3$, it contains some band $\mathbbm{b}_{3, t}$. By design, all bands in $\Gamma_3$ are either enclosed, on the left, by a left pointing plumbing arc (so it is not a sink), or the band is part of the sector $\mathcal{P}$ described above (which we just proved is not a sink). Therefore, no horizontal sector spanning $\Gamma_3$ is a sink. Alternatively, if $\mathcal{H}$ spans $\Gamma_4$, then since there are no plumbing or image arcs in $S_5$, $\mathcal{H} \sim \mathcal{S}_5$, which is not a sink.
\end{itemize}

We deduce that we can construct a sink disk free branched surface in this case. 

\tcbox[size=fbox, colback=gray!10]{Suppose $n=5$, and Case (B.2) from \Cref{lemma:Generic_Cmax=Codd_Gammas12} holds.}

As summarized in \Cref{table:Bookkeeping_Cmax=Codd_Gammas12}, in this case, we know that $\Gamma_3$ is heterogeneous, and that there are no sink disks spanning $\Gamma_1 \cup \Gamma_2$. We now argue that there are no sink disks spanning $\Gamma_3 \cup \Gamma_4$:

\begin{itemize}
\item \Seifert \Cref{table:Bookkeeping_Cmax=Codd_Gammas12} confirms that $\mathcal{S}_3$ is not a sink. $S_3$ contains both left and right pointing plumbing arcs, so $\mathcal{S}_4$ contains both left and right pointing image arcs; thus, it is not a sink. We now argue that $\mathcal{S}_5$ is not a sink. 

Throughout the forthcoming analysis, we use the same band labellings as in Case (B.2) and \Cref{fig:Generic_CmaxCodd_Gammas12_A.2}. By the assumptions of Case (B.2), we have $d(3; c_3, \delta) = c_4$. Let $\alpha_{3,p}$ be the first right pointing plumbing arc in $\Gamma_3$; by assumption, $1 \leq p \leq \delta$. Since $\widehat{\beta}$ is not a connected sum, then we cannot have that $d(3; p, p+1) = c_4$. Therefore, $d(3; p, p+1) : = \epsilon < c_4$ and $d(3; p+1, \delta) := \eta \geq 2$. Note that $\epsilon + \eta = c_4$. 

Since $d(3; p+1, \delta) \geq 2$, then there exists some band $\mathbbm{b}_{4, k}$, where $\epsilon+1 \leq k \leq \epsilon + \eta$ such that either (a) $\mathbbm{b}_{4, k}$ is enclosed, on the left, by a left pointing image arc, or (b) $\mathbbm{b}_{4, k}$ is not enclosed, on the left, by any image arcs; see 
see \Cref{fig:LastThreeColumns_Local_2}. 
However, since we included no product disks from $\Gamma_4$ while building $B$, we know that no band in $\Gamma_4$ is enclosed, on the right, by any arcs. Therefore, $\mathcal{S}_5 \sim \mathbbm{b}_{4, 1} \sim \ldots \sim \mathbbm{b}_{4, k} \sim \ldots \sim \mathbbm{b}_{4, c_4}$.
If (a) occurs, then $\partial \mathcal{S}_5$ contains an outward pointing image arc, but if (b) happens, then $\mathcal{S}_5 \sim \mathcal{S}_4$ (and we already proved the latter is not a sink). We deduce that $\mathcal{S}_5$ is never a sink. 

\begin{figure}[h!] \center
\labellist \tiny
\pinlabel {$\Gamma_{3}$} at 42 170
\pinlabel {$\Gamma_{4}$} at 70 170
\pinlabel {$p=1$} at 44 124
\pinlabel {$\delta$} at 44 52
\pinlabel {$\epsilon$} at 70 100
\pinlabel {$\Gamma_{3}$} at 195 170
\pinlabel {$\Gamma_{4}$} at 225 170
\pinlabel {$p=2$} at 192 90
\pinlabel {$\delta$} at 195 52
\pinlabel {$\epsilon$} at 223 100
\pinlabel {$\Gamma_{3}$} at 350 170
\pinlabel {$\Gamma_{4}$} at 378 170
\pinlabel {$p=2$} at 344 90
\pinlabel {$\delta$} at 350 52
\pinlabel {$\epsilon$} at 376 100
\endlabellist
        \includegraphics[scale=1.12]{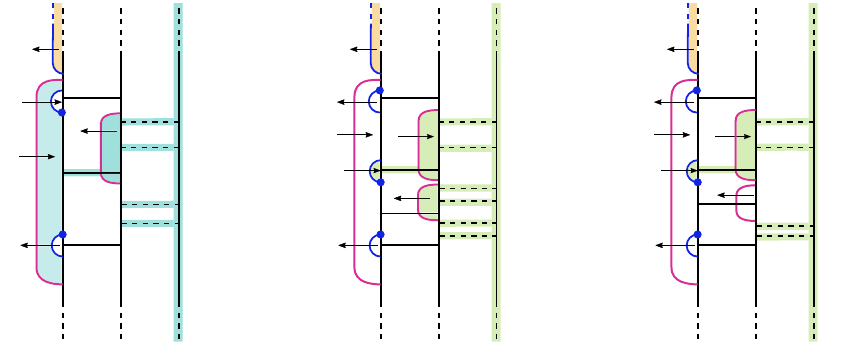}
        \caption{Zooming in on the bands enclosed, on the left, by $\varphi(\alpha_{2,1})$ from \Cref{fig:Generic_CmaxCodd_Gammas12_A.2} (to be consistent, we have used the same cyclic conjugated presentation of the braid from that figure). 
        We know that $d(3; 1, \delta) \geq 1$, but we do not know the distribution of the bands within $\Gamma_4$, so the visible bands in $\Gamma_4$ are dashed. No band in $\Gamma_4$ is enclosed, on the right, by any arc. If there is at least one enclosed, on the left, by an image arc, the image arc is a left pointer, and $\mathcal{S}_5$ is not a sink. Otherwise, $\mathcal{S}_5 \sim \mathcal{S}_4$, which we already proved is not a sink. }
        \label{fig:LastThreeColumns_Local_2}
\end{figure}

\item \Polygon There is a unique sector $\mathcal{P}$ in $S_3$ which contains both plumbing and image arcs in its boundary. Notice that $\partial \mathcal{P}$ contains $\alpha_{3, \delta-2}$, which is a right pointing plumbing arc. 
Therefore, $\mathcal{P}$ is not a sink. 

\item \Horizontal If $\mathcal{H}$ is a horizontal sector spanning $\Gamma_4$, then $\mathcal{H} \sim \mathbbm{b}_{4, i}$, for some $1 \leq i \leq c_4$. But these bands are not enclosed, on the right, by any arcs, so $\mathcal{H} \sim \mathcal{S}_5$, which we already proved is not a sink. 

Next, suppose that $\mathcal{H}$ is a horizontal sector spanning $\Gamma_3$, so that $\mathcal{H} \sim \mathbbm{b}_{3, j}$, for some $1 \leq j \leq c_3$. 

        \begin{itemize}
        \item[$\circ$] The bands $\mathbbm{b}_{3, \delta}, \mathbbm{b}_{3, \delta+1}, \ldots, \mathbbm{b}_{3, c_3}$, and $\mathbbm{b}_{3,1}, \ldots, \mathbbm{b}_{3, p-1}$ are all enclosed, on the left, by a left pointing image arc. Therefore, if $\mathcal{H}$ contains one of these bands, $\mathcal{H}$ is not a sink. 
        \item[$\circ$] Suppose $\mathcal{H} \sim \mathbbm{b}_{3, p}$. Since $d(3; p, p+1) \geq 2$, then $\mathcal{H} \sim \mathbbm{b}_{4,1} \sim \ldots \sim \mathbbm{b}_{4, \epsilon} \sim \mathcal{S}_5$; we already proved $\mathcal{S}_5$ is not a sink.
        \item[$\circ$] The bands $\mathbbm{b}_{3, p+1}, \ldots, \mathbbm{b}_{3, \delta-1}$ are all enclosed, on the right, by a left pointing image arc. Thus, if $\mathcal{H}$ contains one of these bands, it is not a sink.
        \end{itemize}
Therefore, $B$ contains no sink disk spanning $\Gamma_3 \cup \Gamma_4$, hence it is sink disk free.
\end{itemize}

\tcbox[size=fbox, colback=gray!10]{Suppose $n=5$, and Case (C) from \Cref{lemma:Generic_Cmax=Codd_Gammas12} holds.}

As summarized in \Cref{table:Bookkeeping_Cmax=Codd_Gammas12}, in this case, after applying our template to $\Gamma_3$, the column is heterogeneous. Additionally, we know that there is a unique potential sink disk spanning $\Gamma_1 \cup \Gamma_2$, and it is the sector $\mathcal{S}$ containing $\mathbbm{b}_{2,1} \sim \mathbbm{b}_{3, c_3}$ (we refer the reader to  \Cref{fig:Generic_CmaxCodd_Gammas12_B} to recall our labelling conventions). We include no product disks from $\Gamma_4$ into $B$, and show that the result contains no sink disks spanning $\Gamma_3 \cup \Gamma_4$:

\begin{itemize}
\item \Seifert In \Cref{lemma:Generic_Cmax=Codd_Gammas12}, we proved that $\mathcal{S}_3$ is not a sink. By design, $S_4$ contains both left and right pointing image arcs; since $B$ includes no product disks from $\Gamma_4$, we deduce that $\partial \mathcal{S}_4$ must contain an outward pointing arc, so it is not a sink. 

We now prove that $\mathcal{S}_5$ is not a sink. Since we included no product disks from $\Gamma_4$, we know that $\mathcal{S}_5 \sim \mathbbm{b}_{4,1} \sim \ldots \sim \mathbbm{b}_{4, c_4}$, and it suffices to show that some band in $\Gamma_4$ is enclosed, on the left, by a left pointing image arc, or that $\mathcal{S}_5 \sim \mathcal{S}_4$.

We first notice: by the definition of our template (and the fact that we are in Case (C) of \Cref{lemma:Generic_Cmax=Codd_Gammas12}), we know that the following inequalities hold (once again, we refer the reader to \Cref{fig:Generic_CmaxCodd_Gammas12_B} to recall the labelings):
\begin{align*}
d(3; \delta, j-1) &= 0 \text{ (by the construction in \Cref{lemma:Generic_Cmax=Codd_Gammas12})}\\
d(3; j-1, j) &\geq 1 \text{ (by the definition of our template)} \\
d(3; j-1, j) &< c_4 \text{ (because $\beta$ is not a connected sum)} 
\end{align*}

These three inequalities imply that $d(3; j+1, \delta) \geq 1$. This means that either (a) there is some band $\mathbbm{b}_{4, k}$ which is enclosed, on the left, by a left pointing image arc, or (b) that there is some band $\mathbbm{b}_{4,k}$ which is not enclosed, on the left, by any image arc, so $\mathcal{S}_5 \sim \mathcal{S}_4$. We deduce that $\mathcal{S}_5$ is not a sink.

\item \Polygon Applying the same analysis from Case (B.2) tells us that there are no sink disk polygon sectors in $\Gamma_3 \cup \Gamma_4$; see \Cref{fig:LastThreeColumns_Local_2}. 

\item \Horizontal As described in \Cref{table:Bookkeeping_Cmax=Codd_Gammas12}, there is a unique potential sink disk spanning $\Gamma_1 \cup \Gamma_2$, and it is the horizontal sector containing both $\mathbbm{b}_{2,1}$ and $ \mathbbm{b}_{3, c_3}$. So, to prove that $B$ is completely sink disk free, we need to argue that there are no horizontal sink disks spanning $\Gamma_3 \cup \Gamma_4$. 

We quickly argue that no horizontal sector spanning $\Gamma_4$ is a sink: we already argued that $\mathcal{S}_5 \sim \mathbbm{b}_{4,1} \sim \ldots \sim \mathbbm{b}_{4, c_4}$. Since we already proved that $\mathcal{S}_5$ is not a sink, we immediately know that no horizontal sector spanning $\Gamma_4$ is a sink. 

Finally, we argue that no horizontal sector spanning $\Gamma_3$ is a sink. Let $\mathcal{H}$ denote such a sector. 
\begin{itemize}
\item[$\circ$] If $\mathcal{H} \sim \mathbbm{b}_{3, k}$, for $1 \leq k \leq \delta-1$, then $\partial \mathcal{H}$ contains $\varphi(\alpha_{3,k-1})$ which points out of the sector, so $\mathcal{H}$ is not a sink. 
\item[$\circ$] If $\mathcal{H} \sim \mathbbm{b}_{3, k}$, for $\delta \leq k \leq j-1$, then $\partial \mathcal{H}$ contains $\alpha_{3,k}$ which points out of the sector, so $\mathcal{H}$ is not a sink. 
\item[$\circ$] Suppose $\mathcal{H} \sim \mathbbm{b}_{3, j}$. Then by definition of our template, $\mathcal{H} \sim \mathbbm{b}_{4,s}$ for some $s$. But as discussed above, this means $\mathcal{H} \sim \mathcal{S}_5$, which we already proved is not a sink. 
\item[$\circ$] If $\mathcal{H} \sim \mathbbm{b}_{3, k}$, for $j+1 \leq k \leq c_3$, then $\partial \mathcal{H}$ contains $\varphi(\alpha_{3,k-1})$ which points out of the sector, so $\mathcal{H}$ is not a sink. 
\end{itemize}

\end{itemize}

For emphasis, we remark: suppose $\mathcal{H} \sim \mathbbm{b}_{2,1} \sim \mathbbm{b}_{3, c_3}$. If $j = c_3$, then $\mathcal{H} \sim \mathcal{S}_5$ (which is not a sink), and if $j < c_3$, then $\mathcal{H}$ meets $\mathcal{S}_4$ is a left pointing image arc. Regardless, it is not a sink. Therefore, we see that no horizontal sector spanning $\Gamma_3 \cup \Gamma_4$ is a sink, and $B$ is sink disk free. 

This concludes the analysis when $n=5$. 

\tcbox[size=fbox, colback=gray!30]{Suppose $n= 6$.} 

In \Cref{lemma:Generic_Cmax=Codd_Gammas12}, we constructed our branched surface so that it contained product disks from $\Gamma_1 \cup \Gamma_2 \cup \Gamma_3$. When $n=6$, our braid has five columns; to complete the construction, we must specify some presentation of our braid, and then build $B$ in the remaining two columns. We will choose no product disks from $\Gamma_4$ and $c_5-1$ product disks from $\Gamma_5$.

As recalled in \Cref{table:Bookkeeping_Cmax=Codd_Gammas12}, were four sub-cases in \Cref{lemma:Generic_Cmax=Codd_Gammas12}, and each of these four cases, there is at most one potential sink disk spanning $\Gamma_1 \cup \Gamma_2$: in Cases (A) and (C), the sector is $\mathcal{S}_3$ or $\mathbbm{b}_{3, c_3}$, respectively; in Cases (B.1) and (B.2), there were no sink disks spanning $\Gamma_1 \cup \Gamma_2$. In particular, the unique potential sink disk spanning $\Gamma_1 \cup \Gamma_2$ always spans $\Gamma_3$. \Cref{lemma:Generic_EndOfBraid_LastColumnIsFull} describes how to cyclically conjugate $\beta$ with respect to some distinguished band in $\Gamma_3$, and then build $B$ in $\Gamma_4 \cup \Gamma_5$. \Cref{lemma:Generic_EndOfBraid_LastColumnIsFull} allows us to conclude that there are no sink disks spanning $\Gamma_3 \cup \Gamma_4 \cup \Gamma_5$. We deduce that we can build a branched surface $B$ which contains co-oriented product disks from all columns except $\Gamma_4$, such that $B$ is sink disk free. 

\tcbox[size=fbox, colback=gray!30]{Suppose $n=7$.} 

By assumption, we already built $B$ in $\Gamma_1 \cup \Gamma_2 \cup \Gamma_3$ via \Cref{lemma:Generic_Cmax=Codd_Gammas12}; we now follow \Cref{lemma:Continue_Special} to construct $B$ in $\Gamma_4 \cup \Gamma_5$. The conclusions of \Cref{lemma:Continue_Special} tell us that the unique potential sink disk spanning $\Gamma_1 \cup \ldots \cup \Gamma_5$ is $\mathcal{S}_5$. Applying \Cref{lemma:Generic_EndOfBraid_LastColumnIsSparse}, we see that there are no sinks spanning $\Gamma_5 \cup \Gamma_6$. We deduce that we have constructed a sink disk free branched surface. 

\tcbox[size=fbox, colback=gray!30]{Suppose $n\geq 8$.} 
We already built $B$ in $\Gamma_1 \cup \Gamma_2 \cup \Gamma_3$ via \Cref{lemma:Generic_Cmax=Codd_Gammas12}, and we now follow \Cref{lemma:Continue_Special} to construct $B$ in $\Gamma_4 \cup \Gamma_5$. We now iteratively build $B$ in the remaining columns. To simplify our exposition, we define the value $f$ as follows:
\begin{align*}
f :=
\begin{cases}
\displaystyle \frac{(n-1)-5}{2}-2 & \qquad n-1 \equiv 1 \mod 2 \\ \\
\displaystyle \frac{(n-1)-6}{2} & \qquad n-1 \equiv 0 \mod 2
\end{cases}
\end{align*}

To build $B$, we iteratively apply \Cref{lemma:Continue}, by assuming that we have build $B$ to contain co-oriented product disks in some non-trivial subset of columns $\Gamma_1 \cup \ldots \Gamma_{5+2t}$, for $t \in \{0, \ldots, f-1\}$. By construction, before each application of \Cref{lemma:Continue}, the only potential sink disk spanning $\Gamma_1 \cup \ldots \cup \Gamma_{5+2t}$ is $\mathcal{S}_{5+2t}$; after building $B$ in $\Gamma_1 \cup \ldots \cup \Gamma_{5+2t} \cup \Gamma_{5+2t+1} \cup \Gamma_{5+2t+2}$ as directed by \Cref{lemma:Continue}, the only potential sink disk spanning these columns is $\mathcal{S}_{5+2t+2}$. 

Suppose $n-1 \equiv 1 \mod 2$. When $t=f-1$, our last application of \Cref{lemma:Continue} assumes we have constructed $B$ to include product disks from $\Gamma_1 \cup \ldots \cup \Gamma_{n-5}$, and then concludes by producing a branched surface which includes product disks from $\Gamma_1 \cup \ldots \cup \Gamma_{n-3}$. By design, at this stage, the only potential sink disk spanning these columns is $\mathcal{S}_{n-3}$. We now apply  \Cref{lemma:Generic_EndOfBraid_LastColumnIsFull} to build $B$ in $\Gamma_{n-2} \cup \Gamma_{n-1}$.  \Cref{lemma:Generic_EndOfBraid_LastColumnIsFull} ensures that there are no sink disks spanning $\Gamma_{n-3} \cup \Gamma_{n-2} \cup \Gamma_{n-1}$. We deduce that our resulting branched surface, which contains co-oriented product disks from $\Gamma_1 \cup \Gamma_2 \cup \Gamma_3 \cup \Gamma_5 \cup \ldots \cup \Gamma_{n-1}$, is sink disk free. 

Instead suppose $n-1 \equiv 0 \mod 2$. When $t=f-1$, our last application of \Cref{lemma:Continue} assumes that we have already constructed our branched surface in $\Gamma_1 \cup \ldots \cup \Gamma_{n-4}$, and then outputs a branched surface containing product disks spanning $\Gamma_1 \cup \ldots \cup \Gamma_{n-2}$. The only potential sink disk spanning these columns is $\mathcal{S}_{n-2}$. We now apply \Cref{lemma:Generic_EndOfBraid_LastColumnIsSparse} to see that without including any product disks from $\Gamma_{n-1}$, we have constructed a sink disk free branched surface which contains co-oriented product disks from $\Gamma_1 \cup \Gamma_2 \cup \Gamma_3 \cup \Gamma_5 \cup \ldots \cup \Gamma_{n-2}$.

We deduce that when $\Cmax = \Codd$ and $n \geq 5$, we have produced a sink disk free branched surface $B$ which contains a copy of the fiber surface, all product disks from $\Gamma_i$, $i \equiv 1 \mod 2$, and a single product disk from $\Gamma_2$. Applying \Cref{prop:BisLaminar}, we deduce that $B$ is a laminar branched surface. It remains to determine what slopes are fully carried by $B$. Once again, our analysis will be determined by the parity of $n$.  Combining Lemmas \ref{lemma:Generic_Cmax=Codd_Gammas12}, \ref{lemma:Continue}, \ref{lemma:Continue_Special}, \ref{lemma:Generic_EndOfBraid_LastColumnIsSparse}, and \ref{lemma:Generic_EndOfBraid_LastColumnIsFull}, we see that there is no pairwise linking between any of the plumbing arcs used to build $B$. Thus, it suffices to put a lower bound on the number of product disks used to build $B$. 

\begin{itemize}
\item If $n-1 \equiv 1 \mod 2$, then $n \equiv 0 \mod 2$, and for $2g(K)-1=\mathcal{C}-n$ to hold, we must have that $\mathcal{C}$ is odd. Therefore, since we assumed that $\Cmax = \Codd$, we have that $\Codd \geq \frac{\mathcal{C}+1}{2}$. Therefore, 
\begin{align*}
\TauSup &= (\# \text{product disks used to construct $B$}) \\
&= \left( \ \sum_{i \text{ odd}} c_i \right) - (\# \Gamma_i, i \text{ odd}) +(\# \text{ product disks from } \Gamma_2) \\
&= \Codd - \frac{n}{2} + 1 \\
&\geq \frac{\mathcal{C}+1}{2} - \frac{n}{2}+1 = \frac{\mathcal{C}-n+1}{2}+1 = g(K)+1
\end{align*}
We deduce that $\tau_B$ carries all slopes $r < g(K)+1$, as desired. 
\item If $n-1 \equiv 0 \mod 2$, then $n \equiv 1 \mod 2$, and for $2g(K)-1=\mathcal{C}-n$ to hold, we have that $\mathcal{C}$ is even. Therefore, since $\Cmax = \Codd$, we have that $\Codd \geq \frac{\mathcal{C}}{2}$. Performing the analogous computation from above, we see that:
\begin{align*}
\TauSup \geq \left( \frac{\mathcal{C}}{2} - \frac{n}{2}\right)+1 = \frac{\mathcal{C}-n+1}{2}+\frac{1}{2} = g(K)+\frac{1}{2}
\end{align*}
Since $B$ was constructed from a single copy of the fiber surface and a collection of product disks, the supremal slope carried by $\tau$ is integral. Therefore, $\TauSup \geq g(K)+1$, and $B$ carries all slopes $r < g(K)+1$ as desired. 
\end{itemize}

We deduce that $\tau_B$ carries all slopes $r < g(K)+1$, as desired. 
\end{proof}

\begin{thm} \label{thm:PositiveNBraids_Even}
Suppose $K$ is a positive $n$-braid knot. Fix a standardized positive $n$-braid $\beta$ with $\widehat{\beta} \approx K$ and $n \geq 5$. Suppose, for this $\beta$, that $\Cmax = \Ceven$. There is a laminar branched surface $B$ for $K$ such that $\tau_B$ fully carries all slopes $r < g(K)+1$. 
\end{thm}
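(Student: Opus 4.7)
The plan is to mirror the architecture of the proof of \Cref{thm:PositiveNBraids_Odd}, substituting \Cref{lemma:Sparse_Cmax=Ceven_Gammas12} for \Cref{lemma:Generic_Cmax=Codd_Gammas12} as the left-end seed. First I would apply \Cref{lemma:Sparse_Cmax=Ceven_Gammas12} to build $B$ in $\Gamma_1 \cup \Gamma_2$, contributing one co-oriented product disk from $\Gamma_1$, $c_2 - 1$ product disks from $\Gamma_2$, and no disks yet from $\Gamma_3$. By \Cref{table:Bookkeeping_Cmax=Ceven_Gammas12}, the unique potential sink disk spanning $\Gamma_1 \cup \Gamma_2$ is a sector that already meets $\mathbbm{b}_{3,1}$, so it is a candidate to be discharged once $\Gamma_4$ is populated.

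Next I would establish a bridging lemma analogous to \Cref{lemma:Continue_Special}: after pivoting $\beta$ and canonically calibrating it with respect to the distinguished band of \Cref{table:Bookkeeping_Cmax=Ceven_Gammas12}, apply our template to $\Gamma_4$ (producing $c_4 - 1$ co-oriented product disks, with no product disks taken from $\Gamma_3$), and verify on a case-by-case basis (running through cases (1A), (2A), (1B), (2B) of \Cref{lemma:Sparse_Cmax=Ceven_Gammas12}) that the unique potential sink disk spanning $\Gamma_1 \cup \Gamma_2$ either acquires an outward pointing arc from a new band in $\Gamma_4$ or merges into $\mathcal{S}_4$ or $\mathcal{S}_5$. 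The conclusion I aim for is that after this bridging step, the only potential sink disk spanning $\Gamma_1 \cup \ldots \cup \Gamma_4$ is $\mathcal{S}_5$, and that no two plumbing arcs in $S_2 \cup S_4$ are linked. This bridging step is where I expect the main difficulty: the four-case branching of \Cref{lemma:Sparse_Cmax=Ceven_Gammas12} is more delicate than that of \Cref{lemma:Generic_Cmax=Codd_Gammas12} because the distinguished band $\mathbbm{b}_{2, \star}$ sits at a variable location in the standardized braid, and each sub-case demands its own Seifert / polygon / horizontal sink-disk check.

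Once the bridging step is in place, the remainder of the argument runs as in \Cref{thm:PositiveNBraids_Odd}. For $n \geq 8$, I would iteratively apply \Cref{lemma:Continue} across $\Gamma_5 \cup \Gamma_6, \Gamma_7 \cup \Gamma_8, \ldots$, each time discharging the unique potential sink disk $\mathcal{S}_{2t+4}$ produced at the previous step. Depending on the parity of $n$, I would cap off by invoking \Cref{lemma:Generic_EndOfBraid_LastColumnIsSparse} (when $n$ is even, closing $\Gamma_{n-2} \cup \Gamma_{n-1}$ with no disks from $\Gamma_{n-1}$) or \Cref{lemma:Generic_EndOfBraid_LastColumnIsFull} (when $n$ is odd, closing $\Gamma_{n-3} \cup \Gamma_{n-2} \cup \Gamma_{n-1}$ with $c_{n-1} - 1$ disks from $\Gamma_{n-1}$). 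The small cases $n = 5, 6, 7$ are handled by direct concatenation of the bridging lemma with the appropriate end-of-braid lemma; the exceptional sub-case $n = 5$ with $\Gamma_2$ homogeneous (excluded from the hypotheses of \Cref{lemma:Generic_EndOfBraid_LastColumnIsFull}) requires a hands-on argument in the spirit of the four sub-case analysis for $n = 5$ in \Cref{thm:PositiveNBraids_Odd}.

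Finally, to bound $\TauSup$: by \Cref{lemma:TemplateLinking} combined with the bridging lemma and \Cref{lemma:Continue}, no two product disks used in $B$ are linked, so $\TauSup$ equals the total number of product disks used. Adding up contributions, $B$ uses $1$ disk from $\Gamma_1$ and $c_i - 1$ disks from each even-indexed $\Gamma_i$ with $2 \leq i \leq n-1$, giving $\TauSup = \Ceven - \lfloor (n-1)/2 \rfloor + 1$. Combining $\Ceven \geq \lceil \mathcal{C}/2 \rceil$ with $2g(K) - 1 = \mathcal{C} - n$ and performing the parity-by-parity check identical to the one at the end of \Cref{thm:PositiveNBraids_Odd} (rounding up the half-integer case, as $\TauSup$ is integral) yields $\TauSup \geq g(K) + 1$. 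Applying \Cref{prop:BisLaminar} then shows $B$ is laminar, so $\tau_B$ fully carries all slopes $r < g(K) + 1$, which is exactly what the theorem asserts.
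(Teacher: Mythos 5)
Your proposal follows essentially the same route as the paper's proof: seed with \Cref{lemma:Sparse_Cmax=Ceven_Gammas12}, calibrate with respect to the distinguished band containing $\mathbbm{b}_{3,1}$ and apply the template to $\Gamma_4$ (the paper does this bridging inline rather than as a separate lemma), iterate \Cref{lemma:Continue}, cap off with the parity-appropriate end-of-braid lemma, handle the $n=5$, $\Gamma_2$-not-heterogeneous sub-case by hand, and count disks to get $\TauSup \geq g(K)+1$. The only slip is that the unresolved sector after templating $\Gamma_4$ is $\mathcal{S}_4$, not $\mathcal{S}_5$ (your own later formula $\mathcal{S}_{2t+4}$ is consistent with the correct index), which does not affect the argument.
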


\begin{proof}
To begin, we follow the procedure outlined in \Cref{lemma:Sparse_Cmax=Ceven_Gammas12}. This produces a branched surface in $\Gamma_1 \cup \Gamma_2$. In particular, we have selected a single product disk from $\Gamma_1$ and $c_2-1$ product disks from $\Gamma_2$. \Cref{table:Bookkeeping_Cmax=Ceven_Gammas12} summarizes the potential sink disks spanning $\Gamma_1 \cup \Gamma_2$. In particular, regardless of construction used, \Cref{table:Bookkeeping_Cmax=Ceven_Gammas12} identifies that the unique potential sink disk spanning $\Gamma_1 \cup \Gamma_2$ contains $\mathbbm{b}_{3,1}$. To build $B$ in the remaining columns, we study the $n=5$ and $n \geq 6$ cases separately. We subsequently produce a lower bound on $\TauSup$.

\tcbox[size=fbox, colback=gray!30]{Suppose $n=5$.} 

In this case, there are four columns of the braid, and we have already built $B$ in $\Gamma_1 \cup \Gamma_2$. As summarized in \Cref{table:Bookkeeping_Cmax=Ceven_Gammas12}, there were two primary cases for how we built $B$ in $\Gamma_1 \cup \Gamma_2$, and each requires a slightly different construction to build $B$ in $\Gamma_4$. We emphasize: our naming convention for the sub-cases, as well as the initial labellings of the bands, is the same as those specified in \Cref{lemma:Sparse_Cmax=Ceven_Gammas12} and \Cref{table:Bookkeeping_Cmax=Ceven_Gammas12}.

\tcbox[size=fbox, colback=gray!10]{Suppose $n=5$, and construction in $\Gamma_1 \cup \Gamma_2$ is inherited from Case (1B) or (2B) of \Cref{lemma:Sparse_Cmax=Ceven_Gammas12}.} 

We canonically calibrate our braid with respect to $\mathbbm{b}_{2, 1}$, and then canonically calibrate $\beta$ with respect to $\mathbbm{b}_{3,1}$. The braid $\beta$ now begins with a $\sigma_4$ letter, and $d_L(4; 1,2) \geq 1$. Define $j$, where $2 \leq j \leq c_4$, to be the minimal integer so that $d_L(4; 2,j)=0$; see \Cref{fig:CmaxCeven_Nis5_CaseA}. We co-orient $\alpha_{4, 1}, \ldots, \alpha_{4,j-1}$ to the left, and co-orient $\alpha_{4,j}, \ldots, \alpha_{4, c_4-1}$ to the right. We claim the resulting branched surface is sink disk free. 

As noted in \Cref{table:Bookkeeping_Cmax=Ceven_Gammas12}, we already argued that there is a unique sink disk spanning $\Gamma_1 \cup \Gamma_2$. According to the presentation of the braid in \Cref{lemma:Sparse_Cmax=Ceven_Gammas12}, and as seen in \Cref{fig:NotGeneric_CmaxCeven_Gammas12_CaseA}, it is the sector $\mathcal{S}$ with $\mathcal{S} \sim \mathbbm{b}_{2,1} \sim \mathcal{S}_3 \sim \mathbbm{b}_{3,1}$. Since we have performed two canonical calibrations of our braid, the relative positions of $\mathbbm{b}_{2,1}$ and $\mathbbm{b}_{3,1}$ may have changed. So, in our current presentation of $\beta$, we refer to these bands as $\mathbbm{b}_{2, \theta}$ and $\mathbbm{b}_{3, \eta}$, as in \Cref{fig:CmaxCeven_Nis5_CaseA}. (In particular, $\alpha_{2, \theta}$ is a right pointer and $\mathbbm{b}_{2, \theta-1}$ is not enclosed, on the left, by any plumbing arcs.) By design, $\mathbbm{b}_{2, \theta}$ is not enclosed, on the right, by any arcs, so $\mathbbm{b}_{2, \theta} \sim \mathcal{S}_3$.  Therefore, to show that there are no sink disks in $B$, it suffices to show that there are no sink disks spanning $\Gamma_3 \cup \Gamma_4$. 

\begin{figure}[h!]
\labellist \tiny
%
\pinlabel {$\Gamma_{1}$} at 32 175
\pinlabel {$\Gamma_{2}$} at 60 175
\pinlabel {$\Gamma_{3}$} at 88 175
\pinlabel {$\Gamma_{4}$} at 115 175
\pinlabel {$\Gamma_{1}$} at 208 175
\pinlabel {$\Gamma_{2}$} at 235 175
\pinlabel {$\Gamma_{3}$} at 265 175
\pinlabel {$\Gamma_{4}$} at 290 175
\pinlabel {$\Gamma_{1}$} at 375 175
\pinlabel {$\Gamma_{2}$} at 403 175
\pinlabel {$\Gamma_{3}$} at 430 175
\pinlabel {$\Gamma_{4}$} at 455 175
\pinlabel {$j$} at 110 95
\pinlabel {$j$} at 285 95
\pinlabel {$j$} at 453 95
\pinlabel {$\theta$} at 58 132
\pinlabel {$\theta$} at 233 132
\pinlabel {$\theta$} at 401 132
\pinlabel {$\eta$} at 84 128
\pinlabel {$\eta$} at 260 128
\pinlabel {$\eta$} at 425 128
\endlabellist
        \includegraphics[scale=0.98]{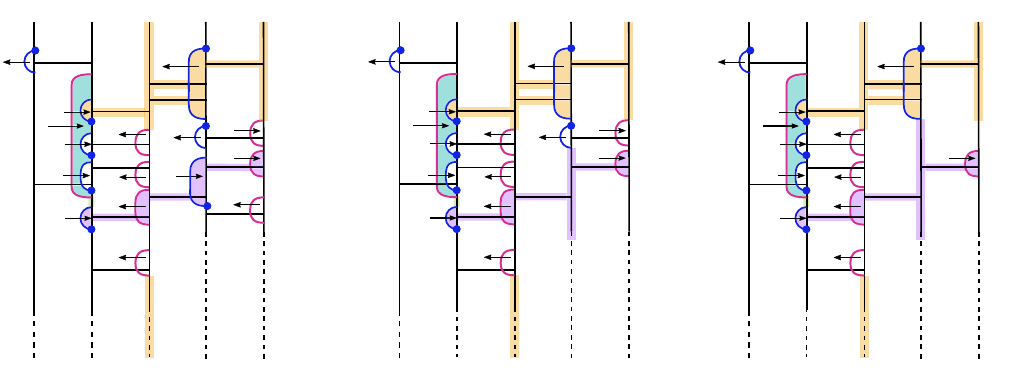}
        \caption{Building $B$ when $\Cmax = \Ceven$ and $n=5$. The construction in $\Gamma_1 \cup \Gamma_2$ is inherited from either Case (1) or Case (2A) \Cref{lemma:Sparse_Cmax=Ceven_Gammas12}. On the \textbf{left}, we see that $j < c_4$. In the \textbf{middle}, $j = c_4 \geq 3$. On the \textbf{right}, $j = c_4 = 2$. Regardless of the frame, we see that $\mathcal{S}_3 \sim \mathcal{S}_5$. Note: while it is possible that the band $\mathbbm{b}_{3, \eta-1}$ is enclosed, on the left, by some left pointing image arc, we suppress this possibility from our figures, as it does not impact our sink disk analysis.}
        \label{fig:CmaxCeven_Nis5_CaseA}
\end{figure}

\begin{itemize}
\item \textbf{Seifert disk sector analysis:} By design, $\mathcal{S}_3 \sim \mathbbm{b}_{2, \theta} \sim \mathbbm{b}_{3, \eta}$. By construction, $\mathbbm{b}_{3, \eta}$ is enclosed, on the right, by the left pointing plumbing arc $\alpha_{4,1}$. In particular, this means $\mathbbm{b}_{3, \eta} \sim \mathbbm{b}_{4,1}$, where the band $\mathbbm{b}_{4,1}$ is not enclosed, on the right, by any arcs. Therefore, $\mathcal{S}_3 \sim \mathcal{S}_5$. Since $\partial \mathcal{S}_5$ contains $\varphi(\alpha(4,1))$, which points out of the sector, we deduce that $\mathcal{S}_3$ and $\mathcal{S}_5$ are not sinks. See \Cref{fig:CmaxCeven_Nis5_CaseA}. 

We analyze $\mathcal{S}_4$. If $j < c_4$, then $\alpha_{4,j}$ points out of $\mathcal{S}_4$, so it is not a sink; see \Cref{fig:CmaxCeven_Nis5_CaseA} (left). Otherwise, $j=c_4$, and $\mathcal{S}_4 \sim \mathbbm{b}_{3,x}$, where $x=d_L(4; 1,2) +1$, as in \Cref{fig:CmaxCeven_Nis5_CaseA} (middle, right). This band is either not enclosed on the left by any image arc (in which case $\mathcal{S}_4 \sim \mathcal{S}_3$, so it is not a sink), or it is enclosed on the left by some image arc. However, all the image arcs in $S_3$ are left pointers, so there is some arc pointing out of $\mathcal{S}_4$. We deduce that none of the disk sectors spanning $\Gamma_3 \cup \Gamma_4$ are sinks. 
\item \textbf{Polygon sector analysis:} None of the Seifert disks $S_3, S_4,$ or $S_5$ contain both plumbing and image arcs, so there are no polygon sectors contained in $\Gamma_3 \cup \Gamma_4$.
\item \textbf{Horizontal sector analysis:} Suppose $\mathcal{H}$ is a horizontal sector spanning $\Gamma_3$. So, by definition, $\mathcal{H} \sim \mathbbm{b}_{3,k}$, where $1 \leq k \leq c_3$. If $\mathbbm{b}_{3,k}$ is not enclosed, on the left, by any image arcs, then $\mathcal{H} \sim \mathcal{S}_3$, which we already argued is not a sink. Otherwise, $\mathbbm{b}_{3,k}$ is enclosed, on the left, by a left pointing image arc, so $\mathcal{H}$ is not a sink. 

If $\mathcal{H}$ is a horizontal sector spanning $\Gamma_4$, then $\mathcal{H} \sim \mathbbm{b}_{4,k}$, for $1 \leq k \leq c_4$. 

\begin{itemize}
\item[$\circ$] It $\mathcal{H} \sim \mathbbm{b}_{4,k}$, for $1 \leq k \leq j-1$, then $\alpha_{4,k}$ points out of the sector, and it is not a sink. 
\item[$\circ$] Next, suppose $\mathcal{H} \sim \mathbbm{b}_{4,j}$. 

	\begin{itemize}
	\item[$\bullet$] If $j=c_4$, then $\mathcal{H} \sim \mathbbm{b}_{4,c_4}$. But $\mathbbm{b}_{4,c_4} \sim \mathcal{S}_4$, which is not a sink, so neither is $\mathcal{H}$. 
	\item[$\bullet$] Suppose $j < c_4$. In this case, $\mathbbm{b}_{4,j}$ is enclosed, on the left, by the right pointing arc $\alpha_{4,j}$. But this means $\mathcal{H} \sim \mathbbm{b}_{3, \star}$, and in particular, $\mathcal{H}$ travels westwards to meet $S_3$. Thus, $\mathcal{H}$ is either in the same branch sector as $\mathcal{S}_3$ (which we already argued is not a sink), or it meets $S_3$ in an left pointing image arc (so it is not a sink). 
	\end{itemize}
\item[$\circ$] Finally, notice that the band $\mathbbm{b}_{4,k}$ with $j+1 \leq k \leq c_4$ is enclosed, on the right, by $\varphi(\alpha_{4,k-1})$, which points out of the band. Thus, if $\mathcal{H}$ contains $\mathbbm{b}_{4,k}$, $\mathcal{H}$ is not a sink.  	
\end{itemize}

Therefore, there are no horizontal sink disk sectors spanning $\Gamma_3 \cup \Gamma_4$. 
\end{itemize}

We deduce that there are no sink disks spanning $\Gamma_3 \cup \Gamma_4$, and so we have constructed a sink disk free branched surface. Next, we construct the branched surface in $\Gamma_3 \cup \Gamma_4$ when we are in Cases (1A) or (2A) of \Cref{lemma:Sparse_Cmax=Ceven_Gammas12}.

\tcbox[size=fbox, colback=gray!10]{Suppose $n=5$, and construction in $\Gamma_1 \cup \Gamma_2$ is inherited from Cases (1A) or (2A) from \Cref{lemma:Sparse_Cmax=Ceven_Gammas12}.} 

In this case, we built $B$ in $\Gamma_1 \cup \Gamma_2$ using Case (1A) or Case (2A) of \Cref{lemma:Sparse_Cmax=Ceven_Gammas12}. In particular, this means $\Gamma_2$ is heterogeneous. We then follow the instructions of  \Cref{lemma:Generic_EndOfBraid_LastColumnIsFull} to build $B$ in $\Gamma_3 \cup \Gamma_4$. \Cref{table:Bookkeeping_Cmax=Ceven_Gammas12} tells us that there was a unique potential sink disk spanning $\Gamma_1 \cup \Gamma_2$, and it was a horizontal sector which also spanned $\Gamma_3$. However, by applying the conclusions of \Cref{lemma:Generic_EndOfBraid_LastColumnIsFull}, we know that there are no sink disks spanning $\Gamma_2 \cup \Gamma_3 \cup \Gamma_4$. Therefore, we deduce that we have produced a branched surface containing co-oriented product disks from all columns except $\Gamma_3$, and this branched surface is sink disk free. 





\tcbox[size=fbox, colback=gray!30]{Suppose $n \geq 6$.}

We begin by explaining how to build $B$ in $\Gamma_1 \cup \Gamma_2 \cup \Gamma_3 \cup \Gamma_4$: we apply \Cref{lemma:Sparse_Cmax=Ceven_Gammas12} to construct $B$ in $\Gamma_1 \cup \Gamma_2$. As summarized in \Cref{table:Bookkeeping_Cmax=Ceven_Gammas12}, despite the fact that we used two different constructions across the four sub-cases, we were able to ensure that the unique potential sink disk spanning $\Gamma_1 \cup \Gamma_2$ always contained $\mathbbm{b}_{3,1}$. To continue building $B$ in $\Gamma_3 \cup \Gamma_4$, we canonically calibrate our braid with respect to $\mathbbm{b}_{3,1}$, and then apply our template to $\Gamma_4$.

We claim that the only potential sink disk spanning $\Gamma_1 \cup \Gamma_2 \cup \Gamma_3$ is $\mathcal{S}_4$. Since \Cref{lemma:Sparse_Cmax=Ceven_Gammas12} (as summarized in \Cref{table:Bookkeeping_Cmax=Ceven_Gammas12}) ensures that the only potential sink disk spanning $\Gamma_1 \cup \Gamma_2$ is the branch sector containing $\mathbbm{b}_{3,1}$, it suffices to analyze $\Gamma_3$. 

We split our proof depending on whether the construction of $B$ in $\Gamma_1 \cup \Gamma_2$ followed via Cases (1A) and (2A), or Cases (1B) and (2B) of \Cref{lemma:Sparse_Cmax=Ceven_Gammas12}, and refer to Figures \ref{fig:NotGeneric_CmaxCeven_Gammas12_CaseB} and \ref{fig:NotGeneric_CmaxCeven_Gammas12_CaseA} (and the labelings of bands in those figures) throughout.

\tcbox[size=fbox, colback=gray!10]{Suppose $n \geq 6$ and construction in $\Gamma_1 \cup \Gamma_2$ is inherited from Cases (1A) or (2A) from \Cref{lemma:Sparse_Cmax=Ceven_Gammas12}.} 

\begin{itemize}
\item \Seifert In \Cref{lemma:Sparse_Cmax=Ceven_Gammas12}, we already proved that $\mathcal{S}_3$ is not a sink. We do not know if $\Gamma_4$ is homogeneous or heterogeneous, so we cannot argue that $\mathcal{S}_4$ is not a sink. 
\item \Polygon Neither $S_3$ nor $S_4$ contain both plumbing and image arcs, so neither Seifert disk contains polygon sectors. 
\item \Horizontal Let $\epsilon := d(2; j, j+1)$. As we see in \Cref{fig:NotGeneric_CmaxCeven_Gammas12_CaseB}, the bands $\mathbbm{b}_{3,1}, \ldots, \mathbbm{b}_{3, \epsilon}$ are all enclosed, on the left, by $\varphi(\alpha_{2,j})$. Therefore, $\mathbbm{b}_{3,1} \sim \ldots \sim \mathbbm{b}_{3, \epsilon}$. Since we canonically calibrated $\beta$ with respect to $\mathbbm{b}_{3,1}$ and then applied our template to $\Gamma_4$, we know that $\mathbbm{b}_{3,1}$ is enclosed, on the right, by $\alpha_{4,1}$, which is a left pointer. We deduce that if $\mathcal{H} \sim \mathbbm{b}_{3, \ell}$, for $1 \leq \ell \leq \epsilon$, then $\mathcal{H}$ is not a sink. 

Alternatively, if $\mathcal{H} \sim \mathbbm{b}_{3, \ell}$ for $\epsilon +1 \leq \ell \leq c_3$, then $\mathcal{H}$ is either enclosed, on the left, by a left pointing image arc, or $\mathcal{H} \sim \mathcal{S}_3$. Regardless of which occurs, we know $\mathcal{H}$ is not a sink.
\end{itemize}

We deduce that no branch sector spanning $\Gamma_3$ is a sink, and we have produced a branched surface $B$ such that the unique potential sink disk spanning $\Gamma_1 \cup \Gamma_2 \cup \Gamma_3$ is $\mathcal{S}_4$. 

\tcbox[size=fbox, colback=gray!10]{Suppose $n= 6$ and construction in $\Gamma_1 \cup \Gamma_2$ is inherited from Cases (1B) or (2B) from \Cref{lemma:Sparse_Cmax=Ceven_Gammas12}.}

\begin{itemize}
\item \Seifert In this case $\mathcal{S}_3 \sim \mathbbm{b}_{3,1}$; by design, this band is enclosed, on the right, by $\alpha_{4,1}$. This arc is a left pointer, pointing out of $\mathcal{S}_3$, so we deduce that the sector is not a sink. We do not know whether $\mathcal{S}_4$ is a sink or not.
\item \Polygon Neither $S_3$ nor $S_4$ contain both plumbing and image arcs, so neither Seifert disk contains polygon sectors. 
\item \Horizontal Suppose $\mathcal{H}$ spans $\Gamma_3$. If $\mathcal{H} \sim \mathbbm{b}_{3,1}$, then we can apply the same argument as in the Seifert disk sector analysis to show see that $\mathcal{H}$ is not a sink. If $\mathcal{H} \sim \mathbbm{b}_{3, \ell}$, for $2 \leq \ell \leq c_3$, then, as we see in \Cref{fig:NotGeneric_CmaxCeven_Gammas12_CaseA}, $\mathcal{H}$ either meets $S_3$ in an exiting image arc, or $\mathcal{H} \sim \mathcal{S}_3$, which we already argued is not a sink.
\end{itemize}

We deduce that no branch sector spanning $\Gamma_3$ is a sink, and we have produced a branched surface $B$ such that the unique potential sink disk spanning $\Gamma_1 \cup \Gamma_2 \cup \Gamma_3$ is $\mathcal{S}_4$. 

\tcbox[size=fbox, colback=gray!30]{Returning to build $B$ in the remaining columns of the braid.}

We have now constructed $B$ in $\Gamma_1 \cup \Gamma_2 \cup \Gamma_3 \cup \Gamma_4$, and we know that the only potential sink disk spanning $\Gamma_1 \cup \Gamma_2 \cup \Gamma_3$ is $\mathcal{S}_4$. 

Suppose $n=6$. We apply \Cref{lemma:Generic_EndOfBraid_LastColumnIsSparse} to conclude that there are no sink disks spanning $\Gamma_4 \cup \Gamma_5$. We deduce that we have constructed a sink disk free branched surface containing co-oriented product disks from $\Gamma_1, \Gamma_2$, and $\Gamma_4$. 

Suppose $n \geq 7$. We iteratively build $B$ in the remaining columns. We define the value $f$ as follows:
\begin{align*}
f :=
\begin{cases}
\displaystyle \frac{(n-1)-5}{2}-1 & \qquad n-1 \equiv 1 \mod 2 \\ \\
\displaystyle \frac{(n-1)-4}{2}-2 & \qquad n-1 \equiv 0 \mod 2
\end{cases}
\end{align*}

To build $B$, we iteratively apply \Cref{lemma:Continue}, in sequence, to $\Gamma_{4 + 2t}$, for $t \in \{0, \ldots, f\}$. By construction, before each application of \Cref{lemma:Continue}, the only potential sink disk is $\mathcal{S}_{4+2t}$, but after building $B$ in $\Gamma_{4+2t} \cup \Gamma_{4+2t+1} \cup \Gamma_{4+2t+2}$ as directed by \Cref{lemma:Continue}, the only potential sink disk is $\mathcal{S}_{4+2t+2}$. 

Suppose $n-1 \equiv 1 \mod 2$. When $t=f$, our last application of \Cref{lemma:Continue} assumes our template has been applied to $\Gamma_{n-4}$, and concludes by building $B$ in $\Gamma_{n-2}$. At the end of this procedure, the only potential sink disk is $\mathcal{S}_{n-2}$. We now apply \Cref{lemma:Generic_EndOfBraid_LastColumnIsSparse} to see that there are no sinks spanning $\Gamma_{n-2} \cup \Gamma_{n-1}$. Therefore, we have constructed a sink disk free branched surface which contains product disks from $\Gamma_1 \cup \Gamma_2 \cup \Gamma_4 \cup \ldots \cup \Gamma_{n-2}$. 

Instead suppose $n-1 \equiv 0 \mod 2$. When $t=f$, our last application of \Cref{lemma:Continue} assumes our template has been applied to $\Gamma_{n-3}$, and the only potential sink disk is $\mathcal{S}_{n-3}$. We now apply \Cref{lemma:Generic_EndOfBraid_LastColumnIsFull} to build $B$ in $\Gamma_{n-1}$. By design,  \Cref{lemma:Generic_EndOfBraid_LastColumnIsFull} ensures that there are no sink disks spanning $\Gamma_{n-3} \cup \Gamma_{n-2} \cup \Gamma_{n-1}$. We deduce that we have produced a sink disk free branched surface containing co-oriented product disks from $\Gamma_1 \cup \Gamma_2 \cup \Gamma_4 \cup \ldots \cup \Gamma_{n-1}$.

Therefore, when $\Cmax = \Ceven$ and $n \geq 5$, we have constructed a sink disk free branched surface. Applying \Cref{prop:BisLaminar}, we deduce that $B$ is a laminar branched surface. It remains to determine what slopes are fully carried by $B$. Our analysis is determined by the parity of $n$. 

Since $\Cmax = \Ceven$, our branched surface $B$ contains a copy of the fiber surface, a single product disk from $\Gamma_1$, and all product disks from $\Gamma_i$, $i \equiv 0 \mod 2$. Combining Lemmas \ref{lemma:Sparse_Cmax=Ceven_Gammas12}, \ref{lemma:Continue}, \ref{lemma:Generic_EndOfBraid_LastColumnIsSparse}, and \ref{lemma:Generic_EndOfBraid_LastColumnIsFull}, we see that there is no pairwise linking between any of the plumbing arcs used to build $B$. Therefore, it suffices to put a lower bound on the number of product disks used to build $B$. 

\begin{itemize}
\item If $n-1 \equiv 1 \mod 2$, then $n \equiv 0 \mod 2$, and for $2g(K)-1=\mathcal{C}-n$ to hold, we must have that $\mathcal{C}$ is odd. Therefore, since $\Cmax \geq \frac{\mathcal{C}+1}{2}$. Performing our usual estimations, we see that
\begin{align*}
\TauSup &= (\# \text{product disks used to construct $B$}) \\
&= \left( \ \sum_{i \text{ even}} c_i \right) - (\# \Gamma_i, i \text{ even}) +(\# \text{ product disks from } \Gamma_1) \\ 
&= \Ceven - \frac{n-2}{2} + 1 \\
&\geq \left( \frac{\mathcal{C}+1}{2} - \frac{n-2}{2}\right)+1 = \frac{\mathcal{C}-n+3}{2}+1 = g(K)+2
\end{align*}
We deduce that $\tau_B$ carries all slopes $r < g(K)+1$, as desired. 
\item If $n-1 \equiv 0 \mod 2$, then $n \equiv 1 \mod 2$, and for $2g(K)-1=\mathcal{C}-n$ to hold, we have that $\mathcal{C}$ is even. Therefore, since $\Cmax \geq \frac{\mathcal{C}}{2}$. Therefore, 
\begin{align*}
\TauSup \geq \left( \frac{\mathcal{C}}{2} - \frac{n-1}{2}\right)+1 = \frac{\mathcal{C}-n+1}{2}+1 = g(K)+1
\end{align*}
Therefore, $\tau_B$ carries all slopes $r < g(K)+1$ as desired. 
\end{itemize}

We deduce that if $\Cmax = \Ceven$, we can construct a laminar branched surface such that $\tau_B$ fully carries all slopes $r < g(K)+1$. 
\end{proof}

We are now ready to prove \Cref{thm:main} in its entirety. 

\textbf{\Cref{thm:main}.} \textit{If $K$ is a positive braid knot with $g(K) \geq 2$, then $S^3_r(K)$ admits a taut foliation whenever $r < g(K)+1$.}

\begin{proof}
If $K$ is a positive braid on $n \leq 3$ strands, then we can apply the main theorem of \cite{Krishna:3Braids} to deduce that $S^3_r(K)$ admits a taut foliation whenever $r < 2g(K)-1$. Since we assumed that $g(K) \geq 2$, we know, in particular, that $S^3_r(K)$ admits a taut foliation whenever $r < g(K)+1$. 

If $K$ is a positive braid knot on $n = 4$ strands, we apply \Cref{thm:Main4Braids}. Otherwise, $K$ is a positive braid knot on $n \geq 5$ strands, so we combine \Cref{thm:PositiveNBraids_Odd} and \Cref{thm:PositiveNBraids_Even} to deduce that there is a sink disk free branched surface $B$ such that $\tau_B$ fully carries all slopes $r < g(K)+1$.  Applying \Cref{prop:BisLaminar} and \Cref{thm:foliations}, we deduce that $S^3_r(K)$ admits a taut foliation for all $r < g(K)+1$.
\end{proof}

\section{Splicing knot exteriors} \label{section:Splicing}

To conclude, we apply \Cref{thm:main} to construct taut foliations in splicings of braid positive knot exteriors. 

\textbf{\Cref{cor:splicing}.} \textit{Suppose $K_1$ and $K_2$ are positive braid knots of genus at least two. Let $M(K_1, K_2)$ denote the splicing of the knot exteriors of $K_1$ and $K_2$. Then $M(K_1, K_2)$ is a non-L-space if and only if it admits a taut foliation. Moreover, $\pi_1(M(K_1, K_2))$ is left-orderable.}

As noted in \Cref{subsection:results}, this is a special case of a theorem of Boyer-Gordon-Hu \cite[Theorem 2.1]{BoyerGordonHu:Toroidal}. In our proof, the restriction of the taut foliation on the essential torus is a foliation by circles; in the Boyer-Gordon-Hu proof, the foliation induced on the essential torus is a suspension. 

\begin{proof}
We claim that the only thing to prove is that if $K_1, K_2$ are positive braid knots of genus at least two, then $M(K_1, K_2)$ admits a taut foliation. This is because:

\begin{itemize}
\item If $M(K_1, K_2)$ admits a taut foliation, then it is a non-L-space \cite{OSz:HolDisks, Bowden:Approx, KazezRoberts}.
\item By Hedden-Levine \cite[Corollary 2]{HeddenLevine:Splicing}, $M(K_1, K_2)$ is not an L-space. 
\item If $Y$ is an integer homology sphere with a taut foliation, then $\pi_1(Y)$ is left-orderable \cite{BoyerRolfsonWiest}. 
\end{itemize}

Therefore, the rest of the proof is devoted to constructing taut foliations in the splicings.

First, we make a few observations about the construction used to prove \Cref{thm:main}. 
The construction produces, for any $K$ a positive braid knot, and any $r < g(K)+1$, a taut foliation $\mathcal{F}_r$ in the knot exterior $X_K$, such that $\mathcal{F}_r$ meets $\partial X_K$ in parallel simple closed curves of slope $r$. When we perform the $r$-framed Dehn filling of $X_K$, the taut foliation $\mathcal{F}_r$ gets capped off to produce the taut foliation $\widehat{\mathcal{F}_r}$. Moreover, the dual knot of the surgery (i.e. the core of the Dehn surgery solid torus) \textit{is} a transversal for the taut foliation $\widehat{\mathcal{F}_r}$. In particular, suppose $\rho$ is a simple closed curve on $\partial X_K$ of slope $r$. If $\gamma$ is a simple closed curve on $\partial X_K$ such that $\gamma$ and $\rho$ have intersection number $\pm 1$, then we can push $\gamma$ slightly into the interior of $X_K$ to produce a transversal for the taut foliation $\mathcal{F}_r$ of $X_K$. The simple closed curve $\gamma$ remains a transversal for $\widehat{\mathcal{F}_r}$, the taut foliation of $S^3_r(K)$, as this is produced from $\mathcal{F}_r$ by capping off. We use this essential observation to prove \Cref{cor:splicing}.  

Suppose $K_1$ and $K_2$ are positive braid knots, and let $X_1$ and $X_2$ denote their respective knot exteriors. Since $g(K_1) \geq 2$ and $g(K_2) \geq 2$, by the observation above, $X_1$ (resp. $X_2$) admits a taut foliation $\mathcal{F}_1$ (resp. $\mathcal{F}_2$) which meets $\partial X_1$ (resp. $\partial X_2$) in parallel simple closed curves of slope +1. 

Next, we study the splicing $M(K_1, K_2)$. As described in \Cref{subsection:results}, $M(K_1, K_2)$ is constructed by identifying $\partial X_1$ and $\partial X_2$ together via an orientation-reversing homeomorphism; we call this map $\varphi$. The map $\varphi$ identifies $\lambda_1$ and $\mu_2$, and also identifies $\lambda_2$ and $\mu_1$. It follows that $\varphi$ identifies the slope $+1 = \mu_1 + \lambda_1$ on $\partial X_1$ with the slope $+1 = \mu_2 + \lambda_2$ on $\partial X_2$. 

Therefore, we have endowed both $X_1$ and $X_2$ with the taut foliations $\mathcal{F}_1$ and $\mathcal{F}_2$, and after performing the splicing operation, the boundaries of the leaves on $\partial X_1$ and $\partial X_2$ exactly line up, thereby producing a foliation $\mathcal{F}$ of $M(K_1, K_2)$. 

We emphasize that this foliation $\mathcal{F}$ really is taut: suppose $\rho$ is a simple closed curve on $\partial X_1$ of slope $+1$. Constructing any curve $\gamma$ with intersection number one with $\rho$ (as described in the first paragraph), and push $\gamma$ into the interior of $X_1$ prior to splicing. This curve will be transverse to all the leaves of the foliation produced after splicing, so $\mathcal{F}$ is taut. Said differently, rather than capping off the leaves of $\mathcal{F}_1$ with disks (as you would when performing Dehn surgery), the leaves of $\mathcal{F}_1$ are getting capped off by the leaves of $\mathcal{F}_2$. 
\end{proof}

\appendix
\section{Appendix A: Infinitely many $K_m$ are hyperbolic} \label{appendix}

We prove the knot $K_1$ of \Cref{thm:examples} is hyperbolic. This requires using SnapPy \cite{SnapPy}: we input $\beta_1$ (which we defined in our proof of \Cref{thm:examples}) into Snappy, which computes the volume of the complement of $\widehat{\beta_1}$.

\begin{quotation}
\begin{verbatim}
In[1]: K=Link(braid_closure=[1,2,3,1,2,3,1,2,3,1,2,3,1,2,3,1,2,3,1,2,3,
                             3,2,3,2,3,2])

In[2]: X=K.exterior()

In[3]: X.volume()

Out[3]: 4.1885842865

In[4]: X.solution_type()

Out[4]: `all tetrahedra positively oriented'
\end{verbatim}
\end{quotation}

\medskip

Therefore, $K_1$ is a hyperbolic knot in $S^3$, and $vol(S^3-K_1) \approx 4.188$. We now show that infinitely many other of the $\{K_m \ | \ m \geq 2\}$ from \Cref{thm:examples} are hyperbolic as well. We begin by constructing a 5-braid $\gamma$ as follows: $$\gamma: = \beta_1 \sigma_4^{-1}\sigma_3^{-1}\sigma_2^{-1}\sigma_2^{-1}\sigma_3^{-1}\sigma_4^{-1}$$

The braid $\gamma$ is seen in \Cref{fig:4Braid_Hyperbolic} (left). It is straightforward to see that $\widehat{\gamma}$ is a 2-component link, where one of the components is $\widehat{\beta_1}$, and the other is an unknot which encircles the second, third, and fourth strands of $\beta_1$; see \Cref{fig:4Braid_Hyperbolic} (right). Below, we include the SnapPy verification that the link $\widehat{\gamma}$ is hyperbolic:

\begin{quotation}
\begin{verbatim}
In[1]: L=Link(braid_closure=[1,2,3,1,2,3,1,2,3,1,2,3,1,2,3,1,2,3,1,2,3,
                             3,2,3,2,3,2,-4,-3,-2,-2,-3,-4])

In[2]: X=L.exterior()

In[3]: X.volume()

Out[3]: 6.2903026840

In[4]: X.solution_type()

Out[4]: `all tetrahedra positively oriented'
\end{verbatim}
\end{quotation}

\medskip

Doing $\frac{1}{-(m-1)}$--framed Dehn surgery along the unknotted component of $\gamma$ yields $S^3$ again, and adds $(m-1)$ positive full twists into the second, third, and fourth strands of the braid (see \cite[Figure 5.27]{GompfAndStipsicz}). Therefore, we can produce all the knots $K_m$, where $m \geq 2$, via Dehn surgery along one of the components of $\widehat{\gamma}$. Applying Thurston's hyperbolic Dehn surgery theorem \cite{Thurston:Vol4}, \cite[Theorem A]{NeumannZagier}, we know that infinitely many of these knots are hyperbolic.

\begin{figure}[h!]
\labellist
\pinlabel {$\beta_1$} at 70 250
\pinlabel {$\beta_1$} at 375 250
\pinlabel {$\frac{1}{-(m-1)}$} at 480 150
\endlabellist
        \includegraphics[scale=0.75]{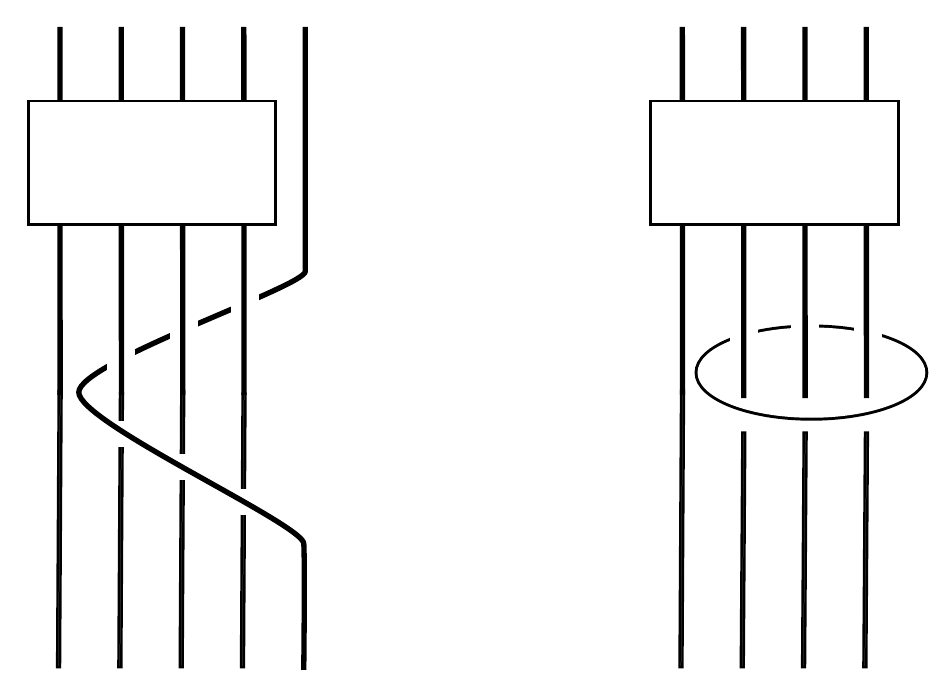}
        \caption{\textbf{Left:} The 5-braid $\gamma$. \textbf{Right:} The link $\widehat{\gamma}$, but we have suppressed drawing the closure of $\beta_1$ for simplicity.Performing $-1/m$ surgery along the unknotted component yields $\widehat{\beta_m}$.}
        \label{fig:4Braid_Hyperbolic}
\end{figure}

\bibliographystyle{amsalpha2}
\bibliography{masterbiblio}

\end{document}